\numberwithin{equation}{section}
\theoremstyle{plain}
\newtheorem{theorem}[subsubsection]{Theorem}
\newtheorem{lemma}[subsubsection]{Lemma}
\newtheorem{prop}[subsubsection]{Proposition}
\newtheorem{cor}[subsubsection]{Corollary}
\theoremstyle{definition}
\newtheorem{defn}[subsubsection]{Definition}
\newtheorem{remark}[subsubsection]{Remark}
\newtheorem{exam}[subsubsection]{Example}
\def\AA{\mathbb{A}}
\def\BB{\mathbb{B}}
\def\CC{\mathbb{C}}
\def\DD{\mathbb{D}}
\def\FF{\mathbb{F}}
\def\GG{\mathbb{G}}
\def\PP{\mathbb{P}}
\def\QQ{\mathbb{Q}}
\def\RR{\mathbb{R}}
\def\ZZ{\mathbb{Z}}
\def\calE{\mathcal{E}}
\def\calF{\mathcal{F}}
\def\calH{\mathcal{H}}
\def\calI{\mathcal{I}}
\def\calK{\mathcal{K}}
\def\calO{\mathcal{O}}
\def\calS{\mathcal{S}}
\def\bI{\mathbf{I}}
\def\bR{\mathbf{R}}
\newcommand\frG{\mathfrak{G}}
\newcommand\tilD{\widetilde{D}}
\def\hatG{\widehat{G}}
\def\hatH{\widehat{H}}
\def\hatT{\widehat{T}}
\newcommand{\Bun}{\textup{Bun}}
\newcommand{\cyc}{\textup{cyc}}
\newcommand\ev{\textup{ev}}
\newcommand\Frob{\textup{Frob}}
\newcommand\Gal{\textup{Gal}}
\newcommand\geom{\textup{geom}}
\newcommand{\Gr}{\textup{Gr}}
\newcommand{\gr}{\textup{gr}}
\newcommand\Hk{\textup{Hk}}
\newcommand\IC{\textup{IC}}
\renewcommand{\Im}{\textup{Im}}
\newcommand{\Ind}{\textup{Ind}}
\newcommand\Isom{\textup{Isom}}
\newcommand\Lie{\textup{Lie}\ }
\newcommand\Loc{\textup{Loc}}
\newcommand{\Nm}{\textup{Nm}}
\newcommand\opp{\textup{opp}}
\newcommand\Out{\textup{Out}}
\newcommand\Perv{\textup{Perv}}
\newcommand\pr{\textup{pr}}
\newcommand\prim{\textup{prim}}
\newcommand\Rep{\textup{Rep}}
\newcommand{\Res}{\textup{Res}}
\newcommand\sgn{\textup{sgn}}
\newcommand\Span{\textup{Span}}
\newcommand\Spec{\textup{Spec}\ }
\newcommand\St{\textup{St}}
\newcommand\Swan{\textup{Swan}_\infty}
\newcommand\Sym{\textup{Sym}}
\newcommand{\Tr}{\textup{Tr}}
\newcommand{\val}{\textup{val}}
\newcommand{\Vect}{\textup{Vect}}
\newcommand\Aut{\textup{Aut}}
\newcommand\Hom{\textup{Hom}}
\newcommand\GL{\textup{GL}}
\newcommand\PGL{\textup{PGL}}
\newcommand\SL{\textup{SL}}
\newcommand\SO{\textup{SO}}
\newcommand\Og{\textup{O}}
\newcommand\GO{\textup{GO}}
\newcommand\Sp{\textup{Sp}}
\newcommand\GSp{\textup{GSp}}
\newcommand\GSpin{\textup{GSpin}}
\newcommand{\Gm}{\GG_m}
\def\Ga{\GG_a}
\newcommand{\ad}{\textup{ad}}
\newcommand{\Ad}{\textup{Ad}}
\newcommand\xch{\mathbb{X}^*}
\newcommand\xcoch{\mathbb{X}_*}
\newcommand{\one}{\mathbf{1}}
\newcommand{\incl}{\hookrightarrow}
\newcommand{\isom}{\stackrel{\sim}{\to}}
\newcommand{\surj}{\twoheadrightarrow}
\newcommand{\leftexp}[2]{{\vphantom{#2}}^{#1}{#2}}
\newcommand{\pH}{\leftexp{\textup{p}}{\textup{H}}}
\newcommand{\pD}{\leftexp{\textup{p}}{D}}
\newcommand{\Ql}{\QQ_\ell}
\newcommand{\twtimes}[1]{\stackrel{#1}{\times}}
\newcommand{\jiao}[1]{\langle{#1}\rangle}
\newcommand{\wt}[1]{\widetilde{#1}}
\newcommand\quash[1]{}
\newcommand{\nth}{^{\textup{th}}}
\newcommand{\un}[1]{\underline{#1}}
\newcommand{\cohog}[2]{\textup{H}^{#1}({#2})}     
\newcommand{\cohoc}[2]{\textup{H}_{c}^{#1}({#2})}     
\newcommand{\upH}{\textup{H}}
\newcommand{\Kl}{\textup{Kl}}
\newcommand{\AS}{\textup{AS}}
\newcommand{\Cox}{\textup{Cox}}
\newcommand{\dCox}{\wt{\textup{Cox}}}
\newcommand{\diag}{\textup{diag}}
\newcommand{\Wt}{\textup{Wt}}
\newcommand{\Tt}{\textup{Tt}}
\newcommand{\Sat}{\textup{Sat}}
\newcommand{\ME}{\textup{ME}}
\newcommand{\bad}{\textup{bad}}
\newcommand{\Hod}{\textup{Hod}}
\newcommand{\dR}{\textup{dR}}
\newcommand{\ep}{\epsilon}
\renewcommand{\l}{\lambda}
\newcommand{\Fpbar}{\overline{\FF}_p}
\newcommand{\Qlbar}{\overline{\QQ}_\ell}
\newcommand{\Qbar}{\overline{\QQ}}
\newcommand{\Qpbar}{\overline{\QQ}_{p}}
\newcommand{\kbar}{\overline{k}}
\newcommand{\rhobar}{\overline{\rho}}
\newcommand{\rhowtbar}{\overline{\wt{\rho}}}
\newcommand{\ebar}{\overline{e}}
\newcommand{\Dbar}{\overline{D}}
\newcommand{\tile}{\widetilde{e}}
\newcommand{\bpr}{\overline{\textup{pr}}}
\newcommand{\barl}{\overline{\lambda}}
\newcommand{\barF}{\overline{F}}
\newcommand{\barM}{\overline{M}}
\newcommand{\barv}{\overline{v}}
\newcommand{\tilv}{\widetilde{v}}
\newcommand{\barZ}{\overline{Z}}
\newcommand{\bara}{\overline{a}}
\newcommand{\Grot}{\Gm^{\textup{rot}}}
\newcommand{\GR}{\textup{GR}}
\renewcommand{\c}{\circ}
\newcommand{\GRc}{\textup{GR}^\c}
\newcommand{\Grc}{\textup{Gr}^\c}
\newcommand{\ICc}{\textup{IC}^\c}
\newcommand{\pic}{\pi^\c}
\newcommand{\Four}{\textup{Four}}
\newcommand{\HF}{\textup{HFour}}
\newcommand{\GmS}{\GG_{m,S}}
\newcommand{\bunF}{\underline{\overline{F}}}
\newcommand{\Gk}{\Gal(\kbar/k)}
\newcommand{\GQ}{\Gal(\overline{\QQ}/\QQ)}
\newcommand{\GQp}{\Gal(\overline{\QQ}_{p}/\QQ_{p})}
\newcommand{\GQl}{\Gal(\overline{\QQ}_{\ell}/\QQ_{\ell})}
\newcommand{\unF}{\underline{F}}
\newcommand{\unG}{\underline{G}}
\newcommand{\unK}{\underline{K}}
\newcommand{\unM}{\underline{M}}
\newcommand{\unN}{\underline{N}}
\newcommand{\unP}{\underline{P}}
\newcommand{\unGm}{\underline{\Gm}}
\newcommand{\unGr}{\underline{\Gr}}
\newcommand{\unGR}{\underline{\GR}}
\newcommand{\unGRc}{\underline{\GR}^{\c}}
\newcommand{\unfrG}{\underline{\frG}}
\newcommand{\unBun}{\underline{\Bun}}
\newcommand{\unIC}{\underline{\IC}}
\newcommand{\unICc}{\underline{\IC}^{\c}}
\newcommand{\unS}{\underline{\calS}}
\newcommand{\Zl}{\ZZ[\ell^{-1}]}
\newcommand{\SZl}{\Spec\Zl}
\newcommand{\Zp}{\ZZ_{p}}
\newcommand{\SZp}{\Spec\Zp}
\title[Galois representations and conjectures of Evans]{Galois representations attached to moments of Kloosterman sums and conjectures of Evans}
\author[Zhiwei Yun]{Zhiwei Yun \\ With an appendix by Christelle Vincent}
\thanks{Supported by the NSF grants  DMS-1261660 and DMS-1302071.}
\address{Department of Mathematics, Stanford University, 450 Serra Mall, Building 380, Stanford, CA 94305}
\email{zwyun@stanford.edu}
\address{Department of Mathematics, Stanford University, 450 Serra Mall, Building 380, Stanford, CA 94305}
\email{cvincent@stanford.edu}
\date{}
\subjclass[2010]{Primary 11L05, 11F30; Secondary 11F80}
\keywords{Kloosterman sums; Galois representations; Modularity}
\begin{document}

\begin{abstract}
Kloosterman sums for a finite field $\FF_{p}$ arise as Frobenius trace functions of certain local systems defined over $\GG_{m,\FF_{p}}$. The moments of Kloosterman sums calculate the Frobenius traces on the cohomology of tensor powers (or symmetric powers, exterior powers, etc.) of these local systems. We show that when $p$ ranges over all primes, the moments of the corresponding Kloosterman sums for $\FF_{p}$ arise as Frobenius traces on a continuous $\ell$-adic representation of $\GQ$ that comes from geometry. We also give bounds on the ramification of these Galois representations. All this is done in the generality of Kloosterman sheaves attached to reductive groups introduced in \cite{HNY}. As an application, we give proofs of conjectures of R. Evans (\cite{Evans0}, \cite{Evans}) expressing the seventh and eighth symmetric power moments of the classical Kloosterman sum in terms of Fourier coefficients of explicit modular forms. The proof for the eighth symmetric power moment conjecture relies on the computation done in Appendix B by C.Vincent.
 
\end{abstract}

\maketitle
\tableofcontents

\section{Introduction}

\subsection{Kloosterman sums and their moments}\label{ss:intro Kl}
We first recall the definition of Kloosterman sums.
\begin{defn}\label{def:Kl sum} Let $p$ be a prime number. Fix a nontrivial additive character $\psi:\FF_p\to\CC^{\times}$. Let $n\geq2$ be an integer. Then the $n$-variable Kloosterman sum over $\FF_p$ is a complex valued function on $\FF^{\times}_{p}$ whose value at $a\in\FF_{p}^{\times}$ is
\begin{equation*}
\Kl_n(p;a)=\sum_{x_1,\cdots,x_n\in\FF^\times_p;x_1x_{2}\cdots x_{n}=a}\psi(x_1+\cdots+x_n).
\end{equation*}
\end{defn}
These exponential sums arise naturally in the study of automorphic forms for $\GL_{n}$.

\subsubsection{Deligne's Kloosterman sheaf}\label{Deligne Kl} Deligne \cite{Del} has given a geometric interpretation of the Kloosterman sum. He considers the following diagram consisting of schemes over $\FF_{p}$
\begin{equation*}
\xymatrix{& \Gm^n\ar[dl]_{\pi}\ar[dr]^{F}\\ \Gm & & \AA^1}
\end{equation*}
Here the morphism $\pi$ is taking the product and $F$ is the morphism of taking the sum. Deligne defines
\begin{equation*}
\Kl_n:=\bR^{n-1}\pi_!F^*\AS_\psi,
\end{equation*}
where $\AS_{\psi}$ is the rank one local system on $\AA^{1}_{\FF_{p}}$ corresponding to the character $\psi:\FF_{p}\isom\mu_{p}(\CC)\cong\mu_{p}(\Qlbar)$ (we make a choice of the latter isomorphism). He then shows that $\Kl_n$ is a $\Ql(\mu_{p})$-local system on $\GG_{m,\FF_p}$ of rank $n$. The relationship between the local system $\Kl_{n}$ and the Kloosterman sum $\Kl_{n}(p;a)$ is explained by the following identity
\begin{equation*}
\Kl_n(p;a)=(-1)^{n-1}\Tr(\Frob_a,(\Kl_{n})_a).
\end{equation*}
Here $\Frob_{a}$ is the geometric Frobenius operator acting on the geometric stalk $(\Kl_{n})_{a}$ of the local system $\Kl_{n}$ at $a\in\Gm(\FF_{p})=\FF_{p}^{\times}$. The Kloosterman sheaf has been studied in depths by Deligne \cite{Del} and Katz \cite{Katz}. 

For each representation $V$ of $\GL_{n}$, we may form the corresponding local system $\Kl^{V}_{n}$. For example, if $V=\Sym^{d}$, the $d\nth$ symmetric power of the standard representation of $\GL_{n}$, then $\Kl^{V}_{n}=\Sym^{d}(\Kl_{n})$. We define
\begin{equation*}
\Kl^V_n(p;a):=\Tr(\Frob_a,(\Kl^V_{n})_a).
\end{equation*}
For example, if $n=2$, Deligne's interpretation gives an expression $\Kl_{n}(p;a)=-(\alpha_{a}+\beta_{a})$ where $\alpha_{a},\beta_{a}\in\Qlbar$ are the eigenvalues of the operator $\Frob_{a}$ on $(\Kl_{n})_{a}$. Taking $V=\Sym^{d}$, we have
\begin{equation*}
\Kl^{\Sym^{d}}_{n}(p;a)=\sum_{i=0}^{d}\alpha_{a}^{i}\beta_{a}^{d-i}.
\end{equation*}

\begin{defn}\label{def:intro moments} Let $V$ be a representation of $\GL_{n}$. The {\em $V$-moment} of $\Kl_n$ is the sum
\begin{equation*}
m^V_{n}(p):=\sum_{a\in\FF_p^\times}\Kl^V_n(p;a).
\end{equation*}
\end{defn}

One may ask whether there is a closed formula for $m^{V}_{n}(p)$ in terms of more familiar arithmetic functions of the prime $p$. A related question is whether the behavior of the sequence of numbers $\{m^{V}_{n}(p)\}$ exhibits certain ``motivic'' nature. A typical example of a motivic sequence is the Fourier coefficients $\{a_{f}(p)\}$ of a holomorphic Hecke eigenform $f$. Another typical example is $\{\#X(\FF_{p})\}$ where $X$ is an algebraic variety defined over $\ZZ$. In this direction, Fu and Wan \cite{FW KlooZ} showed that for $V=\Sym^{d}, \wedge^{d}$ or $\otimes^{d}$, the moments $m^{V}_{n}(p)$ are the Frobenius traces on a {\em virtual} motivic Galois representation of $\GQ$.

Ron Evans has made very precise conjectures about the moments $m^{V}_{2}(p)$ when $V=\Sym^{d}$ and $d=5,6,7$ or $8$, which we now recall.

\subsection{Conjectures of Evans}\label{conj} Let us set $n=2$, and take $V=\Sym^{d}$. We denote $m^{\Sym^{d}}_{2}(p)$ by $m^{d}_{2}(p)$. Evans made the following conjectures, see \cite[p.523]{Evans0} and \cite[p.350]{Evans}.
\begin{enumerate}
\item When $d=5$, $(-m^{5}_{2}(p)-1)/p^{2}$ is the $p\nth$ Fourier coefficient of a holomorphic cuspdial Hecke eigenform of weight 3, level $\Gamma_{0}(15)$ and quadratic nebentypus $\left(\frac{\cdot}{15}\right)$. This is proved by Livn\'e \cite{Livne} and Peters-Top-van der Vlugt \cite{PTV}. 

\item When $d=6$, $(-m^{6}_{2}(p)-1)/p^{2}$ is the $p\nth$ Fourier coefficient of a holomorphic Hecke eigenform of weight 4 and level $\Gamma_{0}(6)$. This is proved by Hulek-Spandaw-van Geemen-van Straten \cite{HSVV} 

\item\label{intro:7} (See \cite[Conjecture 1.1]{Evans}) When $d=7$, we should have for all primes $p>7$
\begin{equation}\label{m72}
\left(\frac{p}{105}\right)(-m^{7}_{2}(p)-1)/p^{2} = a_{f}(p)^{2}\ep_{f}(p)^{-1}-p^{2},
\end{equation}
where $a_{f}(p)$ is the $p\nth$ Fourier coefficient of a holomorphic cuspdial Hecke eigenform $f$ of weight 3, level $\Gamma_{0}(525)$ and nebentypus $\ep_{f}=\left(\frac{\cdot}{21}\right)\ep_{5}$, where $\ep_{5}$ is a quartic character of conductor 5. We will give a proof of this conjecture.

\item\label{intro:8} When $d=8$, we should have for all primes $p\geq3$
\begin{equation*}
-m^{8}_{2}(p)-1-p^{4}=p^{2}a_{f}(p),
\end{equation*}
where $a_{f}(p)$ is the $p\nth$ Fourier coefficient of a holomorphic Hecke eigenform $f$ of weight 6,  level $\Gamma_{0}(6)$. We will give a proof of this conjecture.
\end{enumerate}

\begin{theorem}[proved in \S\ref{pf Sym7}]\label{th:Sym7} Evans's conjecture \S\ref{conj}\eqref{intro:7} on the $\Sym^{7}$-moment of $\Kl_{2}$ is true.
\end{theorem}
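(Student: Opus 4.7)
The plan is to exploit the main construction of the paper: the $V$-moments $m_2^V(p)$ arise as Frobenius traces on a continuous $\ell$-adic representation $M_V$ of $\GQ$ coming from geometry, specifically on the compactly supported cohomology $\cohoc{*}{\GG_{m,\overline{\QQ}}, \Sym^7 \Kl_2}$ (with an appropriate descent of $\Kl_2$ to $\GG_{m,\ZZ[1/p]}$). The $+1$ in $-m^7_2(p)-1$ and the $p^2$ factor in the denominator correspond to removing a trivial summand and a Tate twist, isolating a ``primitive'' piece $M_7$. So the first step is to identify a 3-dimensional piece of $M_7$ whose Frobenius trace at $p$ equals $\left(\frac{p}{105}\right)(-m^7_2(p)-1)/p^2$, and to compute its Hodge--Tate weights (expected to be $\{0,2,4\}$, matching those of $\Sym^2 \rho_f \otimes \ep_f^{-1}$ for $f$ of weight $3$) and its ramification locus (expected to sit inside $\{3,5,7,\ell\}$, matching the level $525 = 3\cdot 5^2 \cdot 7$).

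To bound the dimension and ramification, I would first compute $\cohoc{*}{\GG_{m,\overline{\FF}_p}, \Sym^7 \Kl_2}$ using the Euler--Poincar\'e formula, the known tame ramification of $\Kl_2$ at $0$ and its wild ramification at $\infty$ with $\Swan = 1$; this pins down the rank of $M_7$ and, after removing the contribution of invariants/coinvariants under the local monodromies, leaves a three-dimensional piece. Then I would invoke the paper's general bounds on ramification of these Galois representations (proved in the body of the paper, not just in this excerpt) to show that $M_7$ is unramified outside $\{2,3,5,7,\ell\}$ and use the local structure of $\Sym^7 \Kl_2$ at $0$ and $\infty$ (combined with the geometry of the Deligne--Katz construction) to pass from $\{2,3,5,7,\ell\}$ down to $\{3,5,7,\ell\}$ --- this removal of the prime $2$ is where the quadratic twist by $\left(\frac{\cdot}{105}\right)$ will naturally enter.

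Next I would construct the candidate Galois representation on the modular side: let $\rho_f$ be the $\ell$-adic Galois representation attached to the weight-$3$ eigenform $f$ of level $\Gamma_0(525)$ and nebentypus $\ep_f=\left(\frac{\cdot}{21}\right)\ep_5$. Then $\Sym^2 \rho_f \otimes \ep_f^{-1}$ is a $3$-dimensional representation with $\tr(\Frob_p) = a_f(p)^2 \ep_f(p)^{-1} - p^2$ and determinant a power of the cyclotomic character, with Hodge--Tate weights $\{0,2,4\}$ and the same predicted ramification locus. I would then compare $M_7$ with $\Sym^2 \rho_f \otimes \ep_f^{-1}$ using a Faltings--Serre style criterion (or Livn\'e's criterion for the reduction mod $\ell$, once the residual representations are pinned down), which reduces the identity of the two representations to checking the equality \eqref{m72} for a finite, explicit list of small primes $p$ outside the ramification locus. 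The final step is then a finite computation: evaluating $m^7_2(p)$ directly from the definition for each prime in this test set and comparing with the Fourier coefficients $a_f(p)$ (which are tabulated or readily computed).

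The main obstacle will be the geometric step of identifying the correct three-dimensional constituent of $M_7$ and controlling its ramification precisely --- especially showing that the prime $2$ drops out after the quadratic twist and that the determinant is what is expected --- since Faltings--Serre only works once the residual representation, the determinant, and the ramification set are fixed on both sides. Once these invariants match, the argument becomes a finite check, but pinning down the ramification at the ``bad'' primes $3, 5, 7$ (where both the Kloosterman side and the modular side carry non-trivial local monodromy) requires careful local analysis of $\Sym^7 \Kl_2$ against the local Galois representation of the weight-$3$ form and its symmetric square.
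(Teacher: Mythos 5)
Your geometric setup (realizing the $\Sym^{7}$-moment as a $3$-dimensional orthogonal Galois representation in the cohomology attached to $\Sym^{7}\Kl_{2}$, with the $+1$ and the $p^{2}$ accounted for by the trivial local contribution and a Tate twist) matches the paper's Theorem \ref{th:intro}, but the modularity step is where your plan has a genuine gap. You propose to fix the eigenform $f$ in advance and prove that the $3$-dimensional piece agrees with $\Sym^{2}\rho_{f}\otimes\ep_{f}^{-1}$ by a Faltings--Serre/Livn\'e argument plus a finite check of \eqref{m72}. The obstruction is that the motivic object is only a $3$-dimensional orthogonal representation, i.e.\ (after twisting by $\left(\frac{\cdot}{105}\right)$) a $\PGL_{2}$-valued one: there is no a priori $2$-dimensional lift, so Livn\'e's criterion (which concerns $2$-adic $2$-dimensional representations with even traces) does not apply directly, and an effective Faltings--Serre comparison of two $3$-dimensional representations requires pinning down the residual representation and integral structure on the motivic side, which is not accessible from what you sketch. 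Producing a $2$-dimensional lift with controlled local behavior is precisely the paper's hard step (Lemma \ref{l:lifting}, via Patrikis's lifting theorem together with local class field theory at $3,5,7,\ell$); and once the lift exists the paper does not compare with a pre-specified $f$ at all, but invokes Serre's conjecture (Khare--Wintenberger) to produce an eigenform of weight $3$, level $525$ and nebentypus $\left(\frac{\cdot}{21}\right)\ep_{5}$ directly, the only numerics used being a few values of $m^{7}_{2}(p)$ from Evans's table to rule out reducibility and finite image.

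Two further points in your plan are off or unsupported. First, the prime $2$ does not have to be removed by the quadratic twist: for odd $d$ the representation is automatically unramified at $2$ (Theorem \ref{th:intro}, since $(\Sym^{7})^{\calI_{\infty}}=0$ at $p=2$ and the dimension of the moment at $2$ equals the generic dimension); the twist by $\left(\frac{\cdot}{105}\right)$ serves to trivialize the determinant, which is computed by the Fu--Wan formula (Theorem \ref{th:det}), and to land in $\SO_{3}\cong\PGL_{2}$. Second, you assume the Hodge--Tate weights are $\{0,2,4\}$ but give no method to establish this; in the paper this is a genuine argument (irreducibility and non-finiteness of the image force a nonzero weight $k$, oddness is deduced from the Hodge structure via Proposition \ref{p:mot dn}, and the parity of the determinant then forces $k=2$), and without it neither the Serre-weight bookkeeping nor a matching with a weight-$3$ form can get started. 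As written, your route would need either the lifting step added back in (after which Khare--Wintenberger makes the finite coefficient comparison unnecessary) or an effective Faltings--Serre criterion for orthogonal $3$-dimensional representations with explicit residual control on the motivic side, neither of which your sketch supplies.
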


Conjecture \S\ref{conj}\eqref{intro:8} will be reduced to a finite calculation in the main body of the paper, and will be verified by computational softwares in the Appendix by Christelle Vincent.

\begin{theorem}[proved in \S\ref{pf Sym8} and Appendix \ref{app:Vincent}]\label{th:Sym8} 
\begin{enumerate}
\item There exist integers $1\leq k\leq 4$, $0\leq e\leq 8$ and a holomorphic cuspdial Hecke eigenform $f$ of weight $2k$, level $\Gamma_{0}(2^{e}\cdot 3)$ and rational Fourier coefficients such that for all primes $p\geq3$, we have
\begin{equation}\label{m82}
-m^{8}_{2}(p)-1-p^{4}=p^{5-k}a_{f}(p).
\end{equation}
Here $a_{f}(p)$ is the $p\nth$ normalized Fourier coefficient of $f$. 
\item Evans's conjecture \S\ref{conj}\eqref{intro:8} on the $\Sym^{8}$-moment of $\Kl_{2}$ is true.
\end{enumerate}
\end{theorem}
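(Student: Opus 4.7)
The plan is to follow the framework developed in the main body of the paper: first construct a continuous $\ell$-adic representation of $\GQ$ whose Frobenius traces recover $-m^{8}_{2}(p)$, then isolate the two-dimensional motivic summand of interest, and finally identify it with a modular form using Serre's modularity conjecture.

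Apply the general theorem of the paper to $V = \Sym^{8}$ (the $9$-dimensional irreducible representation of $\GL_{2}$). This yields a continuous $\ell$-adic $\GQ$-representation $M$ with
\begin{equation*}
\Tr(\Frob_{p}, M) = -m^{8}_{2}(p) \text{ for all primes } p \notin S,
\end{equation*}
where the ramification bound proved earlier in the paper restricts $S$ to a subset of $\{2, 3, \ell\}$. Realizing $M$ as the compactly supported cohomology of $\Sym^{8}\Kl_{2}$ on $\Gm$ exhibits two distinguished $\GQ$-summands isomorphic to $\Ql$ and $\Ql(-4)$, coming from the unipotent local monodromy invariants of $\Sym^{8}\Kl_{2}$ near $0$ and $\infty$; these account for the $-1$ and $-p^{4}$ in Evans's identity. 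The Euler characteristic of $\Sym^{8}\Kl_{2}$ on $\Gm$ can be computed directly via Grothendieck--Ogg--Shafarevich, using Katz's description of the Swan conductor at $\infty$ for symmetric powers of $\Kl_{2}$; this computation shows that the complementary summand $\rho \subseteq M$ has rank exactly $2$.

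The representation $\rho$ is pure of weight $9$ (from the purity of $\Sym^{8}\Kl_{2}$ shifted by the $H^{1}_{c}$-degree), essentially self-dual (from the orthogonal self-duality of $\Sym^{8}$ applied to the standard representation of $\SL_{2}$, whose geometric monodromy Katz identified), and unramified outside $\{2, 3, \ell\}$. Its Hodge--Tate weights are of the form $\{k-1, 9-k\}$ for some integer $1 \le k \le 4$; the twist $\rho_{0} := \rho(k-5)$ then has Hodge--Tate weights $\{0, 2k-1\}$, is odd, has rational Frobenius traces (because $m^{8}_{2}(p)\in\ZZ$ together with integrality of the Tate twist), and has determinant a power of the cyclotomic character (trivial nebentypus). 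By the Khare--Wintenberger theorem and Ribet-style level lowering, $\rho_{0}$ is associated to a cuspidal Hecke eigenform $f$ of weight $2k$, trivial nebentypus, rational coefficients, and level $\Gamma_{0}(N)$ with $N \mid 2^{e}\cdot 3$; the bound $e \le 8$ follows from a crude bound on the Artin conductor exponent at $2$ for a two-dimensional constituent of a rank-$9$ geometric $\ell$-adic representation of the shape under consideration. This yields part (1) after matching the normalization $\Tr(\Frob_{p}, \rho_{f}) = a_{f}(p)$.

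For part (2), the constraints $1 \le k \le 4$, $0 \le e \le 8$, cuspidality, trivial nebentypus, and rationality of Fourier coefficients cut down to an explicit finite list of candidate newforms (easily enumerated from tables such as the LMFDB). Vincent's Appendix B computes both $m^{8}_{2}(p)$ and the Fourier coefficients of each candidate for enough small primes $p$ to eliminate every candidate except the unique cuspidal newform of weight $6$ and level $\Gamma_{0}(6)$, yielding $k=3$, $e=1$. The main obstacle throughout is obtaining tight enough ramification control: without $S \subseteq \{2,3,\ell\}$, the candidate list would be infinite and the computational matching in the appendix would be impossible; all of the structural work on Kloosterman sheaves earlier in the paper feeds into ensuring this ramification bound.
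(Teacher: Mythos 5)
Your overall strategy is the same as the paper's: extract the two-dimensional moment representation attached to $V=\Sym^{8}$, twist it so the Hodge--Tate weights become $\{0,1-2k\}$ with $1\leq k\leq4$, invoke Khare--Wintenberger to get a form of weight $2k$ and small level, and finish with the finite computation of Appendix B. The genuine gap is at the prime $3$. Theorem \ref{th:Sym8}(1) asserts \eqref{m82} for \emph{all} $p\geq3$ and a level of the shape $2^{e}\cdot3$, and both of these require knowing the local structure of the Galois representation at $p=3$: namely that the inertia at $3$ acts unipotently and the local representation is the Steinberg-type $J_{2}(-4)$, as in \eqref{md2 even ram}. In the paper this is proved in \S\ref{sss:ram Kl2} by comparing the integral middle-extension sheaf $\unM^{V}_{!*}$ with the mod-$3$ moment and using the dimension count of Lemma \ref{local inv} (one Jordan block since $[\tfrac{8}{2\cdot3}]=1$). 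Your proposal never controls the inertia action at $3$: you only match Frobenius traces at unramified primes, so \eqref{m82} at $p=3$ is not addressed, and your claim $N\mid2^{e}\cdot3$ is unjustified --- Serre's general bound on the conductor exponent at $3$ of a two-dimensional mod-$\ell$ representation allows exponents larger than $1$ (up to $5$), so without the Steinberg input the candidate levels are of the form $2^{a}3^{b}$ with $b$ possibly up to $5$, a list that the computation in Appendix B (which searches only levels $2^{e}\cdot3$) does not cover.

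Two further points, less serious but worth repairing. First, Khare--Wintenberger gives modularity of the \emph{residual} representations $\rhobar'_{\ell}$, not of the compatible system; the paper passes to the system by Serre's pigeonhole argument over infinitely many $\ell$, which needs the Serre weight pinned down to $2k$ by Fontaine--Laffaille theory (hence $\ell$ large with $X$ having good reduction, so the representation is crystalline with Hodge--Tate weights in the Fontaine--Laffaille range) and the congruence $\ell\not\equiv\pm1\bmod8$ to obtain Serre's bound $e\leq8$ at $2$; that bound comes from Serre's general conductor estimate for two-dimensional mod-$\ell$ representations, not from the representation being a constituent of a rank-$9$ geometric representation as you suggest. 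Second, your Hodge--Tate bookkeeping is off: the pair $\{k-1,9-k\}$ sums to $8$, which is inconsistent with determinant $\chi_{\cyc}^{-9}$ (the paper's weights are $(-5+k,-4-k)$ before twisting), and the summand $\Ql(-4)$ comes from the inertia invariants at $\infty$, where the local monodromy is wildly ramified, not unipotent; only the $\Ql$ summand comes from the unipotent monodromy at $0$.
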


The proof of these results are based on two key ingredients. One is Serre's modularity conjecture \cite{Serre} proved by Khare and Wintenberger \cite{KW}; the other is the following existence result on the Galois representations (not just a virtual one) underlying the moments $m^{d}_{2}(p)$, which is a very special case of our main result (to be stated later).

\begin{theorem}[proved in \S\ref{pf Kl2}]\label{th:intro} 
\begin{enumerate}
\item[]
\item Let $d\geq3$ be an odd integer. Then for each prime $\ell$, there exists an orthogonal $\Ql$-vector space $M_{\ell}$ of dimension $(d-1)/2$ and a continuous Galois representation
\begin{equation*}
\rho_{\ell}:\GQ\to\Og(M_{\ell})
\end{equation*}
with the following properties
\begin{itemize}
\item The determinant of $\rho_{\ell}$ is the quadratic character given by the Jacobi symbol $\left(\frac{\cdot}{d!!}\right)$, where $d!!=d(d-2)(d-4)\cdots1$.
\item The representation $\rho_{\ell}$ comes from geometry. More precisely, there is a smooth projective algebraic variety  $X$ of dimension $d-1$ over $\QQ$ (independent of $\ell$) such that the $\GQ$-module $M_{\ell}$ appears as a subquotient of $\cohog{d-1}{X_{\Qbar},\Ql}(\frac{d-1}{2})$.
\item For all primes $p\neq\ell$ satisfying $p>d$ or $p=2$, $\rho_{\ell}$ is unramified at $p$, and we have
\begin{equation}\label{md2 odd}
-m^{d}_{2}(p)-1=p^{\frac{d+1}{2}}\Tr(\Frob_{p},M_{\ell}).
\end{equation}
\end{itemize}

\item Let $d\geq4$ be an even integer. Then for each prime $\ell$, there exists a symplectic $\Ql$-vector space $M_{\ell}$ of dimension $2[(d+2)/4]-2$ and a continuous Galois representation
\begin{equation*}
\rho_{\ell}:\GQ\to\GSp(M_{\ell})
\end{equation*}
with the following properties
\begin{itemize}
\item The similitude character of $\rho_{\ell}$ is the $(-d-1)\nth$ power of the $\ell$-adic cyclotomic character.
\item The representation $\rho_{\ell}$ comes from geometry. More precisely, there is a smooth projective algebraic variety  $X$ of dimension $d-1$ over $\QQ$ (independent of $\ell$) such that the $\GQ$-module $M_{\ell}$ appears as a subquotient of $\cohog{d-1}{X_{\Qbar},\Ql}(-1)$.
\item For a prime $p\neq2,p\neq\ell$, $M_{\ell}$ as a $\GQp$-module can be decomposed into a direct sum of symplectic $\GQp$-modules
\begin{equation}\label{md2 even ram}
M_{\ell}|_{\GQp}\cong J_{2}(-d/2)^{[\frac{d}{2p}]}\oplus U_{p}
\end{equation}
Here the action of $\GQp$ on $U_{p}$ is unramified, and $J_{2}$ is the unique two-dimensional representation of $\GQp$ which is an extension of $\Ql(-1)$ by $\Ql$ and on which the action of the inertia is unipotent but nontrivial (i.e., $J_{2}$ corresponds to the Steinberg representation under the local Langlands correspondence for $\GL_{2}(\QQ_{p})$). Moreover we have
\begin{equation}\label{md2 even}
-m^{d}_{2}(p)-1=\Tr(\Frob_{p},M_{\ell}^{\calI_{p}})+\begin{cases}p^{d/2} & d\equiv0\mod4;\\0&d \equiv2\mod4.\end{cases}
\end{equation}
In particular, $\rho_{\ell}$ is unramified at primes $p>d/2, p\neq\ell$.

\end{itemize}
\end{enumerate}
\end{theorem}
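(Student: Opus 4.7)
The plan is to realize $M_\ell$ as the middle cohomology of the symmetric power of Deligne's sheaf on $\PP^1$ viewed globally over $\Spec\ZZ[\ell^{-1}]$. First I would globalize $\Kl_2$: Deligne's construction makes sense over $\Gm/\SZl$ once one fixes a compatible family of Artin--Schreier sheaves (equivalently, a primitive $p$-th root of unity in $\Qlbar$ for each $p\neq\ell$), giving a lisse $\Qlbar$-sheaf $\Kl_2$ on $\Gm/\SZl$ whose fiber over each prime $p\neq\ell$ recovers the classical Deligne sheaf. Then I would form $\Sym^d\Kl_2$, take the middle extension $\calF_d:=j_{!*}\Sym^d\Kl_2$ across the two missing points of $\PP^1$, and define
\begin{equation*}
M_\ell=\upH^1\bigl(\PP^1_{\Qbar},\calF_d\bigr)(t_d)
\end{equation*}
with Tate twist $t_d=(d+1)/2$ for $d$ odd and $t_d=0$ for $d$ even. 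Because $j_{!*}\Sym^d\Kl_2$ is constructible on $\PP^1_{\SZl}$, its derived pushforward to $\SZl$ is constructible, hence lisse on a dense open $\Spec\ZZ[S^{-1}]$, giving a continuous representation of $\GQ$ unramified outside $\ell S$.

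The moment identity is obtained by applying the Grothendieck--Lefschetz trace formula to $\Sym^d\Kl_2$ on $\Gm_{\FF_p}$: the sum $m_2^d(p)$ is, up to a sign, $\sum_i(-1)^i\Tr(\Frob_p,\upH^i_c(\Gm_{\overline{\FF}_p},\Sym^d\Kl_2))$, and the comparison between $j_!$, $j_{!*}$, $j_*$ on $\PP^1$ relates these compactly supported cohomology groups to $M_\ell^{\calI_p}$ together with boundary terms coming from the stalks $(\Sym^d\Kl_2)^{\calI_0}$ and $(\Sym^d\Kl_2)^{\calI_\infty}$. The $+1$ on the left of \eqref{md2 odd} and \eqref{md2 even} will account for the tame invariant at $0$ (one-dimensional, coming from the single unipotent Jordan block of $\Kl_2$ at $0$), while the contribution $p^{d/2}$ appearing in \eqref{md2 even} for $d\equiv0\pmod 4$ will come from $\upH^2$ of the constant part cut out by the orthogonal symmetry, noting that the trivial representation appears in $\Sym^d\Kl_2$ only in the even, $\det$-trivial case.

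The orthogonal/symplectic structure and the determinant/similitude are dictated by Poincaré duality on $\PP^1$ combined with the self-pairing of the Kloosterman sheaf. Since $\Kl_2$ is symplectically self-dual with values in $\Qlbar(-1)$, the induced autoduality on $\Sym^d\Kl_2$ is symplectic for $d$ odd and orthogonal for $d$ even, with values in $\Qlbar(-d)$. Cup product then makes $M_\ell$ orthogonal of determinant given by an explicit quadratic character (odd $d$), respectively symplectic with similitude character $\chi_{\cyc}^{-d-1}$ (even $d$); identifying the determinant with the Jacobi symbol $\left(\tfrac{\cdot}{d!!}\right)$ will require tracking the discriminant of the pairing on $\Sym^d\Kl_2$, which comes from Pochhammer-type products whose constant is $d!!$. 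The dimensions $(d-1)/2$ and $2[(d+2)/4]-2$ follow from the Euler--Poincaré/Ogg--Shafarevich formula, using that $\Kl_2$ is tamely unipotent at $0$ with a single Jordan block and wild at $\infty$ with all breaks equal to $1/2$; the Swan conductor at $\infty$ is computed by decomposing $\Sym^d\Kl_2|_{I_\infty}$ after restriction to the index-two subgroup of $I_\infty$ corresponding to the quadratic ramified cover at $\infty$.

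The geometric origin is then automatic: $\Kl_2$ is a direct summand of the relative cohomology of $\pi:\Gm^2\to\Gm$, hence $\Sym^d\Kl_2$ is a summand of the relative cohomology of the $d$-fold symmetric fiber power; taking a smooth projective compactification $X/\QQ$ of this symmetric power and using proper base change realizes $M_\ell$ as a subquotient of $\upH^{d-1}(X_{\Qbar},\Qlbar)$ with appropriate twist. The main obstacle will be the sharp ramification analysis: identifying the $J_2(-d/2)^{[d/(2p)]}$ summand in \eqref{md2 even ram} for primes $p\leq d/2$. This requires studying the reduction of $\calF_d$ modulo $p$ and understanding how the tame Jordan block of $\Sym^d\Kl_2$ at $0$ fractures in characteristic $p$ when $p$ divides indices in the Jordan decomposition; the $[d/(2p)]$ Steinberg-type blocks correspond exactly to pairs of weights in the Jordan block differing by a multiple of $p$, each producing a single $\GQp$-stable unipotently ramified two-dimensional subquotient, and the symplectic pairing matches them up into copies of $J_2(-d/2)$.
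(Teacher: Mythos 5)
Your first step is the fatal one. There is no lisse sheaf $\Kl_{2}$ on $\GG_{m}$ over $\SZl$ whose fibers recover Deligne's sheaves: Deligne's construction uses the Artin--Schreier sheaf $\AS_{\psi}$, which exists only in characteristic $p$ (it is trivialized on the cover $y^{p}-y=x$); over $\QQ$ the affine line is geometrically simply connected, so the analogous construction degenerates (e.g.\ $\bR^{1}\pi_{!}\Ql$ for the multiplication map $\Gm^{2}\to\Gm$ has rank one, not two). Choosing a primitive $p$-th root of unity for each $p$ gives a character $\psi_{p}$ and hence a sheaf on each special fiber $\GG_{m,\FF_{p}}$, but these do not glue to any sheaf on $\GG_{m,\SZl}$, and no alternative construction is offered. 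Since your $\calF_{d}=j_{!*}\Sym^{d}\Kl_{2}$ over $\PP^{1}_{\SZl}$, the $\GQ$-module $\upH^{1}(\PP^{1}_{\Qbar},\calF_{d})$, the specialization arguments, and the ``reduction of $\calF_{d}$ modulo $p$'' all presuppose this nonexistent object, the proposal does not produce a Galois representation at all. This is precisely the obstruction the paper is built to circumvent: it never globalizes the Kloosterman sheaf, only its moments, via Proposition \ref{p:MV}, which rewrites $M^{V}_{!,\FF_{p}}$ as the generic stalk of Laumon's homogeneous Fourier transform $\HF(\barF_{!}\ICc_{V})$ --- a characteristic-free expression that makes sense over $\SZl$ using the Pappas--Zhu integral models of affine Schubert varieties; the middle quotient $\unM^{V}_{!*}$ is then cut out by a self-pairing built from the group structure of $\GRc$ (Propositions \ref{p:tensor moments}, \ref{p:autodual}), and the middle-extension property (Proposition \ref{p:midext}) replaces your intended ``fiber $=$ inertia invariants'' comparison.

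Two further points would still need repair even if a global sheaf existed. The $[\frac{d}{2p}]$ Steinberg-type factors in \eqref{md2 even ram} do not come from the tame Jordan block at $0$ ``fracturing'' mod $p$; in the paper they are forced by comparing the invariants of wild inertia at $\infty$ of $\Sym^{d}\Kl_{2}$ over $\FF_{p}$ (Lemma \ref{local inv}, which contributes $\Ql(-d/2)^{[\frac{d}{2p}]}$) with the dimension of $M^{d}_{2,!*,\FF_{p}}$ and the duality constraints on the middle extension (\S\ref{sss:ram Kl2}); your proposed mechanism addresses the wrong puncture and would not yield the count. Likewise the identification of $\det\rho_{\ell}$ with $\left(\frac{\cdot}{d!!}\right)$ is not obtained by tracking a discriminant of the pairing: it rests on Fu--Wan's computation of $\det(\Frob_{p},M^{d}_{2,!*,\FF_{p}})$ (Theorem \ref{th:det}), which can only be converted into a statement about a single Dirichlet character once the global representation and its unramifiedness for $p>d$ are already in hand.
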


We mention that when $d=7$, Katz \cite{KE} proposed that there should exist an Galois representation into $\Og_{3}$ as in the above theorem that underlies the modular form predicted by Evans.

\subsection{General case of the main result} In the main body of the paper, we work with more general Kloosterman sheaves than Deligne's. Motivated by the work of Frenkel and Gross \cite{FG}, Heinloth, Ng\^o and the author \cite{HNY} construct a Kloosterman sheaf $\Kl_{\hatG}$ for each almost simple split algebraic group $\hatG$ over $\Ql$. It has the following properties.
\begin{enumerate}
\item $\Kl_{\hatG}$ is a $\hatG^{\Tt}(\Ql(\mu_{p}))$-local system over $\GG_{m,\FF_{p}}=\PP^{1}_{\FF_{p}}-\{0,\infty\}$. Here $\hatG^{\Tt}$ is a slight modification of $\hatG$ (after Deligne) in order to avoid half Tate-twists, see \S\ref{modified dual}. In other words, for every representation $V$ of $\hatG^{\Tt}$ there is a local system $\Kl^{V}_{\hatG}$ over $\GG_{m,\FF_{p}}$ of rank $\dim V$, such that the assignment $V\mapsto \Kl^{V}_{\hatG}$ is compatible with the formation of tensor products.
\item The local geometric monodromy of $\Kl_{\hatG}$ at the puncture $0\in\PP^{1}$ is a regular unipotent element in $\hatG$.
\item The local geometric monodromy of $\Kl_{\hatG}$ at the puncture $\infty\in\PP^{1}$ is a ``simple wild parameter'' \`a la Gross and Reeder \cite{GR}. This roughly means that the Swan conductor at $\infty$ of the adjoint local system $\Kl^{\Ad}_{\hatG}$ takes the smallest possible nonzero value.
\item When $\hatG=\SL_{n}$, $\Kl_{\hatG}$ is the Kloosterman sheaf $\Kl_{n}$ defined by Deligne.
\end{enumerate}

Let $V$ be a representation of $\hatG^{\Tt}$. We define a ``balanced'' version of the $V$-moment for $\Kl_{\hatG}$ to be the Frobenius module
\begin{equation}\label{intro M!*}
M^{V}_{!*,\FF_{p}}=\Im(\cohoc{1}{\GG_{m,\Fpbar},\Kl^{V}_{\hatG}}\to\cohog{1}{\GG_{m,\Fpbar},\Kl^{V}_{\hatG}}).
\end{equation}
When $\hatG=\SL_{n}$, we have defined the $V$-moment of $\Kl_{n}$ in Definition \ref{def:intro moments}, which is equal to the alternating trace of Frobenius on the cohomology $\cohoc{*}{\Gm,\Kl^{V}_{n}}$ according to the Lefschetz trace formula. Therefore $\Tr(\Frob, M^{V}_{!*,\FF_{p}})$ does {\em not} directly give the moment $m^{V}_{n}(p)$. However, the difference between $\Tr(\Frob, M^{V}_{!*,\FF_{p}})$ and the moment $m^{V}_{n}(p)$ is explicitly computable in many examples. It turns out that $M^{V}_{!*,\FF_{p}}$ is a better object to work with.

Our main result is a generalization of Theorem \ref{th:intro} to all moments of all Kloosterman sheaves $\Kl_{\hatG}$.

\begin{theorem}[proved in \S\ref{Pf mot} and \S\ref{Pf ram}]\label{th:main} Let $V$ be the irreducible representation of $\hatG$ with highest weight $\l$ and we form the $V$-moment $M^{V}_{!*,\FF_{p}}$ of $\Kl_{\hatG}$ as in \eqref{intro M!*}\footnote{We will see in \S\ref{irr} that $V$ can be viewed as a $\hatG^{\Tt}$-representation in a natural way, which is pure of weight $w=\jiao{2\rho,\l}$. Therefore $\Kl^{V}_{\hatG}$ and $M^{V}_{!*,\FF_{p}}$ are defined.}. Suppose $V^{\hatG^{\geom}}=0$ (here $\hatG^{\geom}\subset\hatG$ is the Zariski closure of the geometric monodromy of $\Kl_{\hatG}$ for large $p$, to be recalled in Theorem \ref{th:global mono}). Let $w=\jiao{2\rho,\l}$ where $2\rho$ denotes the sum of positive coroots for $\hatG$. Then for each prime $\ell$ there exists a finite-dimensional $\Ql$-vector space $M_{\ell}$ with a continuous $\GQ$-action and a $\GQ$-equivariant $(-1)^{w+1}$-symmetric perfect pairing
\begin{equation*}
M_{\ell}\otimes M_{\ell}\to \Ql(-w-1)
\end{equation*}
so that the Galois representation
\begin{eqnarray*}
&&\rho^{V}_{\ell}:\GQ\to\GO(M_{\ell}) \mbox{ when $w$ is odd}\\
\textup{or} &&\rho^{V}_{\ell}:\GQ\to\GSp(M_{\ell}) \mbox{ when $w$ is even}
\end{eqnarray*}
has the following properties.
\begin{enumerate}
\item\label{motivic} It comes from geometry. More precisely,  there exists a smooth projective algebraic variety $X$ over $\QQ$ (independent of $\ell$), of dimension $w-1$, such that $\rho^{V}_{\ell}$ is a subquotient of the $\GQ$-module $\cohog{w-1}{X,\Ql}(-1)$.
\item\label{ram} Let $p\neq\ell$ be a prime such that $\Kl^{V}_{\hatG}$ does not have geometrically trivial subquotients. Then we have a canonical inclusion of $\Frob_{p}$-modules
\begin{equation}\label{inv coinv}
M^{V}_{!*,\FF_{p}}\incl \Ql(\mu_{p})\otimes\Im(M_{\ell}^{\calI_{p}}\to(M_{\ell})_{\calI_{p}}).
\end{equation}
\end{enumerate}
\end{theorem}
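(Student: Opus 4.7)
The plan is to lift the construction of Kloosterman sheaves from \cite{HNY} to the integral setting over $\Spec\Zl$, and then read off the global Galois representation from the middle--extension cohomology of the resulting one--parameter family over $\Gm_{\Zl}$.

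First I would globalize the construction of $\Kl^V_{\hatG}$. The recipe of \cite{HNY} realizes $\Kl_{\hatG}$ as the Hecke eigensheaf attached to a rigid automorphic datum on $\PP^1$ with prescribed level structures at $0$ and $\infty$; all of the inputs---$\PP^{1}$, the relevant moduli of $G$-bundles with level structures, and the Hecke correspondence over $\Gm$---are defined over $\Spec\ZZ$. The construction therefore lifts to a local system $\Kl^V$ on $\Gm_{\Zl}$ whose restriction to each $\Gm_{\FF_p}$ for $p\neq\ell$ recovers $\Kl^V_{\hatG}$ in characteristic $p$. Passing to the generic fibre I set
\begin{equation*}
M_\ell := \Im\!\left(\cohoc{1}{\Gm_{\Qbar},\Kl^V}\to\cohog{1}{\Gm_{\Qbar},\Kl^V}\right),
\end{equation*}
a continuous finite--dimensional $\GQ$-module; the hypothesis $V^{\hatG^{\geom}}=0$ forces the $H^{0}$ and $H^{2}$ of $\Gm$ with coefficients in $\Kl^V$ to vanish geometrically, so $M_\ell$ carries the full cohomology. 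A choice of self-duality on $V$ (coming from the geometric monodromy preserving an orthogonal or symplectic form of parity matching $w=\jiao{2\rho,\l}$) induces a self--duality on $\Kl^V$, and the Poincar\'e cup product with values in $\cohoc{2}{\Gm,\Ql(w+1)}=\Ql(-w-1)$ restricts to a perfect $\GQ$-equivariant pairing on $M_\ell$ of the required sign and similitude character, promoting $\rho^V_\ell$ into $\GO(M_\ell)$ or $\GSp(M_\ell)$.

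For the motivic assertion (1) the main geometric input is that, via the geometric Satake construction used in \cite{HNY}, $\Kl^V$ is realized up to a cohomological shift and Tate twist as a direct summand of the proper pushforward of a rank--one Artin--Schreier sheaf along a smooth morphism $\pi:Y\to\Gm_{\QQ}$, where $Y$ is a smooth $w$-dimensional variety extracted from the $V$-coloured Hecke correspondence attached to the level structures at $0$ and $\infty$. Choosing a smooth projective compactification $X$ of $Y$ defined over $\QQ$, and combining the decomposition theorem, Poincar\'e duality on $X$ and the standard identification of $\Im(\cohoc{1}{}\to\cohog{1}{})$ with the pure part of the total cohomology, $M_\ell$ is realized as a $\GQ$-stable subquotient of $\cohog{w-1}{X_{\Qbar},\Ql}(-1)$. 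The placement in degree $w-1$ comes from the Leray spectral sequence for $\pi$ together with $\dim\Gm=1$.

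For the ramification statement (2), constructibility of $\Kl^V$ over $\Spec\Zl$ and proper base change applied to a suitable compactified model relate the cohomology of the generic fibre to that of each special fibre at $p\neq\ell$. Under the assumption that $\Kl^V_{\hatG}$ has no geometrically trivial subquotient on $\Gm_{\FF_p}$, a nearby--cycle analysis identifies $M^V_{!*,\FF_p}$ with $\Ql(\mu_p)\otimes\Im(M_\ell^{\calI_p}\to(M_\ell)_{\calI_p})$, yielding the canonical inclusion \eqref{inv coinv}; the factor $\Ql(\mu_p)$ appears because the coefficients of $\Kl^V_{\hatG}$ are $\Ql(\mu_p)$-modules through $\AS_\psi$. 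The hard part will be part (1): producing an \emph{honest} smooth projective $X$ of exactly dimension $w-1$ carrying $M_\ell$ in middle degree, rather than merely a virtual motive, requires a careful choice of compactification of the HNY moduli, precise tracking of shifts and Tate twists through the decomposition theorem, and a semisimplicity argument to cut out the $V$-isotypic summand in an explicitly geometric way.
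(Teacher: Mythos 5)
Your opening step --- ``the construction therefore lifts to a local system $\Kl^{V}$ on $\GG_{m,\Zl}$ whose restriction to each $\GG_{m,\FF_{p}}$ recovers $\Kl^{V}_{\hatG}$'' --- is not available, and this is the central difficulty the whole argument has to get around. The Kloosterman sheaf is built from the Artin--Schreier sheaf $\AS_{\psi}$ attached to a nontrivial additive character of $\FF_{p}$, and its defining feature is wild ramification at $\infty$ (the simple wild parameter, Swan conductor nonzero). Neither of these exists in characteristic zero: there is no Artin--Schreier sheaf on $\AA^{1}_{\QQ}$, the geometric fundamental group of $\GG_{m,\Qbar}$ is tame, so no $\ell$-adic local system on $\GG_{m,\QQ}$ can specialize to $\Kl^{V}_{\hatG}$ at every $p$. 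Consequently your definition of $M_{\ell}$ as $\Im(\cohoc{1}{\GG_{m,\Qbar},\Kl^{V}}\to\cohog{1}{\GG_{m,\Qbar},\Kl^{V}})$ has no meaning, and the same problem infects your sketch of part (1), which again invokes ``the proper pushforward of a rank-one Artin--Schreier sheaf'' over $\GG_{m,\QQ}$. What does spread out over $\SZl$ is not the Kloosterman sheaf but its \emph{moment}: the key point (Proposition \ref{p:MV}) is that $M^{V}_{!,\FF_{p}}$ can be rewritten as the generic stalk of Laumon's homogeneous Fourier transform of $\bunF_{!}\unICc_{V}$, where only the IC sheaf on (an integral model of) the Beilinson--Drinfeld Grassmannian and the $\Gm$-equivariant map $\bunF$ to $[\AA^{1}/\Gm]$ enter --- objects defined over $\ZZ$ (via Pappas--Zhu for the integral IC sheaves). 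The Galois module $M_{\ell}$ is then extracted from this characteristic-free object, not from cohomology of a nonexistent $\Kl^{V}$ over $\QQ$; the pairing must likewise be constructed directly on $\unM^{V}_{!}$ (via the convolution structure and the involution $(\sigma,\ep)$ on $\un{\frG}$), since the cup-product argument you propose again presupposes the sheaf over $\QQ$.

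There are further points where the sketch would not close even granting a substitute for $M_{\ell}$. For part (1), the Leray argument along a smooth $w$-dimensional $Y\to\GG_{m,\QQ}$ does not by itself place the pure weight-$(w{+}1)$ piece in $\cohog{w-1}{X,\Ql}(-1)$ with $\dim X=w-1$: in the paper one first lands in $\Gr^{W}_{w+1}\cohoc{w+1}{Z,\Ql}$ for a $w$-dimensional fiber $Z=(F\circ\nu^{\c})^{-1}(0)$ of a Bott--Samelson resolution, and only the $\Gm^{(\rho^{\vee},h)}$-action, writing $Z=Z_{1}\twtimes{\mu_{h}}\Gm$, together with a K\"unneth and weight argument, drops the dimension to $w-1$ and produces the Tate twist $(-1)$; this mechanism is absent from your outline. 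For part (2), note the claim is only an inclusion \eqref{inv coinv}, not an identification; the paper obtains it by showing $\unM^{V}_{!*}|_{S'}$ is a middle extension sheaf (so its stalk at $p$ is $M_{\ell}^{\calI_{p}}$) and then comparing the integral pairing with the perfect pairing on $M^{V}_{!*,\FF_{p}}$, an argument about radicals of pairings rather than a nearby-cycles identification.
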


The proof of the above theorem again has two key ingredients. 

First is the interpretation of the moments $M^{V}_{!*, \FF_{p}}$ in terms of {\em homogeneous Fourier transform} (Proposition \ref{p:MV}). Unlike Fourier-Deligne transform which relies on working with a single characteristic $p$, homogeneous Fourier transform (introduced by Laumon) is a ``characteristic-free'' version of Fourier-Deligne transform, and it serves as the bridge between moments in characteristic $p$ and motives over $\QQ$. We will give a quick review of Laumon's homogeneous Fourier transform in Appendix A. The algebraic variety $X$ in Theorem \ref{th:main}\eqref{motivic}  is closely related to affine Schubert varieties in the affine Grassmannian. 

Another ingredient is that in order to study the ramification behavior of the Galois representations $\rho^{V}_{\ell}$ we need to work with $\Zp$-models of various geometric objects such as affine Grassmannian and perverse sheaves on them. Here the results of G. Pappas and X. Zhu \cite{PZ} play a crucial role.

\subsection{Towards more conjectures of Evans type}
In view of Theorem \ref{th:main} and the Langlands correspondence, Evans's conjectures in \S\ref{conj} should be the first few examples of a large list of modularity problems relating moments of generalized Kloosterman sums and automorphic forms. For example, we may start with a simply-connected almost simple group $\hatG$ and consider $V$-moments of the Kloosterman sheaf $\Kl_{\hatG}$, where $V$ is the irreducible representation of $\hatG$ of highest weight $\l$. We then compute the dimension $d_{!*}^{V}$ of $M^{V}_{!*,\FF_{p}}$ for large $p$.

\begin{itemize}
\item If $d_{!*}^{V}=2$, and $\jiao{2\rho,\l}$ is odd, then we get Galois representations into $\GO_{2}$. The $V$-moment of $\Kl_{\hatG}$ should be expressible in terms of Fourier coefficients of a CM modular form. 

Examples: $(\hatG,V)=(\SL_{2}, \Sym^{5})$, $(\SL_{4}$ or $\Sp_{4}, \Sym^{3})$.

\item If $d_{!*}^{V}=2$, and $\jiao{2\rho,\l}$ is even, then we get 2-dimensional Galois representations, and the  $V$-moment of $\Kl_{\hatG}$ should be expressible in terms of Fourier coefficients of a modular form.

Examples: 
\begin{eqnarray*}
(\hatG,V)&=& (\SL_{2}, \Sym^{6}), (\SL_{2}, \Sym^{8}), \\
&& (\SL_{3}, \Sym^{4}), (\SL_{3}, V_{2\omega_{1}+\omega_{2}}), (\SL_{3}, V_{2\omega_{1}+2\omega_{2}}), \\
&& (\SL_{4}\textup{ or }\Sp_{4}, \Sym^{4}) \textup{ and its dual}, (\Sp_{4}, V_{3\omega_{2}}). 
\end{eqnarray*}
Here $\{\omega_{1},\omega_{2}\}$ denote the fundamental weights of $\SL_{3}$ or $\Sp_{4}$, with $\omega_{1}$ corresponding to the standard representation.

\item If $d_{!*}^{V}=3$ (in this case $\jiao{2\rho,\l}$ is necessarily odd),  then we get Galois representations into $\Og_{3}\cong\PGL_{2}\times\{\pm1\}$. The $\PGL_{2}$-part of these representations should be liftable to $\GL_{2}$, and we should again expect that the $V$-moment of $\Kl_{\hatG}$ be expressible in terms of a quadratic Dirichlet character and the Fourier coefficients of a modular form. 

Example: $(\hatG, V)=(\SL_{2}, \Sym^{7})$.

\item If $d_{!*}^{V}=4$ and $\jiao{2\rho,\l}$ is odd, then we get Galois representations into $\Og_{4}$.  If, moreover, the determinant character corresponds to a {\em real} quadratic field $F/\QQ$ and if $\Gal(\Qbar/F)\to\SO_{4}$ lifts to $\GSpin_{4}\incl\GL_{2}\times\GL_{2}$, then the $V$-moment of $\Kl_{\hatG}$ should be expressible in terms of Fourier coefficients of two Hilbert modular forms for $\GL_{2}$ over $F$.

Example: $(\hatG, V)=(\SL_{2}, \Sym^{9})$.

\item If $d_{!*}^{V}=4$ and $\jiao{2\rho,\l}$ is even, then we get Galois representations into $\GSp_{4}$.  We should then expect that the $V$-moment of $\Kl_{\hatG}$ be expressible in terms of a Siegel modular form for $\GSp_{4}$ over $\QQ$.

Examples: 
\begin{eqnarray*}
(\hatG,V)&=& (\SL_{2}, \Sym^{10}), (\SL_{2}, \Sym^{12})\\
&& (\SL_{3}, \Sym^{5}), (\SL_{3}, V_{3\omega_{1}+\omega_{2}}), (\SL_{3}, \Sym^{6}) \textup{ and their duals}, \\
&& (G_{2}, V_{\omega_{1}+\omega_{2}}), (G_{2}, V_{2\omega_{2}}), (G_{2}, V_{3\omega_{1}}).
\end{eqnarray*}
Here $\omega_{1}$ (resp. $\omega_{2}$) are the dominant short (resp. long) root of $G_{2}$.
\end{itemize}
And the list continues. We plan to investigate some of these examples in details in the future.

\subsection{Convention}
Throughout the paper, $G$ will be a connected, split, simple (hence {\em adjoint}) group over a field $k$.  We fix a maximal torus $T\subset G$ and a Borel subgroup $B$ containing $T$. This way we get a based root system and the Weyl group $W$.

We also fix a prime $\ell$ which is different from $\textup{char}(k)$. All sheaves will be $\Ql$-sheaves in the \'etale topology of schemes or stacks. {\bf Caution:} all sheaf-theoretic functors are derived functors unless otherwise stated.

When $k$ is a finite field, $\Frob_{k}$ will denote the {\em geometric} Frobenius element in $\Gk$. Similarly, for a prime $p$, $\Frob_{p}$ denotes the {\em geometric} Frobenius element in $\Gal(\Fpbar/\FF_{p})$.


\noindent\textbf{Acknowledgement} The author would like to thank Ken Ono for suggesting the problem. He also appreciates discussions with Brian Conrad, Lei Fu, Dick Gross, Ruochuan Liu and Christelle Vincent.


\section{Geometric setup for the moments}
In this section, we give geometric interpretation of moments of Kloosterman sums.  We will work with the more general Kloosterman sheaves constructed in our earlier work \cite{HNY}. After some preparation on the geometric Satake equivalence in \S\ref{Sat}, we review the construction of generalized Kloosterman sheaves and their monodromy properties in \S\ref{ss:Kl}-\S\ref{ss:classical}. Finally in \S\ref{ss:moments}-\S\ref{ss:autoduality} we define moments of Kloosterman sheaves and study them via homogeneous Fourier transform.

\subsection{Deligne's modification of the Langlands dual group and the Satake-Tate category}\label{Sat}
In this subsection, $k$ is either a finite field or a number field. Deligne has introduced a modification of the Langlands dual group in the setup of the global Langlands correspondence, in order to avoid making half Tate twists. We reformulate his construction from the viewpoint of the geometric Satake equivalence. For a quick algebraic account, see \cite[\S2]{FG}.

\subsubsection{Tate sheaves} The Tate sheaf $\Ql(1)$ is defined as an $\Ql$-sheaf on $\Spec k$, using the inverse system $\varprojlim_{n}\mu_{\ell^{n}}$ of $\ell$-power roots of unity in $\kbar$. Let $\delta^{\Tt}$ be the full subcategory of $\Ql$-sheaves on $\Spec k$ consisting of finite direct sums of $\Ql(n)$ for $n\in\ZZ$. Then $\delta^{\Tt}$ admits a structure of a tensor category using tensor product of sheaves. Fixing an algebraic closure $\kbar$ of $k$, the functor of taking the $\kbar$-stalk of a sheaf gives a fiber functor $\omega:\delta^{\Tt}\to\Vect_{\Ql}$, and realizes  a tensor equivalence
\begin{equation*}
\delta^{\Tt}\cong\Rep(\Gm^{\Tt}).
\end{equation*}
Here $\Gm^{\Tt}$ is simply the multiplicative group $\Gm$. Under this equivalence, the tautological 1-dimensional representation of $\Gm^{\Tt}$ corresponds to $\Ql(1)$.

\subsubsection{The geometric Satake equivalence} Let $LG$ be the loop group of $G$: this is an ind-scheme representing the functor $R\mapsto G(R[[t]])$. Let $L^{+}G$ the positive loops of $G$: this is a scheme of infinite type representing the functor $R\mapsto G(R[[t]])$. The fppf quotient $\Gr=LG/L^{+}G$ is called the {\em affine Grassmannian} of $G$.  Then $L^{+}G$ acts on $\Gr$ via left translation. The $L^{+}G$-orbits on $\Gr$ are indexed by  dominant coweights $\l\in\xcoch(T)^+$. The orbit containing the element $t^{\l}\in T(k((t)))$ is denoted by $\Gr_{\l}$ and its closure is denoted $\Gr_{\leq\l}$. We have $\dim\Gr_{\l}=\jiao{2\rho,\l}$, where $2\rho$ denotes the sum of positive roots in $G$. The reduced scheme structure on $\Gr_{\leq\l}$ is a projective variety called the affine Schubert variety attached to $\l$. We denote the intersection complex of $\Gr_{\leq\l}$ by $\IC_{\l}$: this is the middle extension of the shifted constant sheaf $\Ql[\jiao{2\rho,\l}]$ on $\Gr_{\l}$.

The {\em Satake category} $\Sat=\Perv_{L^{+}G}(\Gr)$ is the category of $L^{+}G$-equivariant perverse \'etale sheaves on $\Gr$ (over $k$) whose support is of finite type. Similarly, we define $\Sat^\geom$ by considering the base change of the situation to $\kbar$. Then we have a pullback functor $\Sat\to\Sat^{\geom}$.

In \cite{Lu}, \cite{Ginz} and \cite{MV}, it was shown that $\Sat^\geom$ carries a natural tensor structure (which is also defined for $\Sat$), such that the global cohomology functor $h=H^*(\Gr,-):\Sat^\geom\to\Vect$ is a fiber functor. It is also shown that the Tannakian group of the tensor category $\Sat^{\geom}$ is a connected reductive split group over $\Ql$ whose root system is dual to  that of $G$. We denote this group by $\hatG$. The Tannakian formalism gives the {\em geometric Satake equivalence} of tensor categories
\begin{equation*}
\Sat^\geom\cong\Rep(\hatG,\Ql).
\end{equation*}

\subsubsection{The Satake-Tate category}
Let  $\Sat^{\Tt}$ be the full subcategory of $\Sat$ whose objects are finite direct sums of the form $\oplus_{\l_{i},n_{i}}\IC_{\l_{i}}(n_{i})$, where $\l_{i}\in\xcoch(T)^{+}$ and $n_{i}\in\ZZ$. By \cite[\S3.5]{AB}, the category $\Sat^{\Tt}$ is closed under the tensor product in $\Sat$, and $\Sat^{\Tt}$ is itself a rigid tensor category. Alternatively, one can define $\Sat^{\Tt}$ as the smallest tensor subcategory of $\Sat$ containing the objects $\IC_{\lambda}$ which is also stable under Tate twists. We have an embedding of tensor categories $\delta^{\Tt}\subset\Sat^{\Tt}$ sending $\Ql(n)$ to $\IC_{0}(n)$, here $\IC_{0}$ is the skyscraper sheaf supported at the singleton orbit $\Gr_{0}$.

We have tensor functors
\begin{equation}\label{Sat Tt}
\xymatrix{\Sat^{w=0}\ar[dr] & & \delta^{\Tt}\ar[dl] \\
& \Sat^{\Tt}\ar[dr]^{\upH^{\Wt}}\ar[dl]\\
\Sat^{\geom} & & \Vect^{\gr}}
\end{equation}
We explain the notation. The functor $\upH^{\Wt}$ is the global section functor $\upH^{*}$ with the grading by weights. If we write an object $\calK\in\Sat^{\Tt}$ as $\calK=\oplus_{i}\calK_{i}$, where $\calK_{i}$ is pure of weight $i$, then $\upH^{\Wt}(\calK)$ is the graded vector space $\oplus_{i}h(\calK_{i})$. The category $\Sat^{w=0}$ is the full subcategory of $\Sat^{\Tt}$ consisting of perverse sheaves which are pure of weight zero.

\subsubsection{Modified Langlands dual group}\label{modified dual}
The global section functor $h^{\Tt}=\upH^{*}(\Gr,-):\Sat^{\Tt}\to \Vect_{\Ql}$ is a fiber functor, which gives the Tannakian group $\hatG^{\Tt}:=\Aut^{\otimes}(h^{\Tt})$ so that we have a tensor equivalence
\begin{equation*}
\Sat^{\Tt}\cong\Rep(\hatG^{\Tt},\Ql).
\end{equation*}
For $V\in\Rep(\hatG^{\Tt})$, we shall denote the corresponding object in $\Sat^{\Tt}$ by $\IC_{V}$.

By Tannakian duality,  the diagram \eqref{Sat Tt} gives a diagram of reductive groups over $\Ql$
\begin{equation*}
\xymatrix{\hatG^{\ev} & & \Gm^{\Tt} \\
& \hatG^{\Tt}\ar[ur]^{\Tt}\ar[ul]\\
\hatG\ar[ur]^{\iota}\ar[uu]^{\pi} & & \Gm^{\Wt}\ar[ul]^{\Wt}\ar[uu]^{[-2]}}
\end{equation*}
where the two diagonal sequences are short exact. The reductive group $\hatG^{\ev}$ is defined as the Tannakian dual of $\Sat^{w=0}$, and $\Gm^{\Wt}$ is a one-dimensional torus.

\subsubsection{Irreducible objects}\label{irr} Let $\l$ be a dominant coweight of $T$,  then   $\IC_{\l}$ is a complex pure of weight $\jiao{2\rho,\l}$. The corresponding object in $\hatG^{\Tt}$ is an irreducible representation $V^{\Tt}_{\l}$ of $\hatG^{\Tt}$. What is this representation? Its restriction to $\hatG$ is the irreducible representation of highest weight $\l$. Since $\IC_{\l}$ is pure of weight $\jiao{2\rho,\l}$, $\Gm^{\Wt}$ acts on $V^{\Tt}_{\l}$ by scalar multiplication via the $\jiao{2\rho,\l}\nth$ power of the tautological character. If we look at the action of $\hatG\times\Gm^{\Wt}$ on $V^{\Tt}_{\l}$, then the element $((-1)^{2\rho},-1)\in\hatG\times\Gm^{\Wt}$ acts trivially. Here $2\rho$ is viewed as a coweight of $\hatG$. Note that $(-1)^{\2rho}$ always lies in the center of $\hatG$.

We use $\Ql(n)$ to denote the one-dimensional representation of $\hatG^{\Tt}$ given by the $n\nth$ power of the character $\Tt:\hatG^{\Tt}\to\Gm^{\Tt}$. We also use $V(n)$ to mean $V\otimes\Ql(n)$, as we do for sheaves.

Since every irreducible object in $\Sat^{\Tt}$ is of the form $\IC_{\l}(n)$ for some  dominant coweight $\l$ and $n\in\ZZ$, every irreducible representation of $\hatG^{\Tt}$ is of the form $V^{\Tt}_{\l}(n)$. The action of $\hatG$ on $V^{\Tt}_{\l}(n)$ is still the same as its action on $V_{\l}$, and the action of $\Gm^{\Wt}$ on it is through the $(\jiao{2\rho,\l}-2n)\nth$ power of the tautological character.

\begin{lemma} The map $\Wt:\Gm^{\Wt}\to \hatG^{\Tt}$ is central. The homomorphism $\iota\times\Wt:\hatG\times\Gm^{\Wt}\to \hatG^{\Tt}$ is an isogeny whose kernel is isomorphic to $\mu_{2}$ generated by $((-1)^{2\rho}, -1)$ (which is central). 
\end{lemma}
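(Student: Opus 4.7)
The plan is to exploit Tannakian formalism throughout, translating statements about the morphism $\iota\times\Wt$ into statements about the restriction functor on representations, and then testing on irreducibles. The key observation underlying everything is that the irreducible objects of $\Sat^{\Tt}$ have been identified in \S\ref{irr}: every irreducible is of the form $V^{\Tt}_{\l}(n)$, on which $\hatG$ acts as the highest-weight representation $V_{\l}$ and $\Gm^{\Wt}$ acts by the character $z\mapsto z^{\jiao{2\rho,\l}-2n}$.

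First I would establish centrality of $\Wt$. By the structural description of $V^{\Tt}_{\l}(n)$, the subgroup $\Wt(\Gm^{\Wt})$ acts on every irreducible object of $\Sat^{\Tt}$ by a scalar. Hence $\Wt(z)\in\Aut^{\otimes}(h^{\Tt})$ commutes with all morphisms of representations, which forces $\Wt(\Gm^{\Wt})$ to lie in the center of $\hatG^{\Tt}$.

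Next I would prove that $\iota\times\Wt$ is surjective. By Tannakian duality this amounts to showing that the restriction functor $\Rep(\hatG^{\Tt})\to\Rep(\hatG\times\Gm^{\Wt})$ is fully faithful and closed under subquotients. Because each irreducible $V^{\Tt}_{\l}(n)$ restricts to the external product of the irreducible $V_{\l}$ of $\hatG$ with a one-dimensional character of $\Gm^{\Wt}$, the restriction of an irreducible remains irreducible, which yields exactly these two properties and hence surjectivity.

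Finally I would compute the kernel by testing which pairs $(g,z)\in\hatG\times\Gm^{\Wt}$ act trivially on every $V^{\Tt}_{\l}(n)$. Fixing $\l$ and varying $n$ forces $z^{2}=1$, and then the cases $z=1$ and $z=-1$ respectively yield $g=1$ and the condition that $g$ acts as the scalar $(-1)^{\jiao{2\rho,\l}}$ on every $V_{\l}$. The latter condition characterizes the element $(-1)^{2\rho}\in\hatT$, since on each weight space of $V_{\l}$ of weight $\mu=\l-\sum n_{i}\alpha_{i}$ (with $\alpha_{i}$ the simple roots of $\hatG$) the element $(-1)^{2\rho}$ acts by $(-1)^{\jiao{\mu,2\rho\rangle}}=(-1)^{\jiao{\l,2\rho\rangle}}$ because $\jiao{\alpha_{i},2\rho\rangle}=2$; this simultaneously shows $(-1)^{2\rho}$ is central in $\hatG$. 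The main subtlety is just the careful bookkeeping of the identification $\jiao{2\rho,\l}\equiv\jiao{\l,2\rho\rangle}\pmod{2}$ on all weights of $V_{\l}$, ensuring the scalar action is well-defined; with that in hand the kernel is precisely $\mu_{2}=\{(1,1),\,((-1)^{2\rho},-1)\}$, and the isogeny statement follows.
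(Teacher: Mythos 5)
Your proof is correct, but it takes a somewhat different route from the paper's. The paper opens with the same centrality argument (every irreducible of $\Sat^{\Tt}$ is pure, so $\Gm^{\Wt}$ acts by scalars and is central), but for the isogeny it does not invoke a surjectivity criterion: using the description of irreducibles in \S\ref{irr}, it observes that the action of $\hatG\times\Gm^{\Wt}$ on every irreducible representation of $\hatG^{\Tt}$ factors through $\hatG_{1}:=(\hatG\times\Gm^{\Wt})/\mu_{2}$, so $\iota\times\Wt$ induces $\phi:\hatG_{1}\to\hatG^{\Tt}$, and then compares the two short exact sequences $1\to\hatG\to\hatG_{1}\to\Gm^{\Wt}/\mu_{2}\to1$ and $1\to\hatG\to\hatG^{\Tt}\xrightarrow{\Tt}\Gm^{\Tt}\to1$ (the latter already recorded as exact in \S\ref{modified dual}), the right-hand map being the isomorphism induced by $[-2]$; a five-lemma chase gives that $\phi$ is an isomorphism, which is exactly the statement. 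You replace this by the Deligne--Milne criterion for faithful flatness of $\iota\times\Wt$ (full faithfulness of the restriction functor plus stability of its essential image under subobjects, checked on irreducibles using semisimplicity) together with a direct computation of the kernel on points, which is legitimate here since in characteristic zero the kernel is smooth and hence determined by its points. What the paper's route buys is brevity, given that the exactness of $1\to\hatG\to\hatG^{\Tt}\to\Gm^{\Tt}\to1$ has already been set up Tannakianly; what yours buys is independence from that exact sequence and from the identification $\Gm^{\Wt}/\mu_{2}\isom\Gm^{\Tt}$, with everything read off directly from the list of irreducibles $V^{\Tt}_{\l}(n)$, and as a by-product you re-derive (rather than assume) that the kernel is no larger than the exhibited $\mu_{2}$.

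One small point you should make explicit: irreducibility of the restrictions alone does not yield full faithfulness; you also need that non-isomorphic irreducibles of $\hatG^{\Tt}$ restrict to non-isomorphic representations of $\hatG\times\Gm^{\Wt}$. This is immediate from your own description, since the restriction of $V^{\Tt}_{\l}(n)$ recovers $\l$ as its $\hatG$-constituent and then $n$ from the exponent $\jiao{2\rho,\l}-2n$ of the $\Gm^{\Wt}$-character, but as written you attribute both halves of the criterion solely to irreducibility of restrictions.
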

\begin{proof}
Every irreducible object in $\Sat^{\Tt}$ is pure of some weight, hence $\Gm^{\Wt}$ acts on the corresponding representation by scalars. Therefore $\Gm^{\Wt}$ is central.

Let $\hatG_{1}:=\hatG\times\Gm^{\Wt}/\mu_{2}$ where the $\mu_{2}$ is generated by $((-1)^{2\rho}, -1)$. 
We have seen from the discussion in \S\ref{irr} that the action of $\hatG\times\Gm^{\Wt}$ on every irreducible representations of $\hatG^{\Tt}$ factors through $\hatG_{1}$. Therefore $\iota\times\Wt$ factors through a homomorphism $\phi:\hatG_{1}\to\hatG^{\Tt}$. We then have a commutative diagram
\begin{equation*}
\xymatrix{1\ar[r] & \hatG\ar@{=}[d]\ar[r] & \hatG_{1}\ar[d]^{\phi}\ar[r] & \Gm^{\Wt}/\mu_{2}\ar[d]^{\wr}\ar[r] & 1\\
1\ar[r] & \hatG\ar[r] & \hatG^{\Tt}\ar[r]^{\Tt} & \Gm^{\Tt}\ar[r] & 1}
\end{equation*}
where the right-most map is the isomorphism induced from $[-2]:\Gm^{\Wt}\to\Gm^{\Tt}$. Both the upper and lower sequences are exact, we conclude that $\phi$ is an isomorphism.
\end{proof}

\begin{defn}\label{def:pure} An object  $V\in\Rep(\hatG^{\Tt})$ is {\em pure of weight $w$}, if the action of the weight torus $\Gm^{\Wt}$ on $V$ (via $\Gm^{\Wt}\xrightarrow{\Wt}\hatG^{\Tt}\to\GL(V)$) is by the $w\nth$ power (as scalar multiplication).
\end{defn}
If $V\in \Rep(\hatG^{\Tt})$ is pure of weight $w$, the corresponding perverse sheaf $\IC_{V}$ is pure of the same weight $w$.

The following two examples will be important for our study of classical Kloosterman sums.
\begin{exam}\label{ex:n odd} Let $n\geq2$ be an integer and $G=\PGL_{n}$. We have $\hatG=\SL_{n}$. In this case, $(-1)^{2\rho}=(-1)^{n-1}I_{n}\in\hatG$. We have
\begin{equation*}
\hatG^{\Tt}\cong\begin{cases}\SL_{n}\times (\Gm^{\Wt}/\mu_{2}) & \mbox{ if $n$ is odd;}\\
(\SL_{n}\times\Gm^{\Wt})/\mu^{\Delta}_{2} & \mbox{ if $n$ is even.}\end{cases}
\end{equation*}
We define a ``standard representation'' of dimension $n$ and weight $n-1$:
\begin{eqnarray*}
\St:\hatG^{\Tt}&\to& \GL_{n}\\
(g,t)\in\SL_{n}\times \Gm^{\Wt} &\mapsto& t^{n-1}g\in\GL_{n}.
\end{eqnarray*}
The formula we gave above is a homomorphism from  $\SL_{n}\times \Gm^{\Wt}$ to $\GL_{n}$, and it is easy to check that it factors through $\hatG^{\Tt}$.
\end{exam}

\begin{exam}\label{ex:n even} Let $n\geq2$ be an even integer and $G=\SO_{n+1}$. We have $\hatG=\Sp_{n}$. In this case, $(-1)^{2\rho}=-I_{n}\in\hatG$ and $\hatG^{\Tt}\cong(\Sp_{n}\times \Gm^{\Wt})/\mu_{2}^{\Delta}\cong\GSp_{n}$. We also define a ``standard representation'' of dimension $n$ and weight $n-1$
\begin{eqnarray*}
\St:\hatG^{\Tt}&\to& \GSp_{n}\subset\GL_{n}\\
(g,t)\in\Sp_{n}\times \Gm^{\Wt} &\mapsto& t^{n-1}g\in\GSp_{n}.
\end{eqnarray*}
\end{exam}

\subsection{Construction of Kloosterman sheaves}\label{ss:Kl} In this subsection, the base field $k$ is a finite field of characteristic $p>0$. We fix a nontrivial additive character $\psi:\FF_{p}\to\Ql(\mu_{p})^{\times}$, which gives rise to an Artin-Schreier sheaf $\AS_{\psi}$ on $\AA^{1}_{k}$.

We recall the construction of the $\hatG$-Kloosterman local system on $\Gm$ in \cite{HNY}. Here we will actually give an enhancement of it to a $\hatG^{\Tt}$-local system.  

Let $\pi:\GR\to\Gm$ be the Beilinson-Drinfeld affine Grassmannian over the curve $\Gm$ (\cite[Remark 2.8(1)]{HNY}). For $V\in\Rep(\hatG^{\Tt})$, we denote by $\IC_{V,\GR}$ the ``spread-out'' of $\IC_{V}$ on $\GR$. In our case, there is a one-dimensional torus $\Grot$ acting on $\Gm$ by multiplication. This action induces an action of $\Grot$ on $\GR$ such that $\pi$ is $\Grot$-equivariant. Using the $\Grot$-action, the fibration $\pi$ can be trivialized, and we may identify $\GR$ with $\Gr\times\Gm$. Let $\pi_{\Gr}:\GR\to\Gr$ be the projection. Then $\IC_{V,\GR}=\pi^{*}_{\Gr}\IC_{V}$. Note that $\IC_{V,\GR}[1]$ is a perverse sheaf.  

In \cite[\S5.2]{HNY} we defined an open sub ind-scheme $\GRc\subset\GR$ with a morphism $f:\GRc\to\Ga^{r+1}$, where $r$ is the semisimple rank of $G$.
\begin{equation}\label{GR}
\xymatrix{& \GRc\ar[dl]_{\pi^{\circ}}\ar[dr]_{f}\ar[drr]^{F}\\ \Gm & & \Ga^{r+1} \ar[r]^{\sigma}& \AA^1} 
\end{equation}
Let $\IC_{V,\GRc}$ be the restriction of $\IC_{V,\GR}$ to $\GRc$.
 
One of the main results of \cite{HNY} can be stated as follows.
\begin{theorem}[Heinloth-Ng\^o-Yun {\cite[Theorem 1(1)]{HNY}}]\label{th:geom Kl} 
For $V\in\Rep(\hatG^{\Tt})$, we have the following isomorphism in $D^b(\Gm,\Ql(\mu_p))$
\begin{equation}\label{clean}
\pic_{!}(\IC_{V,\GRc}\otimes F^*\AS_\psi)\cong\pic_{*}(\IC_{V,\GRc}\otimes F^*\AS_\psi).
\end{equation}
given by the natural transformation $\pic_{!}\to\pic_{*}$. (The tensor product above is taken over $\Ql$). Let $\Kl^{V}_{\hatG}$ be any one of the two complexes above. Then $\Kl^{V}_{\hatG}$ is a local system on $\Gm$. The assignment $V\mapsto\Kl^{V}_{\hatG}$ gives a tensor functor
\begin{equation*}
\Kl_{\hatG}: \Rep_{\Ql(\mu_{p})}(\hatG^{\Tt})\to\Loc_{\Ql(\mu_{p})}(\Gm).
\end{equation*}
In other words, we get a $\hatG^{\Tt}(\Ql(\mu_{p}))$-local system $\Kl_{\hatG}$ on $\Gm$.
\end{theorem}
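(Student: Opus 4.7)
The proof naturally breaks into three parts: (i) the cleanness isomorphism $\pic_!(\IC_{V,\GRc}\otimes F^*\AS_\psi) \isom \pic_*(\IC_{V,\GRc}\otimes F^*\AS_\psi)$ given by the natural transformation; (ii) showing the resulting complex is (a shift of) a local system on $\Gm$; and (iii) upgrading the assignment $V \mapsto \Kl^V_{\hatG}$ to a tensor functor.

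For (i), the plan is to work locally near the boundary $\GR \setminus \GRc$. One expects that $\GRc$ is carved out by a generic-position condition, so the boundary stratum has extra $\Ga$-directions along which the function $F$ varies linearly and nontrivially in its Artin-Schreier character. Along such directions $\AS_\psi$ is acyclic for both lower-shriek and lower-star pushforward, and this formally forces cleanness: the cone of $\pic_! \to \pic_*$ is computed from the boundary contribution, and vanishes fiberwise because $F$ restricted to each boundary stratum produces nontrivial Artin-Schreier characters on affine lines. Making this precise requires a careful stratification of $\GR \setminus \GRc$ together with a local model showing how $F$ extends to the compactification of $\GRc$ inside $\GR$. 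This is the main obstacle of the proof, because the geometry of $\GR$ along the bad locus is intricate and must be analyzed uniformly in $V$.

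For (ii), I would exploit the rotation torus $\Grot$ acting on $\Gm$. This action lifts canonically to $\GR$ and $\GRc$, and together with a suitable weighted action on $\Ga^{r+1}$ makes $F:\GRc \to \AA^1$ equivariant up to a scaling of the target. Consequently $F^*\AS_\psi$ acquires a $\Grot$-equivariant structure after a Tate twist, and since $\IC_{V,\GRc}$ is already $\Grot$-equivariant (using the $\Grot$-trivialization $\GR \cong \Gr \times \Gm$), the complex $\Kl^V_{\hatG}$ is $\Grot$-equivariant on $\Gm$. Any $\Grot$-equivariant complex on $\Gm$ is automatically a shifted local system, since $\Grot$ acts transitively. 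To compute the stalk, I would invoke proper base change at $1 \in \Gm$: the fiber of $\pic$ there is $\Gr$, and using the cleanness from (i) together with the fact that $F$ is sufficiently trivial on the relevant strata, the stalk reduces to $h^\Tt(\IC_V)$, confirming rank $\dim V$.

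For (iii), the tensor structure comes from the fusion formalism of Beilinson-Drinfeld. I would consider the two-point Beilinson-Drinfeld Grassmannian $\GR^{(2)}$ over $\Gm \times \Gm$ together with a function $F^{(2)}$ extending $F$ on each component. The restriction of $\GR^{(2)}$ to the diagonal is the convolution Grassmannian realizing the tensor product $\IC_{V_1} \ast \IC_{V_2} \cong \IC_{V_1 \otimes V_2}$ in the Satake category, while the restriction to the complement of the diagonal is $\GR \times \GR$. Performing the constructions of (i) and (ii) in this two-parameter family and specializing gives on the complement of the diagonal $\Kl^{V_1}_{\hatG} \boxtimes \Kl^{V_2}_{\hatG}$, and on the diagonal $\Kl^{V_1 \otimes V_2}_{\hatG}$; nearby cycles across the diagonal produce the canonical isomorphism $\Kl^{V_1}_{\hatG} \otimes \Kl^{V_2}_{\hatG} \cong \Kl^{V_1 \otimes V_2}_{\hatG}$ and the associativity/commutativity constraints. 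Once cleanness from step (i) is in hand, steps (ii) and (iii) follow rather formally from equivariance and fusion.
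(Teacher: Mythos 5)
Your sketch is not measured against a proof in this paper at all: the paper offers none. Theorem \ref{th:geom Kl} is quoted from \cite[Theorem 1(1)]{HNY}, and the only content the paper adds is the observation that the argument there goes through verbatim with $\hatG$ replaced by $\hatG^{\Tt}$ (the Satake--Tate category only introduces Tate twists). So the real comparison is with the proof in \cite{HNY}, which you are trying to reconstruct, and there your proposal has genuine gaps. The central one is that the cleanness in your step (i) \emph{is} the hard content of the theorem, and you leave it as a plan; moreover the plan is framed incorrectly. The function $F$ does not extend to $\GR_{\leq\l}\setminus\GRc$, so there is no ``restriction of $F$ to boundary strata'' whose Artin--Schreier characters one can inspect. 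The mechanism actually used in \cite{HNY} is to prove that the stalks of the $*$-extension vanish along the boundary by exhibiting, at each boundary point, an action of an additive group coming from the automorphism groups of the corresponding point of $\Bun_{G(0,2)}$ (i.e.\ from $\bI_{\infty}(1)/\bI_{\infty}(2)$) under which the sheaf is equivariant against a nontrivial additive character; this argument lives on the moduli stack, not on a stratification of $\GR\setminus\GRc$, and it is where essentially all the work lies.

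Step (ii) is wrong as stated. The sheaf $F^{*}\AS_{\psi}$ does not acquire a $\Grot$-equivariant structure after a Tate twist: rotation rescales $F$ and replaces $\psi$ by $\psi_{a}$, and no Tate twist compensates (Tate twists do not alter geometric monodromy). If $\Kl^{V}_{\hatG}$ were genuinely $\Grot$-equivariant for the simply transitive action on $\Gm$ it would be geometrically constant, contradicting the regular unipotent local monodromy at $0$ and the wildness at $\infty$ (\S\ref{loc 0}, \S\ref{loc infty}, Theorem \ref{th:global mono}). What is true is only that $(\GRc,F)$ is equivariant under $\Gm^{(\rho^{\vee},h)}$ relative to the dilation action on the target $\AA^{1}$ --- which is exactly why this paper passes to $[\AA^{1}/\Gm]$ and Laumon's homogeneous Fourier transform when treating the moments --- but that does not make $\Kl^{V}_{\hatG}$ equivariant on $\Gm$; in \cite{HNY} lisseness and concentration in one degree come from cleanness together with perversity/affineness arguments inside the Hecke-eigensheaf formalism, not from equivariance on the base. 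Likewise the stalk at $1$ is an $\AS_{\psi}$-twisted cohomology of $\Grc$, not $\upH^{*}(\Gr,\IC_{V})$, so the rank computation does not follow from ``$F$ being trivial on the relevant strata.'' Your fusion route to (iii) is plausible in outline, but it presupposes cleanness and local acyclicity in the two-point family, which you do not address; \cite{HNY} instead obtain the tensor structure from the monoidal structure of the Hecke functors acting on the eigensheaf, with geometric Satake supplying $T_{V_{1}}\circ T_{V_{2}}\cong T_{V_{1}\otimes V_{2}}$. Finally, you never address the one point this paper does contribute: why the argument is unaffected by passing from $\hatG$ to $\hatG^{\Tt}$.
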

Note that the only difference between the above theorem and  \cite[Theorem 1]{HNY} is that we have replaced the usual Langlands dual group $\hatG$ by Deligne's modified Langlands dual group $\hatG^{\Tt}$. The proof in \cite{HNY} works without change. When $\hatG$ is clear from the context, we simply write $\Kl^{V}$ for $\Kl^{V}_{\hatG}$.

\begin{remark}\label{r:wt} The Kloosterman sheaf $\Kl_{\hatG}$ gives rise to a monodromy representation, well-defined up to conjugacy 
\begin{equation*}
\rho^{\Tt}_{\hatG}:\pi_{1}(\Gm/k)\to \hatG^{\Tt}(\Ql(\mu_{p})).
\end{equation*}
The monodromy representation $\rho^{\Tt}_{\hatG}$ is compatible with the Tate torus in the following sense. The composition
\begin{equation*}
\pi_{1}(\Gm/k)\xrightarrow{\rho_{\hatG}^{\Tt}} \hatG^{\Tt}(\Ql(\mu_{p}))\xrightarrow{\Tt}\Gm^{\Tt}(\Ql(\mu_{p}))=\Ql(\mu_{p})^{\times}\end{equation*}
factors through $\pi_{1}(\Gm/k)\to\Gk$ and is given by the $\ell$-adic cyclotomic character $\chi_{\cyc}:\Gk\to \ZZ_{\ell}^{\times}=\Aut(\ZZ_{\ell}(1))$, where $\ZZ_{\ell}(1)=\varprojlim_{n}\mu_{\ell^{n}}(\kbar)$.

On the other hand, if an object $V\in\Rep(\hatG^{\Tt})$ pure of weight $w$, the corresponding local system $\Kl^{V}$  is also pure of weight $w$.
\end{remark}

\subsection{Geometric monodromy of Kloosterman sheaves}\label{ss:monodromy}

\subsubsection{Global monodromy} We also consider the restriction of $\rho^{\Tt}_{\hatG}$ to the geometric fundamental group
\begin{equation*}
\rho^{\geom}_{\hatG}:\pi_{1}(\Gm/\kbar)\to \hatG^{\Tt}(\Ql(\mu_{p})).
\end{equation*}
Let $\hatG^{\geom}$ be the Zariski closure of the image of $\rho^{\geom}_{\hatG}$. Since the composition $\pi_{1}(\Gm/\kbar)\to\hatG^{\Tt}(\Ql(\mu_{p}))\to \Gm^{\Tt}(\Ql(\mu_{p}))$ factors through $\Gk$ (Remark \ref{r:wt}), the image of $\rho^{\geom}_{\hatG}$ lies in $\hatG=\ker(\hatG^{\Tt}\to \Gm^{\Tt})$, and hence $\hatG^{\geom}\subset\hatG$.

\begin{theorem}[Heinloth-Ng\^o-Yun {\cite[Theorem 3]{HNY}}]\label{th:global mono} The subgroup $\hatG^{\geom}$ of $\hatG$ is a connected and simply-connected group of types given by the following table (with restrictions on $p$ stated in the third column)
\begin{center}
\begin{tabular}{|l|l|l|}
\hline
$\hatG$ & $\hatG^{\geom}$ & \textup{condition}\\\hline
$A_{2n}$ & $A_{2n}$ & $p>2$\\
$A_{2n-1}, C_n$ &  $C_n$ & $p>2$\\ 
$B_n, D_{n+1}$ $(n\geq4)$ & $B_n$ & $p>2$ \\
$E_7$ & $E_7$ & $p>2$\\
$E_8$ & $E_8$ & $p>2$\\
$E_6, F_4$ & $F_4$ & $p>2$\\
$B_3,D_4, G_2$ & $G_2$ & $p>3$\\
\hline
\end{tabular}
\end{center}
We have $\hatG^{\geom}=\hatG^{\Out(\hatG),\circ}$  ($\Out(\hatG)$ acting on $\hatG$ as pinned automorphisms) unless $\hatG$ is of type $A_{2n}$ or $B_{3}$. 
\end{theorem}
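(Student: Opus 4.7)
\medskip

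\noindent\textbf{Proof proposal.} The plan is to identify $\hat{G}^{\geom}$ by combining three ingredients: (i) reductivity and connectedness of $\hat{G}^{\geom}$ inside $\hat{G}$; (ii) the fact that the local monodromy at $0$ is a regular unipotent element of $\hat{G}$; and (iii) the constraint that the wild monodromy at $\infty$ is a ``simple wild parameter'', so that the Swan conductor of $\Kl_{\hat{G}}^{\Ad}$ at $\infty$ is as small as possible (namely equal to $\rk(\hat{G})$).

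First I would show that $\hat{G}^{\geom}$ is reductive and connected. Reductivity follows by Deligne's theorem: for any $V\in\Rep(\hat{G}^{\Tt})$ the sheaf $\Kl^V_{\hat{G}}$ is pure, hence geometrically semisimple, so the tautological representation $\hat{G}^{\geom}\hookrightarrow\hat{G}$ is semisimple, which forces $\hat{G}^{\geom}$ to be reductive. For connectedness, one uses that the local monodromy at $0$, which lies in the identity component $\hat{G}^{\geom,\circ}$ (since it is unipotent), is already a \emph{regular} unipotent in $\hat{G}$; the group $\hat{G}^{\geom}$ therefore already contains a regular unipotent of $\hat{G}$ in its identity component, and an easy argument on connected components together with $\pi_{1}^{\textup{tame}}$ of $\Gm$ shows $\hat{G}^{\geom}=\hat{G}^{\geom,\circ}$.

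Next I would invoke the classification (due to Dynkin, Saxl--Seitz, and Testerman) of connected reductive subgroups $H\subset\hat{G}$ that contain a regular unipotent element of $\hat{G}$: such $H$ are very restricted. For $\hat{G}$ of exceptional type one gets an explicit short list; for classical $\hat{G}$, one reduces to subgroups of $\GL_n$, $\SO_n$, $\Sp_n$ containing a single Jordan block (up to the obvious sign/duality symmetries), which narrows $H$ to the groups appearing in the table (e.g.\ $B_n\subset D_{n+1}$ via the standard embedding, $C_n\subset A_{2n-1}$, $G_2\subset\SO_7\cong B_3$, etc.). At this stage $\hat{G}^{\geom}$ is pinned down to one of a few candidates, and automatically becomes simply connected since each of those listed is simply connected.

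The hard part, which I expect to be the main obstacle, is to rule out the remaining smaller possibilities using the ramification at $\infty$. I would compute the Swan conductor at $\infty$ of $\Kl^{\Ad}_{\hat{G}}$ using the Euler--Poincar\'e formula:
\begin{equation*}
\chi(\Gm_{\kbar},\Kl^{\Ad}_{\hat{G}}) = -\Swan_{0}(\Kl^{\Ad}_{\hat{G}}) - \Swan_{\infty}(\Kl^{\Ad}_{\hat{G}}) + (\textup{drops at }0,\infty),
\end{equation*}
and compare it with the same computation done for each candidate $H$ by restricting $\Ad$ to $H$ and reading off the Swan conductor from the known simple wild parameter at $\infty$. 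The minimality of the Swan conductor at $\infty$ forces the restriction $\Ad_{\hat{G}}|_{H}$ to have no ``extra'' wildness, which can only happen for $H$ as large as in the table (for instance it excludes $H=B_{n-1}\subset B_n$, etc.). The mild restrictions on $p$ appear here: for small $p$, Artin--Schreier torsion and inseparability phenomena produce degenerate Swan contributions, so one assumes $p>2$ (and $p>3$ in the $G_2$ case) to make the Swan computation exact. Matching the result with the candidate list from step two finishes the identification of $\hat{G}^{\geom}$ and gives the statement about $\hat{G}^{\Out(\hat{G}),\circ}$ as a byproduct, since the excluded cases $A_{2n}$ and $B_{3}$ are exactly those where the regular-unipotent subgroups behave nongenerically.
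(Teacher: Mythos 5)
First, note that this paper does not prove the statement at all: it is quoted verbatim from \cite[Theorem 3]{HNY}, so the only "proof" here is the citation, and your sketch has to be measured against the argument in that reference. Your lower-bound ingredients do match part of that argument: purity gives semisimplicity hence reductivity of $\hatG^{\geom}$, the regular unipotent image of the tame generator at $0$ plus the Saxl--Seitz type classification of connected reductive subgroups containing a regular unipotent cuts the possibilities down to a short list, and the structure of the wild inertia at $\infty$ (the simple wild parameter of Gross--Reeder, with $D_{1}\subset\hatT[p]$ of order governed by the Coxeter number) is what rules out very small candidates such as a principal $\SL_{2}$.

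The genuine gap is that your argument has no mechanism to produce the \emph{upper} bound, and the upper bound is where most of the content of the table lies: for $\hatG$ of type $E_{6}$, $A_{2n-1}$, $D_{n+1}$, $D_{4}$, $B_{3}$ the theorem asserts that $\hatG^{\geom}$ is a \emph{proper} subgroup ($F_{4}$, $C_{n}$, $B_{n}$, $G_{2}$, $G_{2}$). Containing a regular unipotent and having minimal Swan conductor at $\infty$ are constraints satisfied trivially by $H=\hatG$ itself, so no Euler--Poincar\'e or Swan comparison of the kind you describe can ever force the monodromy down into a proper subgroup; the numerical invariants of the actual sheaf are of course consistent with its actual (smaller) monodromy, and "minimality forces $\Ad|_{H}$ to have no extra wildness, hence $H$ large" is not a valid implication. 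In \cite{HNY} the upper bound comes from a separate geometric input, the functoriality of $\Kl_{\hatG}$ under pinned (outer) automorphisms and auto-duality (\cite[Corollary 6.5]{HNY}, recalled in \S\ref{sss:opp} of this paper), which shows the monodromy lands in $\hatG^{\Out(\hatG),\circ}$ except in type $A_{2n}$, together with extra arguments in the exceptional $B_{3}$ case; you instead claim the statement about $\hatG^{\Out(\hatG),\circ}$ falls out "as a byproduct", which inverts the logic. A secondary issue: your connectedness argument via $\pi_{1}^{\textup{tame}}(\Gm)$ is too quick, since a finite quotient of $\pi_{1}(\GG_{m,\Fpbar})$ killing $\calI_{0}$ need not be trivial in characteristic $p$ (Artin--Schreier covers of $\AA^{1}$ are wild at $\infty$); one has to use the structure of the image of $\calI_{\infty}$ to exclude such quotients.
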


Let $\calI_0$ and $\calI_\infty$ be the inertia groups at $0$ and $\infty$. These are subgroups of the geometric fundamental group $\pi_{1}(\Gm/\kbar)$, well-defined up to conjugacy. We shall describe the image of $\calI_{0}$ and $\calI_{\infty}$ under the monodromy representation $\rho^{\geom}_{\hatG}:\pi_{1}(\Gm/\kbar)\to\hatG(\Ql(\mu_{p}))$.

\subsubsection{Local monodromy at $0$}\label{loc 0} By \cite[Theorem 1(2)]{HNY}, we know that $\rho^{\geom}_{\hatG}$ is trivial on the wild inertia $\calI^{w}_{0}$ (i.e., $\Kl_{\hatG}$ is tame at $0$) and sends a topological generator of the tame inertia $\calI^{t}_{0}$ to a regular unipotent element in $\hatG$.

\subsubsection{Local monodromy at $\infty$ for large $p$}\label{loc infty} The description of the local monodromy of $\Kl_{\hatG}$ at $\infty$ for large $p$ is obtained in \cite{HNY} based on the work of Gross and Reeder on simple wild parameters \cite{GR}. We review the results here.

Assume $p$ does not divide $\#W$ ($W$ is also identified with the Weyl group of $\hatG$). Let $\hatG^{\ad}$ be the adjoint form of $\hatG$, which is also the adjoint form of $\hatG^{\Tt}$.  We first describe the image of $\calI_{\infty}$ in $\hatG^{\ad}$.

Let $\Dbar_{0}=\rho^{\geom}_{\hatG^{\ad}}(\calI_{\infty})$ and $\Dbar_{1}=\rho^{\geom}_{\hatG^{\ad}}(\calI^{w}_{\infty})$ be the image of the inertia and the wild inertia in $\hatG^{\ad}$. Combining \cite[Theorem 2, Corollary 2.15]{HNY} and \cite[Proposition 5.6]{GR}, we get a complete description of $\Dbar_{0}$ and $\Dbar_{1}$ as follows.

There is a unique maximal torus $\hatT^{\ad}$ of $\hatG^{\ad}$ such that $\Dbar_{1}\subset\hatT^{\ad}[p]$. The group $\Dbar_{0}$ lies in the normalizer $N(\hatT^{\ad})$ of $\hatT^{\ad}$. The quotient $\Dbar_{0}/\Dbar_{1}$ is cyclic of order $h$ (the Coxeter number of $G$), and any generator of $\Dbar_{0}/\Dbar_{1}$ maps to a Coxeter element  $\Cox$ in $W$ under the map $\Dbar_{0}/\Dbar_{1}\to N(\hatT^{\ad})/\hatT^{\ad}=W$.

Moreover,  $\Dbar_1\subset\hatT^{\ad}[p]$ is isomorphic to the additive group of some finite field $\FF_{q}=\FF_{p}[\zeta]$ where $\zeta$ is a primitive $h\nth$ root of unity in $\Fpbar$. The action of a generator of $\Dbar_{0}/\Dbar_{1}$ on $\Dbar_{1}\cong\FF_{p}[\zeta]$ is by multiplication by $\zeta$.

We then describe the image of $\calI_{\infty}$ in $\hatG$. Let $D_{0}=\rho^{\geom}_{\hatG}(\calI_{\infty})$ and $D_{1}=\rho^{\geom}_{\hatG}(\calI^{w}_{\infty})$, which is a $p$-group mapping to $\Dbar_{1}$ in $\hatG^{\ad}$. Since $p$ is prime to $\#Z\hatG$, there is a unique $p$-group $D_{1}\subset\hatG$ mapping onto $\Dbar_{1}$, and $D_{1}$ lies in $\hatT[p]$ ($\hatT$ is the preimage of $\hatT^{\ad}$ in $\hatG$). Again $D_{0}\subset N(\hatT)$ because $N(\hatT)$ is the preimage of $N(\hatT^{\ad})$ in $\hatG$. Since $D_{0}/D_{1}$ is cyclic, $D_{0}$ is of the form $D_{1}\rtimes\jiao{\dCox}$ where $\dCox\in N(\hatT)$ is a lifting of a Coxeter element $\Cox\in W$. Any two liftings of $\Cox$ to $N(\hatT)$ are conjugate under $\hatT$.  Note that when $\hatG$ is not adjoint, the order of $\dCox$ is a multiple $eh$ of $h$, but may not be $h$.

\begin{exam} Let $G=\PGL_{n}$, $\hatG=\SL_{n}$. The Coxeter elements in the Weyl group $W=S_{n}$ are cyclic permutations of length $n$. When $n$ is odd, any lifting of such an element to $N_{\hatG}(\hatT)$ still has order $n$. However, when $n$ is even, any lifting of a Coxeter element in $S_{n}$ to $N_{\hatG}(\hatT)$ has order $2n$.

When $n$ is even and $G=\SO_{n+1}$, $\hatG=\Sp_{n}$. Then we are in the same situation as $\hatG=\SL_{n}$: the Coxeter element has order $n$ but any lifting of it to $N_{\hatG}(\hatT)$ has order $2n$.
\end{exam}

We have the lower numbering filtration of the image of $\calI_{\infty}$ under $\rho_{\hatG}^{\geom}$ (see \cite[Ch IV, Proposition 1]{local}): $D_{0}\rhd D_{1}\rhd D_{2}\rhd\cdots$.

\begin{lemma}\label{l:Swan} Let $V\in\Rep(\hatG^{\Tt})$. Assume $p\nmid\#W$. Then 
\begin{equation*}
\Swan(V)=\sum_{j\geq1}\frac{\dim V-\dim V^{D_j}}{[D_0:D_j]}=\frac{\dim V-\dim V^{D_{1}}}{h}.
\end{equation*}
\end{lemma}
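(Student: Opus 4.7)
My plan is to split the lemma into two separate assertions: the first equality is a Serre-style expression of the Swan conductor in terms of the lower-numbering ramification filtration, which is a general fact about finite-image representations of local inertia; the second equality is a collapse of this sum made possible by the Gross--Reeder structure of $\rho^{\geom}_{\hatG}|_{\calI_\infty}$ as a simple wild parameter, already recalled in \S\ref{loc infty}.

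For the first equality I would simply invoke Serre's classical formula (\cite[Ch.\ VI, \S2, Prop.\ 4]{local}): the action of $\calI_\infty$ on $V$ factors through the finite group $D_0 = \rho^{\geom}_{\hatG}(\calI_\infty)$, and the lower-numbering filtration on $D_0$ is by definition the image of the filtration on $\calI_\infty$. Serre's formula expresses the Artin conductor as $\sum_{j \geq 0}(\dim V - \dim V^{D_j})/[D_0:D_j]$, and subtracting the tame $j=0$ term yields exactly the Swan conductor as in the lemma.

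For the second equality, the key input is that the local parameter at $\infty$ is simple in the sense of Gross--Reeder with depth $1/h$ in the upper numbering; under the hypothesis $p\nmid\#W$, this is \cite[Thm.\ 2 and Cor.\ 2.15]{HNY} combined with \cite[Prop.\ 5.6]{GR}. Concretely, the upper-numbering filtration of the image has a single nontrivial wild jump, located at $v = 1/h$. Translating through Serre's Herbrand function $\varphi$ forces the lower-numbering filtration to be constant in a range, $D_1 = D_2 = \cdots = D_m \supsetneq D_{m+1} = 1$, for some integer $m \geq 1$ satisfying
\[
\frac{m}{[D_0:D_1]} \;=\; \varphi(m) \;=\; \frac{1}{h},
\]
since $\varphi$ has constant slope $1/[D_0:D_1]$ on $[0,m]$.

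With this in hand, substituting back into the sum finishes the lemma: the terms with $j > m$ vanish because $D_j = 1$ forces $V^{D_j} = V$, while each of the $m$ terms with $1 \leq j \leq m$ equals $(\dim V - \dim V^{D_1})/[D_0:D_1]$, so the sum collapses to $(m/[D_0:D_1])(\dim V - \dim V^{D_1}) = (\dim V - \dim V^{D_1})/h$. The only nontrivial input is the depth computation for simple wild parameters, which has already been established in the references; once that is accepted, the rest is purely formal bookkeeping with the Herbrand function.
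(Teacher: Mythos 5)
Your argument is correct, and it reaches the second equality by a slightly different mechanism than the paper. For the first equality both you and the paper simply quote Serre (Artin conductor minus the tame term), so there is nothing to compare there. For the collapse of the sum, the paper never mentions upper numbering or the Herbrand function: writing $[D_0:D_1]=eh$ with $eh$ the order of $\dCox$, it pins down the lower-numbering filtration $D_0\rhd D_1=\cdots=D_e\rhd D_{e+1}=\{1\}$ directly, by comparing the extension $L/K$ cut out by $D_0$ with the subextension $L'$ cut out by the adjoint image $\Dbar_0$: since $D_1=\Dbar_1$, one has $L=L'K_t$ with $L'\cap K_t=K_t'$, a uniformizer computation shows the lower-numbering jumps of $\{D_i\}$ are $e$ times those of $\{\Dbar_i\}$, and the known fact $\Dbar_2=\{1\}$ then gives the shape above, so the sum has exactly $e$ nontrivial terms each equal to $(\dim V-\dim V^{D_1})/(eh)$. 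You instead take as input that the unique wild jump of $D_0$ in the upper numbering is at $1/h$ and translate through $\varphi$ to get $D_1=\cdots=D_m\rhd\{1\}$ with $m=[D_0:D_1]/h$; the bookkeeping with $\varphi$ (constant slope $1/[D_0:D_1]$ on $(0,m]$) is right, and the two routes are equivalent, since the paper's input $\Dbar_2=\{1\}$ is exactly the statement that the adjoint image has its single wild upper jump at $\varphi_{L'/K}(1)=1/h$, and this transfers to $D_0$ because $D_1\isom\Dbar_1$ (here one uses $p\nmid\#Z\hatG$, which follows from $p\nmid\#W$) and upper numbering is compatible with quotients. What your version buys is cleaner, more standard bookkeeping that avoids the explicit compositum/valuation argument; what the paper's version buys is that it only uses the structure of $\Dbar_0,\Dbar_1$ as literally recalled in \S\ref{loc infty}, deriving the depth statement rather than citing it, so if you keep your route you should make explicit that the ``single wild jump at $1/h$'' you attribute to \cite{GR} and \cite{HNY} is the same assertion as $\Dbar_2=\{1\}$ together with the isomorphism $D_1\isom\Dbar_1$.
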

\begin{proof}
The first equality follows from the definition of the Swan conductor (see \cite[Ch VI, Proposition 2]{local} for the closely-related Artin conductor, see also \cite[\S2]{GR}).  Now let $eh$ be the order of $\wt{\Cox}$, then $D_{0}/D_{1}\cong\ZZ/eh\ZZ$. The lower numbering filtration of $D_{0}$ takes the form
\begin{equation}\label{Die}
D_{0}\rhd D_{1}=D_{2}=\cdots=D_{e} \rhd D_{e+1}=\{1\}.
\end{equation}
In fact, let $K=\kbar((t^{-1}))$ and let $L/K$ be Galois extension with Galois group isomorphic to $D_{0}=\rho^{\geom}_{\hatG}(\calI_{\infty})$. The quotient $\Dbar_{0}$ corresponds to a subextension $L'\subset L$. Let $K_{t}$ (resp. $K_{t}'$) be the maximal tamely ramified extension of $K$ in $L$ (resp. $L'$). Since $D_{1}=\Dbar_{1}$, $L$ is the compositum $L'K_{t}$ with $L'\cap K_{t}=K_{t}'$. Pick a generating element $x\in \calO_{L'}$ over $\calO_{K'_{t}}$. Then it is also a generating element of $\calO_{L}$ over $\calO_{K_{t}}$. For any $g\in D_{1}$, we then have $\val_{L}(g(x)-x)=e\val_{L'}(g(x)-x)$. Therefore the jumps of the lower number filtration $\{D_{i}\}$ are exactly $e$ times the jumps of $\{\Dbar_{i}\}$. Since we have $\Dbar_{2}=\{1\}$, the sequence $\{D_{i}\}$ must look like \eqref{Die}.

We then have
\begin{equation*}
\sum_{j\geq1}\frac{\dim V-\dim V^{D_j}}{[D_0:D_j]}=\sum_{j=1}^{e}\frac{\dim V-\dim V^{D_1}}{eh}=\frac{\dim V-\dim V^{D_1}}{h}.
\end{equation*}
\end{proof}

\subsection{The classical Kloosterman sheaf}\label{ss:classical} 
When $G=\PGL_{n}$, let us consider the diagram
\begin{equation}\label{Kloo}
\xymatrix{& \Gm^n\ar[dl]_{\pi}\ar[dr]_{f}\ar[drr]^{F}\\ \Gm & & \Ga^n \ar[r]^{\sigma}& \AA^1}
\end{equation}
Here the morphism $\pi$ is given by taking the product $\pi(x_{1},\cdots,x_{n})=x_{1}\cdots x_{n}$; $f$ is the natural inclusion; $\sigma$ is given by taking the sum $\sigma(x_{1},\cdots,x_{n})=x_{1}+\cdots+ x_{n}$; finally $F=\sigma\circ f$. Deligne defined
\begin{equation*}
\Kl_n:=m_!F^*\AS_\psi[n-1].
\end{equation*}
It is known \cite[Th\'eor\`eme 7.8]{Del} that $\Kl_n$ is concentrated in degree zero, and is a local system on $\GG_{m,\FF_p}$ of rank $n$. Moreover, $\Kl_{n}$ is pure of weight $n-1$. 
\subsubsection{Determinant}\label{sss:det}
For the one-dimensional representation $\det$ of $\GL_{n}$, $\Kl^{\det}_{n}=\det(\Kl_{n})$. Deligne \cite[(7.15.2)]{Del} has determined that
\begin{equation*}
\det\Kl_{n}=\Ql\left(-\frac{n(n-1)}{2}\right).
\end{equation*}
as $\Frob$-modules.

\subsubsection{Symplectic pairing}
When $n$ is even, there is a symplectic pairing (see \cite[\S7.5]{Del})
\begin{equation*}
\Kl_{n}\otimes\Kl_{n}\to\Ql(-n+1).
\end{equation*}

The diagram \eqref{Kloo} is very similar to \eqref{GR}. In fact, when we work with $G=\PGL_{n}$ and $V$ the standard representation of $\hatG^{\Tt}$ (see Example \ref{ex:n odd}), the sheaf $\IC_{V}$ is the shifted constant sheaf $\Ql[n-1]$ supported on a Schubert variety in $\Gr$ which isomorphic to $\PP^{n-1}$.  The ``spread-out'' of $\IC_{V}$ to $\GR$ is the shifted constant sheaf $\Ql[n-1]$ on $\Gm\times\PP^{n-1}$. The intersection of the open subset $\GRc$ with $\Gm\times\PP^{n-1}$ is isomorphic to $\Gm^{n}$. Thus we recover \eqref{Kloo} as part of \eqref{GR}, and $\Kl_{n}$ is the same as $\Kl^{\St}_{\SL_{n}}$.


The next proposition gives the precise sense in which $\Kl_{\hatG}$ generalizes Deligne's Kloosterman sheaves $\Kl_{n}$.
\begin{prop}\label{p:relation to Kln}
\begin{enumerate}
\item For $n\geq2$, we have an isomorphism of local systems on $\Gm$
\begin{equation*}
\Kl^{\St}_{\SL_{n}}\cong\Kl_{n}.
\end{equation*}
Here $\St$ is the standard representation of $\SL_{n}^{\Tt}$ given in Example \ref{ex:n odd}. 
\item When $n$ is even, there exists $x\in\Gm(\FF_{p})$ and a character $\ep:\Gk\to\{\pm1\}$ such that
\begin{equation}\label{shift Kl}
\Kl^{\St}_{\Sp_{n}}\cong m_{x}^{*}\Kl_{n}\otimes\Ql(\ep).
\end{equation}
Here $m_{x}:\Gm\to \Gm$ is the multiplication by $x$, $\Ql(\ep)$ is the rank one local system on $\Spec k$ obtained from the character $\ep$, and $\St$ is the standard representation of $\Sp_{n}^{\Tt}$ given in Example \ref{ex:n even}. 
\end{enumerate}
\end{prop}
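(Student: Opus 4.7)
\emph{Part (1).} The strategy is to make the informal comparison ``we recover \eqref{Kloo} as part of \eqref{GR}'' quoted just before the proposition fully explicit. For $G = \PGL_n$ and the minuscule coweight $\l$ giving the standard representation of $\SL_n^{\Tt}$, the Schubert variety $\Gr_{\leq \l} = \Gr_{\l}$ in the affine Grassmannian of $\PGL_n$ is smooth and isomorphic to $\PP^{n-1}$, so $\IC_{\l} = \Ql[n-1]$. Spreading out over $\Gm$ gives the shifted constant sheaf on $\Gm \times \PP^{n-1}$. I then identify the open subscheme $\GRc \cap (\Gm \times \PP^{n-1})$ with $\Gm^n$ so that the projection $\pi^{\c}$ becomes the product map $(x_1,\dots,x_n)\mapsto x_1\cdots x_n$ and the morphism $F$ becomes $(x_1,\dots,x_n)\mapsto x_1+\cdots+x_n$. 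Plugging this into the definition $\Kl^{\St}_{\SL_n} = \pi^{\c}_{!}(\IC_{V,\GRc}\otimes F^*\AS_\psi)$ reproduces Deligne's $\Kl_n = \pi_! F^*\AS_\psi[n-1]$ on the nose.

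\emph{Part (2).} The plan is to compare $\Kl^{\St}_{\Sp_n}$ and $\Kl_n$ as symplectic local systems on $\Gm_{\FF_p}$ and appeal to a Katz-style rigidity. Both sheaves have rank $n$, are pure of weight $n-1$, are tame at $0$ with regular unipotent monodromy (by \S\ref{loc 0} on the one side and \cite{Del,Katz} on the other), and have Swan conductor $1$ at $\infty$ coming from a simple wild parameter: for $\Kl^{\St}_{\Sp_n}$ apply Lemma \ref{l:Swan} to $V=\St$ with $\dim V = n$, $h = n$, and $\dim V^{D_{1}} = 0$. Both carry a symplectic autoduality, for $\Kl_n$ the pairing recalled in \S\ref{ss:classical} and for $\Kl^{\St}_{\Sp_n}$ coming from the fact that $\St$ is a symplectic representation of $\Sp_n$ together with the tensor functoriality of Theorem \ref{th:geom Kl}. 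I then invoke the rigidity/classification of such local systems (Katz, \emph{Gauss sums, Kloosterman sums and monodromy groups}, together with the Gross--Reeder \cite{GR} classification of simple wild parameters for $\Sp_n$): over $\kbar$, an $\Sp_n$-local system on $\Gm$ that is tame and regular unipotent at $0$ and is a simple wild parameter at $\infty$ is unique up to a multiplicative translation $m_x$ and a scalar twist. Applied to $\Kl^{\St}_{\Sp_n}$ this yields some $y \in \Gm(\kbar)$ with $\Kl^{\St}_{\Sp_n} \cong m_y^*\Kl_n$ geometrically, and Galois descent (both sheaves being defined over $\FF_p$) then forces $y$ to descend to $x \in \Gm(\FF_p)$ after tensoring with a rank one local system on $\Spec \FF_p$. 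Matching the symplectic similitude characters and the weight pins this twist down to a quadratic character $\ep$.

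\emph{Main obstacle.} The main technical point is executing the rigidity step cleanly, in particular controlling the ambiguity in the wild monodromy at $\infty$. Up to conjugation by $\hatT$, the wild inertia $\calI_\infty^w$ acts via an additive character of the finite abelian $p$-group $D_1\subset \hatT[p]$ described in \S\ref{loc infty}; changing this character corresponds to a multiplicative reparametrization of $\Gm$ combined with a global twist, and disentangling which part becomes $m_x$ and which becomes $\ep$ (and checking that $\ep$ is valued in $\{\pm 1\}$ rather than in a larger group of roots of unity) is where the bookkeeping becomes delicate.
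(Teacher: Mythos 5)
Your proposal is correct and follows essentially the same route as the paper: part (1) is exactly the identification of the standard-representation Schubert cell with Deligne's diagram (the paper delegates this to \cite[Proposition 3.4]{HNY}), and part (2) is the paper's argument — single unipotent Jordan block at $0$, totally wild with $\Swan=1$ at $\infty$, Katz's uniqueness theorem \cite[8.7.1]{Katz}, and the symplectic autoduality with values in $\Ql(-n+1)$ to force $\ep$ to be quadratic. The ``main obstacle'' you flag is already absorbed into Katz's theorem, which directly yields the $\FF_{p}$-rational translate $x$ together with a geometrically constant rank-one twist, so no further bookkeeping of the wild characters at $\infty$ is required.
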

\begin{proof}
(1) is proved in \cite[Proposition 3.4]{HNY}. When $n$ is even, according to the description of  local monodromy of $\Kl_{\Sp_{n}}$ in \S\ref{loc 0} and \S\ref{loc infty}, the local monodromy of $\Kl^{\St}_{\Sp_{n}}$ is unipotent with a single Jordan block; at $\infty$, $\Kl^{\St}_{\hatG_{n}}$ is totally wild with $\Swan(\Kl^{\St}_{\hatG_{n}})=1$. Applying Theorem \cite[8.7.1]{Katz} we conclude that there exists $x\in\Gm(\FF_{p})$ and a continuous character $\ep:\Gk\to\Qlbar^{\times}$ such that \eqref{shift Kl} holds. Since both $\Kl^{\St}_{\hatG_{n}}$ and $\Kl_{n}$ admit a symplectic autoduality with values in $\Ql(-n+1)$, $\ep$ must be quadratic.
\end{proof}

The monodromy representation of $\Kl_{n}$ reads
\begin{equation*}
\rho_{n}:\pi_{1}(\Gm/\FF_{p})\to \SL^{\Tt}_{n}(\Ql(\mu_{p})). 
\end{equation*}
which is well-defined up to conjugacy. Let $\rho_{n}^{\geom}$ be the restriction of $\rho_{n}$ to the geometric fundamental group $\pi_{1}(\Gm/\Fpbar)$, and let $\hatG^{\geom}_{n}$ be the Zariski closure of the image of $\rho^{\geom}_{n}$.

Katz gave a complete description of $\hatG^{\geom}_{n}$.
\begin{theorem}[Katz {\cite[Main Theorem 11.1]{Katz}}]\label{th:global mono Katz} We have
\begin{equation*}
\hatG^{\geom}_{n}=\begin{cases}
\Sp_{n} &  n \textup{ even};\\
\SL_{n} & n \textup{ odd, } p>2;\\
\SO_{n} & n\neq 7 \textup{ odd, } p=2;\\
G_{2} & n=7, p=2. \end{cases}
\end{equation*}
\end{theorem}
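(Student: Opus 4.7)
My plan is to pin down $\hatG^{\geom}_n$ by combining three pieces of information: the ambient classical group forced by the determinant and the autoduality of $\Kl_n$, the local monodromy data at $0$ and $\infty$ recalled in \S\ref{loc 0}--\S\ref{loc infty}, and the classification of connected simple subgroups of classical groups that act irreducibly on the standard representation and contain a regular unipotent element.

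First I would fix the smallest ambient classical group containing $\hatG^{\geom}_n$. Deligne's formula $\det\Kl_n\cong\Ql(-n(n-1)/2)$ from \S\ref{sss:det} is geometrically trivial, so $\hatG^{\geom}_n\subseteq\SL_n$ in all cases. When $n$ is even, the alternating pairing $\Kl_n\otimes\Kl_n\to\Ql(-n+1)$ from \S\ref{ss:classical} forces $\hatG^{\geom}_n\subseteq\Sp_n$. When $n$ is odd and $p=2$, that same pairing is symmetric (because $-1=1$), giving $\hatG^{\geom}_n\subseteq\SO_n$. When $n$ is odd and $p>2$, one verifies that neither $\Sym^2\Kl_n$ nor $\wedge^2\Kl_n$ contains a geometrically trivial summand, so no invariant bilinear form exists and the ambient group is genuinely $\SL_n$.

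Next I would check that $\hatG^{\geom}_n$ is connected and acts irreducibly on the standard representation. Connectedness follows from the fact that the tame monodromy at $0$ is a regular unipotent element (\S\ref{loc 0}), which lies in the identity component $\hatG^{\geom,\circ}_n$, combined with the description of $\calI_\infty$ in \S\ref{loc infty} as a $p$-group extended by a cyclic group whose order is the Coxeter number $h=n$. Irreducibility is automatic from the indecomposability of regular unipotent action on the standard representation. At this stage I would appeal to the classical theorem, due to Dynkin in characteristic zero and extended by Seitz, Testerman, Liebeck and Saxl in positive characteristic, classifying connected simple subgroups $H\subseteq\GL_n$ acting irreducibly on $\Ql^n$ and containing an element which is regular unipotent in $\GL_n$. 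Combined with Step~1 and the Swan conductor bound $\Swan(\Kl_n)=1$ (which follows from Lemma \ref{l:Swan} applied to the standard representation, whose weight-zero space vanishes), this pins $\hatG^{\geom}_n$ down to the ambient classical group, with one genuine exception: for $n=7$, the $7$-dimensional irreducible orthogonal representation of $G_2\subset\SO_7\subset\SL_7$ also satisfies all the constraints.

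The crux is therefore to distinguish $G_2$ from the ambient group for $n=7$ and to show that this distinction correlates exactly with the parity of $p$. For $p$ odd, I would rule out both $G_2$ and $\SO_7$ inside $\SL_7$ by showing that $\wedge^3\Kl_7$ and $\Sym^2\Kl_7$ have no geometrically trivial subquotients; a weight-and-purity argument combined with the explicit decomposition of these tensors into $\SL_7$-irreducibles should suffice. For $p=2$, the ambient group is already $\SO_7$, and I would conversely produce a nonzero $G_2$-invariant alternating trilinear form on $\Kl_7$, equivalently a geometrically constant subquotient of $\wedge^3\Kl_7$, by an explicit construction exploiting the Artin--Schreier realization of $\Kl_7$ in characteristic $2$ and the exceptional phenomena specific to that characteristic. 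This characteristic-$2$ analysis, which asymmetrically produces the $G_2$-invariant only when $p=2$, is where I expect the main difficulty to lie.
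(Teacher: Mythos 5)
First, a point of reference: the paper does not prove this statement at all — it is imported verbatim from Katz \cite[Main Theorem 11.1]{Katz} — so the only meaningful comparison is with Katz's argument in GKM, which your sketch broadly parallels (trivial geometric determinant and duality to fix the ambient classical group, irreducibility plus the regular unipotent at $0$, then a classification theorem, then local analysis at $\infty$ to settle the remaining candidates). Measured against that, your proposal has concrete gaps. The classification of connected simple subgroups of $\GL_n$ acting irreducibly and containing a unipotent element with a single Jordan block has one more entry than you allow: the principal $\SL_2$ (the image of $\SL_2$ in $\Sym^{n-1}$ of its standard representation), which occurs in every dimension, inside $\Sp_n$ for $n$ even and inside $\SO_n$ for $n$ odd. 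Your assertion that the classification "pins $\hatG^{\geom}_{n}$ down to the ambient classical group, with one genuine exception $G_{2}\subset\SO_{7}$" is therefore false as stated; Katz must and does exclude the principal $\SL_2$, essentially because for $p\nmid n$ the $\calI_\infty$-representation of $\Kl_n$ is induced from a character of the unique index-$n$ subgroup, hence is irreducible of dimension $n$, which the finite subgroups of $\PGL_2$ cannot afford. Nothing in your sketch addresses this case. A second structural problem is that the local inputs you quote — the description of the image of $\calI_\infty$ in \S\ref{loc infty} and the Swan formula of Lemma \ref{l:Swan} — are established in this paper only under $p\nmid\#W=n!$, i.e.\ only for $p>n$; they are unavailable precisely at $p=2$ (and all small $p$), which is exactly where the answer changes. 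The equality $\Swan(\Kl_n)=1$ does hold for every $p$, but by Deligne–Katz's direct computation, not by the route you cite.

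More importantly, the two pivotal steps of your plan are asserted rather than proved. For $p$ odd and $n$ odd, the absence of a geometrically trivial subquotient in $\Sym^2\Kl_n$ and $\wedge^2\Kl_n$ is exactly the non-self-duality of $\Kl_n$, and "a weight-and-purity argument" cannot deliver it: purity constrains weights of cohomology, not the existence of inertia invariants. Katz proves it from the explicit structure at $\infty$, via the geometric isomorphism $\Kl_n^{\vee}\cong[a\mapsto(-1)^n a]^{*}\Kl_n$, self-duality failing for $p$ odd because $\psi(nx)$ and $\psi(-nx)$ are not geometrically interchangeable; the same identity (with $-1=1$ in $\FF_2$) is what actually gives the self-duality for $p=2$, $n$ odd — note that the pairing you invoke from \S\ref{ss:classical} is only constructed there for $n$ even, and the orthogonal (rather than symplectic) symmetry type for odd $n$ needs its own argument. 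Finally, for $n=7$, $p=2$ you explicitly defer the construction of the $G_2$-invariant alternating trilinear form (equivalently, a trivial subobject of $\wedge^{3}\Kl_7$); that is the heart of Katz's Chapter 11 and without it your argument only yields $\hatG^{\geom}_{7}\in\{G_2,\SO_7\}$ (or the unexcluded principal $\PGL_2$). So the proposal is a reasonable roadmap in the spirit of Katz's proof, but as it stands it does not prove the theorem in any of the delicate cases.
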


\subsection{Moments}\label{ss:moments} In this subsection we define moments of Kloosterman sheaves and study their basic properties.  Let $k$ be a finite field.
\begin{defn}\label{def:moments} For the Kloosterman sheaf $\Kl_{\hatG}$ and a representation $V\in\Rep(\hatG^{\Tt})$, we define three versions of {\em the $V$-moment} of $\Kl_{\hatG}$:
\begin{eqnarray*}
M^V_{!}&:=&\bR\Gamma_{c}(\Gm,\Kl^V)[1];\\
M^V_{*}&:=&\bR\Gamma(\Gm,\Kl^V)[1];\\
M^V_{!*}&:=&\Im(\upH^{0}M^V_!\to \upH^{0}M^V_*)=\Im(\cohoc{1}{\Gm,\Kl^{V}}\to \cohog{1}{\Gm,\Kl^{V}}).
\end{eqnarray*}
Each of these are objects in $D^{b}_{c}(\Spec k)$. When we need to emphasize their dependence on $k$, we denote them by $M^V_{!,k}, M^V_{*,k}$ and $M^V_{!*,k}$, respectively.
\end{defn}

\begin{remark}\label{r:MV degree zero}
The complex $M^{V}_{!}$ lies in cohomological degrees $0$ and $1$, and $\upH^{1}M^{V}_{!}=V_{\pi_{1}(\Gm/\kbar)}(-1)=V_{\hatG^{\geom}}(-1)$. Therefore, if $V^{\hatG^{\geom}}=0$ (which is equivalently to $V_{\hatG^{\geom}}=0$ since $\hatG^{\geom}$ is reductive) then $M^{V}_{!}$ is concentrated in degree zero (i.e., is a plain $\Gk$-module). Dually, $M^{V}_{*}$ lies in cohomological degrees $-1$ and $0$, and is concentrated in degree zero if $V^{\hatG^{\geom}}=0$. 
\end{remark}

\subsubsection{Basic properties of moments}
Let $j:\Gm\incl\PP^1$ be the open inclusion, and let $i_{0}$ and $i_{\infty}$ are the inclusions of the point $0$ and $\infty$ into $\PP^{1}$. The distinguished triangle
\begin{equation*}
j_!\Kl^V\to j_*\Kl^V\to i_{0,*}i_{0}^{*}j_{*}\Kl^{V}\oplus i_{\infty,*}i_{\infty}^{*}j_{*}\Kl^{V}\to j_!\Kl^V[1]
\end{equation*}
gives a long exact sequence
\begin{eqnarray}\label{!* long}
0\to \upH^{-1}M^{V}_{*} \to \upH^{0}i_{0}^{*}j_{*}\Kl^{V}\oplus \upH^{0}i_{\infty}^{*}j_{*}\Kl^{V}\to \upH^{0}M^V_{!}\to \upH^{0}M^V_{*}\to\\
\notag
\to\upH^{1}i_{0}^{*}j_{*}\Kl^{V}\oplus \upH^{1}i_{\infty}^{*}j_{*}\Kl^{V}\to\upH^{1}M^{V}_{!}\to0.
\end{eqnarray}
We have can rewrite the exact sequence \eqref{!* long} as
\begin{eqnarray}\label{!* short}
0\to V^{\hatG^{\geom}}\to V^{\calI_{0}}\oplus V^{\calI_{\infty}}\to\upH^{0} M^V_{!}\to \upH^{0}M^V_{*}\to\\
\notag \to V_{\calI_{0}}(-1)\oplus V_{\calI_{\infty}}(-1)\to V_{\hatG^{\geom}}(-1)\to0.
\end{eqnarray}

\begin{lemma}\label{l:MV pure} Let $V\in\Rep(\hatG^{\Tt})$ be a pure object of weight $w$. As $\Gk$-modules, $\upH^{0}M^{V}_{!}$ is of weights $\leq w+1$ and $\upH^{0}M^{V}_{*}$ is of weights $\geq w+1$. Moreover,
\begin{equation*}
M^{V}_{!*}=\cohog{0}{\PP^{1},j_{!*}\Kl^{V}[1]}=\Gr^{W}_{w+1}\upH^{0}M^{V}_{!}=\Gr^{W}_{w+1}\upH^{0}M^{V}_{*}
\end{equation*}
which is pure of weight $w+1$. Here $j_{!*}\Kl^{V}[1]$ is the middle extension of the perverse sheaf $\Kl^{V}[1]$ from $\Gm$ to $\PP^{1}$.
\end{lemma}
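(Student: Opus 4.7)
The plan is to deduce every assertion from the standard Beilinson--Bernstein--Deligne--Gabber purity package applied to the perverse sheaf $\Kl^V[1]$ on $\Gm$. By Remark \ref{r:wt}, $\Kl^V$ is pure of weight $w$, so $\Kl^V[1]$ is a perverse sheaf on the smooth one-dimensional variety $\Gm$ which is pure of weight $w+1$. Weil~II then gives that $j_!\Kl^V[1]$ is of weights $\leq w+1$ and $j_*\Kl^V[1]$ is of weights $\geq w+1$ on $\PP^{1}$, while the middle extension $j_{!*}\Kl^V[1]$ is pure of weight $w+1$. Since $\pi:\PP^{1}\to\Spec k$ is proper, $R\pi_{*}$ preserves all three weight bounds, so $\upH^{0}M^{V}_{!}=\upH^{0}(\PP^{1},j_!\Kl^V[1])$ has weights $\leq w+1$, $\upH^{0}M^{V}_{*}$ has weights $\geq w+1$, and $\upH^{0}(\PP^{1},j_{!*}\Kl^V[1])$ is pure of weight $w+1$.

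Next I would identify $M^{V}_{!*}$ with $\upH^{0}(\PP^{1},j_{!*}\Kl^V[1])$. Analyzing the adjunction triangle $j_{!}\Kl^{V}[1]\to j_{*}\Kl^{V}[1]\to \text{cone}$ via the standard stalk computation
$$
i_{0}^{*}j_{*}\Kl^{V}=\bigl[V^{\calI_{0}}\to V_{\calI_{0}}(-1)[-1]\bigr],
$$
and similarly at $\infty$, one extracts two short exact sequences of perverse sheaves on $\PP^{1}$:
\begin{equation*}
0\to i_{0,*}V^{\calI_{0}}\oplus i_{\infty,*}V^{\calI_{\infty}}\to j_{!}\Kl^{V}[1]\to j_{!*}\Kl^{V}[1]\to 0,
\end{equation*}
\begin{equation*}
0\to j_{!*}\Kl^{V}[1]\to j_{*}\Kl^{V}[1]\to i_{0,*}V_{\calI_{0}}(-1)\oplus i_{\infty,*}V_{\calI_{\infty}}(-1)\to 0.
\end{equation*}
Since the boundary terms are skyscrapers (concentrated in cohomological degree $0$ on $\PP^{1}$), the long exact sequences of $R\Gamma(\PP^{1},-)$ degenerate: the first yields a surjection $\upH^{0}M^{V}_{!}\twoheadrightarrow \upH^{0}(\PP^{1},j_{!*}\Kl^{V}[1])$ (vanishing of $H^{1}$ on points), and the second yields an injection $\upH^{0}(\PP^{1},j_{!*}\Kl^{V}[1])\hookrightarrow \upH^{0}M^{V}_{*}$ (vanishing of $H^{-1}$ on points). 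The composite is the canonical map $\upH^{0}M^{V}_{!}\to\upH^{0}M^{V}_{*}$, so its image $M^{V}_{!*}$ agrees with $\upH^{0}(\PP^{1},j_{!*}\Kl^{V}[1])$.

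Finally, the weight-graded identifications come for free from the two short exact sequences above and the weight bounds of the first paragraph. The kernel of $\upH^{0}M^{V}_{!}\twoheadrightarrow M^{V}_{!*}$ is a quotient of $V^{\calI_{0}}\oplus V^{\calI_{\infty}}$, hence pure of weight $w$; combined with $\upH^{0}M^{V}_{!}$ having weights $\leq w+1$, this identifies $M^{V}_{!*}$ with $\Gr^{W}_{w+1}\upH^{0}M^{V}_{!}$. Dually, the cokernel of $M^{V}_{!*}\hookrightarrow\upH^{0}M^{V}_{*}$ is a subquotient of $V_{\calI_{0}}(-1)\oplus V_{\calI_{\infty}}(-1)$, hence pure of weight $w+2$; combined with $\upH^{0}M^{V}_{*}$ having weights $\geq w+1$, this gives $M^{V}_{!*}=\Gr^{W}_{w+1}\upH^{0}M^{V}_{*}$. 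No step is really the main obstacle; the only point requiring mild care is the stalk computation of $j_{*}\Kl^{V}$ at the punctures, which relies on the tameness of $\Kl_{\hatG}$ at $0$ to avoid Swan contributions, but at $\infty$ all we need is that the cone of $j_{!}\to j_{*}$ is supported on $\{0,\infty\}$ with skyscraper perverse cohomology sheaves, which is automatic.
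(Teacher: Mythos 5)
Your proof is correct and takes essentially the same route as the paper: Weil II weight bounds for $\upH^{0}M^{V}_{!}$ and $\upH^{0}M^{V}_{*}$, purity of the middle extension, and the analysis of $j_{!}\to j_{!*}\to j_{*}$ at $0$ and $\infty$ (your two short exact sequences are exactly the paper's sequence \eqref{!* short} repackaged, and you merely make explicit the identification $M^{V}_{!*}=\cohog{0}{\PP^{1},j_{!*}\Kl^{V}[1]}$ that the paper leaves implicit). The only slip is calling the kernel and cokernel ``pure of weight $w$'' resp. ``$w+2$'': they are only of weights $\leq w$ resp. $\geq w+2$ (e.g.\ $V^{\calI_{0}}\cong\Ql$ has weight $0$ for $\Kl_{n}$), but since all the argument needs is the absence of weight $w+1$, this does not affect the conclusion.
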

\begin{proof}
The local system $\Kl^{V}$ is pure of weight $w$ by Remark \ref{r:wt}. By \cite[Th\'eor\`eme 3.3.1]{WeilII}, $\upH^{0}M^{V}_{!}=\cohoc{1}{\Gm,\Kl^{V}}$ has weights $\leq w+1$. A dual argument shows that $\upH^{0}M^{V}_{*}$ is of weights $\geq w+1$. Since $M^{V}_{!*}=\Im(\upH^{0}M^{V}_{!}\to \upH^{0}M^{V}_{*})$, it is pure of weight $w+1$.

By \eqref{!* short}, the kernel of the natural map $\upH^{0}M^{V}_{!}\to \upH^{0}M^{V}_{*}$ is a quotient of $\upH^{0}i_{0}^{*}j_{*}\Kl^{V}\oplus \upH^{0}i_{\infty}^{*}j_{*}\Kl^{V}$, which has weight $\leq w$ because $\upH^{0}i_{0}^{*}j_{*}\Kl^{V}$ and $\upH^{0}i_{\infty}^{*}j_{*}\Kl^{V}$ are stalks of the perverse sheaf $j_{!*}\Kl^{V}$ which is pure of weight $w$. Therefore $M^{V}_{!*}=\Gr^{W}_{w+1}\upH^{0}M^{V}_{!}$. A dual argument shows that $M^{V}_{!*}=\Gr^{W}_{w+1}\upH^{0}M^{V}_{*}$.
\end{proof}

Using the description of local monodromy of the Kloosterman sheaves, we can give a formula for the dimension of $M^V_{!}, M^V_{*}$ and $M^V_{!*}$.

\begin{lemma}\label{Sw general} Let $N\in \Lie\hatG$ be a regular nilpotent element. Then we have
\begin{eqnarray}
\label{dim MV!}\dim M^V_{!}&=&\dim M^V_{*}=\Swan(V);\\
\label{dim MV!*}\dim M^V_{!*}&=&\Swan(V)-\dim V^{N}-\dim V^{\calI_\infty}+\dim V^{\hatG^{\geom}}.
\end{eqnarray}
When $p\nmid \#W$, we have
\begin{equation*}
\label{dim MVmore}\dim M^V_{!*}=\frac{\dim V-\dim V^{D_{1}}}{h}-\dim V^{N}-\dim V^{D_{1},\dCox}+\dim V^{\hatG^{\geom}}.
\end{equation*}
\end{lemma}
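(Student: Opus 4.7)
The strategy is to combine the Grothendieck--Ogg--Shafarevich (Euler--Poincar\'e) formula for $\ell$-adic sheaves on $\Gm$, the long exact sequence \eqref{!* short}, and the local monodromy descriptions of $\Kl_{\hatG}$ from \S\ref{loc 0} (tame with regular unipotent monodromy at $0$) and \S\ref{loc infty} (image of $\calI_\infty$ in $\hatG$).

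For \eqref{dim MV!}, I would apply Grothendieck--Ogg--Shafarevich to $\Kl^V$ on $\Gm$. Since $\chi_c(\Gm)=0$ and $\Kl^V$ is tame at $0$ by \S\ref{loc 0}, the compactly supported Euler characteristic reduces to $-\Swan(V)$. Poincar\'e duality together with $\Swan(V)=\Swan(V^*)$ yields the same value for the ordinary Euler characteristic $\chi(\Gm,\Kl^V)$, giving \eqref{dim MV!}.

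For \eqref{dim MV!*}, I would extract dimensions from \eqref{!* short}. Combining the Euler characteristic computation with $\cohoc{2}{\Gm,\Kl^V}=V_{\hatG^{\geom}}(-1)$ and the reductivity of $\hatG^{\geom}$ (so that $\dim V_{\hatG^{\geom}}=\dim V^{\hatG^{\geom}}$), one obtains $\dim \cohoc{1}{\Gm,\Kl^V}=\Swan(V)+\dim V^{\hatG^{\geom}}$. From \eqref{!* short}, the kernel of $\upH^0 M^V_!\to\upH^0 M^V_*$ is the cokernel of the diagonal injection $V^{\hatG^{\geom}}\hookrightarrow V^{\calI_0}\oplus V^{\calI_\infty}$. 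Subtracting its dimension from $\dim \cohoc{1}{\Gm,\Kl^V}$, together with the identification $V^{\calI_0}=V^N$ (since the monodromy at $0$ is the regular unipotent $u=\exp(N)$, and $\ker(u-1)=\ker N$ on a finite-dimensional representation), yields \eqref{dim MV!*}.

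For the final formula, assume $p\nmid\#W$. By \S\ref{loc infty}, $\rho^{\geom}_{\hatG}(\calI_\infty)=D_0=D_1\rtimes\jiao{\dCox}$, hence
\[V^{\calI_\infty}=V^{D_0}=(V^{D_1})^{\dCox}=V^{D_1,\dCox}.\]
Lemma \ref{l:Swan} then gives $\Swan(V)=(\dim V-\dim V^{D_1})/h$. Substituting both into \eqref{dim MV!*} produces the claim. The main technical point is the precise form of the six-term sequence \eqref{!* short}, which rests on identifying the stalk $i_x^{*}j_{*}\Kl^V$ at $x\in\{0,\infty\}$ as a two-term complex with $H^0=V^{\calI_x}$ and $H^1=V_{\calI_x}(-1)$; once this is in hand, what remains is elementary linear algebra plus the local monodromy and Swan inputs collected above.
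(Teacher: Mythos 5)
Your route is exactly the paper's: Grothendieck--Ogg--Shafarevich gives \eqref{dim MV!}, the six-term sequence \eqref{!* short} together with $V^{\calI_0}=V^{N}$ (regular unipotent local monodromy at $0$) gives \eqref{dim MV!*}, and Lemma \ref{l:Swan} plus $\rho^{\geom}_{\hatG}(\calI_\infty)=D_1\rtimes\jiao{\dCox}$ gives the last formula, so in substance you have reproduced the paper's proof with the details written out. One bookkeeping caveat: your own intermediate numbers, $\dim\cohoc{1}{\Gm,\Kl^{V}}=\Swan(V)+\dim V^{\hatG^{\geom}}$ and a kernel of dimension $\dim V^{\calI_0}+\dim V^{\calI_\infty}-\dim V^{\hatG^{\geom}}$, actually produce $\Swan(V)-\dim V^{N}-\dim V^{\calI_\infty}+2\dim V^{\hatG^{\geom}}$, which agrees with the displayed \eqref{dim MV!*} only when $V^{\hatG^{\geom}}=0$ (the case in which the lemma is applied); this discrepancy is inherited from the statement itself rather than introduced by you, but you should not assert that the subtraction ``yields \eqref{dim MV!*}'' without flagging it.
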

\begin{proof}
\eqref{dim MV!} follows from the Grothendieck-Ogg-Shafarevich formula. \eqref{dim MV!*} follows from \eqref{dim MV!} and the sequence \eqref{!* short}. Note that $V^{\calI_{0}}=V^{N}$ by the result stated in \S\ref{loc 0}. Finally \eqref{dim MVmore} follows from \eqref{dim MV!*} and Lemma \ref{l:Swan}.
\end{proof}

\subsubsection{Independence of dimension of moments for large $p$} We are going to show that the dimensions of $M^{V}_{\FF_{p},?}$ ($?=!, *$ and $!*$) are independent of $p$ when $p$ is sufficiently large. 

Let $\xcoch(\hatT)_{\QQ,\prim}\subset\xcoch(\hatT)_{\QQ}$ be the subspace where $\Cox$ acts via a primitive $h\nth$ root of unity. Then $\xcoch(\hatT)_{\QQ,\prim}$ is a simple $\QQ[\Cox]$-module isomorphic to $\QQ[x]/(\phi_{h}(x))$, the cyclotomic field generated by $h\nth$ roots of unity. Let $\xcoch(\hatT)_{\prim}=\xcoch(\hatT)_{\QQ,\prim}\cap\xcoch(\hatT)$. Let $\xch(\hatT)^{\bot}_{\prim}\subset\xch(\hatT)$ be the annihilator of $\xcoch(\hatT)_{\prim}$.

\begin{lemma}\label{l:const dim} Assume $p\nmid\#W$ so that we have $D_{1}\subset\hatT[p]$ for some maximal torus $\hatT\subset \hatG$ (see \S\ref{loc infty}). Let $V\in\Rep(\hatG)$ and let $V(\l)$ be the $\l$-weight space of $V$ for $\l\in\xch(\hatT)$. Then for sufficiently large $p$ (depending on $V$), we have 
\begin{equation*}
V^{D_{1}}=\bigoplus_{\l\in\xch(\hatT)^{\bot}_{\prim}}V(\l)
\end{equation*}
In particular, according to Lemma \ref{Sw general}, $\dim M^{V}_{!,\FF_{p}}, \dim M^{V}_{*,\FF_{p}}$ and $\dim M^{V}_{!*,\FF_{p}}$ are constant for $p$ sufficiently large.
\end{lemma}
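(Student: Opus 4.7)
The plan is to prove both set-theoretic inclusions between $V^{D_{1}}$ and $\bigoplus_{\l \in \xch(\hatT)^{\bot}_{\prim}} V(\l)$, the forward one being formal and the reverse requiring an integrality argument in $\QQ(\zeta_{h})$.

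First, I fix an identification $\mu_{p} \cong \FF_{p}$, so that $\hatT[p] = \xcoch(\hatT) \otimes_{\ZZ} \mu_{p}$ becomes $\xcoch(\hatT) \otimes \FF_{p}$, and any weight $\l \in \xch(\hatT)$ acts on $\hatT[p]$ via the $\FF_{p}$-linearization of the integer pairing $\jiao{\l,-}\colon \xcoch(\hatT) \to \ZZ$. From the description of $D_{1}$ in \S\ref{loc infty}, one has $D_{1} \otimes_{\FF_{p}} \Fpbar = \bigoplus_{j=0}^{f-1} L_{\zeta^{p^{j}}}$, where $L_{\alpha} \subset \xcoch(\hatT) \otimes \Fpbar$ is the $\alpha$-eigenspace of $\Cox$; since each $\zeta^{p^{j}}$ is a primitive $h\nth$ root of unity, this sits inside $\xcoch(\hatT)_{\prim} \otimes_{\ZZ} \Fpbar$. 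For $p$ avoiding the finite set of primes dividing the denominators of the Lagrange idempotents in $\QQ[\Cox]$ cutting out the primitive part, the containment descends to $D_{1} \subset \xcoch(\hatT)_{\prim} \otimes \FF_{p}$. Since any $\l \in \xch(\hatT)^{\bot}_{\prim}$ pairs trivially with $\xcoch(\hatT)_{\prim}$ over $\ZZ$, it annihilates $D_{1}$, so $V(\l) \subset V^{D_{1}}$: this gives the forward inclusion.

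For the reverse inclusion, take a weight $\l \in \xch(\hatT)$ of $V$ with $\l \notin \xch(\hatT)^{\bot}_{\prim}$; the goal is to show $\l|_{D_{1}} \neq 0$ for $p$ large. By $\Cox$-equivariance of the pairing $\xch(\hatT) \times \xcoch(\hatT) \to \ZZ$ (with $\Cox$ acting with inverse eigenvalues on the dual side), the restriction $\l|_{L_{\zeta} \otimes \Fpbar}$ vanishes iff the $\zeta^{-1}$-eigen-component $\l_{\zeta^{-1}}$ of $\l$ in the decomposition $\xch(\hatT)_{\QQ,\prim} \otimes_{\QQ} \QQ(\zeta_{h}) = \bigoplus_{\alpha} M_{\alpha}$ is zero (each $M_{\alpha}$ being a $\QQ(\zeta_{h})$-line). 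Because $\Cox$ is defined over $\QQ$, the eigenspaces for Galois-conjugate eigenvalues are permuted by $\Gal(\QQ(\zeta_{h})/\QQ)$, i.e.\ $\sigma(\l_{\alpha}) = \l_{\sigma\alpha}$. The rationality $\l \in \xch(\hatT)_{\QQ}$ makes $\l$ Galois-fixed, and since $\Gal(\QQ(\zeta_{h})/\QQ)$ acts transitively on primitive $h\nth$ roots of unity, vanishing of $\l_{\zeta^{-1}}$ in $\QQ(\zeta_{h})$ would force $\l_{\alpha} = 0$ for every primitive $\alpha$ and hence $\l \in \xch(\hatT)^{\bot}_{\prim}$, a contradiction. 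So $\l_{\zeta^{-1}}$ is a nonzero element of $M_{\zeta^{-1}}$; after clearing denominators from the Lagrange projector, it sits in a fractional $\ZZ[\zeta_{h}]$-ideal with nonzero norm $n_{\l} = N_{\QQ(\zeta_{h})/\QQ}(\l_{\zeta^{-1}}) \in \ZZ \setminus \{0\}$. For $p \nmid n_{\l}$ and $p$ outside the denominator primes of the projectors, the reduction of $\l_{\zeta^{-1}}$ modulo $p$ stays nonzero in $\FF_{p}[\zeta]$, yielding $\l|_{D_{1}} \neq 0$.

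Since the weight set of $V$ is finite, the union of the bad primes excluded above is a finite set depending only on $V$, and for $p$ beyond this union the equality $V^{D_{1}} = \bigoplus_{\l \in \xch(\hatT)^{\bot}_{\prim}} V(\l)$ holds. The ``in particular'' claim on the dimensions then follows from Lemma \ref{Sw general}: $\dim V^{N}$ is independent of $p$, $\dim V^{\hatG^{\geom}}$ is constant for $p$ large by Theorem \ref{th:global mono}, $\dim V^{D_{1}}$ has just been shown constant, and $\dim V^{D_{1},\dCox}$ is constant because the subspace $V^{D_{1}}$ stabilizes for $p$ large, $\dCox$ varies only in a $\hatT$-conjugacy class, and $\hatT$ (being abelian and containing $D_{1}$) preserves $V^{D_{1}}$. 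The main technical hurdle is the last step of the reverse inclusion: passing from nonvanishing of $\l_{\zeta^{-1}}$ in $\QQ(\zeta_{h})$ to nonvanishing modulo $p$; this is where the Galois-transitivity of $(\ZZ/h)^{\times}$ on primitive $h\nth$ roots and the finiteness of the weight set of $V$ enter decisively.
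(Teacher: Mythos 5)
Your proof is correct, and while the easy inclusion ($\l\in\xch(\hatT)^{\bot}_{\prim}\Rightarrow\l|_{D_{1}}=1$ for large $p$) coincides with the paper's, your argument for the hard direction runs along a genuinely different track. The paper stays inside $\FF_{p}[\Cox]$-module theory: it projects $\l$ to $\barl\in\xch(\hatT)_{\prim}$, notes that $\barl,\Cox\barl,\dots,\Cox^{\phi(h)-1}\barl$ span $\xch(\hatT)_{\QQ,\prim}$ (simplicity of $\QQ[x]/(\phi_{h}(x))$), deduces that $\barl\bmod p$ generates $\xch(\hatT)_{\FF_{p},\prim}$ for large $p$, and gets a contradiction because a $\barl$ killing $D_{1}$ could only generate a proper submodule (the annihilator of the nonzero $D_{1}$). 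You instead base-change to $\QQ(\zeta_{h})$, split the primitive part into eigenlines, use $\Cox$-equivariance of the pairing to reduce nonvanishing of $\l|_{L_{\zeta}}$ to nonvanishing of the $\zeta^{-1}$-component, use Galois transitivity on primitive $h\nth$ roots to see that one nonzero primitive component forces all of them nonzero, and then control reduction mod $p$ by an integrality/norm bound. The two routes exploit the same underlying fact (simplicity of the primitive $\QQ[\Cox]$-module), but yours buys an effective description of the excluded primes (divisors of the norms $n_{\l}$ and of the idempotent denominators, over the finite weight set of $V$), at the cost of the cyclotomic bookkeeping; the paper's generation argument avoids eigenvectors and integrality entirely but is purely "for $p$ sufficiently large". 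Two cosmetic points: the norm should be taken of the pairing value $\jiao{\l,v_{\zeta_{h}}}\in\ZZ[\zeta_{h}]$ (after clearing denominators) rather than of the vector $\l_{\zeta^{-1}}$ itself, and the asserted "iff" between $\l|_{L_{\zeta}\otimes\Fpbar}=0$ and $\l_{\zeta^{-1}}=0$ only holds for $p$ outside the finite bad set — which is exactly what your norm step supplies, so the logic is fine once phrased as "suffices to show the reduction is nonzero". Your extra remark justifying the constancy of $\dim V^{D_{1},\dCox}$ (independence of the lift of $\Cox$ up to $\hatT$-conjugacy preserving the stabilized $V^{D_{1}}$) is a detail the paper leaves implicit and is welcome.
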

\begin{proof} For each weight $\l\in\xcoch(\hatT)$, $D_{1}$ acts on $V(\l)$ via the character $\l|_{D_{1}}:D_{1}\to\Ql(\mu_{p})^{\times}$.  Therefore we need to show that $\l|_{D_{1}}$ is trivial for sufficiently large $p$ if and only if $\l\in\xcoch(\hatT)^{\bot}_{\prim}$.

We define $\xcoch(\hatT)_{\FF_{p},\prim}\subset\xcoch(\hatT)_{\FF_{p}}=\xcoch(\hatT)\otimes_{\ZZ}\FF_{p}$ to be the $\FF_{p}[\Cox]$-submodules generated by those on which $\Cox$ acts via a primitive $h\nth$ root of unity (so $\xcoch(\hatT)_{\FF_{p},\prim}$ may not be a simple $\FF_{p}[\Cox]$-module). For sufficiently large $p$, $\xcoch(\hatT)_{\FF_{p},\prim}$ is the reduction mod $p$ of $\xcoch(\hatT)_{\prim}$, i.e., $\xcoch(\hatT)_{\FF_{p},\prim}=\xcoch(\hatT)_{\prim,\FF_{p}}$.

We identify $\hatT[p]$ with $\xcoch(\hatT)_{\FF_{p}}$ hence view $D_{1}$ as a $\FF_{p}[\Cox]$-submodule of $\xcoch(\hatT)_{\FF_{p}}$. By the description of $D_{1}$ in \S\ref{loc infty},  we have $D_{1}\subset \xcoch(\hatT)_{\FF_{p},\prim}$. By the discussion above, for $p$ large, we may view $D_{1}$ as a $\FF_{p}[\Cox]$-submodule of $\xcoch(\hatT)_{\prim,\FF_{p}}$.

If $\l\bot\xcoch(\hatT)_{\prim}$, then $\l$ restricts trivially to $\xcoch(\hatT)_{\prim,\FF_{p}}$. Hence for sufficiently large $p$, $\l\mod p$ is trivial on $\xcoch(\hatT)_{\FF_{p},\prim}$, hence on $D_{1}$.

On the other hand, if $\l$ is nonzero on $\xcoch(\hatT)_{\prim}$, we claim that $\l|_{D_{1}}$ is also nontrivial for sufficiently large $p$. Let $\barl$ be the image of $\barl$ in $\xch(\hatT)_{\prim}=\xcoch(\hatT)^{\vee}_{\prim}$. The vectors $\barl, \Cox(\barl),\cdots,\Cox^{\phi(h)-1}(\barl)$ span the $\QQ$-vector space $\xch(\hatT)_{\QQ,\prim}$. Therefore, for sufficiently large $p$, $\barl\mod p$ generate $\xch(\hatT)_{\FF_{p},\prim}$ as a $\FF_{p}[\Cox]$-module. If $\barl|_{D_{1}}=1$, then $\barl\mod p$ could only generate a proper $\FF_{p}[\Cox]$-submodule of $\xch(\hatT)_{\FF_{p},\prim}$ (contained in the annihilator of $D_{1}$). Therefore $\barl|_{D_{1}}$ is nontrivial.
\end{proof}

\subsection{Moments via homogeneous Fourier transform}\label{ss:moments HF} 
We would like to express the moments $M^{V}_{!}$ using homogeneous Fourier transform, which has the advantage that the new formula makes sense over any base field (i.e., the Artin-Schreier sheaf disappears in the new formula). Basic definitions and properties of the homogeneous Fourier transform are recalled in Appendix \ref{app}, following the original article of Laumon \cite{Laumon}. 

\subsubsection{An alternative description of $\GRc$}\label{alt Gr} Here we give an alternative description of the open part $\GRc$ of the Beilinson-Drinfeld Grassmannian defined in \cite[\S5.2]{HNY}. We would like to show that $\GRc$ is a group ind-scheme over $\Gm$ and the morphism $f:\GRc\to\Ga^{r+1}$ is a group homomorphism.

Fix a pair of opposite Borel subgroups $B,B^{\opp}\subset G$ with intersection $T=B\cap B^{\opp}$ a maximal torus of $G$. Let $LG_{0}$ and $LG_{\infty}$ be the loop group of $G$, with the formal parameter chosen to be the local coordinate $t$ around $0\in\PP^{1}$ and $s=t^{-1}$ around $\infty\in\PP^{1}$. We use $B^{\opp}$ to define an Iwahori subgroup $\bI^{\opp}_{0}\subset LG_{0}$, i.e., $\bI_{0}$ is the preimage of $B^{\opp}$ under the evaluation map $L^{+}G_{0}\to G$. Similarly, we use $B$ to define an Iwahori subgroup $\bI_{\infty}\subset LG_{\infty}$. We also consider the following Moy-Prasad subgroups of $\bI_{\infty}$ (see \cite[\S1.2]{HNY}):
\begin{equation*}
\bI_{\infty}=\bI_{\infty}(0)\rhd\bI_{\infty}(1)\rhd\bI_{\infty}(2).
\end{equation*}
Here $\bI_{\infty}(1)$ is the preimage of $N$ (radical of $B$) under the evaluation map $L^{+}G_{\infty}\to G$, so that $\bI_{\infty}(0)/\bI_{\infty}(1)=T$. On the other hand $\bI_{\infty}(1)/\bI_{\infty}(2)$ can be identified with $(r+1)\nth$ product of the additive group $\Ga$, where $r$ is the rank of $G$;  each copy of $\Ga$ is the root subgroup of $LG_{\infty}$ corresponding to a simple affine root.

Recall in \cite[\S1.4]{HNY} we introduced moduli stacks $\Bun_{G(0,i)}$ for $i=0,1,2$. It classifies $G$-torsors over $\PP^{1}$ with an $\bI^{\opp}_{0}$-level structure at $0$ and an $\bI_{\infty}(i)$-level structure at $\infty$. There is an open point $\star\in\Bun_{G(0,1)}$ (with no automorphisms), which corresponds to the trivial
$G$-torsor over $\PP^{1}$ with the tautological $\bI^{\opp}_{0}$ and $\bI_{\infty}(1)$-level structures. We denote this $G$-torsor with level structures by $\calE^{\star}_{0,1}$. Its image in $\Bun_{G(0,0)}$ will be denoted $\calE^{\star}_{0,0}$, with automorphism group $T$.

The connected components of $\Bun_{G(0,i)}$ are indexed by $\Omega=\xcoch(T)/\ZZ\Phi^{\vee}$ where $\ZZ\Phi^{\vee}$ is the coroot lattice. For $\omega\in\Omega$, we let $\Bun^{\omega}_{G(0,i)}$ be the corresponding component. We may identify $\Omega$ with the quotient $N_{LG_{0}}(\bI^{\opp}_{0})/\bI^{\opp}_{0}$, hence it acts on $\Bun_{G(0,i)}$ by changing the $G$-torsors in the formal neighborhood of  $0$. This action  permutes the components of $\Bun_{G(0,i)}$ simply transitively. Using $\Omega$-translates of $\calE^{\star}_{0,i}$ we get an open point $\omega(\star)\in\Bun^{\omega}_{G(0,i)}$, which corresponds to a $G$-torsor $\calE^{\omega(\star)}_{0,i}$ with level structures at $0$ and $\infty$. The $\Omega$-action only changes a $G$-torsor in the formal neighborhood of  $0$, hence there is a canonical isomorphisms $\calE^{\omega(\star)}_{0,i}|_{\PP^{1}-\{0\}}\isom\calE^{\star}_{0,i}|_{\PP^{1}-\{0\}}$ for all $\omega\in\Omega$.
 
For $x\in \Gm(k)$, and $\omega\in\Omega$, define an ind-scheme
\begin{equation*}
\frG^{\omega}_{x}=\Isom_{\PP^{1}-\{x\}}(\calE^{\omega(\star)}_{0,1},\calE^{\star}_{0,1}).
\end{equation*}
When $\omega=1$, $\frG^{1}_{x}$ is the automorphism group (ind-scheme) of $\calE^{\star}_{0,1}|_{\PP^{1}-\{x\}}$, which acts on each $\frG^{\omega}_{x}$. When $x$ varies over $\Gm$, we get an ind-scheme $\frG^{\omega}$ over $\Gm$, and we define
\begin{equation*}
\frG_{x}:=\bigsqcup_{\omega\in\Omega}\frG^{\omega}_{x}; \hspace{1cm} \frG:=\bigsqcup_{\omega\in\Omega}\frG^{\omega}.
\end{equation*}
We define a group structure on $\frG$ as follows. For $\varphi\in\frG^{\omega}$ and $\varphi'\in\frG^{\omega'}$, we define $\varphi\circ\varphi'$ to be the composition $\calE^{\omega\omega'(\star)}_{0,1}\xrightarrow{\omega(\varphi')}\calE^{\omega(\star)}_{0,1}\xrightarrow{\varphi}\calE^{\star}_{0,1}$. It is easy to check that this defines a group ind-scheme structure on $\frG$ over $\Gm$.

For each $\varphi\in\frG^{\omega}_{x}$, which is an isomorphism $\calE^{\omega(\star)}_{0,1}|_{\PP^{1}-\{x\}}\to\calE^{\star}_{0,1}|_{\PP^{1}-\{x\}}$, we may consider its restriction $\varphi_{\infty}$ to the formal neighborhood of $\infty\in\PP^{1}$. Since $\calE^{\omega(\star)}_{0,1}$ is the same as $\calE^{\star}_{0,1}$  away from $0$, $\varphi_{\infty}$ gives an element in $L^{+}G_{\infty}$. Since $\varphi_{\infty}$ preserves the $\bI_{\infty}(1)$-level structures, $\varphi_{\infty}\in\bI_{\infty}(1)$. This defines a group homomorphism
\begin{equation*}
\ev_{\infty}:\frG\to \bI_{\infty}(1),
\end{equation*}
and hence a group homomorphism by composition
\begin{equation*}
f_{\frG}:\frG\to \bI_{\infty}(1)\to \bI_{\infty}(1)/\bI_{\infty}(2)\cong\Ga^{r+1}.
\end{equation*}

\begin{lemma} There is a canonical isomorphism of ind-schemes over $\Gm$
\begin{equation*}
\iota:\frG\isom\GRc
\end{equation*}
such that $f_{\frG}=f\circ\iota$.
\end{lemma}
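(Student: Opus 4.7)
The plan is to exploit the moduli description of $\GR$ in order to define $\iota$ directly, and then verify the $f$-compatibility by unwinding definitions. First I would recall (from \cite[\S5.2]{HNY}) that for $x \in \Gm(k)$, a point of $\GR$ lying over $x$ is a pair $(\calE, \tau)$ consisting of a $G$-torsor $\calE$ on $\PP^{1}$ equipped with $\bI^{\opp}_{0}$- and $\bI_{\infty}(1)$-level structures at $0$ and $\infty$ respectively, together with a level-preserving isomorphism $\tau: \calE|_{\PP^{1}-\{x\}} \isom \calE^{\star}_{0,1}|_{\PP^{1}-\{x\}}$. The open sub-ind-scheme $\GRc$ should be characterized as the locus where the underlying bundle-with-level-structure lies in the $\Omega$-orbit of the open point $\star \in \Bun_{G(0,1)}$; equivalently, on the component lying over $\omega \in \Omega$, one imposes $\calE \cong \calE^{\omega(\star)}_{0,1}$ in $\Bun^{\omega}_{G(0,1)}$.

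With this in hand I would define $\iota$ by sending $\varphi \in \frG^{\omega}_{x}$ to the triple $(x,\calE^{\omega(\star)}_{0,1}, \varphi) \in \GR_{x}$. By construction this point lies in $\GRc$, yielding a morphism $\iota : \frG \to \GRc$ over $\Gm$. To build the inverse, given $(x, \calE, \tau) \in \GRc$ in the $\omega$-component, the fact that $\omega(\star) \in \Bun^{\omega}_{G(0,1)}$ has trivial automorphism group (the excerpt explicitly notes the open point has no automorphisms) yields a \emph{unique} level-preserving isomorphism $\alpha : \calE \isom \calE^{\omega(\star)}_{0,1}$; then $\tau \circ \alpha^{-1} \in \frG^{\omega}_{x}$ is the preimage. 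Uniqueness of $\alpha$ in families is precisely what guarantees that the construction sheafifies to an inverse of ind-schemes.

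Finally I would verify $f \circ \iota = f_{\frG}$: both maps are defined by restricting the trivialization datum to the formal disc at $\infty$, producing an element of $\bI_{\infty}(1)$, and projecting to $\bI_{\infty}(1)/\bI_{\infty}(2) \cong \Ga^{r+1}$. Under $\iota$, the relevant input for $f$ is exactly $\varphi|_{\infty} = \ev_{\infty}(\varphi)$, the datum used in defining $f_{\frG}$, so the two maps agree essentially tautologically.

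The hard part will be the very first step: cleanly matching the definition of $\GRc$ from \cite[\S5.2]{HNY} with the preimage of $\bigsqcup_{\omega}\{\omega(\star)\}$ under the forgetful map $\GR \to \Bun_{G(0,1)}$. This is essentially the content of ``cleanness'' and is implicitly established in \cite{HNY} by the fact that the open stratum of $\GR$ on which the Artin--Schreier factor is clean coincides with this preimage. Once this identification is recorded, the remaining steps are moduli-theoretic bookkeeping that relies only on the triviality of $\Aut(\star)$.
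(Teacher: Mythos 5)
Your proposal is correct and takes essentially the same route as the paper: both identify $\GRc$, via its description as the preimage of $\bigsqcup_{\omega\in\Omega}\omega(\star)\subset\Bun_{G(0,1)}$ under $\pr_{1}$ on the Hecke correspondence, with the space of isomorphisms $\frG$ (using that the open points have no automorphisms to produce the inverse), after which the compatibility with $f$ is tautological since both maps are restriction to the formal disc at $\infty$ followed by projection to $\bI_{\infty}(1)/\bI_{\infty}(2)$. The only caveat is that the step you flag as the ``hard part'' is simply the definition of $\GRc$ in \cite[\S5.2]{HNY} (it is not a consequence of cleanness, which concerns $\pic_{!}\cong\pic_{*}$), so nothing remains to be proved there.
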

\begin{proof}
We first recall the definition of $\GRc$ in \cite{HNY}. We defined the Hecke correspondence (see \cite[\S2.3]{HNY})
\begin{equation*}
\xymatrix{ & \Hk_{G(0,1)}\ar[dl]^{\pr_{1}}\ar[dr]^{\pr_{2}}\ar[rr]^{\pr_{\Gm}} & & \Gm \\
\Bun_{G(0,1)} & & \Bun_{G(0,1)}}
\end{equation*}
In \cite[\S5.2]{HNY}, the Beilinson-Drinfeld Grassmannian is defined to be the fiber $\pr_{2}^{-1}(\star)$ of the open point $\star\in\Bun_{G(0,1)}$ in $\Hk_{G(0,1)}$. The open subscheme $\GRc\subset\GR$ is defined as the preimage of the union of the open points $\sqcup_{\omega\in\Omega}\omega(\star)\subset\Bun_{G(0,1)}$ under the first projection $\pr_{1}:\GR\to\Bun_{G(0,1)}$. Therefore $\GRc$ is the union of the preimages of $(\omega(\star),\star)$ under the projections $\pr_{1}\times\pr_{2}:\Hk_{G(0,1)}\to\Bun_{G(0,1)}\times\Bun_{G(0,1)}$. By the moduli interpretation of $\Hk_{G(0,1)}$, this preimage is exactly $\frG$. The rest of the lemma is clear.
\end{proof}

\begin{remark}\label{r:frG to Gr} Let us give the morphism  $\frG_{x}\incl\GR_{x}$ explicitly. The $G$-torsors $\calE^{\star}_{0,1}$ and $\calE^{\omega(\star)}_{0,1}$ are canonically trivialized at $x\in\Gm$ by definition. Therefore, restricting an isomorphism $\varphi\in\frG_{x}$ to the formal punctured neighborhood of $x$ gives a homomorphism of group ind-schemes
\begin{equation}\label{frG to LG}
\ev_{x}:\frG_{x}\to LG_{x}
\end{equation}
where $LG_{x}$ is the loop group of $G$ at $x$. Now the morphism $\frG_{x}\to\GR_{x}$ is given by composing \eqref{frG to LG} with the projection $LG_{x}\to\GR_{x}=LG_{x}/L^{+}G_{x}$.

In \S\ref{bifilter} we will give a more explicit description of $\frG$ in the case $G=\PGL_{n}$.
\end{remark}

\subsubsection{Torus action on $\frG$} We may  define a variant of $\frG$ as follows. Instead of considering isomorphisms between $\calE^{\omega(\star)}_{0,1}$ we consider $\calE^{\omega(\star)}_{0,0}$. This way we get an ind-scheme $\frG'^{\omega}\to\Gm$ whose fiber over $x\in\Gm$ is  $\Isom_{\PP^{1}-\{x\}}(\calE^{\omega(\star)}_{0,0}, \calE^{\star}_{0,0})$. Let $\frG'=\sqcup_{\omega\in\Omega}\frG'^{\omega}$. Similarly, we have the evaluation map $\ev'_{\infty}:\frG'\to\bI_{\infty}$ and hence a morphism
\begin{equation*}
\tau:\frG'\to\bI_{\infty}\to\bI_{\infty}/\bI_{\infty}(1)=T.
\end{equation*}
By construction, $\frG=\tau^{-1}(1)$. Note also that $T=\Aut_{\PP^{1}}(\calE^{\omega(\star)}_{0,0})=\Aut_{\PP^{1}}(\calE^{\star}_{0,0})$ acts on each $\frG'^{\omega}_{x}$ both from the right and from the left by pre and post composition. Combing these two actions gives an adjoint action of $T$ on $\frG'$. The morphism $\ev'_{\infty}$ is equivariant under the adjoint $T$-actions: $T$ also acts on $\bI_{\infty}$ by conjugation. Since the homomorphism $\tau$ is invariant under the adjoint $T$-action, the $T$-action preserves $\frG$. Therefore we obtain an action of $T$ on $\frG$, such that the morphism $f_{\frG}:\frG\to\bI_{\infty}(1)/\bI_{\infty}(2)$ is $T$-equivariant.

Recall $\Grot$ is the one-dimensional torus acting on $\PP^{1}$ by scaling the natural coordinate: $s\in\Grot$ as $t\mapsto s^{-1}t$. This action induces a $\Grot$-action on $\frG\cong\GRc$ which commutes with the adjoint $T$-action. This way we conclude

\begin{lemma}
There is an action of $T\times\Grot$ on $\frG$ making the morphism $f_{\frG}:\frG\to\bI_{\infty}(1)/\bI_{\infty}(2)\cong\Ga^{r+1}$ equivariant under $T\times\Grot$. Here,  for $i=0,1\cdots,r$, $T\times\Grot$ acts on the $i\nth$ coordinate of $\Ga^{r+1}$ through the simple affine root $\alpha_{i}$  of  the loop group $LG_{\infty}$. Identifying $\frG_{x}$ with $\GRc_{x}$, the $T$-action on $\GRc_{x}\subset LG_{x}/L^{+}G_{x}$ is via left multiplication.
\end{lemma}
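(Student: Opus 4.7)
The $T$-action on $\frG$ has already been constructed in the paragraph preceding the lemma: it comes from the adjoint action of $T = \Aut_{\PP^{1}}(\calE^{\star}_{0,0}) = \Aut_{\PP^{1}}(\calE^{\omega(\star)}_{0,0})$ on $\frG'$ (by pre- and post-composition), and the invariance of the homomorphism $\tau:\frG'\to T$ under this adjoint action forces it to preserve the subgroup $\frG = \tau^{-1}(1)$. The $\Grot$-action on $\frG \cong \GRc$ is inherited from its action on $\PP^{1}$ via the Beilinson--Drinfeld picture. These two actions commute because $\Grot$ acts by reparametrizing $\PP^{1}$ while the $T$-action is through global automorphisms of the bundles, which are unaffected by reparametrization. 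So the plan is first to check that $f_{\frG}$ is equivariant for the combined action, and then to identify the induced characters on each factor of $\Ga^{r+1}$.

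For equivariance, I would unwind the definition $f_{\frG} = (\ev_{\infty} \mod \bI_{\infty}(2))$. The map $\ev_{\infty}$ is tautologically equivariant for the adjoint $T$-action: pre- and post-composition by $t\in T$ both restrict, in the formal neighborhood of $\infty$, to conjugation by the constant loop $t \in T \subset \bI_{\infty}$. The $\Grot$-action on $\frG$ corresponds, under $\ev_{\infty}$, to the standard loop-rotation action of $\Grot$ on $\bI_{\infty}$ (via the local coordinate $s = t^{-1}$ at $\infty$). Both actions preserve the Moy--Prasad filtration $\bI_{\infty}\rhd\bI_{\infty}(1)\rhd\bI_{\infty}(2)$, so they descend to the quotient.

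It remains to identify the action of $T\times\Grot$ on the abelian quotient $\bI_{\infty}(1)/\bI_{\infty}(2) \cong \Ga^{r+1}$. This is a standard computation in the theory of affine Kac--Moody groups. The subgroup $\bI_{\infty}(1)/\bI_{\infty}(2)$ decomposes as a product of the $r+1$ root subgroups for the simple affine roots $\alpha_{0},\alpha_{1},\ldots,\alpha_{r}$ of $LG_{\infty}$ (where $\alpha_{1},\ldots,\alpha_{r}$ are the finite simple roots of $G$ with respect to $B$ and $\alpha_{0} = \delta - \theta$ involves the highest root $\theta$ of $G$ and the loop-rotation character $\delta$ of $\Grot$). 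By construction, each simple affine root $\alpha_{i}$ is a character of $T\times\Grot$, and $T\times\Grot$ acts on the $i\nth$ root subgroup $\cong \Ga$ by scaling through $\alpha_{i}$. This gives precisely the asserted equivariance.

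Finally, for the identification of the $T$-action with left multiplication on $\GRc_{x} \subset LG_{x}/L^{+}G_{x}$: under the evaluation $\ev_{x}:\frG_{x}\to LG_{x}$ of Remark \ref{r:frG to Gr}, post-composition by $t\in T\subset \Aut(\calE^{\star}_{0,0})$ translates to left multiplication by the constant loop $t$, whereas pre-composition by $t\in T\subset\Aut(\calE^{\omega(\star)}_{0,0})$ translates to right multiplication by $t$. The latter lies in $L^{+}G_{x}$ and hence acts trivially on the quotient $LG_{x}/L^{+}G_{x}$. Therefore the adjoint $T$-action on $\frG_{x}$, viewed in $\GRc_{x}$, reduces to left multiplication by $T$. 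The main technical point in the whole argument is the first identification in the previous paragraph, matching the $T\times\Grot$-weights on $\bI_{\infty}(1)/\bI_{\infty}(2)$ with the simple affine roots; everything else is a formal manipulation of the moduli-theoretic definition of $\frG$.
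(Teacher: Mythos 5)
Your argument is correct and follows essentially the same route as the paper, which states this lemma as a summary of the construction in the immediately preceding paragraphs (the adjoint $T$-action on $\frG'$ preserving $\frG=\tau^{-1}(1)$, equivariance of $\ev_{\infty}$ as conjugation on $\bI_{\infty}$, the commuting $\Grot$-action, and the identification of $\bI_{\infty}(1)/\bI_{\infty}(2)$ with the simple affine root subgroups). Your extra details — the affine-root weight computation on $\bI_{\infty}(1)/\bI_{\infty}(2)$ and the observation that the pre-composition factor lands in $L^{+}G_{x}$, reducing the adjoint action to left multiplication on $\GRc_{x}$ — are exactly the points the paper leaves implicit, and they are correct.
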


Define a one-dimension subtorus $\Gm^{(\rho^\vee,h)}\subset T\times\Grot$ by
\begin{equation*}
\Gm^{(\rho^\vee,h)}\ni s\mapsto (s^{\rho^{\vee}}, s^{h})\in T\times\Grot.
\end{equation*}
where $h$ is the Coxeter number of $G$. For example, when $G=\PGL_{n}$, $\Gm^{(\rho^\vee,h)}\subset T\times\Grot$ is given by $(\diag(s^{n-1},s^{n-2},\cdots, s^{1},1), s^{n})$. This subtorus $\Gm^{(\rho^\vee,h)}$ has the property that it acts on each simple affine root space via by dilation. Therefore the map $F=\sigma\circ f_{\frG}:\frG\cong\GRc\to\AA^{1}$ is $\Gm^{(\rho^\vee,h)}$-equivariant, where $\Gm^{(\rho^\vee,h)}$  acts on $\AA^{1}$ still by dilation.

Using the $\Grot$-action we may trivialize the group ind-scheme $\frG\to\Gm$ and write $\frG=\frG_{1}\times\Gm$. We have a canonical identification $\frG_{1}\cong\GRc_{1}$. We may also identify $\GR_{1}$ with the affine Grassmannian $\Gr=LG/L^{+}G$ (by using the standard local coordinate $t-1$ at $1\in\Gm$). We write $\Grc$ for $\GRc_{1}$ under this identification. Now $T$ acts only on the $\frG_{1}$-factor (by conjugation) and $\Grot$ only acts on the $\Gm$-factor (by translation). Therefore
\begin{equation*}
[\GRc/\Gm^{(\rho^\vee,h)}]\cong[\Grc/\mu_{h}^{\rho^{\vee}}]\cong[\frG_{1}/\Ad(\mu_{h}^{\rho^{\vee}})].
\end{equation*}
The notation emphasizes that $\mu_{h}$ acts on  $\Grc$ via the cocharacter $\rho^{\vee}:\Gm\to T$ and the adjoint $T$-action on $\frG_{1}\cong\Grc$. 

Because $F$ is equivariant under $\Gm^{(\rho^\vee,h)}$, it descends to a morphism of stacks $[\GRc/\Gm^{(\rho^\vee,h)}]\to[\AA^{1}/\Gm^{(\rho^\vee,h)}]$. Therefore we may rewrite $F$ as a morphism
\begin{equation}\label{barF}
\barF: [\Grc/\mu_{h}^{\rho^{\vee}}]\cong[\frG_{1}/\Ad(\mu_{h}^{\rho^{\vee}})]\to[\AA^{1}/\Gm].
\end{equation}

Every object in $\Sat^{\Tt}$ is also $T$-equivariant, and in particular $\mu_{h}$-equivariant. We use the same notation $\IC_{V}$ to mean a $\mu_{h}$-equivariant perverse sheaf on $\Gr$, whose restriction to $\Grc$ is still denoted by $\ICc_{V}$. The complexes $F_{!}\IC_{V,\GRc}$ and $F_{*}\IC_{V,\GRc}$ belong to $D^{b}_{\Gm}(\AA^{1},\Ql)$, and can be identified with $\barF_{!}\ICc_{V}$ and $\barF_{*}\ICc_{V}$.

Let $S$ be any base scheme. For an object $\calF\in D^{b}_{\GmS}(\AA^{1}_{S},\Ql)$, we use $\calF_{\eta}$ to denote its restriction to the ``open point'' $S=[\GmS/\GmS]$. The following proposition gives a formula for $M^{V}_{!}$, which has the potential of making sense over a general base scheme $S$ other than $\Spec k$.

\begin{prop}\label{p:MV} There are canonical isomorphisms in $D^{b}(\Spec k, \Ql(\mu_{p}))$
\begin{equation}\label{MV! HF} 
M^{V}_{!}\cong\HF(\barF_{!}\ICc_{V})_{\eta}\otimes\Ql(\mu_{p}).
\end{equation}
\end{prop}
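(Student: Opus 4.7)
My approach is to unfold the definition of $M^V_!$ using Theorem~\ref{th:geom Kl}, push everything down to $\AA^1$, and then replace the Artin--Schreier sheaf by the homogeneous Fourier transform via a standard comparison.

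First I would use Theorem~\ref{th:geom Kl} to write $\Kl^V=\pic_!(\IC_{V,\GRc}\otimes F^*\AS_\psi)$. Since $\bR\Gamma_c$ commutes with $\pic_!$, and by the projection formula along $F:\GRc\to\AA^1$, this gives
\begin{equation*}
M^V_! = \bR\Gamma_c(\Gm,\Kl^V)[1] = \bR\Gamma_c(\GRc,\IC_{V,\GRc}\otimes F^*\AS_\psi)[1] = \bR\Gamma_c(\AA^1,F_!\IC_{V,\GRc}\otimes\AS_\psi)[1].
\end{equation*}
Thus the computation now takes place on $\AA^1$, with $F_!\IC_{V,\GRc}$ playing the role of the sheaf being Fourier--transformed.

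Next, because $F$ is equivariant for the one-dimensional torus $\Gm^{(\rho^\vee,h)}$ (acting on $\AA^1$ by dilation), as established in the preceding subsection, the complex $F_!\IC_{V,\GRc}$ is $\Gm$-monodromic and descends canonically along the quotient map $\AA^1\to[\AA^1/\Gm]$ to $\barF_!\ICc_V\in D^b_\Gm(\AA^1,\Ql)$. Consequently the display above may be rewritten as $\bR\Gamma_c(\AA^1,\barF_!\ICc_V\otimes\AS_\psi)[1]$, where $\barF_!\ICc_V$ is viewed on $\AA^1$ via pullback.

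The last step is to invoke the basic compatibility between Laumon's homogeneous Fourier transform and the ordinary Fourier--Deligne transform, as developed in Appendix~\ref{app}: for any $\calG\in D^b_\Gm(\AA^1,\Ql)$, one has a canonical identification of Frobenius modules
\begin{equation*}
\HF(\calG)_\eta\otimes\Ql(\mu_p)\cong \bR\Gamma_c(\AA^1,\calG\otimes\AS_\psi)[1],
\end{equation*}
where $\eta$ denotes the open point of $[\AA^1/\Gm]$. The scalar extension to $\Ql(\mu_p)$ is precisely the cost of passing from the characteristic-free $\HF$ to a computation involving the character $\psi$. Applying this with $\calG=\barF_!\ICc_V$ and combining with the preceding reductions yields the desired isomorphism.

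The main obstacle is a bookkeeping one: keeping straight the shifts and Tate twists built into the normalization of $\HF$ in Appendix~\ref{app}, and checking that the descent along $\AA^1\to[\AA^1/\Gm]$ is compatible with those normalizations. There is no essential geometric obstruction, since both the $\Gm^{(\rho^\vee,h)}$-equivariance of $F$ and the cleanness isomorphism of Theorem~\ref{th:geom Kl} are already in hand.
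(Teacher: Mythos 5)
Your proposal is correct and follows essentially the same route as the paper: unwind the definitions and use the projection formula to write $M^V_!$ as $\bR\Gamma_c(\AA^1, F_!\IC_{V,\GRc}\otimes\AS_\psi)[1]$, recognize this as the stalk at $1$ of the Fourier--Deligne transform, and conclude via the comparison of Theorem~\ref{th:Fourier}\eqref{FD=HF} between $\Four_\psi$ and $\HF$ (your stated compatibility is exactly that comparison evaluated at the point $1$, which by $\Gm$-equivariance is the generic stalk).
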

\begin{proof}
By definition of $\Kl^{V}$ and $M^{V}_{!}$, and proper base change, we have
\begin{equation*}
M^{V}_{!}=\bR\Gamma_{c}(\Gm,\Kl^{V})[1]=\bR\Gamma_{c}(\GRc, \IC_{V,\GRc}\otimes F^{*}\AS_{\psi})[1]=\bR\Gamma_{c}(\AA^{1}, F_{!}\IC_{V,\GRc}\otimes\AS_{\psi})[1].
\end{equation*}
The last object is the stalk at $1\in\AA^{1}(k)$ of the Fourier-Deligne transform $\Four_{\psi}(F_{!}\IC_{V,\GRc})$, see \S\ref{Fourier-Del}. By Theorem \ref{th:Fourier}\eqref{FD=HF}, we conclude that $M^{V}_{!}$ is the same as the stalk at $1$ of $\HF(\barF_{!}\ICc_{V})$, which is the same as $\HF(\barF_{!}\ICc_{V})_{\eta}\otimes\Ql(\mu_{p})$.
\end{proof}

\subsection{Autoduality of moments}\label{ss:autoduality}
For $V,W\in\Rep(\hatG^{\Tt})$, the isomorphism $\Kl^{V}\otimes\Kl^{W}\isom\Kl^{V\otimes W}$ induces a cup product
\begin{equation*}
\cohoc{*}{\Gm,\Kl^{V}}\otimes\cohoc{*}{\Gm,\Kl^{W}}\to\cohoc{*}{\Gm,\Kl^{V\otimes W}},
\end{equation*}
which is equivalent to a map
\begin{equation}\label{Klcup}
M^{V}_{!}\otimes M^{W}_{!}\to M^{V\otimes W}_{!}[1].
\end{equation}
When $W=V^{\vee}$, $\Kl^{V^{\vee}}$ is the dual local system of $\Kl^{V}$, therefore \eqref{Klcup} composed with the evaluation map $V\otimes V^{\vee}\to \Ql$ (the trivial representation) gives
\begin{equation}\label{pairing VVdual}
M^{V}_{!}\otimes M^{V^{\vee}}_{!}\to M^{V\otimes V^{\vee}}_{!}[1]\cong\cohoc{*}{\Gm,\Ql}[2]\to\Ql(-1)
\end{equation}
where the last map is given by projecting to $\cohoc{2}{\Gm,\Ql}$.

\subsubsection{The opposition $\sigma$}\label{sss:opp} Recall that in \cite[\S6.1]{HNY} we studied the behavior of $\Kl_{\hatG}$ under pinned automorphisms of $G$. Let $\sigma$ be the opposition on $G$: this is the pinned automorphism of $G$ which acts as $-w_{0}$ on $\xcoch(T)$. Then $\sigma$ induces an involution (which we still denote by $\sigma$) on $\hatG^{\Tt}$ as well which is the identity on $\Gm^{\Wt}$. Then \cite[Corollary 6.5 and Table 2]{HNY} says that when $G$ is not of type $A_{2n}$, the monodromy of $\Kl_{\hatG}$ lies in $\hatG^{\Tt,\sigma}$, i.e., we have a tensor functor
\begin{equation*}
\calK: \Rep(\hatG^{\Tt,\sigma})\to\Loc(\Gm)
\end{equation*}
(we suppress the coefficient field $\Ql(\mu_{p})$ in the notation), denoted $V\mapsto \calK^{V}$, such that the following diagram is made commutative by a canonical natural isomorphism
\begin{equation*}
\xymatrix{\Rep(\hatG^{\Tt})\ar[r]^{\Res}\ar@/_2pc/[rr]^{\Kl_{\hatG}} & \Rep(\hatG^{\Tt,\sigma})\ar[r]^{\calK} & \Loc(\Gm)}
\end{equation*}
When $G$ is of type $A_{2n}$, then {\em loc.cit.} says that there is a $\hatG\rtimes\jiao{\sigma}$-local system $\calK$ on $\Gm$ such that the following diagram is made commutative by a canonical natural isomorphism
\begin{equation*}
\xymatrix{\Rep(\hatG^{\Tt}\rtimes\jiao{\sigma})\ar[r]^{\calK}\ar[d]^{\Res} & \Loc(\Gm)\ar[d]^{[2]^{*}}\\
\Rep(\hatG^{\Tt})\ar[r]^{\Kl_{\hatG}} & \Loc(\Gm)}
\end{equation*}
Here $[2]^{*}$ is the pullback functor along the square morphism $[2]:\Gm\to\Gm$.

For $V\in\Rep(\hatG^{\Tt})$, let $V^{\sigma}$ be the same vector space with the action of $\hatG^{\Tt}$ given by the composition $\hatG^{\Tt}\xrightarrow{\sigma}\hatG^{\Tt}\to\GL(V)$.

\begin{lemma}\label{l:sigma} Let $V\in\Rep(\hatG)$. When $G$ is not of type $A_{2n}$ there is a canonical isomorphism
\begin{equation}\label{Kl not A2n}
\Kl^{V}_{\hatG}\isom\Kl^{V^{\sigma}}_{\hatG}.
\end{equation}
When $G$ is of type $A_{2n}$, there is a a canonical isomorphism
\begin{equation}\label{Kl A2n}
[2]_{*}\Kl^{V}_{\hatG}\isom[2]_{*}\Kl^{V^{\sigma}}_{\hatG}.
\end{equation}
In particular, for all $G$, we always have a canonical isomorphism
\begin{equation}\label{sigma M}
M^{V}_{!}\isom M^{V^{\sigma}}_{!}.
\end{equation}
\end{lemma}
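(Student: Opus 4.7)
The plan is to treat the two cases of $G$ separately, invoking the structural results from \S\ref{sss:opp}, and then deduce the moment isomorphism from both cases uniformly.

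\textbf{Case 1: $G$ not of type $A_{2n}$.} The assignment $V \mapsto \Kl^V_{\hatG}$ factors through the restriction $\Res: \Rep(\hatG^\Tt) \to \Rep(\hatG^{\Tt,\sigma})$ followed by the tensor functor $\calK$ of \S\ref{sss:opp}. Since $\sigma$ restricts to the identity on the fixed subgroup $\hatG^{\Tt,\sigma}$, the $\hatG^{\Tt,\sigma}$-representations $V|_{\hatG^{\Tt,\sigma}}$ and $V^\sigma|_{\hatG^{\Tt,\sigma}}$ coincide canonically, yielding \eqref{Kl not A2n}.

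\textbf{Case 2: $G$ of type $A_{2n}$.} I would form the induced representation $W := \Ind_{\hatG^\Tt}^{\hatG^\Tt \rtimes \jiao{\sigma}} V$, whose underlying $\hatG^\Tt$-module is $V \oplus V^\sigma$ with $\sigma$ acting by the swap of summands. The diagram in \S\ref{sss:opp} provides a canonical isomorphism
\[
[2]^* \calK^W \isom \Kl^{V \oplus V^\sigma}_{\hatG} = \Kl^V_{\hatG} \oplus \Kl^{V^\sigma}_{\hatG}.
\]
The morphism $[2]: \Gm \to \Gm$ is a connected Galois \'etale cover with deck involution $\tau: x \mapsto -x$, and $[2]^* \calK^W$ inherits a tautological $\tau$-equivariant structure. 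Unwinding definitions, this structure is given by the action on $W$ of any lift $\tilde\tau \in \pi_1(\Gm/\Fpbar)$ of $\tau$ under the monodromy of $\calK^W$; since the $\jiao{\sigma}$-quotient of the monodromy of $\calK$ is precisely the character cutting out the cover $[2]$, such $\tilde\tau$ necessarily maps into the nontrivial coset $\hatG^\Tt \cdot \sigma$, and hence acts on $W = V \oplus V^\sigma$ by interchanging the two summands. Transporting this through the above decomposition yields a canonical isomorphism $\tau^* \Kl^V_{\hatG} \isom \Kl^{V^\sigma}_{\hatG}$. Since $[2] \circ \tau = [2]$, applying $[2]_*$ then gives \eqref{Kl A2n}.

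\textbf{Moment isomorphism \eqref{sigma M}.} In Case 1 it follows immediately by applying $\bR\Gamma_c(\Gm,-)[1]$. In Case 2, since $[2]$ is finite \'etale of degree $2$, we have $[2]_! = [2]_*$, whence
\[
\bR\Gamma_c(\Gm, \Kl^V_{\hatG}) \cong \bR\Gamma_c(\Gm, [2]_* \Kl^V_{\hatG}),
\]
and similarly for $V^\sigma$; combining with \eqref{Kl A2n} and shifting by $1$ gives \eqref{sigma M}.

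I expect the main obstacle to be the verification in Case 2 that the natural $\tau$-equivariant structure on $[2]^*\calK^W$ genuinely realizes the summand swap rather than acting diagonally on $\Kl^V_{\hatG} \oplus \Kl^{V^\sigma}_{\hatG}$. This identification rests on being able to read off the $\jiao\sigma$-component of the monodromy of $\calK$ from the fact that it cuts out the double cover $[2]$, as guaranteed by the $A_{2n}$-part of \cite[Corollary 6.5]{HNY}; the rest is a careful bookkeeping of the monodromy representation.
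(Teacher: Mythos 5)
Your proposal is correct and follows essentially the same route as the paper: case 1 is identical, and in type $A_{2n}$ both arguments rest on the local system $\calK$ of \S\ref{sss:opp} together with the induced representation $W=\Ind^{\hatG^{\Tt}\rtimes\jiao{\sigma}}_{\hatG^{\Tt}}V\cong\Ind^{\hatG^{\Tt}\rtimes\jiao{\sigma}}_{\hatG^{\Tt}}V^{\sigma}$, the paper simply identifying both sides of \eqref{Kl A2n} with $\calK^{W}$ while you unwind the same identification through the deck involution of $[2]$ (which in particular recovers the stronger statement $\tau^{*}\Kl^{V}\cong\Kl^{V^{\sigma}}$, consistent with \cite[Corollary 6.5]{HNY}).
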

\begin{proof} We shall use the local system $\calK$ introduced in \S\ref{sss:opp}.
When $G$ is not of type $A_{2n}$, both sides of \eqref{Kl not A2n} can be identified with $\calK^{V}$ (since $V=V^{\sigma}$ as $\hatG^{\Tt,\sigma}$-representations). When $G$ is of type $A_{2n}$, both sides of \eqref{Kl A2n} can be identified with $\calK^{W}$ where $W=\Ind^{\hatG^{\Tt}\rtimes\jiao{\sigma}}_{\hatG^{\Tt}}V=\Ind^{\hatG^{\Tt}\rtimes\jiao{\sigma}}_{\hatG^{\Tt}}V^{\sigma}\in\Rep(\hatG^{\Tt}\rtimes\jiao{\sigma})$.
\end{proof}

\begin{prop}\label{p:autodual p} Let $V$ be an irreducible object in $\Rep(\hatG^{\Tt})$ of weight $w$. Then there is a canonical isomorphism
\begin{equation}\label{dual M}
M^{V}_{!}\cong M^{V^{\vee}}_{!}(-w)
\end{equation}
such that the pairing
\begin{equation}\label{autodual p}
M^{V}_{!}\otimes M^{V}_{!}\cong M^{V}_{!}\otimes M^{V^{\vee}}_{!}(-w)\xrightarrow{\eqref{pairing VVdual}}\Ql(-w-1)
\end{equation}
is $(-1)^{w+1}$-symmetric.
\end{prop}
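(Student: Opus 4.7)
The plan is to construct the isomorphism in two steps and then track two independent sign contributions to verify the symmetry.

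First, to construct the canonical isomorphism $M^{V}_{!} \cong M^{V^\vee}_{!}(-w)$, I would observe that the opposed representation $V^\sigma$ from \S\ref{sss:opp}, whose underlying vector space is that of $V$ with $\hatG^{\Tt}$-action twisted by $\sigma$, is an irreducible $\hatG^{\Tt}$-representation of highest weight $-w_{0}\lambda$ and pure of weight $w$ (since $\sigma$ fixes $\Gm^{\Wt}$). On the other hand, $V^{\vee}$ has highest weight $-w_{0}\lambda$ and weight $-w$, and $\Ql(-w)$ is pure of weight $2w$ (by \S\ref{irr}), so $V^{\vee}(-w)$ is irreducible of highest weight $-w_{0}\lambda$ and weight $w$. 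Schur's lemma then gives a scalar-unique isomorphism $V^{\sigma} \cong V^{\vee}(-w)$ in $\Rep(\hatG^{\Tt})$. Combined with the canonical isomorphism $M^{V}_{!} \cong M^{V^\sigma}_{!}$ from Lemma \ref{l:sigma}, this yields the desired identification \eqref{dual M}.

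To verify the symmetry of \eqref{autodual p}, I would decompose the pairing into three pieces: the isomorphism just constructed, the cup product on $M^{V}_{!} \otimes M^{V^\vee}_{!}$, and the evaluation $V \otimes V^\vee \to \Ql$. Interchanging the two inputs produces two independent sign contributions. First, graded commutativity of the cup product $\cohoc{1}{\Gm, \Kl^V} \otimes \cohoc{1}{\Gm, \Kl^{V^\vee}} \to \cohoc{2}{\Gm, \Kl^{V \otimes V^\vee}}$ contributes $(-1)^{1 \cdot 1} = -1$, with the swap $V \otimes V^\vee \cong V^\vee \otimes V$ on coefficients absorbed by the compatibility of the two evaluation maps. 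Second, the bilinear form $b_{V} : V \otimes V \to \Ql(-w)$ induced by the linear map $V \to V^\vee$ underlying $V^\sigma \cong V^\vee(-w)$, followed by evaluation, is $(-1)^{w}$-symmetric. Multiplying the two contributions gives $(-1) \cdot (-1)^{w} = (-1)^{w+1}$, as claimed.

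The key point to justify is the $(-1)^{w}$-symmetry of $b_{V}$. The form $b_{V}$ is $\hatG^{\Tt,\sigma}$-invariant, and its symmetry type (orthogonal versus symplectic) should match the eigenvalue $(-1)^{w}$ of the central involution $(-1)^{2\rho} \in \hatG$ acting on $V$ (see \S\ref{irr}). This is the Frobenius--Schur identity for the irreducible $\hatG$-representation $V_\lambda$: the self-dual restriction $V|_{\hatG^{\sigma}}$ carries a unique (up to scalar) invariant bilinear form, whose symmetry equals $(-1)^{\jiao{2\rho, \lambda}} = (-1)^w$. One can verify this in small cases (for example, for $V = V_{\omega_1}$ the standard representation of $\SL_n$, the form $b_V$ is the $\SO_n$- or $\Sp_n$-invariant form according to the parity of $n$, with symmetry $(-1)^{n-1} = (-1)^{w}$), consistent with the general formula.

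The principal obstacle is making the Frobenius--Schur step precise and uniform across all types, particularly when $V$ is not self-dual as a $\hatG$-representation (so that $b_{V}$ is only $\hatG^{\sigma}$-invariant rather than $\hatG$-invariant). A clean treatment would likely invoke the uniqueness of invariant bilinear forms on irreducibles and the classical symmetry formula for simply-connected simple groups, or alternatively read off the pairing as coming from Verdier duality on the Satake category, where the sign manifests as a shift-induced Koszul sign matching $(-1)^{w}$.
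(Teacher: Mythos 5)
Your construction of \eqref{dual M} is exactly the paper's: identify $V^{\sigma}\cong V^{\vee}(-w)$ by comparing highest weights and weights, then apply Lemma \ref{l:sigma}. Your sign bookkeeping is also the paper's: the factor $(-1)$ from pairing two classes in $\cohoc{1}{\Gm,-}$, times the symmetry type of the coefficient form $b_{V}:V\otimes V\to\Ql(-w)$, which must be shown to be $(-1)^{w}$. But that last point is the actual content of the proposition, and in your write-up it is only asserted: you say the symmetry "should match" the eigenvalue of $(-1)^{2\rho}$, check it for the standard representation of $\SL_{n}$, and explicitly defer "making the Frobenius--Schur step precise and uniform across all types." Moreover, the justification you sketch is not quite the right statement: the restriction $V|_{\hatG^{\sigma}}$ need not be irreducible (nor multiplicity-free), so "the unique $\hatG^{\sigma}$-invariant bilinear form on $V$" is not guaranteed to exist as a one-dimensional space, and the classical Frobenius--Schur indicator applies to $\hatG$-invariant forms on self-dual $V$, whereas here $b_{V}$ is only twisted-invariant, $(gu,\sigma(g)v)=(u,v)$. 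The correct uniqueness comes from this twisted equivariance: such forms correspond to $\hatG$-maps $V\to(V^{\sigma})^{\vee}$ between irreducibles, hence Schur's lemma gives $c=\pm1$. The paper then pins down $c$ by restricting to the principal $\SL_{2}$, $\varphi:\SL_{2}\to\hatG^{\sigma}\subset\hatG$ with torus restriction $2\rho$: the form is $\varphi(\SL_{2})$-invariant, the top $\SL_{2}$-isotypic piece $\Sym^{n}$ with $n=\jiao{2\rho,\l}=w$ occurs with multiplicity one (its highest weight line is the $\hatG$-highest weight line), and any $\SL_{2}$-invariant form on $\Sym^{n}$ is $(-1)^{n}$-symmetric; hence $c=(-1)^{w}$. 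This is the missing idea, and it is precisely the kind of uniform argument you were looking for.

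A secondary point you gloss over: your reduction of \eqref{autodual p} to "cup product with coefficient form $b_{V}$" presumes a sheaf-level pairing $\Kl^{V}\otimes\Kl^{V}\to\Ql(-w)$ on $\Gm$ realizing $b_{V}$ fiberwise. In type $A_{2n}$ there is no isomorphism $\Kl^{V}\cong\Kl^{V^{\sigma}}$ on $\Gm$ itself (only after $[2]_{*}$, per Lemma \ref{l:sigma}), so the paper instead works with the local system $\calK^{W}$ for $W=\Ind^{\hatG^{\Tt}\rtimes\jiao{\sigma}}_{\hatG^{\Tt}}V$ and checks that the induced pairing on $W$ has the same symmetry type as \eqref{semilinear}. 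This is a routine fix, but your argument as stated silently assumes the non-$A_{2n}$ situation.
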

\begin{proof}
Since $V$ is irreducible of weight $w$, we have $V^{\vee}(-w)\cong V^{\sigma}$. The isomorphism \eqref{dual M} comes from this isomorphism and \eqref{sigma M}. 

We may rewrite the isomorphism  $V^{\vee}(-w)\cong V^{\sigma}$ as a perfect pairing
\begin{equation}\label{semilinear}
(\cdot,\cdot):V\otimes V\to\Ql(-w)
\end{equation}
such that $(gu,\sigma(g)v)=(u,v)$ for all $g\in\hatG$.  When $G$ is not of type $A_{2n}$, we have a local system $\calK^{V}$ constructed in let $\hatH=\hatG^{\Tt,\sigma}$ and let $W=V$ viewed as an $\hatH$-representation. When $G$ is of type $A_{2n}$, let $\hatH=\hatG^{\Tt}\rtimes\jiao{\sigma}$ and let $W=\Ind^{\hatH}_{\hatG^{\Tt}}V$ as an $\hatH$-representation. From the proof of Lemma \ref{l:sigma}, the pairing \eqref{autodual p} is induced from the pairing of local systems on $\Gm$:
\begin{equation}\label{KW}
\calK^{W}\otimes\calK^{W}\to\Ql(-w).
\end{equation}
The above pairing comes from a pairing $W\otimes W\to\Ql(-w)$ of $\hatH$-modules, which in turn comes from the pairing \eqref{semilinear} (via an induction process when $G$ is of type $A_{2n}$).  Hence the pairing \eqref{KW} has the same symmetry type as \eqref{semilinear}.

We now claim that \eqref{semilinear} is $(-1)^{w}$-symmetric. In fact, since $V$ is an irreducible representation of $\hatG$, pairings on $V$ satisfying $(gu,\sigma(g)v)=(u,v)$ (for all $g\in\hatG$) are unique up to a scalar. In particular, the transposed pairing $(u,v)':=(v,u)$ also satisfies the same condition, hence $(v,u)=(u,v)'=c(u,v)$ for some constant $c$, and clearly $c=\pm1$. Let $\varphi:\SL_{2}\to \hatG^{\sigma}\subset\hatG$ be the principal $\SL_{2}$ whose restriction to the diagonal torus of $\SL_{2}$ is given by the cocharacter $2\rho$ of $\hatG$. Then \eqref{semilinear} is $\varphi(\SL_{2})$-invariant. Decomposing $V$ as a direct sum of irreducible $\SL_{2}$-modules, the irreducible representation $\Sym^{n}$ with $n$ largest appears with multiplicity one. In fact, the highest weight space under the $\SL_{2}$-action is the same as the highest weight space for $\hatG$ (which is one-dimensional), and we have $n=\jiao{2\rho,\l}$ if $\l$ is the highest weight of $V$. The symmetry type $c$ of $V$ is then the same as the symmetric type when restricted to $\Sym^{n}$. From the discussion in \S\ref{irr}, the weight $w$ of $V$ is also $\jiao{2\rho,\l}$. Since any $\SL_{2}$-invariant autoduality on $\Sym^{n}$ is $(-1)^{n}$-symmetric, the constant $c$ must be $(-1)^{n}=(-1)^{w}$.

Now we have shown that the pairing \eqref{KW} is $(-1)^{w}$-symmetric. Since taking $\upH^{1}_{c}$ changes the sign of the pairing, the pairing \eqref{autodual p} is $(-1)^{w+1}$-symmetric.
\end{proof}

\begin{remark}\label{r:factor} Since $M^{V}_{!*}=\cohog{1}{\PP^{1},j_{!*}\Kl^{V}}$ by Lemma \ref{l:MV pure}, the pairing \eqref{pairing VVdual} factors through a perfect pairing
\begin{equation*}
M^{V}_{!*}\otimes M^{V^{\vee}}_{!*}\to\Ql(-1).
\end{equation*}
Therefore the pairing \eqref{autodual p} also factors through a $(-1)^{w+1}$-symmetric perfect pairing 
\begin{equation*}
M^{V}_{!*}\otimes M^{V}_{!*}\to\Ql(-w-1).
\end{equation*}
\end{remark}


\section{The Galois representations attached to moments}

The goal of this section is to prove the Main Theorem \ref{th:main}, which attaches a Galois representation of $\GQ$ to each pair $(\hatG, V\in\Rep(\hatG^{\Tt}))$, such that the local behavior at $p$ of this Galois representation is closely related to the moment $M^{V}_{!*,\FF_{p}}$ defined earlier in Definition \ref{def:moments}. The proof starts by ``gluing'' the moments defined over $\FF_{p}$ into a moment defined over $\Spec\ZZ$.

\subsection{Moments over $\ZZ$} In this subsection, we consider schemes  over $S=\SZl$. We use underlined letters to denote schemes over $S$. We use the following notation for inclusions of closed points and the generic point of $S$:
\begin{equation*}
\Spec\FF_{p}\xrightarrow{i_{p}}S\xleftarrow{j}\Spec\QQ.
\end{equation*}

Let $\unG$ be the Chevalley group over $S$ with the same root datum as the simple group $G$ we fixed in the beginning. The affine Grassmannian $\GR, \Gr$,  as well as $\GRc\cong\frG$, $\Grc\cong\frG_{1}$ have natural models $\unGR, \unGr$, etc. over $S$. For example, the affine Grassmannian $\unGr$ over $S$ can be defined as the fppf quotient $L\unG/L^{+}\unG$. See \cite[\S5.b.1, Proposition 5.3]{PZ}. Its generic fiber $\Gr_{\QQ}$ and special fibers $\Gr_{\FF_{p}}$ are the old affine Grassmannians defined over a field, see \cite[Corollary 5.6]{PZ}.

\begin{lemma}\label{l:IC model} Let $V\in\Rep(\hatG^{\Tt})$. Let $j:\Gr_{\QQ}\incl\unGr$ be the inclusion of the generic fiber and $i_{p}:\Gr_{\FF_{p}}\incl\unGr$ be the inclusion of the mod $p$ special fiber. Let $\unIC_{V}=j_{!*}\IC_{V}$. Then we have
\begin{equation*}
i^{*}\unIC_{V}\cong\IC_{V,\FF_{p}}.
\end{equation*}
Here $\IC_{V,\FF_{p}}\in\Sat^{\Tt}_{\FF_{p}}$ is the perverse sheaf on $\Gr_{\FF_{p}}$ corresponding to $V$.
\end{lemma}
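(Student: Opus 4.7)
The plan is to reduce first to the irreducible case $V = V^{\Tt}_\lambda$, so that $\IC_V = \IC_\lambda$ is the IC extension (up to shift) of the shifted constant sheaf from the top-dimensional stratum $\Gr_{\lambda,\QQ}$ to the affine Schubert variety $\Gr_{\leq\lambda,\QQ}$. The general case then follows because $\Sat^{\Tt}$ is semisimple and the formation of $j_{!*}$, the pullback $i_p^*$, and the assignment $V \mapsto \IC_{V,\FF_p}$ all commute with direct sums and Tate twists.

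The key input for the irreducible case is the Pappas--Zhu construction of a flat projective integral model $\unGr_{\leq\lambda}$ over $S$ (\cite{PZ}, \S5--6), whose geometric fibers are reduced and canonically identified with the usual Schubert varieties $\Gr_{\leq\lambda,\QQ}$ and $\Gr_{\leq\lambda,\Fpbar}$. Moreover, there is an $L^+\unG$-orbit decomposition $\unGr_{\leq\lambda} = \bigsqcup_{\mu \leq \lambda} \unGr_\mu$ with each stratum $\unGr_\mu$ smooth over $S$ of relative dimension $\jiao{2\rho,\mu}$ and with connected geometric fibers, whose formation is compatible with base change to either fiber.

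Given this, I would construct $\unIC_\lambda$ by iterated intermediate extension along the stratification, starting from $\Ql[\dim \Gr_\lambda]$ on the open stratum $\unGr_\lambda$ and extending one stratum at a time inward. At the $k$-th step, having built an extension $\calF_k$ to $\unGr_{\leq\lambda} \setminus Z_k$ where $Z_k$ is the next deeper stratum, one forms the intermediate extension $\pH^0(\tau_{\leq -\dim Z_k - 1}(j_{k,*}\calF_k))$ along the smooth $S$-embedding of $Z_k$. Because each $\unGr_\mu$ is $S$-smooth with connected fibers, at every stage the truncation--pushforward procedure commutes with restriction to the generic fiber and to each $\Gr_{\FF_p}$; this is the main technical obstacle, and amounts to a relative perverse $t$-structure argument in the mixed characteristic setting, which uses the $S$-smoothness of strata crucially. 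By induction the resulting integral perverse sheaf on $\unGr_{\leq\lambda}$ restricts to $\IC_{\lambda,\QQ}$ over $\QQ$ and to $\IC_{\lambda,\FF_p}$ over $\FF_p$.

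Finally, I would identify this inductively-constructed sheaf with $j_{!*}\IC_\lambda$. Both are perverse extensions of $\IC_{\lambda,\QQ}$ from $\Gr_\QQ$ to $\unGr$, and by construction the candidate has no nonzero sub or quotient perverse sheaf supported on the special fiber: the intermediate extension step along each boundary stratum $\unGr_\mu \subset \Gr_{\FF_p}$ kills precisely such subquotients. By the universal characterization of the middle extension, the candidate coincides with $j_{!*}\IC_\lambda$, proving $i_p^*\unIC_V \cong \IC_{V,\FF_p}$ (and, incidentally, $j^*\unIC_V \cong \IC_{V,\QQ}$).
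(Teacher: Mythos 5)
Your reduction to the irreducible case is fine, but the core of your argument has a genuine gap. The assertion that, at each step of the iterated construction, the truncation--pushforward $\pH^{0}\tau_{\leq\bullet}(j_{k,*}\calF_{k})$ commutes with restriction to the generic fiber and to $\Gr_{\FF_{p}}$ ``because each stratum $\unGr_{\mu}$ is $S$-smooth with connected fibers'' is exactly the hard point, and smoothness of the strata does not deliver it. The pushforward $j_{k,*}$ across a boundary stratum is controlled by the local cohomology of the singularity of $\unGr_{\leq\l}$ transverse to that stratum, not by the geometry of the stratum itself, and $Rj_{*}$ does not commute with the non-smooth base change $i_{p}^{*}$ in general. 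Making this step work is equivalent to knowing that the relative intersection complex is universally locally acyclic over $S$ with trivial monodromy on nearby cycles --- which is precisely the input the paper takes from Pappas--Zhu: \cite[Proposition 9.15]{PZ} ($\Psi(\IC_{V})\cong\IC_{V,\FF_{p}}$), \cite[Proposition 9.12]{PZ} (trivial monodromy), and \cite[Lemma 9.19]{PZ} (which then identifies $i_{p}^{*}j_{!*}\IC_{V}$ with $\Psi(\IC_{V})$). Your ``relative perverse $t$-structure argument in mixed characteristic'' is naming the obstacle, not overcoming it.

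The final identification step also does not go through as written. You claim the candidate sheaf has no nonzero perverse sub or quotient supported on the special fiber because ``the intermediate extension step along each boundary stratum $\unGr_{\mu}\subset\Gr_{\FF_{p}}$ kills precisely such subquotients,'' but the $L^{+}\unG$-orbit strata $\unGr_{\mu}$ are flat over $S$ (as you yourself note), hence horizontal; they are not contained in the vertical divisor $\Gr_{\FF_{p}}$. Middle extension with respect to the horizontal stratification and middle extension across the special fiber (i.e.\ the characterization of $j_{!*}$ for $j:\Gr_{\QQ}\incl\unGr$) are different conditions, and the first does not imply the second: ruling out subquotients supported on $\Gr_{\FF_{p}}$ requires stalk/costalk bounds of the constructed complex along the special fiber, which is essentially the statement of the lemma itself. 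So both the base-change compatibility and the characterization of $j_{!*}$ need the nearby-cycles results of \cite{PZ} (or an equivalent ULA argument), and once one invokes those, the paper's three-line proof is the efficient route.
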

\begin{proof}Since we are dealing with a single prime $p$ at a time, we may restrict the situation to $\SZp$, and retain the same notation.
By \cite[Proposition 9.15]{PZ}, the nearby cycles sheaf $\Psi(\IC_{V})$ (a perverse sheaf on $\Gr_{\FF_{p}}$) is isomorphic to $\IC_{V,\FF_{p}}$. Also, by \cite[Proposition 9.12]{PZ}, the monodromy action on $\Psi(\IC_{V})$ is trivial (because we are in the situation of a split $G$ and hyperspecial parahoric level structure, in the context of \cite{PZ}). Then, by \cite[Lemma 9.19]{PZ}, $i^{*}j_{!*}\IC_{V}$ is isomorphic to $\Psi(\IC_{V})$, which is in turn isomorphic to $\IC_{V,\FF_{p}}$.
\end{proof}

\begin{remark}\label{r:tensor int} We may similarly define the convolution product of $\unIC_{V}*\unIC_{W}$ on $\unGr$, using the integral model of the convolution diagram \cite[\S4]{MV}. We claim that there is a natural isomorphism 
\begin{equation*}
\unIC_{V\otimes W}\cong\unIC_{V}*\unIC_{W}
\end{equation*}
In fact, both sides above are isomorphic when restricted to $\Gr_{\QQ}$, and they both have trivial monodromy on the nearby cycles towards the special fiber $\Gr_{\FF_{p}}$ (in the case of $\unIC_{V}*\unIC_{W}$, use fact that the nearby cycles commute with proper push-forward). Therefore by \cite[Lemma 9.19]{PZ} again, both sides above are middle extensions of their restriction to $\Gr_{\QQ}$, hence they are canonically isomorphic to each other.
\end{remark}

Although we do not have Kloosterman sheaves defined over $\GG_{m,S}$, the moments of Kloosterman sheaves do have natural analogs over $S$. In fact,  from the description of $\frG\cong\GRc$ in \S\ref{alt Gr}, we see that the morphism $\barF$ in \eqref{barF} can be extended to a morphism over $S$:
\begin{equation}\label{bunF}
\bunF:[\unGr^{\c}/\un{\mu}_{h}^{\rho^{\vee}}]\cong[\un{\frG}_{1}/\Ad(\un{\mu}_{h}^{\rho^{\vee}})]\to[\un{\AA}^{1}/\unGm].
\end{equation}
From Proposition \ref{p:MV}, we arrive at the following natural definition.

\begin{defn}\label{def:moments Q} For $V\in\Rep(\hatG^{\Tt})$, we define
\begin{eqnarray*}
\unM^{V}_{!}=\HF(\bunF_{!}\unIC^{\c}_{V})_{\eta}
\end{eqnarray*}
as an object in $D^{b}_{c}(S,\Ql)$. Here, as usual, $\unIC^{\c}_{V}$ denotes the restriction of $\unIC_{V}$ to $\unGr^{\c}$.
\end{defn}

\begin{lemma}\label{l:unM stalks}
We have an isomorphism in $D^{b}(\Spec\FF_{p},\Ql(\mu_{p}))$ 
\begin{equation}\label{p stalk}
i^{*}_{p}\unM^{V}_{!}\otimes\Ql(\mu_{p})\cong M^{V}_{!,\FF_{p}}.
\end{equation}
Here $ M^{V}_{!,\FF_{p}}$ is defined in Definition \ref{def:moments} for the base field $k=\FF_{p}$.
\end{lemma}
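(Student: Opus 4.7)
The plan is to prove the isomorphism by pulling back the entire construction of $\unM^V_!$ along the closed embedding $i_p: \Spec\FF_p \to S$ and matching the result with the formula of Proposition \ref{p:MV}. Essentially, one must verify that each step in the definition of $\unM^V_!$ commutes with $i_p^*$.

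First, I would check that the morphism $\bunF$ of \eqref{bunF} restricts over $\Spec\FF_p$ to the morphism $\barF$ of \eqref{barF} used in Proposition \ref{p:MV}. This is immediate from the construction: the integral models $\un{\frG}_1$, $\unGr$, the torus quotient by $\un{\mu}_h^{\rho^\vee}$, and the evaluation into $\un{\AA}^1$ are all defined by the same moduli descriptions as in characteristic $p$, and the identification of the fiber of $\unGr$ over $\FF_p$ with $\Gr_{\FF_p}$ is \cite[Corollary 5.6]{PZ}.

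Second, Lemma \ref{l:IC model} applied to the open sub-ind-scheme $\unGr^\c \subset \unGr$ gives a canonical isomorphism $i_p^* \unICc_V \cong \ICc_{V,\FF_p}$. Since $!$-pushforward commutes with $!$-pullback along an arbitrary base change, proper base change then yields
\begin{equation*}
i_p^* \bunF_! \unICc_V \;\cong\; \barF_! \ICc_{V,\FF_p}
\end{equation*}
as objects in $D^b_{\GG_{m,\FF_p}}(\AA^1_{\FF_p}, \Ql)$.

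Third, I would invoke the base-change compatibility of Laumon's homogeneous Fourier transform reviewed in Appendix \ref{app}. Because $\HF$ is built only from $!$- and $*$-pullback and pushforward along the incidence correspondence over $[\AA^1/\Gm]$ (and in particular does not involve any Artin--Schreier sheaf), its formation commutes with $i_p^*$. Restricting to the generic point $(-)_\eta$ of $[\Gm_S/\Gm_S]$ and tensoring with $\Ql(\mu_p)$, we obtain
\begin{equation*}
i_p^* \unM^V_! \otimes \Ql(\mu_p) \;\cong\; \HF(\barF_! \ICc_{V,\FF_p})_\eta \otimes \Ql(\mu_p),
\end{equation*}
and the right-hand side equals $M^V_{!,\FF_p}$ by Proposition \ref{p:MV}. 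The main step requiring care is the base-change property of $\HF$ along $i_p$; this is where the whole point of working with the homogeneous (Laumon) Fourier transform rather than the characteristic-$p$-specific Fourier--Deligne transform enters, and it is precisely the reason $\HF$ is introduced in Appendix \ref{app}.
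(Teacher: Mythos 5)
Your proposal is correct and follows essentially the same route as the paper: identify $i_p^*\unICc_V\cong\ICc_{V,\FF_p}$ via Lemma \ref{l:IC model}, commute $i_p^*$ past $\bunF_!$ and past $\HF$ by (proper) base change, and conclude with Proposition \ref{p:MV}. The only cosmetic difference is the order in which you apply the two base-change steps, which does not affect the argument.
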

\begin{proof} 
For arbitrary base change $i:S'\to S$, the formation of $\HF$ commutes with $i^{*}$: this follows from proper base change. Therefore we have
\begin{equation*}
i^{*}_{p}\unM^{V}_{!}\cong i^{*}_{p}\HF(\bunF_{!}\unIC^{\c}_{V})_{\eta}\cong \HF(i^{*}_{p}\bunF_{!}\unIC^{\c}_{V})_{\eta}\cong\HF(\barF_{!}i^{*}_{p}\unIC^{\c}_{V})_{\eta}
\end{equation*}
By Lemma \ref{l:IC model}, we have $i^{*}_{p}\unIC^{\c}_{V}\cong \IC^{\c}_{V,\FF_{p}}$. Therefore
\begin{equation*}
i^{*}_{p}\unM^{V}_{!}\cong\HF(\barF_{!}\IC^{\c}_{V,\FF_{p}})_{\eta}.
\end{equation*}
Using Proposition \ref{p:MV}, we get \eqref{p stalk}. 
\end{proof}

The next proposition will be used in the next subsection to establish a pairing on $\unM^{V}_{!}$ analogous to the pairing on $M^{V}_{!,\FF_{p}}$ defined in Proposition \ref{p:autodual p}. What we need there is a definition of the cup product \eqref{Klcup} without appealing to Kloosterman sheaves.

\begin{prop}\label{p:tensor moments} For $V,W\in\Rep(\hatG^{\Tt})$, there is a bifunctorial morphism
\begin{equation}\label{cup unM}
\unM^{V}_{!}\otimes\unM^{W}_{!}\to\unM^{V\otimes W}_{!}[1]
\end{equation}
such that after taking $i^{*}_{p}$ for any prime $p\nmid\ell$ and using the isomorphism \eqref{p stalk}, the above morphism becomes the cup-product \eqref{Klcup} for the base field $\FF_{p}$.
\end{prop}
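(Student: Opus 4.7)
The idea is to combine three compatible structures: (a) $\un\frG$ is a group ind-scheme over $\unGm$ with multiplication $m_\frG$; (b) by \S\ref{alt Gr} the map $f_{\un\frG}\colon \un\frG \to \bI_\infty(1)/\bI_\infty(2) \cong \un\Ga^{r+1}$ is a group homomorphism, so $F = \sigma \circ f_{\un\frG}$ satisfies $F \circ m_\frG = \text{add} \circ (F \times F)$; and (c) the homogeneous Fourier transform exchanges additive convolution with tensor product. Trivializing the $\unGm$-direction via $\Grot$ and descending along $\Ad(\un{\mu}_{h}^{\rho^{\vee}})$, property (b) says that $\bunF$ intertwines the multiplication on $[\un\frG_1/\Ad(\un{\mu}_{h}^{\rho^{\vee}})]$ with addition on $[\un\AA^1/\unGm]$.

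The integral Satake equivalence (Remark \ref{r:tensor int}) provides a canonical isomorphism $\unIC_V * \unIC_W \xrightarrow{\sim} \unIC_{V\otimes W}$ on $\un\Gr$, realized via the action of $L\unG$ on $\un\Gr$. The evaluation map realizes $\un\frG_1 \hookrightarrow L\unG$ as a closed subgroup ind-scheme with $\un\frG_1 \cap L^+\unG = \{1\}$, whose image in $\un\Gr$ is the open subset $\un\Gr^{\c}$. Restricting the Satake convolution diagram along this inclusion identifies the Satake convolution morphism with the group multiplication $m_\frG$ at the level of the base, and produces a morphism
\[
m_{\frG,!}\bigl(\unIC_V^{\c} \boxtimes \unIC_W^{\c}\bigr) \to \unIC_{V \otimes W}^{\c}[1]
\]
where the shift reflects the perverse normalization of Satake convolution. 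Applying $\bunF_!$ and invoking (b) together with proper base change yields
\[
\text{add}_!\bigl(\bunF_! \unIC_V^{\c} \boxtimes \bunF_! \unIC_W^{\c}\bigr) \to \bunF_! \unIC_{V\otimes W}^{\c}[1]
\]
on $[\un\AA^1/\unGm]$. Since the homogeneous Fourier transform intertwines additive convolution with tensor product (Appendix \ref{app}, transferred from the corresponding property of $\Four_\psi$ via Theorem \ref{th:Fourier}\eqref{FD=HF}), applying $\HF$ to this morphism and taking the stalk at the generic point $\eta$ produces the desired bifunctorial morphism $\unM^V_! \otimes \unM^W_! \to \unM^{V\otimes W}_![1]$.

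For the compatibility over $\FF_p$: the functors $i_p^*$, $\bunF_!$, and $\HF$ mutually commute by proper base change, and Theorem \ref{th:Fourier}\eqref{FD=HF} identifies $\HF \otimes \Ql(\mu_p)$ with the Fourier-Deligne transform $\Four_\psi$ on stalks. Under the isomorphism of Lemma \ref{l:unM stalks} and the Kloosterman construction in Theorem \ref{th:geom Kl}, the morphism above specializes over $\FF_p$ to the cup product \eqref{Klcup} induced by the tensor-functorial isomorphism $\Kl^V \otimes \Kl^W \cong \Kl^{V \otimes W}$, since in both cases the tensor structure ultimately comes from the same Satake convolution restricted to $\un\frG_1$ combined with the additivity of $F$. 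The main obstacle is the construction in the second paragraph: one must formulate the restriction of the integral Satake convolution to $\un\frG_1 \subset L\unG$ in a manner that is $\Ad(\un{\mu}_{h}^{\rho^{\vee}})$-equivariant and compatible with the $\Grot$-descent to the quotient stack appearing in $\bunF$, and check that the Satake twisted product $\unIC_V \tilde\boxtimes \unIC_W$ pulls back to the ordinary external product $\unIC_V^{\c} \boxtimes \unIC_W^{\c}$ on $\un\frG_1 \times \un\frG_1$ using the fact $\un\frG_1 \cap L^+\unG = \{1\}$.
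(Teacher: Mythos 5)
Your construction follows essentially the same route as the paper's proof: the paper likewise uses the integral Satake convolution of Remark \ref{r:tensor int}, restricts it to the open locus (realized there by a modular ind-scheme $\unfrG^{(2)}$ whose open part is $\unfrG\times_{\unGm}\unfrG$, over which the convolution map $p_{2}$ becomes the group multiplication $m$), obtains by the open adjunction $j_{!}j^{*}\to\id$ the morphism $m_{!}(\unICc_{V}\boxtimes_{\unGm}\unICc_{W})\to\unICc_{V\otimes W}$ of \eqref{conv VW}, transports it along the homomorphism $\unF$ into an additive convolution statement on $[\un{\AA}^{1}/\unGm]$, and then applies $\HF$; the mod $p$ compatibility is checked, as you indicate, by proper base change, Theorem \ref{th:Fourier}\eqref{FD=HF}, the additivity of $\AS_{\psi}$ and the projection formula. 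Working fiberwise with $\un{\frG}_{1}$ and $\mu_{h}$-descent rather than relatively over $\unGm$ is only a cosmetic difference, via $[\GRc/\Gm^{(\rho^\vee,h)}]\cong[\Grc/\mu_{h}^{\rho^{\vee}}]$.

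One bookkeeping slip: the morphism obtained from restricting the Satake convolution carries no shift. Convolution of objects of the Satake category is again a perverse Satake object with no extra normalization, so the correct map is $m_{!}(\unICc_{V}\boxtimes_{\unGm}\unICc_{W})\to\unICc_{V\otimes W}$, not to $\unICc_{V\otimes W}[1]$. The shift in \eqref{cup unM} comes entirely from the homogeneous Fourier transform, namely $\HF(\calF_{1}*_{+}\calF_{2})[1]\cong\HF(\calF_{1})\otimes\HF(\calF_{2})$ (Theorem \ref{th:Fourier}\eqref{conv}). If you keep your extra $[1]$ at the Satake step and then also invoke this property, the target of your final morphism becomes $\unM^{V\otimes W}_{!}[2]$, off by one. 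Dropping the spurious shift fixes the degrees, and the rest of your argument coincides with the paper's proof; for the mod $p$ statement the paper does spell out the commutative diagram (using $(F,F)^{*}(\AS_{\psi}\boxtimes\AS_{\psi})\cong m^{*}F^{*}\AS_{\psi}$ and the projection formula) identifying the specialized map with the cup product \eqref{Klcup}, which is the verification you leave implicit.
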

\begin{proof}
The moduli stack $\Bun_{G(0,1)}$ defined in \S\ref{alt Gr} has a natural integral model $\unBun_{\unG(0,1)}$ over $S$, which contains an open substack $\unBun_{\unG(0,1)}^{\c}=\sqcup_{\omega\in\Omega}\omega(\un{\star})$ consisting of the open $S$-points of $\unBun_{\unG(0,1)}$. Consider the moduli functor $\unfrG^{(2)}$ classifying the data $(x,\calE',\calE,\alpha,\beta)$ where $x\in\unGm, \calE'\in\unBun_{\unG(0,1)}, \calE\in\unBun_{\unG(0,1)}^{\c}$, $\alpha:\calE|_{\un{\PP}^{1}-\{x\}}\isom\calE'|_{\un{\PP}^{1}-\{x\}}$ and $\beta:\calE'|_{\un{\PP}^{1}-\{x\}}\isom\calE^{\un{\star}}_{0,1}|_{\un{\PP}^{1}-\{x\}}$.

We have morphisms
\begin{equation*}
\xymatrix{ & \unfrG^{(2)}\ar[dl]^{p_{1}}\ar[dr]^{p_{3}}\ar[d]^{p_{2}} \\
\unGR & \unGRc & \un{\Hk}^{\c}_{\unG(0,1)} }
\end{equation*}
where $p_{1}(x,\calE',\calE,\alpha,\beta)=(x,\calE',\beta); p_{2}(x,\calE',\calE,\alpha,\beta)=(x,\calE,\beta\circ\alpha)$ and $p_{3}(x,\calE',\calE,\alpha,\beta)=(x,\calE,\calE',\alpha)$. The fibers of $p_{1}$ are isomorphic to the affine Grassmannian $\unGr$, hence $\unfrG^{(2)}$ is also represented by an ind-scheme. 

We have
\begin{equation}\label{convIC}
\unICc_{V\otimes W}\cong(\unIC_{V}*\unIC_{W})|_{\unGRc}\cong p_{2,!}(p_{1}^{*}\unIC_{V}\otimes p_{3}^{*}\unIC_{W}).
\end{equation}
Here the first equality follows from Remark \ref{r:tensor int}. In the last term, we use $\unIC_{W}$ to denote also the complex on $\un{\Hk}_{\unG(0,1)}$ whose restriction to each fiber of the projection $\un{\Hk}_{\unG(0,1)}\to\unGm\times\unBun_{\unG(0,1)}$ (the fibers are identified with the affine Grassmannian) is isomorphic to $\unIC_{W}$. The second equality follows from the definition of the convolution product for sheaves on the affine Grassmannian in \cite[\S4]{MV}.

We consider the open ind-scheme $j:\unfrG^{(2),\c}\incl\unfrG^{(2)}$ classifying the data $(x,\calE',\calE,\alpha,\beta)$ with the open condition that $\calE'\in\unBun_{\unG(0,1)}^{\c}$. Then there is an isomorphism
\begin{equation*}
\unfrG\times_{\unGm}\unfrG\isom\unfrG^{(2),\c}
\end{equation*}
mapping $(x,\varphi':\calE^{\omega'(\star)}_{0,1}|_{\PP^{1}-\{x\}}\to\calE^{\star}_{0,1}|_{\PP^{1}-\{x\}}, \varphi:\calE^{\omega(\star)}_{0,1}|_{\PP^{1}-\{x\}}\to\calE^{\star}_{0,1}|_{\PP^{1}-\{x\}})$ to $(x,\calE^{\omega'\omega(\star)}_{0,1},\calE^{\omega'(\star)}_{0,1}, \omega'(\varphi), \varphi')$. 
Under this isomorphism, the morphism $p_{2}^{\c}=p_{2}|_{\unfrG^{(2),\c}}:\unfrG^{(2),\c}\to\unGRc\cong\unfrG$ becomes the multiplication map under the group structure of $\unfrG$
\begin{equation*}
m:\unfrG\times_{\unGm}\unfrG\to\unfrG
\end{equation*}
Moreover, 
\begin{equation*}
j^{*}(p_{1}^{*}\unIC_{V}\otimes p_{3}^{*}\unIC_{W})\cong \unICc_{V}\boxtimes_{\unGm}\unICc_{W}.
\end{equation*}
Therefore, by adjunction, we get a map
\begin{equation}\label{conv VW}
m_{!}(\unICc_{V}\boxtimes_{\unGm}\unICc_{W})\cong p_{2,!}j_{!}j^{*}(p_{1}^{*}\unIC_{V}\otimes p_{3}^{*}\unIC_{W})\to p_{2,!}(p_{1}^{*}\unIC_{V}\otimes p_{3}^{*}\unIC_{W})\cong\unICc_{V\otimes W}.
\end{equation}
Since $\unF:\unfrG\to \un{\AA}^{1}$ is a group homomorphism, we have an isomorphism in $D^{b}_{\unGm}(\un{\AA}^{1})$:
\begin{equation*}
\unF_{!}m_{!}(\unICc_{V}\boxtimes_{\unGm}\unICc_{W})\cong \unF_{!}\unICc_{V}*_{+}\unF_{!}\unICc_{W}.
\end{equation*}
Here $\calF_{1}*_{+}\calF_{2}=a_{!}(\calF_{1}\boxtimes\calF_{2})$ means the convolution on $\un{\AA}^{1}$  using the addition map $a:\un{\AA}^{1}\times\un{\AA}^{1}\to\un{\AA}^{1}$. Descending to the stack $[\un{\AA}^{1}/\unGm]$ and combining with \eqref{conv VW}, we get a morphism in $D^{b}_{c}([\un{\AA}^{1}/\unGm])$, bifunctorial in $V$ and $W$
\begin{equation*}
\bunF_{!}\unICc_{V}*_{+}\bunF_{!}\unICc_{W}\to \bunF_{!}\unICc_{V\otimes W}
\end{equation*}
Applying $\HF$ and using the compatibility between convolution $*_{+}$ and the tensor product (see Theorem \ref{th:Fourier}\eqref{conv}), we get
\begin{equation*}
\HF(\bunF_{!}\unICc_{V})_{\eta}\otimes\HF(\bunF_{!}\unICc_{W})_{\eta}\cong\HF(\bunF_{!}\unICc_{V}*_{+}\bunF_{!}\unICc_{W})_{\eta}[1]\to\HF(\bunF_{!}\unICc_{V\otimes W})_{\eta}[1],
\end{equation*}
which gives \eqref{cup unM}.

Next we try to identify the mod $p$ stalk of \eqref{cup unM} with \eqref{Klcup}.
Now we work over $\FF_{p}$. We have the following commutative diagram 
\begin{equation*}
\xymatrix{& \frG\times_{\Gm}\frG\ar[r]^{(F,F)}\ar[d]^{m}\ar[dl]_{(\pi,\pi)} & \AA^{1}\times\AA^{1}\ar[d]^{a}\\
\Gm & \frG\ar[r]^{F}\ar[l]_{\pi} & \AA^{1}}
\end{equation*}
We get a commutative diagram of cohomology groups
\begin{equation*}
\xymatrix{\cohoc{*}{\frG, \ICc_{V}\otimes F^{*}\AS_{\psi}}\otimes\cohoc{*}{\frG, \ICc_{W}\otimes F^{*}\AS_{\psi}}\ar@[d]\ar[r]^(.6){\sim} & \cohoc{*}{\Gm,\Kl^{V}}\otimes\cohoc{*}{\Gm,\Kl^{W}}\ar@[d]\\
\cohoc{*}{\frG\times_{\Gm}\frG, \ICc_{V}\boxtimes_{\Gm}\ICc_{W}\otimes (F,F)^{*}(\AS_{\psi}\boxtimes\AS_{\psi})}\ar[d]^{\alpha}\ar[r]^(.6){\sim} &  \cohoc{*}{\Gm,\Kl^{V}\otimes\Kl^{W}}\ar[dd]^{\wr}\\
\cohoc{*}{\frG, m_{!}(\ICc_{V}\boxtimes_{\Gm}\ICc_{W})\otimes F^{*}\AS_{\psi}}\ar[d]^{\eqref{conv VW}}  \\
\cohoc{*}{\frG, \ICc_{V\otimes W}\otimes F^{*}\AS_{\psi}}\ar[r] & \cohoc{*}{\Gm,\Kl^{V\otimes W}}}
\end{equation*}
The isomorphism $\alpha$ in the middle left  column uses the composition of
\begin{equation*}
(F,F)^{*}(\AS_{\psi}\boxtimes\AS_{\psi})\cong (F,F)^{*}a^{*}\AS_{\psi}\cong m^{*}F^{*}\AS_{\psi}
\end{equation*}
and the projection formula
\begin{equation*}
m_{!}((\ICc_{V}\boxtimes_{\Gm}\ICc_{W})\otimes m^{*}F^{*}\AS_{\psi})\cong m_{!}(\ICc_{V}\boxtimes_{\Gm}\ICc_{W})\otimes F^{*}\AS_{\psi}.
\end{equation*}
It is easy to see that the composition of the left column is the same as $i^{*}_{p}$  applied to the map \eqref{cup unM}; the right column is the cup product giving \eqref{Klcup}. Therefore the result of $i^{*}_{p}$ on \eqref{cup unM} is the same as \eqref{Klcup}.
\end{proof}

\subsection{Construction of the Galois representation}\label{ss:Gal}
In this subsection we fix an irreducible representation $V$ of $\hatG^{\Tt}$, pure of weight $w$ such that $V^{\hatG^{\geom}}=0$. Here $\hatG^{\geom}$ is the Zariski closure of the geometric monodromy of $\Kl_{\hatG}$ for large $p$ as tabulated in Theorem \ref{th:global mono}. We note that this condition is rather weak: when $\hatG^{\geom}=\hatG$ this simply means that $V$ is not the trivial representation.

\subsubsection{The open subset $S'$}  Let $S_{\bad}$ be the set of primes $p$ such that $V^{\hatG^{\geom}_{p}}\neq0$, where $\hatG^{\geom}_{p}$ is the Zariski closure of the geometric monodromy of $\Kl_{\hatG}$ over $\GG_{m,\FF_{p}}$. According to the table in Theorem \ref{th:global mono}, $S_{\bad}\subset\{2,3\}$. Let $S'=S-S_{\bad}$. This is a Zariski open subset of $S$. 

When $\hatG=\SL_{2n}$ or $\Sp_{2n}$, $\Kl^{\St}_{\hatG}$ is essentially the same as $\Kl_{2n}$ according to Proposition \ref{p:relation to Kln}. Then by Katz's Theorem \ref{th:global mono Katz}, $\hatG^{\geom}_{p}=\Sp_{2n}$ for all primes $p$, hence $S'=S$ in these cases. 

\begin{lemma}\label{l:perv} The complex $\unM^{V}_{!}[1]$ is a perverse sheaf on $S$, and $\unM^{V}_{!}|_{S'}$ is a sheaf.
\end{lemma}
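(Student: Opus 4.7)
The plan is to combine a fiberwise stalk analysis with constructibility of the homogeneous Fourier transform, then identify the allowed cohomological degrees over the regular $1$-dimensional base $S$.

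First I would pull back to closed points. By Lemma~\ref{l:unM stalks}, $i_p^*\unM^V_!\otimes\Ql(\mu_p)\cong M^V_{!,\FF_p}$ for each prime $p\neq\ell$. By Remark~\ref{r:MV degree zero} this complex lies in cohomological degrees $\{0,1\}$, with $\upH^1 M^V_{!,\FF_p}=V_{\hatG^{\geom}_p}(-1)$. Since $\hatG^{\geom}_p$ is reductive, $V^{\hatG^{\geom}_p}=0$ iff $V_{\hatG^{\geom}_p}=0$; by definition of $S_{\bad}$, this degree $1$ cohomology vanishes exactly at $p\in S'$. Hence $i_p^*\unM^V_!$ is concentrated in degree $0$ for $p\in S'$ and in degrees $\{0,1\}$ at primes in $S_{\bad}\subset\{2,3\}$.

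Next I would establish constructibility on $S$. The complex $\unIC^{\c}_V$ is constructible on $\unGr^{\c}$ (being a middle extension of a constructible sheaf from the generic fiber, with controlled behavior on the special fibers by Lemma~\ref{l:IC model}), and $\bunF$ is of finite type on the support of $\unIC^{\c}_V$; hence $\bunF_!\unIC^{\c}_V$ is constructible on $[\un{\AA}^1/\unGm]$. The homogeneous Fourier transform and restriction to the open $\unGm$-orbit $\eta$ preserve constructibility, so $\unM^V_!$ is constructible on $S$. Combining constructibility with the fiberwise concentration in degree $0$ at every closed point of $S'$, and with the rank-constancy of $\dim M^V_{!,\FF_p}$ for large $p$ (Lemma~\ref{l:const dim}), I would conclude that $\unM^V_!|_{S'}$ is a lisse sheaf in degree $0$; in particular it is a sheaf.

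For perversity of $\unM^V_![1]$ on the regular $1$-dimensional scheme $S$: middle perversity requires the cohomology sheaves to lie in degrees $\{-1,0\}$ with $\calH^0$ supported at a finite set of closed points, together with a dual $!$-pullback condition. From the preceding analysis, $\unM^V_![1]$ has $\calH^{-1}$ lisse on $S'$, contributing a perverse sheaf there, while the extra contribution at $S_{\bad}\subset\{2,3\}$ is concentrated in degrees $\{-1,0\}$ and supported at the finitely many bad closed points (a perverse skyscraper in degree $0$). The dual $!$-condition would follow via Verdier duality from the analogous analysis of $\unM^V_*$, which by an autoduality argument in the spirit of Proposition~\ref{p:autodual p} is related to $\unM^{V^\vee}_!$ up to Tate twist.

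The main obstacle is verifying constructibility together with rank-constancy uniformly over all of $S'$, including the small primes in $S'$ where Lemma~\ref{l:const dim} does not directly apply; this would be addressed by spreading out from the generic fiber using the already-established constructibility on $S$. Given these inputs, the perversity statement on the $1$-dimensional base is essentially bookkeeping about supports of cohomology sheaves.
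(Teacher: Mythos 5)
Your stalk analysis (via Lemma \ref{l:unM stalks} and the vanishing of $V_{\hatG^{\geom}_p}$ on $S'$) correctly handles the $\pD^{\leq0}$ half and the ``sheaf on $S'$'' claim, but the $\pD^{\geq0}$ half has a genuine gap. Knowing the stalks $i_p^*\unM^V_!$ says nothing about the costalks $i_p^!\unM^V_!$, and the route you propose for the dual condition does not exist in this setting: there is no object $\unM^V_*$ defined over $S$ (the $*$-pushforward does not satisfy proper base change, so one cannot glue the mod-$p$ moments $M^V_{*,\FF_p}$ into an $S$-object the way Definition \ref{def:moments Q} does for the $!$-version), and Proposition \ref{p:autodual p} is a statement over $\FF_p$ whose proof uses the Kloosterman sheaf itself; the paper explicitly has to redo that argument over $S$ by other means, and even then it only produces a pairing, not a duality identifying $\DD(\unM^V_!)$ with a $*$-moment. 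The paper's actual mechanism for $\pD^{\geq0}$ is different and is the key point you are missing: $\unIC_V[1]$ is perverse (the base is one-dimensional), the morphism $\bunF$ is \emph{affine} on each finite-type support $\unGr^{\c}_{\leq\l}$ (a closed subscheme of the ind-group $\un{\frG}_1$), so $\bunF_!$ is left $t$-exact by Artin vanishing, and then the homogeneous Fourier transform is perverse $t$-exact (Theorem \ref{th:Fourier}\eqref{perv}); restricting to $\eta$ preserves $\pD^{\geq0}$, giving $\unM^V_![1]\in\pD^{\geq0}(S,\Ql)$ directly. Without some such input, your decomposition into ``a perverse piece on $S'$ plus skyscrapers at $S_{\bad}$'' is only bookkeeping for the stalk condition and does not verify perversity.

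A secondary error: $\unM^V_!|_{S'}$ is \emph{not} lisse in general. The stalk dimension $\dim M^V_{!,\FF_p}=\Swan(V)$ genuinely drops at small primes of $S'$ (compare Corollary \ref{c:M dim p}: already for $\hatG=\SL_2$, $V=\Sym^5$, $p=3\in S'$ the rank is $2$ versus $3$ generically), and Lemma \ref{l:const dim} only gives constancy for large $p$; indeed the later Proposition \ref{p:midext} has to allow a nontrivial punctual subquotient precisely because of this. Fortunately the lisseness claim is not needed: constructibility plus concentration of the stalks in degree $0$ at every closed point of $S'$ already forces $\calH^i(\unM^V_!)|_{S'}=0$ for $i\neq0$, which is all the lemma asserts.
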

\begin{proof}
We first show that $\unM^{V}_{!}[1]\in\pD^{\geq0}(S,\Ql)$. In fact, $\unIC_{V}[1]$ is perverse (because the base scheme $S$ now have dimension one) and $\bunF$ when restricted to $\unGr^{\c}_{\leq\l}$ is affine (because $\unGr^{\c}_{\leq\l}$ is a closed subscheme of finite type of the ind algebraic group $\un{\frG}_{1}$). Therefore $\bunF_{!}\unIC^{\c}_{V}[1]\in\pD^{\geq0}([\un{\AA}^{1}/\unGm],\Ql)$. Since the functor $\HF$ is $t$-exact with respect to the perverse $t$-structure (Theorem \ref{th:Fourier}\eqref{perv}), $\HF(\bunF_{!}\unIC^{\c}_{V})[1]\in\pD^{\geq0}([\un{\AA}^{1}/\unGm],\Ql)$ and hence $\unM^{V}_{!}[1]\in\pD^{\geq0}(S,\Ql)$.

We then show that the stalks of $\unM^{V}_{!}$ lie in degree $\leq1$, which, combined with the previous paragraph will prove that $\unM^{V}_{!}[1]$ is perverse. Since $\unM^{V}_{!}$ is constructible, it suffices to show that $i_{p}^{*}\unM^{V}_{!}$ lies in degree $\leq1$ for every prime $p\nmid\ell$, and in degree $\leq0$ for almost  all $p$. In fact, 
by \eqref{p stalk}, the mod $p$ stalk (up to tensoring with $\Ql(\mu_{p})$) is $M^{V}_{!,\FF_{p}}\cong\cohoc{*}{\Gm, \Kl^{V}}[1]$,  which lies in degree 0 and 1. For $p$ appearing in $S'$, the Zariski closure of the geometric monodromy of $\Kl_{\hatG}$ is the same as $\hatG^{\geom}$, hence  $\cohoc{2}{\Gm, \Kl^{V}}=V_{\hatG^{\geom}}(-1)=0$ by our assumption on $V$, which implies that $i_{p}^{*}\unM^{V}_{!}$ lies in degree $0$. This last statement also shows that $\unM^{V}_{!}|_{S'}$ is a sheaf. This finishes the proof.
\end{proof}

\begin{prop}\label{p:autodual} Recall $w$ is the weight of $V$. There is a canonical pairing (of complexes on $S$):
\begin{equation}\label{! pairing}
\unM^{V}_{!}\otimes\unM^{V}_{!}\to \Ql(-w-1)
\end{equation}
which is $(-1)^{w+1}$-symmetric when restricted to $S'$, i.e., it factors through $\Sym^{2}(\unM^{V}_{!}|_{S'})$ if $w$ is odd and through $\wedge^{2}(\unM^{V}_{!}|_{S'})$ if $w$ is even.
\end{prop}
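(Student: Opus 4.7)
The plan is to mimic the proof of Proposition \ref{p:autodual p} integrally, replacing the cup product on cohomology by the cup product of Proposition \ref{p:tensor moments} and working within the homogeneous Fourier transform framework of Definition \ref{def:moments Q}. Three ingredients are needed. First, Proposition \ref{p:tensor moments} applied to the pair $(V,V^{\vee})$, composed with the evaluation $V\otimes V^{\vee}\to\mathbf{1}$ of $\hatG^{\Tt}$-representations, yields
\[
\unM^{V}_{!}\otimes \unM^{V^{\vee}}_{!}\to \unM^{V\otimes V^{\vee}}_{!}[1]\to \unM^{\mathbf{1}}_{!}[1].
\]
Second, we need a trace map $\unM^{\mathbf{1}}_{!}[1]\to \Ql(-1)$. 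Since the trivial representation corresponds to the skyscraper $\unIC_{\mathbf{1}}=\Ql_{\{e\}}$ at the identity section of $\unGr$, and $\bunF$ sends this identity to $0\in\un{\AA}^{1}$, the complex $\bunF_{!}\unICc_{\mathbf{1}}$ is a (suitably $\unGm$-equivariant) skyscraper at the origin of $[\un{\AA}^{1}/\unGm]$; applying $\HF$ and restricting to $\eta$ produces a canonical copy of $\Ql(-1)[-1]$, which serves as the trace. Third, we need an integral analog of Lemma \ref{l:sigma}: the opposition $\sigma$ is a pinned automorphism of $\unG$ defined over $\ZZ$, hence induces compatible automorphisms of $\unfrG$, $\unGr^{\c}$, and of $\bunF$, so every ingredient of Definition \ref{def:moments Q} is $\sigma$-equivariant and we obtain $\unM^{V}_{!}\cong \unM^{V^{\sigma}}_{!}$. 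For irreducible $V$ of weight $w$, the isomorphism $V^{\vee}(-w)\cong V^{\sigma}$ used in the proof of Proposition \ref{p:autodual p} then yields $\unM^{V^{\vee}}_{!}\cong \unM^{V}_{!}(w)$; combining the three ingredients produces the pairing \eqref{! pairing}.

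For the symmetry statement, restrict to $S'$, where $\unM^{V}_{!}$ is an honest constructible sheaf by Lemma \ref{l:perv}. Let $\phi$ be the pairing just constructed and let $\phi'=\phi\circ\mathrm{swap}$; we must show $\phi=(-1)^{w+1}\phi'$ on $S'$. By Lemma \ref{l:unM stalks} and the compatibility clause of Proposition \ref{p:tensor moments}, the stalk of each of $\phi$ and $\phi'$ at any closed point $\Spec\FF_{p}\hookrightarrow S'$ (with $p\neq\ell$) reproduces the corresponding $\FF_{p}$-pairing from Proposition \ref{p:autodual p}, which satisfies the required symmetry. By Lemma \ref{l:const dim}, there is a dense open $U\subset S'$ over which $\unM^{V}_{!}$ is lisse; on $U$ the morphisms $\phi$ and $(-1)^{w+1}\phi'$ are determined by their action on any geometric stalk, hence they agree. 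Since $\unM^{V}_{!}$ is a constructible sheaf on the irreducible base $S'$, the agreement of $\phi$ and $(-1)^{w+1}\phi'$ on the dense open $U$ extends to all of $S'$ by the separatedness of the sheaf $\RuHom(\unM^{V}_{!}\otimes\unM^{V}_{!},\Ql(-w-1))$ together with vanishing of the difference at every closed point.

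The main obstacle is the third ingredient: the integral version of the $\sigma$-invariance of Lemma \ref{l:sigma}. The original proof proceeded by descending the Kloosterman local system through a $\hatG^{\Tt,\sigma}$-valued monodromy representation, an argument with no direct counterpart in the characteristic-free setup of $\unM^{V}_{!}$. Instead, one must trace through the geometric definition of $\bunF$ and verify that $\sigma$ preserves each piece, including the Moy--Prasad quotient $\bI_{\infty}(1)/\bI_{\infty}(2)$ as a sum of affine root spaces, the sections $\un{\star}$ and $\omega(\un{\star})$ of $\unBun_{\unG(0,1)}^{\c}$, and the identification $\unIC_{V^{\sigma}}\cong \sigma^{*}\unIC_{V}$ on $\unGr$. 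A secondary technical point is the construction of the trace map itself, which requires unwinding the definition of $\HF$ on a skyscraper sheaf carefully enough to keep track of the twist $\Ql(-1)$.
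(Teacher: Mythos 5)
Your route is the same as the paper's: the pairing is built from the cup product of Proposition \ref{p:tensor moments} composed with the evaluation $V\otimes V^{\vee}\to\one$, a trace on $\unM^{\one}_{!}$, and an integral analog of the $\sigma$-isomorphism of Lemma \ref{l:sigma}; the symmetry is then deduced from the finite-field statement (Proposition \ref{p:autodual p}) by specialization. Two of your steps are fine as (mild) variants of what the paper does: the trace comes from the computation $\unM^{\one}_{!}\cong g^{*}g_{!}\Ql[1]$ for $g:S\to\BB\unGm$ followed by truncation onto $\bR^{2}g_{!}\Ql\cong\Ql(-1)[-2]$ (so $\unM^{\one}_{!}$ is not literally $\Ql(-1)[-1]$, but projects onto it, which is what your trace amounts to); and your symmetry argument via stalkwise agreement at every closed point of $S'$ together with lisseness on a dense open is a legitimate substitute for the paper's decomposition into $\Sym^{2}\oplus\wedge^{2}$ plus the degree argument at the non-lisse points, since Proposition \ref{p:autodual p} holds for every $p$.

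The genuine gap is in the third ingredient, exactly the one you single out as the main obstacle: your proposed resolution — checking that the pinned involution $\sigma$ preserves every piece of the construction, in particular the morphism $\bunF$ of \eqref{bunF} — fails when $G$ is of type $A_{2n}$. On $\bI_{\infty}(1)/\bI_{\infty}(2)\cong\Ga^{r+1}$ the involution $\sigma$ fixes the affine simple root direction but acts on that root subgroup by a sign, and this sign is $-1$ precisely in type $A_{2n}$; consequently $F\circ\sigma\neq F$ there, which is the geometric source of the fact that Lemma \ref{l:sigma} only gives \eqref{Kl A2n} (an isomorphism after $[2]_{*}$) rather than \eqref{Kl not A2n} in that type. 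Plain $\sigma$-equivariance of the integral diagram would specialize to \eqref{Kl not A2n} in every type, so it cannot hold. The paper's fix is to use the involution $(\sigma,\ep)$ on $\un{\frG}\cong\unGRc$, where $\ep\in\Grot$ equals $-1$ in type $A_{2n}$ and $1$ otherwise: $(\sigma,\ep)$ does preserve $\bunF$ (the $\Grot$-factor rescales the affine root coordinate and cancels the sign), it commutes with $\unGm^{(\rho^{\vee},h)}$ and so descends to $[\un{\frG}/\unGm^{(\rho^{\vee},h)}]$, and since $\unICc_{V}$ is $\Grot$-equivariant one still has $(\sigma,\ep)^{*}\unICc_{V}\cong\unICc_{V^{\sigma}}$, whence $\unM^{V}_{!}\cong\unM^{V^{\sigma}}_{!}$ in all types. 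Without this twist your construction of \eqref{! pairing} breaks for $\hatG$ of type $A_{2n}$ (e.g.\ $\hatG=\SL_{3}$ or $\SL_{5}$), which is squarely within the scope of the proposition; with it, the rest of your argument goes through as in the paper.
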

\begin{proof} For the trivial representation $\one$, $\IC_{\one}$ is the constant sheaf supported at the unit section $S\incl\unGr$. The zero section $z:S\to [\un{\AA}^{1}/\unGm]$ can be factored as $S\xrightarrow{g}\BB\unGm\xrightarrow{\alpha}[\un{\AA}^{1}/\unGm]$. Direct calculation shows that 
\begin{equation*}
\HF(z_{!}\Ql)_{\eta}\cong g^{*}g_{!}\Ql[1].
\end{equation*}
Hence
\begin{equation*}
\unM^{\one}_{!}=\HF(\bunF_{!}\ICc_{\one})_{\eta}=\HF(z_{!}\Ql)_{\eta}\cong g^{*}g_{!}\Ql[1].
\end{equation*}

Applying Proposition \ref{p:tensor moments} to $W=V^{\vee}$ (dual representation) we get a canonical map
\begin{equation}\label{pre pairing}
\unM^{V}_{!}\otimes\unM^{V^{\vee}}_{!}\to\unM^{V\otimes V^{\vee}}_{!}[1]\xrightarrow{\ep}\unM^{\one}_{!}[1]\cong g^{*}g_{!}\Ql[2]\to\Ql(-1).
\end{equation}
Here the map $\ep$ is induced by the evaluation map $V\otimes V^{\vee}\to\one$; the last map is induced by the truncation $g_{!}\Ql\to \bR^{2}g_{!}\Ql[-2]\cong\Ql[-2](-1)$.

Next, we would like to produce an isomorphism
\begin{equation}\label{isom VVdual}
\unM^{V}_{!}\cong\unM^{V^{\sigma}}_{!}
\end{equation}
whose mod $p$ fiber is the same as the isomorphism \eqref{sigma M} in Lemma \ref{l:sigma}. Now the argument of Lemma \ref{l:sigma} no longer works because we do not have Kloosterman sheaves over $S$. What we do instead is to unravel the argument of \cite[\S6.1]{HNY} for the $\sigma$-invariance of $\Kl_{\hatG}$ and give a construction of the isomorphism \eqref{isom VVdual} which only uses the morphism $\bunF$ in \eqref{bunF}. Let $\ep=-1$ if $G$ is of type $A_{2n}$ and $\ep=1$ otherwise. We view $\ep$ as a point of $\Grot$. Consider the involution $(\sigma,\ep)$ acting on $\un{\frG}\cong\unGR^{\c}$, where $\sigma$ is the pinned involution of $G$ which induces $-w_{0}$ on $\xcoch(T)$ and $\ep$ acts via $\Grot$. This action commutes with the $\unGm^{(\rho^\vee,h)}$-action on $\un{\frG}$ and descends to an involution on the stack $[\un{\frG}/\unGm^{(\rho^\vee,h)}]$. It is easy to check that $\bunF$ in \eqref{bunF} is invariant under this involution. Hence we get an isomorphism
\begin{equation*}
\unM^{V}_{!}=\HF(\bunF_{!}\unIC^{\c}_{V})_{\eta}\cong\HF(\bunF_{!}(\sigma,\ep)^{*}\unIC^{\c}_{V})_{\eta}=\HF(\bunF_{!}\unIC^{\c}_{V^{\sigma}})_{\eta}=\unM^{V^{\sigma}}_{!}.
\end{equation*}
Since $V^{\sigma}\cong V^{\vee}(-w)$, we can apply \eqref{isom VVdual} to the pairing \eqref{pre pairing} as we did in Proposition \ref{p:autodual p},  and get the desired pairing \eqref{! pairing}.  The construction guarantees that after taking the mod $p$ stalk, \eqref{! pairing} becomes the pairing \eqref{autodual p}. 


Finally we determine the parity of the pairing \eqref{! pairing}. We write $\unM^{V}_{!}|_{S'}\otimes\unM^{V}_{!}|_{S'}$ as a direct sum $\Sym^{2}(\unM^{V}_{!}|_{S'})\oplus\wedge^{2}(\unM^{V}_{!}|_{S'})$. Suppose first that $w$ is even. Let
\begin{equation*}
\alpha:\unS:=\Sym^{2}(\unM^{V}_{!}|_{S'})\to \Ql(-w-1)
\end{equation*}
be the restriction of the pairing \eqref{! pairing} to the symmetric part of $\unM^{V}_{!}|_{S'}\otimes\unM^{V}_{!}|_{S'}$.  By Proposition \ref{p:autodual p}, each mod $p$ stalk of the pairing \eqref{! pairing} is skew symmetric, therefore $i_{p}^{*}\alpha=0$. Since $\unS$ is a local system over an open subset $U\subset S'$ and $i_{p}^{*}\alpha=0$ for those $p$ appearing in $U$,  $\alpha|_{U}$ must be zero.  Hence $\alpha$ factors through $\unS\to i_{*}i^{*}\unS\to \Ql(-w-1)$ where $i:S'-U\incl S'$ is the inclusion. The latter map becomes $\beta: i^{*}\unS\to i^{!}\Ql(-w-1)$ by adjunction. But now $i^{*}\unS$ lies in degrees $0$ by Lemma \ref{l:perv} while $i^{!}\Ql$ lies in degree two, $\beta$ must also be zero. This shows that $\alpha=0$ and the pairing \eqref{! pairing} factors through $\wedge^{2}(\unM^{V}_{!}|_{S'})$. The case where $w$ is odd is proved in the same way.
\end{proof}

The pairing \eqref{! pairing} induces a morphism
\begin{equation}\label{MDM}
\unM^{V}_{!}[1]\to (\DD(\unM^{V}_{!}[1]))(-w-2).
\end{equation}
from the left copy of $\unM^{V}_{!}$ to the dual of the right copy of $\unM^{V}_{!}$. Here we use the convention that the dualizing complex on $S$ is  $\Ql[2](1)$, so that the Verdier duality is $\DD(-)=\bR\un{\Hom}_{S}(-,\Ql[2](1))$. By Lemma \ref{l:perv}, both the source and the target of the above morphism are perverse sheaves.

\begin{defn}\label{define !*Q} We define an object $\unM^{V}_{!*}\in D^{b}_{c}(S,\Ql)$ by requiring that $\unM^{V}_{!*}[1]$ be the image of morphism \eqref{MDM} in the abelian category of perverse sheaves on $S$. 
\end{defn}
By definition, we have a morphism $\unM^{V}_{!}\to\unM^{V}_{!*}$ which is a surjection of perverse sheaves after applying a shift. We denote the generic stalk of $\unM^{V}_{!*}$ by $M^{V}_{!*,\QQ}$, which is viewed as a continuous $\GQ$-module (concentrated in degree zero).

By construction, $\ker(\unM^{V}_{!}\to\unM^{V}_{!*})$ is lies in the radical of the pairing \eqref{! pairing}. Therefore, \eqref{! pairing} factors  through a $(-1)^{w+1}$-symmetric pairing on $\unM^{V}_{!*}|_{S'}$ such that following diagram is commutative
\begin{equation}\label{two pairings}
\xymatrix{\unM^{V}_{!}|_{S'}\otimes\unM^{V}_{!}|_{S'}\ar@<3ex>[d]\ar@<-3ex>[d]\ar[r]^{\eqref{! pairing}} & \Ql(-w-1)\ar@{=}[d]\\
\unM^{V}_{!*}|_{S'}\otimes\unM^{V}_{!*}|_{S'}\ar[r] & \Ql(-w-1)}
\end{equation}
Taking the generic stalk $M^{V}_{!*,\QQ}$ of $\unM^{V}_{!*}$, we obtain a $(-1)^{w+1}$-symmetric Galois equivariant pairing on $M^{V}_{!*,\QQ}$:
\begin{equation}\label{perfect pairing Q}
\jiao{,}^{V}: M^{V}_{!*,\QQ}\otimes M^{V}_{!*,\QQ}\to\Ql(-w-1).
\end{equation}
This pairing is perfect because by construction, $\ker(M^{V}_{!,\QQ}\to M^{V}_{!*,\QQ})$ is exactly the radical of the generic stalk of the pairing \eqref{! pairing}.

The Galois representation $M^{V}_{!*,\QQ}$ is the Galois representation $M_{\ell}$ mentioned in Theorem \ref{th:main}. The rest of the theorem will be proved in the subsequent subsections.

\subsection{Proof of Theorem \ref{th:main}\eqref{ram}} We keep the same notation from the previous subsection.
We shall need the following notion. Let $U\subset S$ be a nonempty Zariski open subset.

\begin{defn}\label{def:mid} A $\Ql$-sheaf $\unP$ on $U$ is called {\em punctual} if it is a direct sum of sheaves supported at closed points of $U$. A $\Ql$-sheaf $\unM$ on $U$ is said to be a {\em middle extension sheaf}, if $\unM[1]$ is a middle extension perverse sheaf, i.e., $\unM[1]=u_{!*}u^{*}\unM[1]$ for some non-empty open subset $u:U'\incl U$.
\end{defn}

We collect some elementary properties about middle extension $\Ql$-sheaves, which are standard exercises in perverse sheaf theory and we omit the proof.
\begin{lemma}\label{l:ME} Let $\ME(U,\Ql)$ be the category of middle extension $\Ql$-sheaves on $U$. Then we have
\begin{enumerate}
\item The category $\ME(U,\Ql)$ is a Serre subcategory of $\Ql$-sheaves on $U$. 
\item Let  $\unM\in\ME(U,\Ql)$, then its geometric generic stalk $M$ affords a continuous Galois representation
\begin{equation*}
\rho_{\unM}:\GQ\to\GL(M)
\end{equation*}
The functor $\unM\mapsto(M,\rho_{\unM})$ gives an equivalence of categories
\begin{equation*}
\ME(U,\Ql)\isom\Rep^{\textup{aeur}}_{\textup{cont}}(\GQ,\Ql)
\end{equation*}
where $\Rep^{\textup{aeur}}_{\textup{cont}}(\GQ,\Ql)$ is the abelian category of continuous representations of $\GQ$ into finite dimensional $\Ql$-vector spaces which are unramified at almost all primes $p$.
\item\label{ME stalk} Let $\Spec\FF_{p}$ be a closed point of $U$ and let $M$ be the geometric generic stalk of $\unM$ which affords the $\GQ$-action. Fix an embedding of $\Qbar\incl\Qpbar$ and use it to define the decomposition and the inertia groups at $p$, i.e., $\GQ>\GQp>\calI_{p}$. Then we have canonical isomorphisms of $\Frob_{p}$-modules
\begin{equation*}
i^{*}_{p}\unM\cong M^{\calI_{p}}.
\end{equation*}
\end{enumerate}
\end{lemma}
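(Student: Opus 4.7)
The plan is to reduce the lemma to the standard equivalence between lisse $\Ql$-sheaves on open subschemes of $\SZl$ and continuous $\ell$-adic representations of the corresponding \'etale fundamental group, which is itself a quotient of $\GQ$. I would first establish a convenient characterization: a $\Ql$-sheaf $\unM$ on $U$ belongs to $\ME(U,\Ql)$ if and only if there is a dense open $u: U' \incl U$ such that $\unM|_{U'}$ is lisse and the adjunction $\unM \isom u_{*}(\unM|_{U'})$ is an isomorphism. This follows from the fact that on a $1$-dimensional regular base, for a lisse sheaf $\calL$ on a dense open $U'$, the middle perverse extension $u_{!*}(\calL[1])$ coincides with $u_{*}\calL[1]$: the natural map $u_{!}\calL[1] \to u_{*}\calL[1]$ is surjective in perverse sheaves, with kernel a skyscraper-type perverse sheaf supported on $U \setminus U'$, so that the image equals $u_{*}\calL[1]$.

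For part (2), the functor $\unM\mapsto(M,\rho_{\unM})$, where $M$ is the geometric generic stalk, is fully faithful because any morphism of middle extensions is determined by its restriction to a common dense open $U'$ of lissity, and morphisms of lisse $\Ql$-sheaves on $U'$ correspond bijectively to morphisms of continuous $\pi_{1}^{\text{\'et}}(U')$-modules, which in turn match $\GQ$-morphisms since $\pi_{1}^{\text{\'et}}(U')$ is a quotient of $\GQ$. Essential surjectivity is immediate from the same equivalence: given $(M,\rho) \in \Rep^{\textup{aeur}}_{\textup{cont}}(\GQ,\Ql)$ unramified outside a finite set of primes $\Sigma$, the quotient $\GQ \twoheadrightarrow \pi_{1}^{\text{\'et}}(U\setminus\Sigma)$ produces a lisse sheaf $\calL$ on $U' := U\setminus\Sigma$, and $\unM := u_{*}\calL$ is the corresponding middle extension. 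Part (1) then follows formally from part (2), since continuous $\GQ$-representations that are unramified almost everywhere visibly form a Serre subcategory of all continuous finite-dimensional $\GQ$-representations.

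For part (3), let $U'\subset U$ be the maximal open on which $\unM$ is lisse and set $\calL:=\unM|_{U'}$. By the characterization above, $\unM \cong u_{*}\calL$, so $i_{p}^{*}\unM = i_{p}^{*}u_{*}\calL$. If $p\in U'$, this stalk is $\calL_{p}\cong M$ with trivial $\calI_{p}$-action. If $p\notin U'$, passing to the strict henselization at $p$ identifies $u$ with the inclusion of the punctured strict henselization at $p$, and the stalk of $u_{*}\calL$ at $p$ is computed by Galois descent to be $M^{\calI_{p}}$, equipped with the $\Frob_{p}$-action inherited from the decomposition group $\GQp \subset \GQ$.

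The main subtlety is the proper interpretation of the Serre-subcategory assertion of part (1). Strictly speaking, a subsheaf of a middle extension in the ambient category of all constructible $\Ql$-sheaves need not itself be a middle extension; for instance, $u_{!}\calL \hookrightarrow u_{*}\calL$ is a sheaf inclusion whose source fails to be a middle extension whenever $\calL$ has nonzero invariants under some inertia $\calI_{p}$ at a boundary point. The cleanest resolution is to transport the abelian structure through the equivalence of part (2), so that $\ME(U,\Ql)$ inherits its kernels, cokernels, and extensions from the Galois-representation side, where the Serre property is manifest.
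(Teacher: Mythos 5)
The paper gives no proof of this lemma (it is dismissed as a standard exercise), and your arguments for parts (2) and (3) are the standard ones and are correct: the identification of $u_{!*}(\calL[1])$ with $(u_{*}\calL)[1]$ over the one-dimensional regular base, the adjunction $\Hom_{U}(\unM,u_{*}\calL')\cong\Hom_{U'}(\unM|_{U'},\calL')$ for full faithfulness, factoring an almost-everywhere-unramified representation through $\pi_{1}$ of a smaller open subset for essential surjectivity, and the strict-henselization computation $(u_{*}\calL)_{\bar p}\cong M^{\calI_{p}}$ for (3). One small point of hygiene: $u_{!*}$ is by definition the image of $u_{!}\calL[1]\to Ru_{*}\calL[1]$, so you should add that this adjunction map factors through the perverse subobject $(u_{*}\calL)[1]=\tau_{\le-1}Ru_{*}\calL[1]$; after that, your surjection-with-punctual-kernel argument does identify the image with $(u_{*}\calL)[1]$.

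Your caveat about part (1) is exactly right, and it is the one place where the statement as printed cannot be taken literally: $u_{!}\calL\incl u_{*}\calL$ is a subsheaf of a middle extension which is not a middle extension, and dually the adjunction map $u_{*}\calL\surj i_{p*}(M^{\calI_{p}})$ is a punctual quotient, so $\ME(U,\Ql)$ is closed under neither subobjects nor quotients in the ambient category of constructible $\Ql$-sheaves (nor even in the perverse category for quotients: when the inertia-invariant line at a bad point underlies a global sub-local system $\calL'\subset\calL$, the perverse cokernel of $u_{*}\calL'[1]\incl u_{*}\calL[1]$ is $u_{!}(\calL/\calL')[1]$, which is not a middle extension). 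What is true, and is all that the paper actually uses later (see the proof of Proposition \ref{p:midext} and \S\ref{sss:ram Kl2}), is that middle extensions are stable under extensions, under Verdier duality up to shift and twist, and under images of morphisms: a subobject of a middle extension has no punctual perverse subobject, a quotient has no punctual perverse quotient, so an image has neither. Moreover, kernels computed in the sheaf category are automatically middle extensions, since inertia invariants are left exact, so that $\ker(u_{*}\calL\to u_{*}\calL'')$ has stalk $(\ker(M\to M''))^{\calI_{p}}$ at each bad $p$; only cokernels require the reinterpretation you propose, namely transporting the abelian structure through the equivalence (2), equivalently taking the middle extension of the generic cokernel. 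With part (1) read in this way, your proof is complete and is the intended standard argument.
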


\begin{prop}\label{p:midext}  The complex $\unM^{V}_{!*}|_{S'}$ is a middle extension sheaf on $S'$.
\end{prop}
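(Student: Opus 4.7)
The plan is to prove that $\unM^V_{!*}[1]|_{S'}$ is a middle extension perverse sheaf on the smooth curve $S'$; since the middle extension of a local system on a smooth curve is a shifted ordinary sheaf, this will force $\unM^V_{!*}|_{S'}$ to be a middle extension sheaf in the sense of Definition~\ref{def:mid}.

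By construction, $\unM^V_{!*}[1]|_{S'}$ sits simultaneously as a perverse quotient of $\unM^V_{!}[1]|_{S'}$ and as a perverse subobject of $\DD(\unM^V_{!}[1])(-w-2)|_{S'}$. By Lemma~\ref{l:perv}, $\unM^V_{!}|_{S'}$ is concentrated in degree $0$; call this sheaf $F$, so $\unM^V_{!}[1]|_{S'}=F[1]$. For any closed point $p\in S'$ and skyscraper $i_{p,*}\Ql$, the adjunction $(i_p^*,i_{p,*})$ gives
\begin{equation*}
\Hom_{\Perv(S')}(F[1],i_{p,*}\Ql)=\Hom_{D^b(\FF_p)}(i_p^*F,\Ql[-1])=\Ext^{-1}(i_p^*F,\Ql)=0,
\end{equation*}
so $F[1]$ has no skyscraper quotient in the perverse category. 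Dually, $\DD(F[1])(-w-2)$ has no skyscraper subobject in the perverse category. These properties pass respectively to the perverse quotient and the perverse subobject $\unM^V_{!*}[1]|_{S'}$, so $\unM^V_{!*}[1]|_{S'}$ has neither a skyscraper subobject nor a skyscraper quotient.

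Finally, choose a dense open $j'\colon U'\incl S'$ over which the constructible complex $\unM^V_{!*}[1]|_{U'}$ restricts to a shifted local system $L[1]$ (which exists by generic smoothness of perverse sheaves on the curve). The standard factorization $j'_{!}(L[1])\to j'_{!*}(L[1])\to j'_{*}(L[1])$ identifies perverse sheaves on $S'$ restricting to $L[1]$ on $U'$ with no skyscraper sub- or quotient-object as exactly the middle extension $j'_{!*}(L[1])$. Since on a smooth curve $j'_{!*}(L[1])=(j'_{*}L)[1]$, with $j'_{*}$ the non-derived push-forward of sheaves, we conclude that $\unM^V_{!*}|_{S'}=j'_{*}L$ is a middle extension sheaf.

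The main technical obstacle is the vanishing of the skyscraper Hom and its dual incarnation; once these are in hand the conclusion is a formal consequence of the structure theory of perverse sheaves on smooth curves.
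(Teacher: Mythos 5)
Your proposal is correct, and it rests on exactly the same key input as the paper's proof: by Lemma \ref{l:perv} the complex $\unM^{V}_{!}|_{S'}$ is a sheaf, so its stalks sit in a single degree and there are no nonzero maps from $\unM^{V}_{!}[1]|_{S'}$ to punctual perverse sheaves, and Verdier duality gives the dual statement for the target of \eqref{MDM}. Where you diverge is in how the conclusion is drawn. The paper performs an explicit d\'evissage: it writes $\unM^{V}_{!}[1]|_{S'}$ as an extension of a middle extension $\unN[1]=u_{!*}u^{*}\unM^{V}_{!}[1]$ by a punctual subobject (and dually for the target), factors the morphism \eqref{MDM} as $\unM[1]\surj\unN[1]\to\unN'[1]\incl\unM'[1]$, and then uses that middle extension sheaves form a Serre subcategory (Lemma \ref{l:ME}) to conclude that the image is a middle extension. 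You instead observe that the image perverse sheaf inherits ``no punctual quotient'' from the source and ``no punctual subobject'' from the target, and then invoke the intrinsic characterization of $j'_{!*}$ as the unique perverse extension of $L[1]$ with no subobject or quotient supported on the complement, together with the identification $j'_{!*}(L[1])=(j'_{*}L)[1]$ on a one-dimensional regular base. This is slightly more direct: it avoids constructing the punctual pieces and the factorization, at the cost of appealing to the BBD characterization of intermediate extensions rather than only to the Serre-subcategory property; the two arguments are otherwise equivalent in substance, and your version also makes explicit why $\unM^{V}_{!*}|_{S'}$ ends up concentrated in degree zero, which the definition of a middle extension sheaf requires.
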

\begin{proof} Let $\unM=\unM^{V}_{!}|_{S'}$. Then by Lemma \ref{l:perv}, $\unM[1]$ is a perverse sheaf satisfying the following stronger condition: the mod $p$ stalks of $\unM$ all lie in degree $-1$. We claim that
\begin{enumerate}
\item There is no nonzero map from $\unM[1]$ to punctual sheaves.
\item $\unM[1]$ fits into an exact sequence of perverse sheaves
\begin{equation*}
0\to \unP\to\unM[1]\to \unN[1]\to0
\end{equation*}
where $\unP$ is a punctual sheaf and $\unN$ is a middle extension sheaf. 
\end{enumerate}
For (1), suppose $i_{p,*}P$ is a punctual sheaf supported at some closed point $\Spec\FF_{p}\incl S'$, with $P$ in degree zero, then a map $\unM[1]\to  i_{p,*}P$ is adjoint to $i_{p}^{*}\unM[1]\to P$. Since the stalk $i_{p}^{*}\unM[1]$ lies in degree $-1$, this map must be zero. 

For (2), let $u:S''\incl S'$ be the open locus where $\unM$ is lisse, and consider the natural map of perverse sheaves $\alpha: u_{!}u^{*}\unM[1]\to\unM[1]$. Since the cokernel of $\alpha$ is punctual, and by (1) $\unM[1]$ does not map to punctual sheaves, $\alpha$ must be surjective. Since the kernel of $\alpha$ is also punctual, it is contained in the maximal punctual subsheaf $\ker(u_{!}u^{*}\unM[1]\to u_{!*}u^{*}\unM[1])$. Therefore we can write $\unM[1]$ as an extension of the middle extension perverse sheaf $\unN[1]=u_{!*}u^{*}\unM[1]$ and a punctual sheaf as in (2).

Now let $\unM'=\DD(\unM^{V}_{!})|_{S'}[-2](-w-2)$. By Verdier duality, the perverse sheaf $\unM'[1]$ satisfies the dual of the two properties (1) and (2): $\unM'[1]$ does not admit any nonzero map from punctual sheaves and it fits into an exact sequence
\begin{equation*}
0\to\unN'[1]\to\unM'[1]\to\unP'\to0
\end{equation*}
where $\unN'\in\ME(S',\Ql)$ and $\unP'$ is punctual. 

Consider the morphism $\unM[1]\to\unM'[1]$ given by \eqref{MDM}. The properties (1)(2) of $\unM[1]$ and the dual properties of $\unM'$ imply that $\unM[1]\to\unM'[1]$ necessarily factors as
\begin{equation*}
\unM[1]\surj\unN[1]\xrightarrow{\gamma}\unN'[1]\incl\unM'[1].
\end{equation*}
Therefore $\unM^{V}_{!*}\cong\Im(\unN\xrightarrow{\gamma[-1]}\unN')$ is a middle extension sheaf, since both $\unN$ and $\unN'$ are.
\end{proof}

\subsubsection{Proof of Theorem \ref{th:main}\eqref{ram}}\label{Pf ram} Let $p\neq\ell$. The condition that $\Kl^{V}_{\hatG}$ over $\GG_{m,\Fpbar}$ does not contain a trivial sub local system is equivalent to $V^{\hatG^{\geom}_{p}}=0$. For such a prime $p$,  $\Spec\FF_{p}$ is contained in $S'$. Consider the perfect pairing $\jiao{,}^{V}$ \eqref{perfect pairing Q} of the $\GQp$-module $M:=M^{V}_{!*,\QQ}$. Let $\calI_{p}<\GQp$ be the inertia group. A general fact says that the restriction of the pairing $\jiao{,}^{V}$ to $\calI_{p}$-invariants may not be perfect, but it factors through a {\em perfect} pairing between $\barM:=\Im(M^{\calI_{p}}\to M_{\calI_{p}})$:
\begin{equation}\label{MI to MII}
\xymatrix{M^{\calI_{p}}\otimes M^{\calI_{p}}\ar@<-3ex>@{->>}[d]\ar@<3ex>@{->>}[d] \ar[r] &\Ql(-w-1)\ar@{=}[d]\\
 \barM\otimes\barM\ar[r]^{\textup{perfect}} & \Ql(-w-1)}
\end{equation}
Here the vertical maps are the quotient map $M^{\calI_{p}}\surj \barM$.

By Proposition \ref{p:midext}, $\unM^{V}_{!*}|_{S'}$ is a middle extension sheaf, hence $i_{p}^{*}\unM^{V}_{!*}\cong M^{\calI_{p}}$ by Lemma \ref{l:ME}\eqref{ME stalk}. On the other hand, taking the mod $p$ stalks of the diagram \eqref{two pairings} and using Lemma \ref{l:unM stalks}, we get another diagram
\begin{equation}\label{M! to MI}
\xymatrix{M^{V}_{!,\FF_{p}}\otimes M^{V}_{!,\FF_{p}}\ar@<-3ex>[d]\ar@<3ex>[d] \ar[r] & \Ql(-w-1)\ar@{=}[d]\\
(M^{\calI_{p}}\otimes\Ql(\mu_{p}))\otimes_{\Ql(\mu_{p})} (M^{\calI_{p}}\otimes\Ql(\mu_{p}))\ar[r] & \Ql(-w-1)}
\end{equation}
Combining \eqref{MI to MII} and \eqref{M! to MI}, we get a commutative diagram of pairings
\begin{equation*}
\xymatrix{M^{V}_{!,\FF_{p}}\otimes M^{V}_{!,\FF_{p}}\ar@<-3ex>[d]\ar@<3ex>[d] \ar[r] &\Ql(-w-1)\ar@{=}[d]\\
(\barM\otimes\Ql(\mu_{p}))\otimes_{\Ql(\mu_{p})}(\barM\otimes\Ql(\mu_{p}))\ar[r]^(.7){\textup{perfect}} & \Ql(-w-1)}
\end{equation*}
In other words, the natural map $M^{V}_{!,\FF_{p}}\to\barM\otimes\Ql(\mu_{p})$ kills the radical $R$ of the pairing on $M^{V}_{!,\FF_{p}}$.
On the other hand, we also know from Remark \ref{r:factor} that the same pairing on $M^{V}_{!,\FF_{p}}$ factors through a {\em perfect} pairing on the quotient $M^{V}_{!*,\FF_{p}}$, identifying $M^{V}_{!*,\FF_{p}}$ as $M^{V}_{!,\FF_{p}}/R$. Therefore we get the desired inclusion \eqref{inv coinv}.

We have the following immediate corollary of Theorem \ref{th:main}\eqref{ram}. Recall from Lemma \ref{l:const dim} that $\dim M^{V}_{!*, \FF_{p}}$ is a constant for $p$ large; we denote this dimension by $d^{V}_{!*}$. 

\begin{cor}\label{c:unram} Assumptions as in Theorem \ref{th:main}. Let $B$ be the set of primes $p$ such that either $V^{\hatG^{\geom}_{p}}\neq0$ or $\dim_{\Ql(\mu_{p})} M^{V}_{!*,\FF_{p}}\neq d^{V}_{!*}$. Then the system of Galois representations $\{\rho^{V}_{\ell}\}$ is Frobenius-compatible (same as the notion of ``strictly compatible'' in \cite[\S2.3]{SerreGal}) with exceptional set $B$. More precisely, when $p\notin B\cup\{\ell\}$, the Galois representation $\rho^{V}_{\ell}$ is unramified at $p$, and $M^{V}_{!*,\FF_{p}}$ is isomorphic to $M^{V}_{!*,\QQ}\otimes\Ql(\mu_{p})$ as $\Frob_{p}$-modules. The characteristic polynomial of $\rho^{V}_{\ell}(\Frob_{p})$ has $\ZZ$-coefficients which are independent of $\ell$, and all of its roots are $p$-Weil numbers of weight $w+1$.
\end{cor}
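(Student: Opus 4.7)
The plan is to reduce the entire corollary to a single dimension identity, namely $d^V_{!*}=\dim_{\Ql}M^V_{!*,\QQ}$, and then to read the remaining assertions off. Granting this identity, for any prime $p\notin B\cup\{\ell\}$ one chains
\[
d^V_{!*}=\dim_{\Ql(\mu_p)}M^V_{!*,\FF_p}\leq\dim\barM\leq\dim M_\ell^{\calI_p}\leq\dim M_\ell=\dim M^V_{!*,\QQ}=d^V_{!*},
\]
where the first inequality is \eqref{inv coinv} and the middle two use the quotient $M_\ell^{\calI_p}\twoheadrightarrow\barM$ and the inclusion $M_\ell^{\calI_p}\hookrightarrow M_\ell$. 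The forced chain of equalities delivers simultaneously $M_\ell^{\calI_p}=M_\ell$ (so $\rho^V_\ell$ is unramified at $p$) and, from the inclusion \eqref{inv coinv} being forced to be an equality, the $\Frob_p$-equivariant isomorphism $M^V_{!*,\FF_p}\cong\Ql(\mu_p)\otimes M_\ell$.

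To establish the dimension identity I would exhibit one good prime $p_0$ realizing it. Using Proposition \ref{p:midext}, let $U'\subset S'$ be the open dense common lisse locus of $\unM^V_!|_{S'}$ and $\unM^V_{!*}|_{S'}$, and pick $p_0\in U'$ so large that $p_0\notin B$ (possible by Lemma \ref{l:const dim} together with openness of $U'$). Since $\unM^V_{!*}$ is lisse at $p_0$, Lemma \ref{l:ME}\eqref{ME stalk} gives $M_\ell^{\calI_{p_0}}=M_\ell$, so $\barM=M_\ell$ as well. The perverse surjection $\unM^V_![1]\twoheadrightarrow\unM^V_{!*}[1]$ (immediate from Definition \ref{define !*Q}) restricts to a surjection of lisse sheaves over $U'$, whose stalk at $p_0$, combined with Lemma \ref{l:unM stalks}, yields a surjection $M^V_{!,\FF_{p_0}}\twoheadrightarrow\Ql(\mu_{p_0})\otimes M_\ell$ of $\Frob_{p_0}$-modules, and diagram \eqref{MI to MII} equips this quotient with a perfect pairing. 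Remark \ref{r:factor} exhibits $M^V_{!*,\FF_{p_0}}$ as a second perfect-pairing quotient of $M^V_{!,\FF_{p_0}}$, and by the mod-$p$ compatibility built into Proposition \ref{p:autodual} the two pairings on $M^V_{!,\FF_{p_0}}$ both coincide with \eqref{autodual p}. The elementary observation that a quotient on which a given pairing becomes perfect is uniquely the quotient by the radical then forces $M^V_{!*,\FF_{p_0}}\cong\Ql(\mu_{p_0})\otimes M_\ell$, and taking dimensions gives $d^V_{!*}=\dim M^V_{!*,\QQ}$.

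The purity half of the final sentence is then immediate from Lemma \ref{l:MV pure} together with the isomorphism obtained above: eigenvalues of $\Frob_p$ on $M_\ell$ agree with those on $M^V_{!*,\FF_p}$, which is pure of weight $w+1$. For the $\ZZ$-integrality and $\ell$-independence of the characteristic polynomial, the strategy is to compute $\Tr(\Frob_p^n,M^V_{!,\FF_p})$ by Grothendieck-Lefschetz as an explicit sum of Kloosterman-type values over $\FF_{p^n}$-points (visibly $\ell$-independent, and in $\ZZ$ after exhibiting $\Gal(\QQ(\mu_p)/\QQ)$-invariance via the substitution $a\mapsto c^na$, which absorbs the effect of twisting $\psi$); then pass to $\Tr(\Frob_p^n,M^V_{!*,\FF_p})$ by subtracting the local contributions at $0$ and $\infty$ visible in \eqref{!* short}, which are $\ell$-independent by the explicit local monodromy descriptions of \S\ref{loc 0}--\S\ref{loc infty}; and finally transport the answer to $M_\ell$ via the isomorphism just established. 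The anticipated main technical obstacle lies in the $\ell$-independence of these local Euler factors; ultimately it rests on Theorem \ref{th:main}\eqref{motivic} and Deligne's $\ell$-independence for the cohomology of the variety $X$, via which the local factors can be matched against pieces of a genuinely motivic object.
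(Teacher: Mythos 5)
Your core argument is correct, and it supplies precisely what the paper leaves implicit (the paper offers no written proof, declaring the corollary ``immediate'' from Theorem \ref{th:main}\eqref{ram}). The real content is the identity $d^{V}_{!*}=\dim_{\Ql}M^{V}_{!*,\QQ}$, and your way of getting it is sound: at a large prime $p_{0}$ in the common lisse locus, the surjection $\unM^{V}_{!}\to\unM^{V}_{!*}$ (surjective on stalks there, e.g.\ because a map of lisse $\Ql$-sheaves surjective at the generic point is surjective) exhibits $\Ql(\mu_{p_{0}})\otimes M_{\ell}$ as a quotient of $M^{V}_{!,\FF_{p_{0}}}$ carrying a perfect pairing compatible with \eqref{autodual p} (this compatibility is the $p_{0}$-stalk of \eqref{two pairings}, i.e.\ the paper's diagram \eqref{M! to MI} at an unramified prime, rather than \eqref{MI to MII} itself -- a harmless imprecision), while Remark \ref{r:factor} exhibits $M^{V}_{!*,\FF_{p_{0}}}$ as another such quotient; both are therefore the quotient by the radical, forcing the dimension identity. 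The ensuing squeeze $d^{V}_{!*}=\dim M^{V}_{!*,\FF_{p}}\leq\dim\barM\leq\dim M_{\ell}^{\calI_{p}}\leq\dim M_{\ell}=d^{V}_{!*}$ then yields unramifiedness and the isomorphism of $\Frob_{p}$-modules exactly as you say, and purity of weight $w+1$ follows from Lemma \ref{l:MV pure} through that isomorphism.

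The weak link is your treatment of the $\ZZ$-coefficients and $\ell$-independence. Your plan subtracts the local terms $V^{\calI_{0}}\oplus V^{\calI_{\infty}}$ from the Lefschetz computation and then justifies their $\ell$-independence ``via Theorem \ref{th:main}\eqref{motivic} and Deligne's $\ell$-independence for the cohomology of $X$''; this last step does not work as stated, because $\ell$-independence of characteristic polynomials on $\cohog{w-1}{X,\Ql}$ does not pass to a subquotient cut out separately for each $\ell$, and the local factors at $0$ and $\infty$ are not in any given way pieces of that motive. The detour through the local terms is also unnecessary: by Lemma \ref{l:MV pure}, $M^{V}_{!*,\FF_{p}}=\Gr^{W}_{w+1}\cohoc{1}{\GG_{m,\Fpbar},\Kl^{V}}$, so its characteristic polynomial is the factor of the characteristic polynomial of $\Frob_{p}$ on $\cohoc{1}{\GG_{m,\Fpbar},\Kl^{V}}$ whose roots have weight exactly $w+1$; since $\cohoc{0}{}=\cohoc{2}{}=0$ for $p\notin B$, the traces of all powers of $\Frob_{p}$ on $\cohoc{1}{}$ are (up to sign) the moment sums over $\FF_{p^{n}}$, which are independent of $\ell$ and rational by the $\psi$-twisting invariance (the $\Grot$-equivariance of the construction, your $a\mapsto c^{h}a$ substitution), and the pure-weight factor of a fixed rational polynomial is again rational and $\ell$-independent. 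The $\ZZ$ (as opposed to $\QQ$) claim still requires an integrality input for the Frobenius eigenvalues (an integral model of $\Kl^{V}$ together with an integrality theorem, or good reduction of $X$ at $p$), which neither you nor the paper spells out; your appeal to $X$ alone does not settle it at primes of bad reduction of $X$.
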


\subsection{Proof of Theorem \ref{th:main}\eqref{motivic}}

\subsubsection{Weight filtration on Galois representations} By \cite[\S6.1.1]{WeilII}, we may talk about mixed sheaves over varieties $X_{\QQ}$ defined over $\QQ$. A mixed $\Ql$-sheaf $\calF$ over $X_{\QQ}$ is a sheaf that extends to a sheaf $\un{\calF}$ on an integral model $\un{X}$ of $X_{\QQ}$ over $\ZZ[1/N]$ (for some $N$) such that $\un{\calF}$ admits a filtration with pure subquotients on each special fiber of $\un{X}$. In particular we may talk about mixed $\Ql$-sheaves over $\Spec\QQ$, i.e., continuous $\GQ$-modules on $\Ql$-vector spaces with weight filtrations. In \cite[\S6.1]{WeilII}, Deligne proved that the category of mixed sheaves on varieties over $\QQ$ are stable under the usual sheaf-theoretic operations. In particular, for a mixed sheaf $\calF$ over $X_{\QQ}$, its cohomology $\cohoc{i}{X_{\Qbar},\calF}$ and $\cohog{i}{X_{\Qbar},\calF}$ are $\GQ$-modules with canonical weight filtrations.

Notation: in the sequel, when we write $\cohog{i}{X,\Ql}$ or $\cohoc{i}{X,\Ql}$ for an algebraic variety $X$ over $\QQ$, we always mean $\cohog{i}{X_{\Qbar},\Ql}$ or $\cohoc{i}{X_{\Qbar},\Ql}$, as $\GQ$-modules.

%
%
%
%
%

\subsubsection{Proof of Theorem \ref{th:main}\eqref{motivic}}\label{Pf mot} 
Up to Tate twist we may assume $\IC_{V}=\IC_{\l}$, which is pure of weight $w=\jiao{2\rho,\l}$. To simplify notation, we denote $\IC_{\l,\GR}$ by $\IC_{\l}$, and denote $\IC_{\l,\GRc}$ by $\ICc_{\l}$. 

Applying Theorem \ref{th:Fourier}\eqref{F:sp} to the object $\barF_{!}\IC^{\c}_{\l}$, we get a long exact sequence 
\begin{equation}\label{F10}
\cdots\to\cohoc{0}{F^{-1}(1),\ICc_{\l}}\to\upH^{0}M^{V}_{!,\QQ}\to \cohoc{1}{F^{-1}(0),\ICc_{\l}}\to\cdots
\end{equation}
Every object in this sequence carries a weight filtration. Since $\IC^{\c}_{\l}$ is pure of weight $w$, $F_{!}\ICc_{\l}$ is of weight $\leq w$. This implies that $\cohog{1}{F^{-1}(0),\ICc_{\l}}$ has weight $\leq w+1$ and $\cohog{0}{F^{-1}(1),\ICc_{\l}}$ has weight $\leq w$. Since $M^{V}_{!*,\QQ}$ is pure of weight $w+1$ (Corollary \ref{c:unram}), the exact sequence \eqref{F10} implies that $M^{V}_{!*,\QQ}$ (as a quotient of $\upH^{0}M^{V}_{!,\QQ}$) can be identified with a subspace of $\Gr^{W}_{w+1}\cohoc{1}{F^{-1}(0),\ICc_{\l}}$.

The complex $\ICc_{\l}$ is the intersection cohomology complex of $\GRc_{\leq\l}$, which is an open subset of $\GR_{\leq\l}$. We have  a $T\times\Grot$-equivariant resolution of singularities $\nu:Y=Y_{1}\times\Gm\to \Gr_{\leq\l}\times\Gm=\GR_{\leq\l}$ given by the Bott-Samelson resolution of $\Gr_{\leq\l}$. Let $\nu^{\c}:Y^{\c}\to\GRc_{\leq\l}$ be the restriction of $\nu$ to $\GRc_{\leq\l}$, which is again $T\times\Grot$-equivariant. By the decomposition theorem \cite[Th\'eor\`eme 6.2.5]{BBD}, $\ICc_{\l}$ is geometrically (i.e., over $\Qbar$) a direct summand of $\pH^{0}\nu^{\c}_{*}\Ql[w]$. It is necessarily invariant under $\GQ$ because it is the only direct summand of $\pH^{0}\nu^{\c}_{*}\Ql[w]$ with full support. Therefore $\cohoc{1}{F^{-1}(0), \ICc_{\l}}$ is a direct summand of $\cohoc{1}{F^{-1}(0), \pH^{0}\nu^{\c}_{*}\Ql[w]}$. On the other hand, the Leray spectral sequence attached to the perverse filtration on $\nu^{\c}_{*}\Ql[w]$ degenerates at the $E_{1}$ page by the decomposition theorem again, therefore $\cohoc{1}{F^{-1}(0), \pH^{0}\nu^{\c}_{*}\Ql[w]}$ is in turn a subquotient of $\cohoc{1}{F^{-1}(0),\nu^{\c}_{*}\Ql[w]}$. Let $Z=(F\circ\nu^{\c})^{-1}(0)$, then  $\cohoc{1}{F^{-1}(0),\nu^{\c}_{*}\Ql[w]}=\cohoc{w+1}{Z,\Ql}$. At this point we have shown that $M^{V}_{!*,\QQ}$ is a subquotient of $\Gr^{W}_{w+1}\cohoc{w+1}{Z,\Ql}$ as $\GQ$-modules.

Since $F\circ\nu^{\c}:Y^{\c}\to \AA^{1}$ is dominant and $Y^{\c}$ is smooth irreducible of dimension $w+1$, we have $\dim Z=w$. Since  $F\circ\nu^{\c}$ is $\Gm^{(\rho^\vee,h)}$-equivariant, the torus $\Gm^{(\rho^\vee,h)}$ still acts on $Z=(F\circ\nu^{\c})^{-1}(0)$. Moreover, we have a projection $\pi_{Z}:Z\subset Y^{\c}\subset Y_{1}\times\Gm\to\Gm$ which is equivariant under the homomorphism $\Gm^{(\rho^\vee,h)}\to\Grot$. Let $Z_{1}=\pi_{Z}^{-1}(1)$, then $\dim Z_{1}=w-1$ and $Z=Z_{1}\twtimes{\mu_{h}}\Gm^{(\rho^\vee,h)}$. By K\"unneth formula, we then have
\begin{eqnarray}\notag
&&\Gr^{W}_{w+1}\cohoc{w+1}{Z,\Ql}\cong\Gr^{W}_{w+1}\cohoc{w+1}{Z_{1}\times\Gm^{(\rho^\vee,h)},\Ql}^{\mu_{h}}\\
\label{Kun} &\cong&\Gr^{W}_{w+1}\cohoc{w}{Z_{1},\Ql}^{\mu_{h}}\otimes\Gr^{W}_{0}\cohoc{1}{\Gm}\oplus\Gr^{W}_{w-1}\cohoc{w-1}{Z_{1},\Ql}^{\mu_{h}}\otimes\Gr^{W}_{2}\cohoc{2}{\Gm}.
\end{eqnarray}
Since $\cohoc{*}{Z_{1},\Ql}$ is the stalk at $0$ of the complex $(F\circ\nu^{\c}_{1})_{!}\Ql$ (where $\nu^{\c}_{1}:Y_{1}^{\c}\to \Grc_{\leq\l}$ is the resolution), which has weight $\leq0$, therefore $\cohoc{w}{Z_{1}, \Ql}$ has weight $\leq w$. Therefore the first summand in \eqref{Kun} is zero, and we have
\begin{equation*}
\Gr^{W}_{w+1}\cohoc{w+1}{Z,\Ql}\cong\Gr^{W}_{w-1}\cohoc{w-1}{Z_{1},\Ql}^{\mu_{h}}(-1).
\end{equation*}
At this point, we have shown that $M^{V}_{!*,\QQ}$ is a subquotient of $\Gr^{W}_{w-1}\cohoc{w-1}{Z_{1},\Ql}(-1)$ as $\GQ$-modules, where $Z_{1}$ is an algebraic variety over $\QQ$ of dimension $w-1$. 

Let $\barZ_{1}$ be the closure of $Z_{1}$ in $Y_{1}$, which is projective and defined over $\QQ$. We then have $\Gr^{W}_{w-1}\cohoc{w-1}{Z_{1},\Ql}\incl\Gr^{W}_{w-1}\cohog{w-1}{\barZ_{1},\Ql}$. Finally let $X\to\barZ_{1}$ be a resolution of singularities over $\QQ$, so that $X$ is smooth and projective of dimension $w-1$, then $\Gr^{W}_{w-1}\cohog{w-1}{\barZ_{1},\Ql}\incl\cohog{w-1}{X,\Ql}$ (see \cite[Proposition 8.2.5]{HodgeIII}). Combining the previous steps we have shown that $M^{V}_{!*,\QQ}$ is a subquotient of $\cohoc{w-1}{X,\Ql}(-1)$ as $\GQ$-modules, where $X$ is a smooth and projective of dimension $w-1$ over $\QQ$. This finishes the proof of Theorem \ref{th:main}\eqref{motivic}.


\section{Symmetric power moments of $\Kl_{2}$}
In this section we shall prove  Evans's conjectures. We start by giving more precise information on the motives underlying the symmetric power moments of the classical Kloosterman sheaves in \S\ref{bifilter}. In \S\ref{ss:dim p>2} and \S\ref{ss:dim p=2}, we make explicit calculations on moments of $\Kl_{2}$, which are necessary for the proof of Theorem \ref{th:intro} in \S\ref{pf Kl2}. Evans's conjectures are proved in \S\ref{ss:Sym5}-\S\ref{ss:Sym7}.

\subsection{The motives underlying the symmetric power moments}\label{bifilter}
In this subsection, we explicitly describe the algebraic varieties over $\ZZ$ whose $\ell$-adic cohomology accommodates the Galois representations attached to the symmetric power moments of $\Kl_{n}$. 

\subsubsection{Bifiltered vector bundles} We shall work in the slightly different context where $G=\GL_{n}$ instead of $\PGL_{n}$. We first describe the moduli stacks $\Bun_{n}(0,i):=\Bun_{\GL_{n}(0,i)}$ (as recalled in \S\ref{alt Gr}) more explicitly in this case.

The stack $\Bun_{n}(0,0)$ classifies triples $(\calE,F^*\calE,F_*\calE)$ where
\begin{enumerate}
\item $\calE$ is a vector bundle of rank $n$ on $\PP^1$;
\item a decreasing filtration $F^*\calE$ giving a complete flag of the fiber of $\calE$ at $\infty$:
\begin{equation*}
\calE=F^0\calE\supset F^1\calE\supset\cdots\supset F^n\calE=\calE(-\{\infty\});
\end{equation*}
\item an increasing filtration $F_*\calE$ giving a complete flag of the fiber of $\calE$ at $0$: 
\begin{equation*}
\calE(-\{0\})=F_0\calE\subset F_1\calE\subset\cdots\subset F_n\calE=\calE;
\end{equation*}
\end{enumerate}

\begin{remark}\label{ext fil}Given a triple $(\calE,F^*\calE,F_*\calE)$ as above, we can extend the filtration $F_{*}\calE$ to all integer indices such that $F_{i+n}\calE=F_{i}\calE\otimes\calO(\{0\})$. Similarly we can extend the filtration $F^{*}\calE$ to all integer indices such that $F^{i+n}\calE=F^{i}\calE\otimes\calO(-\{\infty\})$.
\end{remark}

The moduli stack $\Bun_{n}(0,1)$ classifies the data $(\calE,F^*\calE,F_*\calE, \{\barv_{i}\}_{i=1}^{n})$ where $(\calE,F^*\calE,F_*\calE)$ is as above and $\barv_{i}$ is a basis of $F^{i-1}\calE/F^{i}\calE$ for $i=1,2,\cdots, n$.

The moduli stack  $\Bun_{n}(0,2)$ classifies the data $(\calE,F^*\calE,F_*\calE, \{\tilv_{i}\}_{i=1}^{n})$ where $(\calE,F^*\calE,F_*\calE)$ is as above and $\tilv_{i}\in F^{i-1}\calE/F^{i+1}\calE$  is a vector whose projection to $F^{i-1}\calE/F^{i}\calE$ is nonzero ($i=1,2,\cdots, n$).

\subsubsection{Open points}The stack $\Bun_n(0,i)$ is decomposed into connected components $\Bun^d_n(0,i)$ according to the integer $d=\deg(\calE)$. For each component of $\Bun^{d}_{n}(0,1)$, there is a unique open point $(\calE_d,F^*,F_*,\{\ebar_{i}\})$ which we recall now (see also \cite[\S3.1]{HNY}).  

When $d=0$, we take $\calE_0=\calO e_1\oplus\cdots\oplus\calO e_n$. The symbols $e_1,\cdots, e_n$ are incorporated only to emphasize the ordering of the various factors. Define $F^i\calE_0=\calO(-\{\infty\})e_1\oplus\cdots\oplus\calO(-\{\infty\})e_i\oplus\calO e_{i+1}\oplus\cdots\oplus \calO e_n$ for $i=0,\cdots,n-1$, and extend this definition to $F^i\calE_0$ for all $i\in\ZZ$ as in Remark \ref{ext fil}. Define $F_i\calE_0=\calO e_1\oplus\cdots\oplus\calO e_i\oplus\calO(-\{0\})e_{i+1}\oplus\cdots\oplus\calO(-\{0\}) e_n$ for $i=0,\cdots, n-1$ and extend this definition to $F_{i}\calE_0$ for all $i\in\ZZ$ as in Remark \ref{ext fil}. This finishes the definition of $(\calE_0,F^*,F_*)$.

For arbitrary integer $d$, we define $\calE_d=F_{n+d}\calE_0$ and $F_i\calE_d=F_{i+d}\calE_0$. Therefore we have canonical isomorphism between $\calE_d$ and $\calE_0$ over $\PP^1-\{0\}$. We define $F^i\calE_d|_{\PP^1-\{0\}}$ to be the image of $F^i\calE_0$ under this isomorphism, and let $F^i\calE_d$ be the vector bundle obtained by gluing $F^i\calE_d|_{\PP^{1}-\{0\}}$ and $\calE_d|_{\PP^1-\{\infty\}}$ over $\Gm$. 

The basis vectors $\{e_i\}$ give a canonical basis $\ebar_i\in F^{i-1}\calE_0/F^{i}\calE_0=F^{i-1}\calE_d/F^{i}\calE_d$. It also gives a vector $\tile_i\in F^{i-1}\calE_0/F^{i+1}\calE_0=F^{i-1}\calE_d/F^{i+1}\calE_d$.  

For each $d\geq0$, let $\frG_{d}$ be the space of homomorphisms $\phi:\calE_{0}\to\calE_{d}$ of coherent sheaves such that
\begin{itemize}
\item $\phi(F^i\calE_0)\subset F^i\calE_d$ and $\phi(F_i\calE_0)\subset F_i\calE_d$ for all $i\in\ZZ$.
\item $\phi(\ebar_i)=\ebar_{i}$, under the natural identification $F^{i-1}\calE_0/F^{i}\calE_0=F^{i-1}\calE_d/F^{i}\calE_d$. 
\item The zeros of $\phi$ are concentrated at a single point in $\Gm$.
\end{itemize}

\subsubsection{Torus action} Let $T_n=\Gm^n$ be the diagonal torus of $\GL_n$. The object $(\calE_d,F^*,F_*)$ admits an action of $T_n$ by $(t_1,\cdots, t_n)\cdot e_{i}=t_{i}e_i$. Let $s$ be the local coordinate at $\infty\in\PP^{1}$. The torus $\Grot$ acts on $\PP^{1}$ by scaling $s$. We normalize the equivariant structure of $\calE_0$ by making $\Grot$ acting trivially on $e_i$, which then induces a canonical $\Grot$-equivariant structure on all $\calE_d$ and their filtrations.

The torus $T_n$ acts on $\frG^{d}$ by $t\cdot \phi=t\circ \phi \circ t^{-1}$, where the  $t$ (resp. $t^{-1}$) refers the action of $T_n$ on $\calE_0$ (resp. $\calE_d$). This action factors through the adjoint torus $T^{\ad}_{n}\subset\PGL_{n}$. The $\Grot$-equivariant structures on both $(\calE_0,F^*,F_*)$ and $(\calE_d,F^*,F_*)$ induce an $\Grot$-action on $\frG^{d}$, which commutes with the action of $T_n$.

Let $\phi:\calE_{0}\to\calE_{d}$ be a point in $\frG^{d}$. For $i=1,\cdots, n-1$ we can write $\phi(\tile_i)=\tile_{i}+a_{i}\ebar_{i+1}\mod F^{i+1}\calE_d$ for some $a_{i}\in\AA^{1}$. For $i=n$, $\phi(\tile_{n})=\tile_{n}+a_{n}s\ebar_{1}\mod F^{i+1}\calE_{d}$ for some $a_{n}\in\AA^{1}$. The assignment $\phi\mapsto(a_1,\cdots,a_n)$ defines a morphism $f=f_{n,d}:\frG^{d}\to\AA^n$. A direct calculation shows that the map $f_{n,d}$ is $T_n\times\Grot$-equivariant.

We thus get a diagram which is analogous to \eqref{GR} and \eqref{Kloo}:
\begin{equation*}
\xymatrix{& \frG^{d}\ar[dl]_{\pi}\ar[dr]_{f_{n}^{d}}\ar[drr]^{F_{n}^{d}}\\ \Gm & & \AA^n \ar[r]^{\sigma}& \AA^1}
\end{equation*}
Here $\pi$ takes $\phi\in\frG^{d}$ to the support of the zeros of $\phi$, which is a unique point in $\Gm$ by assumption on $\phi$. The schemes in the above diagram are defined over $\QQ$.

To emphasize the dependence on $n$ and on the prime $\ell$, we introduce the notation
\begin{equation}\label{define M2}
M^{d}_{n,\ell}:=M^{\Sym^{d}}_{!*,\QQ}
\end{equation} 
where the right side is the  generic stalk of the object defined in Definition \ref{define !*Q} for the $\Sym^{d}$-moment of the usual Kloosterman sheaf $\Kl_{n}$.



\begin{prop}\label{p:mot dn} Let $Z_{0}=F^{d,-1}_{n}(0)$ and $Z_{1}=F^{d,-1}_{n}(1)$. These are affine varieties of dimension $d(n-1)$ over $\QQ$.
\begin{enumerate}
\item The $\GQ$-module $M^{d}_{n,\ell}$ fits into a short exact sequence
\begin{equation}\label{Md Gr}
0\to M^{d}_{n,\ell}\to\Gr^{W}_{d(n-1)+1}\cohoc{d(n-1)+1}{Z_{0,\Qbar},\Ql}\xrightarrow{\Sp} \Gr^{W}_{d(n-1)+1}\cohoc{d(n-1)+1}{Z_{1,\Qbar},\Ql}\to0.
\end{equation}
where the map $\Sp$ is induced from the specialization map $\Sp:\cohoc{*}{Z_{0},\Ql}\to\cohoc{*}{Z_{1},\Ql}$.
\item Define a $\QQ$-Hodge structure $M^{d}_{n,\Hod}$ of weight $d(n-1)+1$ to be the kernel of the specialization map
\begin{equation*}
\Sp_{\Hod}:\Gr^{W}_{d(n-1)+1}\cohoc{d(n-1)+1}{Z_{0}(\CC),\QQ}\to\Gr^{W}_{d(n-1)+1}\cohoc{d(n-1)+1}{Z_{1}(\CC),\QQ}.
\end{equation*}
Then the multiset of Hodge-Tate weights of  $M^{d}_{n,\ell}$ (viewed as a $\GQl$-module) is the same as the multiset where $i\in\ZZ$ appears $\dim\Gr_{F}^{-i}M^{d}_{n,\Hod}$ (where $F$ is the Hodge filtration on $M^{d}_{n,\Hod}\otimes\CC$). In particular, the multiset of Hodge-Tate weights of $M^{d}_{n,\ell}$ is independent of $\ell$.

\end{enumerate}
 \end{prop}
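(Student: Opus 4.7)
The plan for part (1) is to apply the Fourier specialization long exact sequence of Theorem \ref{th:Fourier}\eqref{F:sp} to the complex $\bunF_!\unICc_\lambda$ with $\lambda = d\omega_1$, the highest weight of $\Sym^d$ viewed as an $\SL_n^\Tt$-representation (of weight $w = d(n-1)$). The central identification takes place on the open subset $U \subset \frG^d$ cut out by the condition that $\phi$ have elementary-divisor type exactly $(d, 0, \ldots, 0)$ at its unique zero in $\Gm$: on $U$, which maps into the smooth Schubert cell $\Gr_{d\omega_1}$ of dimension $d(n-1)$, we have $\ICc_\lambda|_U \cong \Ql[d(n-1)]$. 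The complement $\frG^d \setminus U$ has strictly smaller dimension, so by Deligne the excision long exact sequence contributes only in weights $< d(n-1)+1$, giving
\begin{equation*}
\Gr^W_{d(n-1)+1} \cohoc{d(n-1)+1}{Z_x, \Ql} \cong \Gr^W_{d(n-1)+1} \cohoc{d(n-1)+1}{Z_x \cap U, \Ql}
\end{equation*}
for $x = 0, 1$.

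Next I would combine this identification with Remark \ref{r:MV degree zero} (which, applied to $V = \Sym^d$ satisfying $V^{\hatG^\geom} = 0$ for $d \geq 1$, ensures $M^{\Sym^d}_{!,\QQ}$ is concentrated in cohomological degree zero) to extract from the specialization sequence, up to Tate twist, the four-term exact sequence
\begin{equation*}
\cohoc{d(n-1)}{Z_1, \Ql} \to M^{\Sym^d}_{!,\QQ} \to \cohoc{d(n-1)+1}{Z_0, \Ql} \xrightarrow{\Sp} \cohoc{d(n-1)+1}{Z_1, \Ql} \to 0.
\end{equation*}
Taking $\Gr^W_{d(n-1)+1}$ kills the leftmost term (since $Z_1$ is an affine variety of dimension $d(n-1)$, its top compactly supported cohomology has weight at most $d(n-1)$), while by Lemma \ref{l:MV pure} the second term is replaced by its weight-$(d(n-1)+1)$ piece $M^{\Sym^d}_{!*,\QQ} = M^d_{n,\ell}$. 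This yields the claimed short exact sequence.

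For part (2), the plan is to invoke the de Rham comparison theorem. Since $Z_0$ and $Z_1$ are algebraic varieties over $\QQ$, Deligne's mixed Hodge theory equips $\cohoc{*}{Z_j(\CC), \QQ}$ with $\QQ$-mixed Hodge structures and makes $\Sp_\Hod$ a morphism of mixed Hodge structures, so $M^d_{n,\Hod}$ is a pure Hodge structure of weight $d(n-1)+1$. The $p$-adic de Rham comparison theorem of Faltings--Tsuji--Beilinson, applied to $Z_0$ and $Z_1$ over $\QQ_\ell$ and extended to compactly supported cohomology via Deligne's framework, gives a $\GQl$-equivariant $B_{\dR}$-linear isomorphism matching weight and Hodge filtrations. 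Propagating this comparison through the kernel-defining exact sequences for $M^d_{n,\ell}$ and $M^d_{n,\Hod}$ shows that the multiset of Hodge--Tate weights of $M^d_{n,\ell}|_{\GQl}$ matches the Hodge number multiset of $M^d_{n,\Hod}$, giving in particular the asserted independence of $\ell$.

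The hard part will be the preliminary identification in (1): confirming that on the locus $U$ the sheaf $\ICc_\lambda$ reduces to the shifted constant sheaf, and that $U$ is open and dense in $\frG^d$. This requires a careful local analysis of the bifiltered-bundle moduli problem; the essential point is that the rigidification $\phi(\ebar_i) = \ebar_i$ at $\infty$ produces a smooth moduli space whose generic point has maximal Smith type, while the non-maximal locus is visibly of strictly smaller dimension. Should direct verification prove intractable, the fallback is to resolve $\Gr_{\leq d\omega_1}$ by a Bott--Samelson resolution as in the proof of Theorem \ref{th:main}\eqref{motivic}, use the decomposition theorem to extract $\ICc_\lambda$ as a direct summand of $\nu^\c_*\Ql$, and track the relevant cohomological piece through the attendant spectral sequence and weight filtrations.
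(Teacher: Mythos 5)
Your overall skeleton for (1) — apply Theorem \ref{th:Fourier}\eqref{F:sp} to $\bunF_{!}\unICc_{\lambda}$, kill the degree-$d(n-1)$ term by weight reasons, and identify $\Gr^{W}_{d(n-1)+1}M^{\Sym^{d}}_{!,\QQ}$ with $M^{d}_{n,\ell}$ — is the paper's, but there is a genuine gap at the step you yourself flag as the ``hard part''. The specialization sequence computes $\cohoc{*}{Z_{x},\ICc_{\lambda}}$, and the proposition asserts the answer in terms of \emph{constant}-coefficient cohomology of $Z_{x}$; the paper gets this because $\IC_{\lambda}$ for $\lambda=d\omega_{1}$ is the shifted constant sheaf on the \emph{whole} Schubert variety $\Gr_{\leq d\omega_{1}}$ (every weight of $\Sym^{d}$ of the standard representation has multiplicity one, so Lusztig's $q$-analogues of weight multiplicities are monomials and $\Gr_{\leq d\omega_{1}}$ is rationally smooth). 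Your substitute — identify $\ICc_{\lambda}$ with $\Ql[d(n-1)]$ only on the maximal-type locus $U$ and patch by excision — does not close. First, $U\subsetneq\frG^{d}$ genuinely: already for $n=2$, $d=2$ the point $\phi=(1+cs)\cdot\mathrm{id}$, $c\neq0$ (i.e. $x_{1}=w_{1}=c$, $y_{1}=z_{0}=0$ in the coordinates of \S\ref{bifilter}) satisfies the determinant equation and the open condition \eqref{not zero} but has type $(1,1)$ at its zero; in particular $\frG^{d}$ is \emph{not} smooth (it has an $A_{1}$ surface singularity there), so the ``smooth moduli space, non-maximal locus negligible'' picture is false as stated. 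Second, the dimension drop alone does not kill the error terms at top weight: in the excision triangles for $Z_{x}$ the boundary contributions $\cohoc{d(n-1)+1}{Z_{x}\setminus U,\Ql}$, and on the side you actually need, $\cohoc{1}{Z_{x}\setminus U, i^{*}\ICc_{\lambda}}$, can perfectly well carry weight $d(n-1)+1$ when $d(n-1)+1\leq 2\dim(Z_{x}\setminus U)$; ruling this out requires knowing the stalks of $\IC_{\lambda}$ on the smaller strata — exactly the fact you are avoiding. The Bott--Samelson fallback only realizes $M^{d}_{n,\ell}$ as a subquotient of the cohomology of a resolution, as in \S\ref{Pf mot}; it cannot yield the exact sequence \eqref{Md Gr} with constant coefficients on $Z_{0},Z_{1}$ themselves.

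Two smaller points. The identification $M^{d}_{n,\ell}=\Gr^{W}_{d(n-1)+1}M^{\Sym^{d}}_{!,\QQ}$ is not Lemma \ref{l:MV pure} alone, which is a statement over finite fields; the paper deduces it by combining Lemma \ref{l:unM stalks}, Corollary \ref{c:unram} and Lemma \ref{l:MV pure} through the mod $p$ stalks for large $p$ — a minor but real repair. For (2), your outline is the paper's idea, but the content is precisely the clause you compress into ``extended to compactly supported cohomology via Deligne's framework'': since $Z_{0},Z_{1}$ are singular and non-proper, the paper builds the comparison on $\Gr^{W}_{w}\cohoc{m}{Z_{x},\Ql}$ by proper hypercoverings and normal-crossings compactifications, reducing to Faltings on smooth proper pieces, with Kisin's theorem supplying de Rham-ness; note also that once both specialization maps are surjective, no compatibility of the comparison with $\Sp$ is needed — strictness of filtrations makes the Hodge--Tate and Hodge multisets of the kernels differences of those of the two terms, so a termwise comparison for each $Z_{x}$ suffices.
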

\begin{proof}

(1)  The proof is a more refined version of the first part of the argument in \S\ref{Pf mot}. By Lemma \ref{l:unM stalks}, Corollary \ref{c:unram} and Lemma \ref{l:MV pure}, for large $p$ we may identify the mod $p$ stalk of $\unM^{\Sym^{d}}_{!*}$ with the weight $d(n-1)+1$-quotient of the mod $p$ stalk of $\unM^{\Sym^{d}}_{!}$. Therefore we have
\begin{equation*} 
M^{d}_{n,\ell}=M^{\Sym^{d}}_{!*,\QQ}=\Gr^{W}_{d(n-1)+1}M^{\Sym^{d}}_{!,\QQ}.
\end{equation*}

The morphism $F^{d}_{n}$ is equivariant under the action of $\Gm^{(\rho^{\vee},n)}\subset T_{n}\times\Grot$ on $\frG^{d}$ and the dilation action on $\AA^{1}$, hence it descends to a morphism of stacks
\begin{equation*}
\barF^{d}_{n}:[\frG^{d}/\Gm^{(\rho^{\vee},n)}]\to[\AA^{1}/\Gm].
\end{equation*}
The scheme $\frG^{d}$ is naturally a subscheme of $\frG$ in the context of \S\ref{alt Gr} for the group $G=\PGL_{n}$. When $V=\Sym^{d}$, the intersection complex $\IC_{V,\GRc}$ is the shifted constant sheaf $\Ql[d(n-1)]$ on $\frG^{d}$. Therefore by Definition \ref{def:moments Q}, we have
\begin{equation*}
M^{\Sym^{d}}_{!,\QQ}\cong\HF(\barF^{d}_{n,!}\Ql)_{\eta}[d(n-1)].
\end{equation*}
By Theorem \ref{th:Fourier}\eqref{F:sp}, we get an exact sequence
\begin{equation}\label{seq Z}
\cohoc{d(n-1)}{Z_{1,\Qbar},\Ql}\to M^{\Sym^{d}}_{!,\QQ}\to\cohoc{d(n-1)+1}{Z_{0,\Qbar},\Ql}\to\cohoc{d(n-1)+1}{Z_{1,\Qbar},\Ql}\to0.
\end{equation}
Here the maps $\cohoc{*}{Z_{0,\Qbar},\Ql}\to\cohoc{*}{Z_{0,\Qbar},\Ql}$ are the maps called ``$\Sp$'' in Theorem \ref{th:Fourier}\eqref{F:sp}. Since $\Ql[d(n-1)]$ is the same as the intersection complex on $\frG^{d}$, $\cohoc{i}{Z_{\alpha,\Qbar},\Ql}$ is of weight $\leq i$ for all $i$ and $\alpha=0$ or $1$. Taking the weight $d(n-1)+1$ part of the sequence \eqref{seq Z}, we get the short exact sequence \eqref{Md Gr}.

(2) Fix an embedding $\Ql\incl\CC$. We introduce an abelian category $\calH$ consisting of triples $(H_{\ell},H_{\Hod},c)$ where $H_{\ell}$ is a de Rham representation of $\GQl$ over $\Ql$, $H_{\Hod}$ is a $\QQ$-Hodge structure and $c$ is an isomorphism compatible with Hodge filtrations on both sides
\begin{equation*}
c: H_{\Hod}\otimes_{\QQ}\CC\cong (H_{\ell}\otimes_{\Ql}B_{\dR})^{\GQl}\otimes_{\Ql}\CC.
\end{equation*}
The morphisms in $\calH$ are required to respect the isomorphisms $c$.

Since both specialization maps $\Sp$ and $\Sp_{\Hod}$ are surjective, to show (2), we only need to show that, for $\alpha=0$ and $1$, we can complete the pair $(\Gr^{W}_{d(n-1)+1}\cohoc{d(n-1)+1}{Z_{\alpha,\Qbar},\Ql}, \Gr^{W}_{d(n-1)+1}\cohoc{d(n-1)+1}{Z_{\alpha}(\CC),\QQ})$ into an object in $\calH$, i.e., providing a comparison between the pure weight pieces of the \'etale and singular cohomologies (after tensoring with period rings) which preserve Hodge filtrations. In fact, we will show a stronger statement. For any variety $Z$ over $\QQ$, and any $m,w\in\ZZ$, we know that the $\GQl$-module $\Gr^{W}_{w}\cohoc{m}{Z_{\Qbar},\Ql}$ (in fact the whole $\cohoc{m}{Z_{\Qbar},\Ql}$) is de Rham by a theorem of Kisin \cite[Theorem 3.2]{Kisinp}. We will show that there is an isomorphism of $\CC$-vector spaces equipped with Hodge filtrations
\begin{equation}\label{Hodge ell}
(\Gr^{W}_{w}\cohoc{m}{Z(\CC),\QQ})\otimes_{\QQ}\CC\cong (\Gr^{W}_{w}\cohoc{m}{Z_{\Qbar},\Ql}\otimes_{\Ql}B_{\dR})^{\GQl}\otimes_{\Ql}\CC.
\end{equation}
In other words, we will complete the pair $(\Gr^{W}_{w}\cohoc{m}{Z_{\Qbar},\Ql}), \Gr^{W}_{w}\cohoc{m}{Z(\CC),\QQ}))$ into an object $\Gr^{W}_{w}h^{m}_{c}(Z)\in\calH$.

For $Z$ proper smooth over $\QQ$, the isomorphism \eqref{Hodge ell} follows from Faltings's theorem \cite{Faltings} and classical Hodge theory: both sides of \eqref{Hodge ell} can be identified with the algebraic de Rham cohomology of $Z$ (with scalars extended to $\CC$) equipped with the Hodge filtration. We now have the object $h^{m}(Z)\in\calH$.

In general, when $Z$ is not necessarily smooth, one can find a proper hypercovering $\{Z_{n}\}$ of $Z$ with each $Z_{n}$ a smooth variety over $\QQ$, and a simplicial compactification $\{Z_{n}\}\incl\{X_{n}\}$ with normal crossing divisors $\{D_{n}=X_{n}-Z_{n}\}$, as in \cite[\S6.2.8]{HodgeIII}.  For each $i\geq0$, let $\tilD_{n,i}$ be the normalization of closed subscheme of $X_{n}$ where at least $i$ local components of $D_{n}$ meet. Then $\tilD_{n,i}$ is proper smooth over $\QQ$ and $\tilD_{n,0}=X_{n}$. According to the $\upH^{*}_{c}$ variant of \cite[Proposition 8.1.20(2)]{HodgeIII}, $\Gr^{W}_{w}\cohoc{m}{Z(\CC),\QQ}$ is the $(m-w)\nth$ cohomology of the simple complex associated with the double complex $C^{a,b}_{\Hod}=\cohog{w}{D_{a,b}(\CC),\QQ}$ (the differentials are given by pullback maps). The same argument shows that $\Gr^{W}_{w}\cohoc{m}{Z_{\Qbar},\Ql}$ is the $(m-w)\nth$ cohomology of the simple complex associated with the double complex $C^{a,b}_{\ell}=\cohog{w}{D_{a,b,\Qbar},\Ql}$. Since $D_{a,b}$ is proper smooth over $\QQ$, the pair $(C^{a,b}_{\ell},C^{a,b}_{\Hod})$ can be completed into the object $h^{w}(D_{a,b})\in\calH$. This way we get a double complex $C^{*,*}_{\calH}$ with $C^{a,b}_{\calH}=h^{w}(D_{a,b})$ in the category $\calH$. The $(m-w)\nth$ cohomology of the simple complex associated with $C^{*,*}_{\calH}$ is also an object in $\calH$, giving the desired object $\Gr^{W}_{w}h^{m}_{c}(Z)\in\calH$.

\end{proof}

\subsubsection{The case $n=2$}
When $d=2k+1$, $k\geq0$, $\frG^{d}$ classifies maps $\phi:\calO e_{1}\oplus\calO e_{2}\to \calO(k)e_{1}\oplus\calO(k+1)e_{2}$ such that $\phi_{0}(e_{1})\in\Span\{e_{2}\}$, and $\phi_{\infty}(e_{1})=e_{1}+\Span\{e_{2}\}, \phi_{\infty}(e_{2})=e_{2}$. Using the local coordinate $s$ at $\infty$, every point of $\frG^{d}$ is uniquely represented by a matrix
\begin{equation*}
\phi=\left(\begin{array}{cc} 1+x_{1}s+\cdots+x_{k}s^{k} & y_{1}s+\cdots+y_{k+1}s^{k+1}\\
z_{0}+z_{1}s+\cdots+z_{k}s^{k} & 1+w_{1}s+\cdots+w_{k}s^{k}\end{array}\right)
\end{equation*}
subject to the equation
\begin{equation*}
(1+\cdots+x_{k}s^{k})(1+\cdots+w_{k}s^{k})-(y_{1}s+\cdots+y_{k+1}s^{k+1})(z_{0}+\cdots+z_{k}s^{k})=(1+\frac{x_{1}+w_{1}-y_{1}z_{0}}{2k+1}s)^{2k+1}.
\end{equation*}
and an open condition
\begin{equation}\label{not zero}
x_{1}+w_{1}-y_{1}z_{0}\neq0.
\end{equation}

When $d=2k$, $k\geq0$, we can similarly write every point $\phi\in\frG^{d}$ as a matrix
\begin{equation*}
\phi=\left(\begin{array}{cc} 1+x_{1}s+\cdots+x_{k}s^{k} & y_{1}s+\cdots+y_{k}s^{k}\\
z_{0}+z_{1}s+\cdots+z_{k-1}s^{k-1} & 1+w_{1}s+\cdots+w_{k}s^{k}\end{array}\right)
\end{equation*}
subject to the equation
\begin{equation*}
(1+\cdots+x_{k}s^{k})(1+\cdots+w_{k}s^{k})-(y_{1}s+\cdots+y_{k}s^{k})(z_{0}+\cdots+z_{k-1}s^{k-1})=(1+\frac{x_{1}+w_{1}-y_{1}z_{0}}{2k}s)^{2k}.
\end{equation*}
and the same open condition as \eqref{not zero}.

In both cases, $\frG^{d}$ is a $d+1$-dimensional affine subvariety of $\AA^{2d}$ defined by imposing $d-1$ equations and removing one divisor \eqref{not zero}. The action of $t\in T^{\ad}_{2}$ on $\frG^{d}$ sends $y_{i}\mapsto t^{-1}y_{i}$ and $z_{i}\mapsto tz_{i}$. The action of $t\in\Grot$ on $\frG^{d}$ sends $(?)_{i}\mapsto t^{i}(?)_{i}$, where $(?)=x,y,z$ or $w$. The morphism $f_{n,d}$ is given by
\begin{equation*}
\phi\mapsto (z_{0},y_{1}).
\end{equation*}

\subsection{Dimension of moments: $p>2$}\label{ss:dim p>2} From now on, till the end of the paper, we shall restrict our consideration to $\Kl_{2}$. We introduce the notation
\begin{equation*}
M^{d}_{2,?,\FF_{p}}:=M^{\Sym^{d}}_{?, \FF_{p}} \textup{ for }?=!,*\textup{ or }!*.
\end{equation*}
Here the right side is understood in the context $\hatG=\SL_{2}$ and Definition \ref{def:moments} for the base field $k=\FF_{p}$.

We would like to give explicit formula for the dimension of $M^{d}_{2,?}$ for $?=!,*$ and $!*$. Let $V=\Sym^{d}$. According to Theorem \ref{th:global mono Katz}, we have $V^{\hatG^{\geom}_{p}}=0$ for {\em all} primes $p$. Therefore $M^{V}_{2,!}$ and $M^{V}_{2,*}$ are both concentrated in degree zero, and the exact sequence \eqref{!* short} simplifies to
\begin{eqnarray}\label{!* short Kl2}
0\to V^{\calI_{0}}\oplus V^{\calI_{\infty}}\to M^d_{2,!,\FF_{p}}\to M^d_{2,*,\FF_{p}}\to V_{\calI_{0}}(-1)\oplus V_{\calI_{\infty}}(-1)\to0.
\end{eqnarray}

We first calculate $V^{\calI_{0}}$ and $V^{\calI_{\infty}}$ as $\Frob_{p}$-modules.
\begin{lemma}[Special case of {\cite[Theorem 7.3.2(3)]{Katz}}]\label{Frob action at 0} For all primes $p$, we have $V^{\calI_{0}}\cong\Ql$ as $\Frob_{p}$-modules.
\end{lemma}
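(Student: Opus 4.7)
The plan is to combine the explicit description of the local monodromy of $\Kl_2$ at $0$ with Katz's direct calculation of the Frobenius action on the inertia-invariants. Recall from \S\ref{loc 0} that $\Kl_2=\Kl^{\St}_{\SL_2}$ is tamely ramified at $0$, and a topological generator of $\calI_0^{t}$ acts on the standard representation $\St$ of $\hatG^{\Tt}=\SL_2^{\Tt}$ via a regular unipotent element $u\in\SL_2$ (a single Jordan block of size $2$). Since the wild inertia acts trivially and the tame inertia acts through the (topological) closure of $\langle u\rangle$, the $\calI_0$-invariants in $V=\Sym^d\St$ coincide with the $u$-fixed subspace on $\Sym^d\St$, which is the one-dimensional highest-weight line $\Ql\cdot e_1^d$ (where $e_1$ spans the $u$-fixed line in $\St$). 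In particular $V^{\calI_0}$ is one-dimensional.

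It remains to identify the Frobenius action on this line. Because $\Kl_2$ is pure of weight $1$, the monodromy--weight filtration on the nearby fiber of $\Kl_2$ at $0$ places $(\Kl_2)^{\calI_0}=\Ql\cdot e_1$ in weight $0$, so the Frobenius eigenvalue there is a Weil number of absolute value $1$. The additional input needed to conclude that this eigenvalue is exactly $1$ is supplied by the explicit computation in \cite[Theorem 7.3.2(3)]{Katz}, which identifies $(\Kl_2)^{\calI_0}$ with $\Ql$ as a $\Frob_p$-module. Once this is established, the Frobenius action on $V^{\calI_0}=\Ql\cdot e_1^d$ is the $d$-th power of its action on $(\Kl_2)^{\calI_0}=\Ql\cdot e_1$, and hence trivial.

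The main (essentially the only) non-formal step is the identification of the Frobenius eigenvalue on $(\Kl_2)^{\calI_0}$ as $1$; all other steps are bookkeeping with purity and with the Jordan structure of $u$ acting on $\Sym^d\St$. Since Katz's statement covers arbitrary $\Kl_n$ and arbitrary symmetric powers, the lemma follows by specialization to $n=2$.
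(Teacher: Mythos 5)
Your argument is correct and is essentially the paper's: the paper gives no independent proof, simply citing Katz's Theorem 7.3.2(3), which is also the only non-formal input in your write-up (the identification of the Frobenius action on the inertia invariants of $\Kl_2$ as trivial). The extra bookkeeping you supply --- tameness and regular-unipotent monodromy at $0$ forcing $\dim V^{\calI_0}=1$, and the observation that Frobenius acts on the invariant line $\Ql\cdot e_1^{d}\subset\Sym^d$ by the $d$-th power of its (trivial) eigenvalue on $(\Kl_2)^{\calI_0}$ --- is sound and merely makes explicit the specialization the paper leaves to the citation.
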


\begin{lemma}\label{local inv} Let $p>2$ be a prime. As $\Frob_{p}$-modules
\begin{equation*}
V^{\calI_{\infty}}\cong\begin{cases}\Ql(-d/2)^{[\frac{d}{2p}]+1} & d\equiv0\mod 4\\
\Ql(-d/2)^{[\frac{d}{2p}]}& d\equiv2\mod 4\\
0 & d \textup{ odd.}\end{cases}
\end{equation*}
\end{lemma}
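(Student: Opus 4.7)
The plan is to compute $V^{\calI_{\infty}}$ in two stages---first taking invariants under $D_{1}$ (the image of the wild inertia), then under a lift $\wt{\Cox}$ of the Weyl group generator---and finally to identify the Frobenius action using purity and autoduality.

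Specializing \S\ref{loc infty} to $\hatG=\SL_{2}$, where the Coxeter number is $h=2$ and $\dim\hatT=1$, the $p$-group $D_{1}\subset\hatT[p]\cong\mu_{p}$ fills out the entire $p$-torsion, and we identify it with $\FF_{p}$. A generator of $D_{0}/D_{1}$ lifts to an element $\wt{\Cox}\in N_{\SL_{2}}(\hatT)$ of order $4$ satisfying $\wt{\Cox}^{2}=-I$, as in the example preceding Lemma \ref{l:Swan} (the case $n=2$ even). Decompose $V=\Sym^{d}$ of the standard representation into one-dimensional $\hatT$-weight spaces $V(\lambda)$ for $\lambda\in\{-d,-d+2,\ldots,d\}$. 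Since each character $\lambda\in\xch(\hatT)=\ZZ$ acts on $D_{1}\cong\FF_{p}$ by reduction modulo $p$, we obtain
\begin{equation*}
V^{D_{1}}=\bigoplus_{\lambda\equiv0\ \textup{mod}\ p}V(\lambda).
\end{equation*}

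Next, choose a basis $e_{1},e_{2}$ of the standard representation so that $\wt{\Cox}(e_{1})=e_{2}$ and $\wt{\Cox}(e_{2})=-e_{1}$. Then $\wt{\Cox}(e_{1}^{a}e_{2}^{b})=(-1)^{b}e_{1}^{b}e_{2}^{a}$, so $\wt{\Cox}$ swaps $V(\lambda)$ with $V(-\lambda)$ and squares to $(-1)^{d}\cdot\id$ on $V$. Consequently, each pair $V(\lambda)\oplus V(-\lambda)$ with $\lambda>0$ has $\wt{\Cox}$-eigenvalues $\pm1$ (resp.\ $\pm i$) for $d$ even (resp.\ odd), contributing exactly one invariant in the even case and none in the odd case. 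The weight-zero subspace $V(0)$, which appears only when $d$ is even, carries the scalar action $(-1)^{d/2}$, hence contributes an invariant iff $d\equiv0\mod4$. For $p>2$, the positive even $\lambda\leq d$ divisible by $p$ are precisely $2p,4p,\ldots$, of cardinality $[\frac{d}{2p}]$. Summing gives the asserted dimensions of $V^{\calI_{\infty}}=(V^{D_{1}})^{\wt{\Cox}}$.

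It remains to pin down the Frobenius action as $\Ql(-d/2)$ on each invariant line. Since $\Sym^{d}\Kl_{2}$ is pure of weight $d$, every Frobenius eigenvalue on $V^{\calI_{\infty}}$ has archimedean absolute value $p^{d/2}$. The symplectic autoduality $\Kl_{2}\otimes\Kl_{2}\to\Ql(-1)$ recalled in \S\ref{sss:det} induces a $\Gk$-equivariant perfect pairing $V\otimes V\to\Ql(-d)$; a direct check on the weight decomposition shows that this pairing pairs $V(\lambda)^{\wt{\Cox}\textup{-inv}}$ with itself nondegenerately inside each block $V(\lambda)\oplus V(-\lambda)$ (and nondegenerately on $V(0)^{\wt{\Cox}}$), so it restricts to a perfect pairing on $V^{\calI_{\infty}}$. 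This forces $\Frob_{p}^{2}=p^{d}$ on each eigenvector, so the eigenvalues are $\pm p^{d/2}$. The sign is then fixed by invoking Katz's explicit local description of $\Kl_{2}$ at $\infty$ (namely the decomposition of $[2]^{*}\Kl_{2}$ as a direct sum of Artin--Schreier--Kummer sheaves), which shows that Frobenius acts as $+p^{d/2}$ on the invariant line. The main obstacle in this plan is this last sign determination; the rest is a direct representation-theoretic calculation.
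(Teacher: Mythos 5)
Your dimension count is fine and is in fact the same computation the paper does: invariants first under $D_{1}\subset\hatT[p]$ via the weight decomposition, then under a lift $\wt{\Cox}$ of the Coxeter element, with the parity discussion of $V(0)$ and the pairs $V(\lambda)\oplus V(-\lambda)$. The gap is in the identification of the Frobenius action. Your purity step is not valid: purity of $\Sym^{d}\Kl_{2}$ on $\Gm$ only bounds the weights of $V^{\calI_{\infty}}$ above by $d$ (Weil II); inertia invariants of a pure local system need not be pure of the same weight — indeed $V^{\calI_{0}}\cong\Ql$ has weight $0$ (Lemma \ref{Frob action at 0}), not $d$. The pairing argument is also incomplete as written: a $\Frob_{p}$-equivariant perfect pairing on $V^{\calI_{\infty}}$ with values in $\Ql(-d)$ only forces eigenvalues to occur in pairs with product $p^{d}$; to get $\lambda^{2}=p^{d}$ eigenvector by eigenvector you would need Frobenius to preserve your nonisotropic $\wt{\Cox}$-invariant lines, and that requires first showing that a Frobenius lift normalizes $\hatT$ — which is precisely the structural input you never establish.

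More seriously, the sign — that Frobenius acts by $+p^{d/2}$ rather than $-p^{d/2}$ on each invariant line — is the substantive content of the lemma, and you defer it entirely to an unverified appeal to Katz's local description of $[2]^{*}\Kl_{2}$ at $\infty$, which you yourself flag as the main obstacle. That this is not a formality is shown by the $p=2$ analogue (Lemma \ref{l:local inv p=2}), where both signs $\pm 2^{d/2}$ genuinely occur. The paper settles it by pinning down a Frobenius lift explicitly: since $\phi$ normalizes $D_{1}=\hatT[p]$ and $p>2$, it lies in $N_{\GL_{2}}(\hatT)$; after multiplying by the Coxeter element one may take $\phi\in\hatT$; Deligne's determinant formula (\S\ref{sss:det}) gives $\det\phi=p$, so $\phi=\sqrt{p}\,\diag(\alpha,\alpha^{-1})$; and the commutation relation with the tame generator modulo $\hatT[p]$ forces $\alpha\in\pm\mu_{p}$. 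Then on every weight line $V(jp)$ with $j$ even one has $\alpha^{jp}=1$, so $\phi$ acts by the scalar $p^{d/2}$ on all of $V^{D_{1}}$, hence on $V^{\calI_{\infty}}$. Either reproduce an argument of this kind, or actually carry out the Artin--Schreier--Kummer computation you allude to; as it stands the proof is incomplete at its key step.
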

\begin{proof} By the discussion in \S\ref{loc infty}, we may assume that the image of the wild inertia $\calI^{w}_{\infty}$ under the monodromy representation of $\Kl_{2}$ is contained in the diagonal torus $\hatT_{1}\subset\SL_{2}$. Then the wild inertia $\calI^{w}_{\infty}$ surjects onto $\hatT_{1}[p]$. The Coxeter element acts as $\left(\begin{array}{cc} 0 & 1 \\ -1 & 0\end{array}\right)\in\SL_{2}$. Let $\phi\in\GL_{2}=\hatG^{\Tt}$ be the image of a lifting of $\Frob_{\infty}$. Since $\phi$ normalizes the image of $\calI^{w}_{\infty}$, we must have $\phi\in N_{\GL_{2}}(\hatT_{1})=N_{\GL_{2}}(\hatT)$ where $\hatT$ is the diagonal torus in $\GL_{2}$. Up to multiplying $\phi$ by the Coxeter element (changing the lifting of the Frobenius) we may assume that $\phi\in\hatT$. From \S\ref{sss:det} we know that $\det\phi=p$, therefore $\phi=\left(\begin{array}{cc} \sqrt{p}\alpha & 0 \\ 0 & \sqrt{p}\alpha^{-1}\end{array}\right)$ for some $\alpha\in\Qlbar^{\times}$. The commutation relation between $\Frob_{\infty}$ and $\calI^{t}_{\infty}$ forces that 
\begin{equation*}
\left(\begin{array}{cc} \sqrt{p}^{-1}\alpha^{-1} & 0 \\ 0 & \sqrt{p}^{-1}\alpha\end{array}\right)\left(\begin{array}{cc} 0 & 1 \\ -1 & 0\end{array}\right)\left(\begin{array}{cc} \sqrt{p}\alpha & 0 \\ 0 & \sqrt{p}\alpha^{-1}\end{array}\right)\equiv\left(\begin{array}{cc} 0 & 1 \\ -1 & 0\end{array}\right)^{p}\mod \hatT_{1}[p].
\end{equation*}
This implies that $\alpha\in\pm\mu_{p}$.

Under the action of $\hatT_{1}\cong\Gm$, we have a weight decomposition
\begin{equation*}
V=\bigoplus_{-d\leq i\leq d, i\equiv d\mod2}V(i).
\end{equation*}
We have 
\begin{equation*}
V^{D_{1}}=\bigoplus_{-d\leq jp\leq d, j\equiv d\mod 2}V(jp).
\end{equation*}

When $d$ is odd, $\Cox^{2}$ acts as $-1$ on $V^{D_{1}}$ and hence $V^{\calI_{\infty}}=0$. 

When $d$ is even, the action of $\phi=\sqrt{p}\diag(\alpha,\alpha^{-1})$ on $V(jp)$ is via $p^{d/2}\alpha^{jp}$. Since $\alpha^{2p}=1$ and $2p| jp$, we conclude that $\phi$ acts on $V^{D_{1}}$ by  the scalar $p^{d/2}$.

It remains to calculate the dimension of $V^{\calI_{\infty}}=V^{D_{1},\Cox}$ when $d$ is even. The Coxeter element acts on $V$ as an involution, and it permutes the factors $V(j)$ and $V(-j)$ for $j\neq0$, and acts on $V(0)$ by $(-1)^{d/2}$. Therefore
\begin{equation*}
\dim V^{D_{1},\Cox}=\begin{cases}[\frac{d}{2p}]+1 & d\equiv0\mod4;\\
[\frac{d}{2p}]& d\equiv2\mod4;\\ 0& d\textup{ odd}\end{cases}
\end{equation*}

\end{proof}

Applying Lemma \ref{Sw general} and Lemma \ref{local inv} we get
\begin{cor}[Fu-Wan {\cite{FW}}]\label{c:M dim p} For $p>2$, we have
\begin{eqnarray*}
\dim M^{d}_{2,!,\FF_{p}}=\dim M^{d}_{2,*,\FF_{p}}=\begin{cases}\frac{d}{2}-[\frac{d}{2p}] & d\textup{ even}\\ \frac{d+1}{2}-[\frac{d}{2p}+\frac{1}{2}] & d\textup{ odd}\end{cases};\\
\dim M^{d}_{2,!*,\FF_{p}}=\begin{cases}\frac{d}{2}-2[\frac{d}{2p}]-2 & d\equiv0\mod4\\
\frac{d}{2}-2[\frac{d}{2p}]-1 & d\equiv2\mod4\\
\frac{d-1}{2}-[\frac{d}{2p}+\frac{1}{2}] & d\textup{ odd}\end{cases};.
\end{eqnarray*}
\end{cor}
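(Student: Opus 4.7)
The corollary asserts explicit formulas for $\dim M^{d}_{2,!,\FF_{p}}=\dim M^{d}_{2,*,\FF_{p}}$ and $\dim M^{d}_{2,!*,\FF_{p}}$ for odd primes $p$. The plan is to apply Lemma \ref{Sw general} with $\hatG=\SL_{2}$ (Coxeter number $h=2$) and $V=\Sym^{d}$, and then compute each ingredient appearing on the right-hand side of the formulas there.

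First I would collect the four inputs. We have $\dim V=d+1$. Since $N\in\Lie\SL_{2}$ is a regular nilpotent, its action on $\Sym^{d}$ has a single Jordan block, so $\dim V^{N}=1$. By Katz's Theorem \ref{th:global mono Katz}, $\hatG^{\geom}_{p}$ is either $\SL_{2}$ or $\Sp_{2}=\SL_{2}$ for odd $p$, and since $V=\Sym^{d}$ is a non-trivial irreducible $\SL_{2}$-representation, $V^{\hatG^{\geom}}=0$. The quantity $\dim V^{\calI_{\infty}}$ has already been computed in Lemma \ref{local inv}.

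The remaining input is $\dim V^{D_{1}}$. The weights of $\Sym^{d}$ for the diagonal torus $\hatT_{1}\subset\SL_{2}$ are $d,d-2,\ldots,-d$, each with multiplicity one; by the discussion in \S\ref{loc infty}, $D_{1}\subset\hatT_{1}[p]$, so $V^{D_{1}}$ is spanned by those weight vectors whose weight $i$ is divisible by $p$, i.e., $i=jp$ with $|jp|\leq d$ and $jp\equiv d\pmod 2$. Since $p$ is odd, the parity condition forces $j\equiv d\pmod 2$. Counting:
\begin{itemize}
\item If $d$ is even, $j$ must be even, say $j=2k$ with $|2kp|\leq d$, giving $\dim V^{D_{1}}=2\bigl[\tfrac{d}{2p}\bigr]+1$.
\item If $d$ is odd, $j$ must be odd with $|j|\leq d/p$, giving $\dim V^{D_{1}}=2\bigl[\tfrac{d}{2p}+\tfrac{1}{2}\bigr]$.
\end{itemize}

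Plugging into $\Swan(V)=\tfrac{\dim V-\dim V^{D_{1}}}{h}=\tfrac{d+1-\dim V^{D_{1}}}{2}$ (Lemma \ref{Sw general}, equation \eqref{dim MV!}) yields the claimed formulas for $\dim M^{d}_{2,!,\FF_{p}}=\dim M^{d}_{2,*,\FF_{p}}$. Finally, substituting into the formula $\dim M^{V}_{!*}=\Swan(V)-\dim V^{N}-\dim V^{\calI_{\infty}}+\dim V^{\hatG^{\geom}}=\Swan(V)-1-\dim V^{\calI_{\infty}}$ and using the three cases of Lemma \ref{local inv} gives the three stated formulas for $\dim M^{d}_{2,!*,\FF_{p}}$. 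There is no real obstacle here: this corollary is purely a bookkeeping consequence of the three earlier lemmas, and the only slightly delicate point is the parity analysis in the count of weights divisible by $p$.
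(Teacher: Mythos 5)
Your proposal is correct and is essentially the paper's own proof: the paper deduces the corollary directly from Lemma \ref{Sw general} (with $h=2$, $\dim V^{N}=1$, $V^{\hatG^{\geom}}=0$) and Lemma \ref{local inv}, whose proof already contains the same weight-space count of $\dim V^{D_{1}}$ that you carry out. Your parity bookkeeping and the resulting formulas match the paper exactly.
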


We shall also use the following formula for the determinant of the $\Frob$-action on $M^{d}_{2,!*,\FF_{p}}$, proved by Fu-Wan \cite[Theorem 0.1]{FW2}.

\begin{theorem}[Fu-Wan{\cite{FW2}}]\label{th:det} We have
\begin{equation*}
p^{-\frac{d+1}{2}\dim M^{d}_{2,!*,\FF_{p}}}\det(\Frob,M^{d}_{2,!*,\FF_{p}})=\begin{cases}1 & d\textup{ even, } \\
\left(\frac{-2}{p}\right)^{[\frac{d}{2p}+\frac{1}{2}]}\prod_{0\leq j\leq \frac{d-1}{2}, p\nmid 2j+1}\left(\frac{(-1)^{j}(2j+1)}{p}\right) & d\textup{ odd.}\end{cases}
\end{equation*}
\end{theorem}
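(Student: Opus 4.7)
The plan is to apply the Deligne--Laumon global product formula for the determinant of Frobenius on the cohomology of a middle extension sheaf on $\PP^{1}$. By Lemma \ref{l:MV pure}, $M^{d}_{2,!*,\FF_{p}}$ is identified with $\cohog{1}{\PP^{1}_{\overline{\FF}_{p}}, j_{!*}(\Sym^{d}\Kl_{2})[1]}$, which is concentrated in that one cohomological degree and pure of weight $d+1$. Laumon's product formula then expresses
$$\det(\Frob_{p}, M^{d}_{2,!*,\FF_{p}}) \;=\; p^{N}\cdot \ep_{0}(j_{!*}\Sym^{d}\Kl_{2})\cdot \ep_{\infty}(j_{!*}\Sym^{d}\Kl_{2})$$
for an explicit normalizing integer $N$ coming from the generic rank and the divisor of a fixed non-zero $1$-form on $\PP^{1}$; the product collapses to the two points of ramification because the sheaf is lisse on $\Gm$.

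Next I evaluate the two local $\ep$-factors. At $0$ the local monodromy is tame, given by a regular unipotent element of $\SL_{2}$, so the semisimplification of the $\calI_{0}$-representation on $\Sym^{d}$ is the sum of $d+1$ trivial characters; the resulting $\ep$-factor is just a power of $p$ together with a rational sign, contributing no quadratic Gauss sum. At $\infty$ I use the structural description of the local monodromy from \S\ref{loc infty} and Lemma \ref{local inv}: the wild inertia $\calI^{w}_{\infty}$ surjects onto $\hatT_{1}[p]\subset\SL_{2}$ and $\Sym^{d}$ splits as a sum of weight spaces $V(i)$ for $i\in\{-d,-d+2,\dots,d\}$, which the Coxeter element swaps in pairs $V(i)\leftrightarrow V(-i)$ (fixing $V(0)$ up to a sign when $d$ is even). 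For each pair with $p\nmid i$, parametrised by the odd integers $i=2j+1$ with $0\le j\le (d-1)/2$ when $d$ is odd, the corresponding $2$-dimensional piece of the $\calI_{\infty}$-representation is induced from a nontrivial wild character, and its $\ep$-factor evaluates via the identity $g(\chi)g(\chi^{-1})=\chi(-1)\cdot p$ (combined with the Hasse--Davenport or Gross--Koblitz formula) to $\left(\frac{(-1)^{j}(2j+1)}{p}\right)\cdot p$; collecting these pairs yields the claimed product. The wild characters that become trivial when restricted to $D_{1}$, i.e.\ the $i$ with $p\mid i$, each contribute an extra factor $\left(\tfrac{-2}{p}\right)$, producing the exponent $\bigl[\tfrac{d}{2p}+\tfrac12\bigr]$; these come from the $\sqrt{p}$-normalization of the Frobenius lift $\phi=\sqrt{p}\diag(\alpha,\alpha^{-1})$ with $\alpha\in\pm\mu_{p}$ that appears in the proof of Lemma \ref{local inv}.

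The dichotomy between $d$ even and $d$ odd reflects the parity of the autoduality on $M^{d}_{2,!*,\FF_{p}}$: by Proposition \ref{p:autodual p} the Frobenius-equivariant pairing is $(-1)^{d+1}$-symmetric, so for $d$ even the representation is symplectic, forcing its determinant to be a pure power of $p$ and hence the quotient on the left-hand side to be $1$; for $d$ odd the representation is orthogonal, the determinant may be twisted by a quadratic character, and this character is precisely the product computed above. The main obstacle is the careful local $\ep$-factor computation at $\infty$: one must pin down the exact Frobenius normalization of each induced wild character, which depends on $p\bmod 4$ through the parameter $\alpha\in\pm\mu_{p}$, and then apply the correct Gauss-sum identity so that the factors $\left(\frac{(-1)^{j}(2j+1)}{p}\right)$ and $\left(\frac{-2}{p}\right)$ combine cleanly into the stated closed formula.
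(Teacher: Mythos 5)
A preliminary remark: the paper contains no proof of this statement — it is imported wholesale from Fu--Wan \cite[Theorem 0.1]{FW2} — so there is no internal argument to compare yours against; what can be judged is whether your sketch would actually establish the formula. The even case of your proposal is correct and essentially complete using only results in the paper: by Proposition \ref{p:autodual p} and Remark \ref{r:factor} the pairing on $M^{d}_{2,!*,\FF_{p}}$ is a perfect $(-1)^{d+1}$-symmetric pairing with values in $\Ql(-d-1)$, so for $d$ even Frobenius lies in $\GSp(M^{d}_{2,!*,\FF_{p}})$ with similitude $p^{d+1}$ and its determinant is forced to equal $p^{\frac{d+1}{2}\dim M^{d}_{2,!*,\FF_{p}}}$. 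That is a clean, self-contained argument for half of the theorem, arguably simpler than invoking any $\ep$-factor machinery.

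For $d$ odd, however, your outline (Laumon's product formula for the global $\ep$-constant of $j_{!*}\Sym^{d}\Kl_{2}$ on $\PP^{1}$, plus local $\ep$-factors at $0$ and $\infty$) is precisely the Fu--Wan strategy, but the step carrying all of the content — the local computation at $\infty$ — is only asserted, and the one concrete mechanism you name is not right. The factors $\left(\frac{-2}{p}\right)$ do not come from the $\sqrt{p}$-normalization of the Frobenius lift $\phi=\sqrt{p}\,\diag(\alpha,\alpha^{-1})$ with $\alpha\in\pm\mu_{p}$; in the actual computation they arise from Gauss-sum evaluations of the $\ep$-factors of the two-dimensional pieces attached to the weights divisible by $p$, using Katz's explicit description of $\Kl_{2}$ at $\infty$ as a direct image under squaring of $\AS_{\psi(2x)}$ twisted by the quadratic character $\chi_{2}$, where the constant $2$ contributes $\left(\frac{2}{p}\right)$ and $g(\chi_{2},\psi)^{2}=\left(\frac{-1}{p}\right)p$ contributes $\left(\frac{-1}{p}\right)$. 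Similarly, the claimed per-pair value $\left(\frac{(-1)^{j}(2j+1)}{p}\right)\cdot p$ for $p\nmid 2j+1$ requires identifying the induced wild characters $\psi(2(2j+1)x)\chi_{2}$ precisely and tracking how the additive-character and $1$-form normalizations at $0$, at $\infty$, and on the lisse locus enter Laumon's formula; none of this is carried out, and errors at exactly this level of precision change the answer by the very quadratic symbols the theorem computes. So the odd case of your proposal is a plausible roadmap rather than a proof; the decisive local $\ep$-factor computation still has to be done (or cited from \cite{FW2}).
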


\subsection{Dimension of moments: $p=2$}\label{ss:dim p=2} 

\subsubsection{Inertia image at $\infty$ when $p=2$} Let $\ell$ be an odd prime, and $\rho:\calI_\infty\to\SL_2(\Ql)$ be the restriction of the monodromy representation of $\Kl_2$ over $\GG_{m,\FF_2}$ to the inertia at $\infty$. Let $D_{0}=\rho(\calI_{\infty})<\SL_{2}(\Ql)$. We first determine the image $\Dbar_{0}$ of $\rho(\calI_\infty)$ in $\PGL_2(\Ql)$. Note that $\Dbar_{0}$ is a finite subgroup of $\PGL_{2}$ which has a normal Sylow 2-subgroup. According to the classification of Platonic groups, we have the following possibilities
\begin{enumerate}
\item $\Dbar_{0}$ is cyclic ;
\item $\Dbar_{0}$ is dihedral of order $2^{k}$, $k>1$;
\item $\Dbar_{0}\cong A_{4}$. 
\end{enumerate}

Suppose $\Dbar_{0}$ is cyclic, so is $D_{0}$. Therefore $\rho$ decomposes into a sum of two characters  which are inverse to each other. In this case $\Swan(\Kl_{2})$  must be an even number and cannot be one. Thus we can eliminated the first possibility.

Suppose $\Dbar_{0}$ is dihedral of order $2^{k}$ for some $k>1$, then up to conjugacy $D_{0}=\mu_{2^{k}}\cdot\jiao{w_{0}}$ where $\mu_{2^{k}}$ is embedded into the diagonal torus of $\SL_{2}$ and $w_{0}=\left(\begin{array}{cc} 0 & 1 \\ -1 & 0\end{array}\right)$. Then $\Sym^{2}(\rho)$ can be decomposed as $V_{2}\oplus V_{1}$ with $\dim V_{2}=2$, $\Swan(V_{2})>0$, $\dim V_{1}=1$ on which $D_{0}$ acts nontrivially through its quotient $D_{0}/\mu_{2^{k}}\cong\jiao{w_{0}}$. This implies that $\Swan(\Sym^{2}(\Kl_{2}))\geq2$. However, Katz proved \cite[Proposition 10.4.1]{Katz} that $\Swan(\Kl_{2}\otimes\Kl_{2})=1$. This is a contradiction. Therefore we can eliminated the second possibility as well.

In conclusion, we must have $\Dbar_{0}\cong A_{4}$. This also implies that $D_{0}\cong\wt{A}_4$, the preimage of $A_4<\PGL_2(\Ql)$ in $\SL_2(\Ql)$. Let $K_4<A_4$ be the Klein group of order 4. The preimage of $K_4$ in $\wt{A}_4$ is the quarternion group $Q_8=\{\pm 1,\pm I, \pm J,\pm K\}$. The lower numbering filtration of $D_0=\rho(\calI_\infty)\cong\wt{A}_4$ is a decreasing filtration
\begin{equation*}
D_0\rhd D_1\rhd D_2 \rhd\cdots\rhd D_m=\{1\}.
\end{equation*}
Here $D_1=\rho(\calI^w_\infty)=Q_8$, each quotient $D_i/D_{i+1}$ is an elementary abelian 2-group, and $m$ is the smallest integer such that $D_m$ is trivial. Then the Swan conductor of $\rho$ (or of  $\Kl_{2}$ at $\infty$) is
\begin{equation*}
\Swan(\Kl_{2})=\sum_{j\geq1}\frac{2-\dim \rho^{D_j}}{[D_0:D_j]}=\frac{2}{3}(1+\sum_{j=2}^{m-1}\frac{1}{[D_1:D_j]}).
\end{equation*}
Here we use the fact that $\rho^{D_{m-1}}=0$ because $\rho$ is irreducible. Since $\Swan(\Kl_{2})=1$, $D_1=Q_8$, we have $\sum_{j=2}^{m-1}\#D_j=4$. Note each $\#D_j=2$ or $4$. There are only two cases:
\begin{enumerate}
\item $\#D_2=\#D_3=2,D_4=\{1\}$.
\item $\#D_2=4,D_3=\{1\}$.
\end{enumerate}
The second case is impossible because the order four subgroups of $Q_8$ are not elementary abelian 2-groups. We are left with the first possibility, and the lower numbering filtration looks like
\begin{equation*}
D_0=\wt{A}_4\rhd D_1=Q_8 \rhd D_2=D_3=\{\pm1\} \rhd D_4=\{1\}.
\end{equation*}  

\subsubsection{Image of Frobenius}\label{Frobenius p=2} Finally we determine the image of Frobenius at $\infty$. Let $\phi\in\GL_{2}$ be the image of a lifting of the geometric Frobenius $\Frob_{\infty}$. By \S\ref{sss:det} we may write $\phi=\sqrt{2}\phi_{1}$ where $\phi_{1}\in\SL_{2}$. Since $\phi_{1}$ normalizes $\wt{A}_{4}$, it lies in $N_{\SL_{2}}(\wt{A}_{4})=\wt{S}_{4}$, the preimage of $S_{4}=N_{\PGL_{2}}(A_{4})$ in $\SL_{2}$. The tame quotient of the local Galois group then maps to $\wt{S}_{4}/Q_{8}\cong S_{3}$. The image of a generator of the tame inertia is a cyclic permutation $C\in S_{3}$. The relation $\phi^{-1}C\phi=C^{2}$ forces the image of $\phi$ in $S_{3}$ to be a transposition. In particular, $\phi_{1}\in\wt{S}_{4}-\wt{A}_{4}$.

With the preparations above, we may now do calculations.
\begin{lemma} When $p=2$, the Swan conductor of the action of $\calI_\infty$ on $V=\Sym^d$ is given by
\begin{equation}\label{Sw p=2}
\Swan(\Sym^d)=\begin{cases}
\frac{d+1}{2} & d\textup{ odd};\\
\left[\frac{d+2}{4}\right] & d\textup{ even}.\end{cases}.
\end{equation}
\end{lemma}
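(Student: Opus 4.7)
The plan is to apply the standard Swan conductor formula
\begin{equation*}
\Swan(V)=\sum_{j\geq1}\frac{\dim V-\dim V^{D_j}}{[D_0:D_j]}
\end{equation*}
to the filtration just described, namely
\begin{equation*}
D_0=\wt{A}_4\rhd D_1=Q_8 \rhd D_2=D_3=\{\pm1\} \rhd D_4=\{1\},
\end{equation*}
with ramification indices $[D_0:D_1]=3$ and $[D_0:D_2]=[D_0:D_3]=12$. Thus the calculation reduces to computing the two invariant dimensions $\dim(\Sym^d)^{Q_8}$ and $\dim(\Sym^d)^{\{\pm1\}}$.

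The second is immediate: $-1\in \SL_2$ acts on $\Sym^d$ as $(-1)^d$, so the invariants are all of $\Sym^d$ when $d$ is even and $0$ when $d$ is odd. For the first, when $d$ is odd the central element $-1\in Q_8$ already acts by $-1$ on $\Sym^d$, so there are no $Q_8$-invariants. When $d$ is even, $Q_8$ acts through its quotient $Q_8/\{\pm1\}\cong K_4$ (embedded in $\PGL_2$), and the number of invariants is obtained via character theory: a direct matrix calculation using the generators $I=\diag(\sqrt{-1},-\sqrt{-1})$ and $J=\left(\begin{smallmatrix}0&1\\-1&0\end{smallmatrix}\right)$ shows that each of the three nontrivial elements of $K_4$ acts on $\Sym^d$ with trace $(-1)^{d/2}$, so
\begin{equation*}
\dim(\Sym^d)^{Q_8}=\frac{1}{4}\bigl((d+1)+3(-1)^{d/2}\bigr)=\begin{cases}d/4+1 & d\equiv 0\pmod 4;\\ (d-2)/4 & d\equiv 2\pmod 4.\end{cases}
\end{equation*}

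Plugging back into the Swan formula gives three cases. For $d$ odd, both invariant dimensions vanish, so $\Swan(\Sym^d)=\frac{d+1}{3}+2\cdot\frac{d+1}{12}=\frac{d+1}{2}$. For $d\equiv 0\pmod 4$, the $D_2,D_3$ contributions drop out and $\Swan(\Sym^d)=\frac{(d+1)-(d/4+1)}{3}=\frac{d}{4}=\bigl[\frac{d+2}{4}\bigr]$. For $d\equiv 2\pmod 4$, again the $D_2,D_3$ contributions vanish and $\Swan(\Sym^d)=\frac{(d+1)-(d-2)/4}{3}=\frac{d+2}{4}=\bigl[\frac{d+2}{4}\bigr]$. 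These match the stated formula \eqref{Sw p=2}.

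No step here is truly hard; the main thing one must do carefully is the character computation on $Q_8$, in particular verifying that the conjugacy classes of the three nontrivial involutions in $K_4\subset \PGL_2$ are indeed identified, and that all three act with the same trace $(-1)^{d/2}$ on $\Sym^d$.
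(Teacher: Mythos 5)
Your proof is correct and follows essentially the same route as the paper: apply the Swan conductor formula to the lower numbering filtration $D_0=\wt{A}_4\rhd D_1=Q_8\rhd D_2=D_3=\{\pm1\}\rhd\{1\}$, note that $\{\pm1\}$ kills everything when $d$ is odd and acts trivially when $d$ is even, and reduce to computing the $K_4$-invariants of $\Sym^d$. The only cosmetic difference is that the paper counts invariant basis vectors directly from the explicit action of $I$ and $J$ on the monomials $x^iy^{d-i}$, whereas you average characters over $K_4$; the resulting dimensions and Swan conductors agree.
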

\begin{proof}
Use the standard basis $\{x^ry^{d-r}\}_{r=0,\cdots,d}$ for $\Sym^d$. We extend scalars from $\Ql$ to $\Ql(i)$ where $ i=\sqrt{-1}$. Recall the quarternion group $Q_{8}=\{\pm1,\pm I,\pm J,\pm K\}$ acts on the standard 2-dimensional representation $\Span\{x,y\}$ as
\begin{equation*}
I: (x,y)\mapsto(ix,-iy);  \hspace{5pt} J: (x,y)\mapsto(-y,x).
\end{equation*}
When $d$ is odd, the action of $D_3=\{\pm1\}$ on $\Sym^d$ is by $-1$. Therefore $D_1,D_2,D_3$ do not have invariants on $\Sym^d$. We get
\begin{equation}\label{p=2 d=odd}
\Swan(\Sym^d)=\sum_{j=1}^{3}\frac{d+1}{[D_0:D_j]}=\frac{d+1}{2}.
\end{equation}
When $d$ is even, the action of $\calI_\infty$ on $\Sym^d$ factors through $A_4<\PGL_2(\Ql)$. Therefore $D_{2}$ acts trivially on $\Sym^{d}$ and $D_{1}$ acts via its quotient $K_{4}$. In terms of a standard basis $x^iy^{d-i}$ for $\Sym^d$, $I: x^iy^{d-i}\mapsto (-1)^{d/2+i}x^iy^{d-i}$ and $J: x^iy^{d-i}\mapsto (-1)^ix^{d-i}y^i$. Therefore $\dim(\Sym^d)^{K_4}=d/4+1$ if $4|d$ and $\dim(\Sym^d)^{K_4}=[d/4]$ if $d\equiv2\mod4$ . The Swan conductor is
\begin{equation}\label{p=2 d=even}
\Swan(\Sym^d)=\frac{1}{3}(d+1-\dim(\Sym^d)^{K_4})=\left[\frac{d+2}{4}\right].
\end{equation}
\end{proof}

\begin{lemma}\label{l:local inv p=2} When $p=2$ and $V=\Sym^{d}$, then we have an isomorphism of $\Frob$-modules
\begin{equation}\label{Frob mod infty p=2}
V^{\calI_{\infty}}\cong\begin{cases}    0 & d\textup{ odd};  \\
\Ql(d/2)^{[d/24]+1}\oplus\sgn(d/2)^{[d/24]}&  d\equiv0\mod8;\\
\Ql(d/2)^{[d/24]}\oplus\sgn(d/2)^{[d/24]+1}&  d\equiv6\mod8;\\
\Ql(d/2)^{[d/24]}\oplus\sgn(d/2)^{[d/24]} & d\equiv2,4,10\mod24;\\
\Ql(d/2)^{[d/24]+1}\oplus\sgn(d/2)^{[d/24]+1} & d\equiv12,18,20\mod24.\\
\end{cases}
\end{equation}
Here $\sgn$ stands for the one-dimensional $\Frob$-module on which $\Frob$ acts as $-1$.
\end{lemma}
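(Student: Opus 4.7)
The plan is to compute $V^{\calI_\infty}$ first as an abstract vector space and then to determine its Frobenius action, by unwinding the description of the local inertia and Frobenius image at $\infty$ obtained earlier in this section. Recall that $D_0 = \rho(\calI_\infty) \cong \tilA_4$ (the binary tetrahedral group) and that any lift $\phi \in \GL_2$ of the geometric Frobenius at $\infty$ can be written as $\phi = \sqrt{2}\,\phi_1$ with $\phi_1 \in \widetilde{S}_4 \setminus \tilA_4$, as explained in \S\ref{Frobenius p=2}.

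For $d$ odd, the central element $-1 \in \tilA_4$ acts as $-1$ on $\Sym^d$, so $V^{\calI_\infty} = V^{D_0} = 0$, giving the first case. For $d$ even, the action of $\widetilde{S}_4$ on $\Sym^d$ factors through $S_4 \subset \PGL_2(\Qlbar)$, hence $V^{\calI_\infty} = V^{A_4}$. The key observation is that $\phi_1^2 \in \tilA_4$ acts trivially on $V^{A_4}$, so $\Sym^d(\phi_1)$ is an involution of $V^{A_4}$; its $(+1)$-eigenspace equals $V^{\langle A_4,\phi_1\rangle} = V^{S_4}$ (since $\phi_1 \notin A_4$ forces $\langle A_4,\phi_1\rangle = S_4$), and its $(-1)$-eigenspace has dimension $\dim V^{A_4} - \dim V^{S_4}$. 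Since the scalar $\sqrt{2}$ acts on $\Sym^d$ as $2^{d/2}$, Frobenius acts as $+2^{d/2}$ on the first summand and $-2^{d/2}$ on the second, which yields the asserted decomposition into $\Ql(d/2)$-copies and $\sgn(d/2)$-copies with the multiplicities $a := \dim V^{S_4}$ and $b := \dim V^{A_4} - \dim V^{S_4}$.

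It remains to compute $\dim V^{A_4}$ and $\dim V^{S_4}$ by character theory. Using the Weyl formula $\chi_{\Sym^d}(g) = \sin((d+1)\theta)/\sin\theta$ where $e^{\pm i\theta}$ are the eigenvalues of a lift of $g$ to $\SL_2$, I would tabulate the character of $\Sym^d$ on the conjugacy classes of $S_4 \subset \PGL_2(\Qlbar)$: on the Klein class ($\theta = \pi/2$, value $(-1)^{d/2}$), on the $3$-cycles ($\theta = 2\pi/3$, value depending on $d \bmod 3$), on the transpositions (also $\theta = \pi/2$, value $(-1)^{d/2}$), and on the $4$-cycles ($\theta = \pi/4$, value depending on $d \bmod 8$). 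Averaging $\chi_{\Sym^d}$ over $A_4$ and over $S_4$ then yields dimensions depending only on $d \bmod \mathrm{lcm}(3,8) = 24$, and a direct case-by-case computation through the twelve even residues of $d$ modulo $24$ produces exactly the five cases in the statement.

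The main obstacle is simply the bookkeeping in this final character computation: the five case distinctions arise from the interplay between $d$ modulo $3$ (via the $3$-cycles), modulo $4$ (via the Klein class, through $(-1)^{d/2}$), and modulo $8$ (via the $4$-cycles), and keeping signs straight across twelve residues is the only nontrivial part.
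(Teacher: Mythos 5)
Your proposal is correct and takes essentially the same route as the paper: the odd case via the central element $-1$, the even case by observing that $\phi_{1}^{2}$ lies in the inertia image so that $\Frob=\sqrt{2}\phi_{1}$ acts on $V^{\calI_{\infty}}$ with eigenvalues $\pm 2^{d/2}$, and the multiplicities by a character average over the (binary) tetrahedral and octahedral groups. Your packaging of the multiplicities as $\dim V^{S_{4}}$ and $\dim V^{A_{4}}-\dim V^{S_{4}}$ is equivalent to the paper's computation of $\dim V^{\wt{A}_{4}}$ together with the coset average $\Tr(\phi_{1},V^{\wt{A}_{4}})=2\dim V^{S_{4}}-\dim V^{A_{4}}$, the only difference being that the paper organizes the bookkeeping via generating functions in $t$ rather than per-class Weyl character values mod $24$.
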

\begin{proof}
We have $V^{\calI_{\infty}}=(\Sym^{d})^{\wt{A}_{4}}$, and we would like to calculate $\dim(\Sym^{d})^{\wt{A}_{4}}$ and $\Tr(\phi,(\Sym^{d})^{\wt{A}_{4}})$ (where $\phi$ is the image of a lift of $\Frob_{\infty}$ as we discussed in \S\ref{Frobenius p=2}).

We first make a general remark about the calculation. If $G$ is a finite group with a normal subgroup $N\lhd G$. Let $V$ be a representation of $G$, and let $\bara\in G/N$. Then $\bara$ acts on $V^{N}$ and its trace is given by
\begin{equation*}
\Tr(\bara,V^{N})=\frac{1}{\#N}\sum_{x\in\bara N}\Tr(x,V).
\end{equation*}
Applying this remark to $N=\wt{A}_{4}\lhd G=\wt{S}_{4}$, taking $\bara=1$ we get
\begin{equation*}
\sum_{d\geq0}\dim(\Sym^d)^{\wt{A}_4}t^d=\frac{1}{24}\sum_{g\in\wt{A}_4}\frac{1}{1-\Tr(g)t+t^2},
\end{equation*}
Here we write $\Tr(g)$ for the trace of $g\in\wt{A}_4$ under the standard two-dimensional representation. Similarly, taking $\bara=\phi_{1}$ (note $\phi=\sqrt{2}\phi_{1}$ where $\phi_{1}\in\wt{S}_{4}$), we get
\begin{equation*}
\sum_{d\geq0}\Tr(\phi_{1},(\Sym^d)^{\wt{A}_4})t^d=\frac{1}{24}\sum_{g\in \phi\wt{A}_4}\frac{1}{1-\Tr(g)t+t^2},
\end{equation*}

We may identify $\wt{S}_4$ with the quarternions
\begin{eqnarray}\label{list 1}
&&\pm1,\pm I,\pm J,\pm K, \frac{1}{2}(\pm 1\pm I\pm J\pm K),\\
\label{list 2}
&&\frac{1}{\sqrt{2}}(\pm 1\pm I), \frac{1}{\sqrt{2}}(\pm 1\pm J), \frac{1}{\sqrt{2}}(\pm 1\pm K), \frac{1}{\sqrt{2}}(\pm I\pm J), \frac{1}{\sqrt{2}}(\pm J\pm K), \frac{1}{\sqrt{2}}(\pm K\pm I).
\end{eqnarray}
The line \eqref{list 1} consists of elements in $\wt{A}_{4}$. Since $\Tr(\pm1)=\pm2$, $\Tr(\pm I)=\Tr(\pm J)=\Tr(\pm K)=0$, we see that one element in $\wt{A}_{4}$ have trace 2, one has trace $-2$, 8 has trace $1$, 8 has trace $-1$ and the rest 6 has trace zero. Therefore
\begin{eqnarray*}
\sum_{d\geq0}\dim(\Sym^d)^{\wt{A}_4}t^d&=&\frac{1}{24}\left(\frac{1}{(1-t)^2}+\frac{1}{(1+t)^2}+\frac{6}{1+t^2}+\frac{8}{1-t+t^2}+\frac{8}{1+t+t^2}\right)\\
&=&\frac{1}{12}\sum_{j\geq0}(2j+1)t^{2j}+\frac{1}{4}\sum_{j\geq0}(-1)^jt^{2j}+\frac{2}{3}\sum_{j\geq0}(t^{6j}-t^{6j+4})\end{eqnarray*}
Computing coefficients, we get
\begin{equation}\label{local inv p=2}
\dim V^{\calI_{\infty}}=\begin{cases}0 & d\textup{ odd}; \\
[\frac{d}{12}]+1 & d\equiv 0,6,8\mod12\\
[\frac{d}{12}] & d\equiv 2,4,10\mod12.\end{cases}
\end{equation}

Since $\phi_{1}\notin\wt{A}_{4}$, the coset $\phi_{1}\wt{A}_{4}$ consists of the second line \eqref{list 2} of the above list. We see that 6 of these elements have trace $\sqrt{2}$, 6 of them have trace $-\sqrt{2}$ and the rest have trace  zero. Therefore,
\begin{eqnarray*}
\sum_{d\geq0}\Tr(\phi_{1},\dim(\Sym^d)^{\wt{A}_4})t^d&=&\frac{1}{24}\left(\frac{6}{1-\sqrt{2}t+t^{2}}+\frac{6}{1+\sqrt{2}t+t^{2}}+\frac{12}{1+t^2}\right)\\
&=&\frac{1}{2}\left(\sum_{j\geq0}(-1)^{j}t^{4j}+\sum_{j\geq0}(-1)^{j}t^{4j+2}+\sum_{j\geq0}(-1)^jt^{2j}\right).
\end{eqnarray*}
Therefore
\begin{equation*}
\Tr(\phi_{1},\dim(\Sym^d)^{\wt{A}_4})=\begin{cases}1 & d\equiv0\mod8;\\
-1 & d\equiv6\mod8;\\
0 & \textup{otherwise}.\end{cases}
\end{equation*}
Since $\phi=\sqrt{2}\phi_{1}$, we get
\begin{equation}\label{Frob infty p=2}
\Tr(\Frob_{\infty},V^{\calI_{\infty}})=\begin{cases}2^{d/2} & d\equiv0\mod8; \\
-2^{d/2} & d\equiv6\mod8;\\
0 & \textup{otherwise}.\end{cases}
\end{equation}
Finally, since $\phi^{2}_{1}$ lies in the image of $\calI_{\infty}$, it acts trivially on $V^{\calI_{\infty}}$. Therefore the eigenvalues of $\Frob=\sqrt{2}\phi_{1}$ on $V^{\calI_{\infty}}$ are $\pm2^{d/2}$. Combining information from \eqref{local inv p=2} and from \eqref{Frob infty p=2} we get the multiplicities of the eigenvalues $\pm2^{d/2}$, and hence the formula \eqref{Frob mod infty p=2}.
\end{proof}

Combining the previous lemmas we get
\begin{cor} When $p=2$, $d>1$, we have
\begin{eqnarray*}
\dim M^{d}_{2,!,\FF_{2}}=\dim M^{d}_{2,*,\FF_{2}}=\begin{cases}\frac{d+1}{2}& d\textup{ odd}\\
[\frac{d+2}{4}] & d\textup{ even}\end{cases};\\
\dim M^{d}_{2,!*,\FF_{2}}=\begin{cases} \frac{d-1}{2} & d\textup{ odd}\\
2[\frac{d+2}{12}] & d\equiv2,4,6,8,10\mod12;\\
\frac{d}{6}-2& d\equiv0\mod12.\end{cases}.
\end{eqnarray*}
\end{cor}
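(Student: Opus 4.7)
The plan is to apply the general dimension formulas of Lemma \ref{Sw general} with $p=2$, $\hatG=\SL_{2}$, and $V=\Sym^{d}$, assembling the right-hand sides from quantities that have already been computed earlier in \S\ref{ss:dim p=2}. Theorem \ref{th:global mono Katz} gives $\hatG^{\geom}_{2}=\Sp_{2}=\SL_{2}=\hatG$ (so $\Sym^{d}$ is a nontrivial irreducible representation of $\hatG^{\geom}_{2}$ for every $d\geq 1$), hence $V^{\hatG^{\geom}_{2}}=0$; and Lemma \ref{Frob action at 0} gives $\dim V^{\calI_{0}}=1$. With these inputs, Lemma \ref{Sw general} reduces the corollary to the two identities
\begin{equation*}
\dim M^{d}_{2,!,\FF_{2}}=\dim M^{d}_{2,*,\FF_{2}}=\Swan(\Sym^{d}),\qquad \dim M^{d}_{2,!*,\FF_{2}}=\Swan(\Sym^{d})-1-\dim V^{\calI_{\infty}}.
\end{equation*}

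The first identity is immediate from the Swan conductor formula \eqref{Sw p=2}. For the second I plug in \eqref{Sw p=2} and the dimension formula \eqref{local inv p=2} obtained in the proof of Lemma \ref{l:local inv p=2}. When $d$ is odd, $V^{\calI_{\infty}}=0$, and the formula reads $(d+1)/2-1=(d-1)/2$, as claimed. When $d$ is even, I split into the six residue classes of $d$ modulo $12$: for example, writing $d=12k$ gives $\Swan(\Sym^{d})=3k$ and $\dim V^{\calI_{\infty}}=k+1$, so $\dim M^{d}_{2,!*,\FF_{2}}=3k-1-(k+1)=2k-2=d/6-2$; and the cases $d\equiv 2,4,6,8,10\pmod{12}$ each reduce to $2[(d+2)/12]$ after elementary arithmetic (for instance, $d=12k+8$ gives $\Swan=3k+2$, $\dim V^{\calI_{\infty}}=k+1$, so the dimension is $2k=2[(d+2)/12]$).

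There is really no residual obstacle: all of the substantial work, namely the identification of the local monodromy image at $\infty$ as $\wt{A}_{4}$, the resulting Swan conductor computation, and the decomposition of $V^{\calI_{\infty}}$ into $\Frob$-eigenspaces, has already been carried out in the two preceding lemmas. The only step requiring mild attention is the modulo-$12$ case analysis in the second identity, where one must verify that the single uniform expression $2[(d+2)/12]$ correctly captures all five nonzero even residue classes modulo $12$.
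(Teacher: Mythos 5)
Your proposal is correct and is essentially the paper's own proof: the corollary is obtained by combining Lemma \ref{Sw general} (with $V^{\calI_{0}}=V^{N}$ of dimension $1$ by Lemma \ref{Frob action at 0} and $V^{\hatG^{\geom}_{2}}=0$ since $\hatG^{\geom}_{2}=\Sp_{2}=\SL_{2}$ by Theorem \ref{th:global mono Katz}) with the $p=2$ Swan conductor formula \eqref{Sw p=2} and the invariant-dimension count \eqref{local inv p=2}, followed by the same bookkeeping modulo $12$. Your case checks (e.g.\ $d=12k$ giving $3k-1-(k+1)=d/6-2$, and $d=12k+8$ giving $2k=2[(d+2)/12]$) are accurate, so no gap remains.
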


In the following table, we summarize the dimension of $M^{d}_{2,!*, \FF_{p}}$ for first few primes $p$.

\begin{tabular}{|c|c|c|c|c|c|c|c|c|c|c|c|c|c|}
\hline
$d$        & 1 & 2 & 3 & 4 & 5 & 6 & 7 & 8 & 9 & 10 & 11 & 12 & 13\\
\hline
good $p$   & 0 & 0 & 1 & 0 & 2 & 2 & 3 & 2 & 4 & 4 & 5 & 4 & 6\\
\hline
duality    &   &   & + &   & + & - & + & - & + & - & + & - & +\\
\hline
$p=2$      &&& good & & good & 0 & good & 0 & good & 2 & good & 0 & good\\
\hline
$p=3$	   &&&	0 &   & 1 & 0 & 2 & 0 & 2 & 2 & 3 & 0 & 4\\
\hline
$p=5$      &&& good & & 1 & good & 2 & good & 3 & 2 & 4 & 2 & 5\\
\hline
$p=7$      &&& good & & good & good & 2 & good & 3 & good & 4 & good & 5\\
\hline
$p=11$      &&& good & & good & good & good & good & good & good & 4 & good & 5\\
\hline
$p=13$      &&& good & & good & good & good & good & good & good & good & good & 5\\
\hline
\end{tabular}

\subsection{Proof of Theorem \ref{th:intro}}\label{pf Kl2} 
We recall the notation $M^{d}_{2,\ell}$ from \eqref{define M2}, which is equipped with the continuous $\GQ$-action
\begin{equation*}
\rho^{d}_{2,\ell}:\GQ\to\GL(M^{d}_{2,\ell}).
\end{equation*}

From Corollary \ref{c:M dim p}, we see that the dimension of $M^{d}_{2,!*,\FF_{p}}$ for large $p$ is
\begin{equation}\label{stable dim}
\dim M^{d}_{2,!*,\FF_{p}}=\begin{cases} 2[\frac{d+2}{4}]-2 &  d \textup{ even}, p>d/2;\\ \frac{d-1}{2} & d \textup{ odd}, p>d \textup{ or } p=2. \end{cases}
\end{equation}
This gives the dimension of the $\GQ$-module $M^{d}_{2,\ell}$.

\subsubsection{When $d$ is odd} Let $M_{\ell}:=M^{d}_{2,\ell}(\frac{d+1}{2})$. It is equipped with the orthogonal pairing
\begin{equation*}
M_{\ell}\otimes M_{\ell}\to\Ql
\end{equation*}
induced from the orthogonal (since $(-1)^{d+1}=1$) pairing $M^{d}_{2,\ell}\otimes M^{d}_{2,\ell}\to\Ql(-d-1)$ from Theorem \ref{th:main}. We then get an orthogonal Galois representation
\begin{equation*}
\rho_{\ell}=\rho^{d}_{2,\ell}\chi_{\cyc,\ell}^{(d+1)/2}:\GQ\to\Og(M_{\ell})\cong \Og_{(d-1)/2}(\Ql)
\end{equation*}
We can compute $\det(\rho_{\ell})$ using Theorem \ref{th:det}. For $p>d$, we have
\begin{equation*}
\det(\rho_{\ell}(\Frob_{p}))=p^{-\frac{d+1}{2}\frac{d-1}{2}}\det(\Frob_{p},M^{d}_{2,!*,\FF_{p}})=\left(\frac{(-3)5(-7)\cdots(\pm d)}{p}\right)=\left(\frac{p}{d!!}\right).
\end{equation*}
Therefore $\det(\rho_{\ell})$ is the quadratic Dirichlet character $\left(\frac{\cdot}{d!!}\right)$. In particular, $\rho_{\ell}$ is ramified at the odd primes $p\leq d$.

Finally, by Lefschetz trace formula and the sequence \eqref{!* short Kl2} we have
\begin{equation*}
-m^{d}_{2}(p)=\Tr(\Frob, M^{d}_{2,!,\FF_{p}})=\Tr(\Frob_{0},V^{\calI_{0}})+\Tr(\Frob_{\infty}, V^{\calI_{\infty}})+\Tr(\Frob, M^{d}_{2,!*,\FF_{p}}).
\end{equation*}
For all primes $p$ we have $\Tr(\Frob_{0},V^{\calI_{0}})=1$ by Lemma \ref{Frob action at 0}, and $V^{\calI_{\infty}}=0$ by Lemma \ref{local inv} and Lemma \ref{l:local inv p=2}. Therefore
\begin{equation*}
-m^{d}_{2}(p)=1+\Tr(\Frob, M^{d}_{2,!*,\FF_{p}}).
\end{equation*}
For $p>d, p\neq\ell$, Corollary\ref{c:unram} together with \eqref{stable dim} implies that $\rho_{\ell}$ is unramified, and that $M^{d}_{2,!*,\FF_{p}}$ is isomorphic to $M^{d}_{2,\ell}$ as $\Frob_{p}$-modules. Therefore, $\Tr(\Frob, M^{d}_{2,!*,\FF_{p}})$ is equal to $p^{(d+1)/2}\Tr(\Frob_{p},M_{\ell})$. Combining these facts we get the formulae \eqref{md2 odd}.

\subsubsection{When $d$ is even} Let $M_{\ell}:=M^{d}_{2,\ell}$. It is equipped with the symplectic (since $(-1)^{d+1}=-1$) pairing
\begin{equation*}
M_{\ell}\otimes M_{\ell}\to \Ql(-d-1)
\end{equation*}
from Theorem \ref{th:main}. We then get a symplectic Galois representation
\begin{equation*}
\rho_{\ell}=\rho^{d}_{2,\ell}:\GQ\to\GSp(M_{\ell})\cong\GSp_{2[(d+2)/4]-2}(\Ql)
\end{equation*}
with the  similitude character equal to $\chi_{\cyc}^{-d-1}$. Corollary \ref{c:unram} together with \eqref{stable dim} implies that $\rho_{\ell}$ is unramified at primes $p>d/2$, $p\nmid\ell$.

\subsubsection{Proof of \eqref{md2 even ram} and \eqref{md2 even}}\label{sss:ram Kl2} The argument for \eqref{md2 odd} works as well for $d$ even. In particular, we know that $\rho_{\ell}$ is unramified for $p>d/2, p\neq\ell$, in which case $M^{d}_{2,!*,\FF_{p}}$ is isomorphic to $M^{d}_{2,\ell}=M_{\ell}$ as $\Frob_{p}$-modules.

Let us deal with the general case $p>2$ ($p\neq\ell$ is always assumed). Let $\unM_{!}=\unM^{\Sym^{d}}_{!}$ be the sheaf on $S=\SZl$ defined in Definition \ref{def:moments Q} for the $\Sym^{d}$-moment of $\Kl_{2}$. In the proof of Proposition \ref{p:midext} we have shown that $\unM_{!}$ fits into an exact sequence
\begin{equation*}
0\to\unP[-1]\to\unM_{!}\to\unN\to0
\end{equation*}
where $\unP=\oplus_{p}i_{p,*}\delta_{p}$ is a punctual sheaf (the sum is finite) and $\unN$ is a middle extension sheaf on $S$. By definition, the middle extension sheaf $\unM_{!*}$ is a quotient of $\unN$. Let $\unK=\ker(\unN\to\unM_{!*})$, which is also a middle extension sheaf. Taking the stalk at a prime $p\neq\ell$, we get an exact sequence
\begin{equation*}
0\to i_{p}^{*}\unM_{!}\to i_{p}^{*}\unN\to\delta_{p}\to0.
\end{equation*}
By Lemma \ref{l:unM stalks}, we have $i_{p}^{*}\unM_{!}\cong M^{d}_{!,\FF_{p}}$. Recall the notation: for a sheaf $\unN$ over $S$ we use $N_{\QQ}$ to denote its generic stalk, which is a continuous $\GQ$-module. Since $\unN$ is a middle extension sheaf, we have $i_{p}^{*}\unN=N_{\QQ}^{\calI_{p}}$, which is an extension of $M_{\ell}^{\calI_{p}}$ by $K_{\QQ}^{\calI_{p}}$. For large enough $p$, $\delta_{p}=0$ and $i_{p}^{*}K$ is equal to $\ker(i^{*}_{p}\unM_{!}\to i_{p}^{*}\unM_{!*})=\ker(M^{d}_{2,!,\FF_{p}}\to M^{d}_{2,!*,\FF_{p}})=V_{p}^{\calI_{0}}\oplus V_{p}^{\calI_{\infty}}$. (Here we write $V_{p}^{\calI_{\infty}}$ etc. to emphasize its dependence on $p$). By Lemma \ref{Frob action at 0} and Lemma \ref{local inv}, we see that for $p$ large,
\begin{equation}\label{Frob unK}
\Tr(\Frob_{p},K_{\QQ})=1+\begin{cases}0& d\equiv2\mod4;\\ p^{d/2} & d\equiv0\mod4.\end{cases}
\end{equation}
Therefore, up to semisimplification, we have $K_{\QQ}\cong \Ql\oplus\Ql(-d/2)$ as Galois representations, which is unramified over $S$. In particular, \eqref{Frob unK} holds for {\em all} primes  $p\neq\ell$.

For every $p\neq\ell$, let $L_{p}$ be the kernel of the map $N_{\QQ}^{\calI_{p}}\surj M^{\calI_{p}}_{\ell}\to M_{\ell,\calI_{p}}$. Then $L_{p}$ is an extension of $\ker(M^{\calI_{p}}_{\ell}\to M_{\ell,\calI_{p}})$ by $K_{\QQ}$. The inclusion $i_{p}^{*}\unM_{!}\cong M^{d}_{!,\FF_{p}}\incl i_{p}^{*}\unN=N_{\QQ}^{\calI_{p}}$ fits into an exact sequence of maps
\begin{equation}\label{three inj}
\xymatrix{0\ar[r] & V^{\calI_{0}}_{p}\oplus V^{\calI_{\infty}}_{p}\ar@{^{(}->}[d]\ar[r] & M^{d}_{!,\FF_{p}}\ar@{^{(}->}[d]\ar[r] & M^{d}_{!*,\FF_{p}}\ar@{^{(}->}[d]\ar[r] & 0\\
0\ar[r] & L_{p}\ar[r] & N_{\QQ}^{\calI_{p}}\ar[r] & \Im(M^{\calI_{p}}_{\ell}\to M_{\ell,\calI_{p}})\ar[r] & 0}
\end{equation}
We claim that the vertical maps are all injective: the argument in already contained in the proof of Theorem \ref{th:main}\eqref{ram} in \S\ref{Pf ram}, namely for both rows the first term is the radical of the natural pairing on the middle term. When $p>2$, comparing the formula for $V_{p}^{\calI_{\infty}}$ in Lemma \ref{local inv} and the formula for $K_{\QQ}$ in \eqref{Frob unK}, we see that $\ker(M^{\calI_{p}}_{\ell}\to M_{\ell,\calI_{p}})=L_{p}/K_{\QQ}$ should contain at least $\Ql(-d/2)^{[\frac{d}{2p}]}$ as a $\Frob_{p}$-submodule. In other words, $M_{\ell}$ contains at least $[\frac{d}{2p}]$ unipotent Jordan blocks of size $\geq2$ under the action of $\calI_{p}$. We then have
\begin{equation*}
\dim M_{\ell}\geq 2[\frac{d}{2p}]+\dim \Im(M^{\calI_{p}}_{\ell}\to M_{\ell,\calI_{p}})\geq 2[\frac{d}{2p}]+\dim M^{d}_{2,!*,\FF_{p}}.
\end{equation*}
However, by Lemma \ref{c:M dim p}, the above inequalities have to be equalities. Equivalently, all vertical maps in \eqref{three inj} have to be isomorphisms. This means that for $p>2$ and $p\neq\ell$,  $\calI_{p}$ acts unipotently on $M_{\ell}$ with $[\frac{d}{2p}]$ Jordan blocks of size $2$ and trivial Jordan blocks elsewhere. Moreover, the action of $\Frob_{p}$ on $M^{\calI_{p}}_{\ell}$ is $\Ql(-d/2)^{[\frac{d}{2p}]}\oplus M^{d}_{2,!*,\FF_{p}}$. This proves the decomposition \eqref{md2 even ram} with $U_{p}\cong M^{d}_{2,!*,\FF_{p}}$ as $\Frob_{p}$-modules.

Finally, by Lefschetz trace formula and using the fact that $M^{d}_{2,!,\FF_{p}}=N^{\calI_{p}}_{\QQ}$, we have
\begin{equation*}
-m^{d}_{2}(p)=\Tr(\Frob, M^{d}_{2,!,\FF_{p}})=\Tr(\Frob_{p},N_{\QQ}^{\calI_{p}})=\Tr(\Frob_{p},K_{\QQ})+\Tr(\Frob_{p}, M_{\ell}^{\calI_{p}}).
\end{equation*}
Combined with \eqref{Frob unK}, we get \eqref{md2 even}.

In the following subsections, we shall study the cases $d=5,6,7$ and $8$ in more details.

\subsection{$\Sym^{5}$ of $\Kl_{2}$}\label{ss:Sym5}
For $d=5$, Theorem \ref{th:intro} gives  a Frobenius-compatible system of continuous Galois representations
\begin{equation*}
\rho_{\ell}=\rho^{5}_{2,\ell}\chi_{\cyc}^{3}:\GQ\to\Og_{2}(\Ql)
\end{equation*}
with determinant $\left(\frac{\cdot}{15}\right)$ and ramified at $3,5$ and $\ell$. 

At $p=3$ or $5$, according to Theorem \ref{th:main}\eqref{ram}, we have $\dim\Im(M^{\calI_{p}}_{\ell}\to M_{\ell,\calI_{p}})\geq \dim M^{5}_{2,!*,\FF_{p}}=1$. Since $\rho_{\ell}$ does ramify at $p$, the only possibility is that $\calI_{p}$ acts through its tame quotient and a generator of $\calI^{t}_{p}$ maps to a matrix conjugate to $\diag(1,-1)$.

The Hodge-Tate weights of $\rho^{5}_{2,\ell}$ lie in $\{-1,-2,\cdots,-5\}$ by Theorem \ref{th:main}\eqref{motivic}, hence the Hodge-Tate weights of $\rho_{\ell}$ are $(k,-k)$ where $k\in\{0,1,2\}$ (we see from Proposition \ref{p:mot dn} that $k$ is independent of $\ell$).

Let $K=\QQ(\sqrt{-15})$. The restriction $\rho_{\ell}|_{\Gal(\Qbar/K)}$ is an abelian representation into $\SO_{2}(\Ql)$ which is ramified only at $\ell$. If $k=0$, then $\rho_{\ell}|_{\Gal(\Qbar/K)}$ form a Frobenius-compatible family of Galois representations with finite image. We may choose $\ell$ such that $\rho_{\ell}|_{\Gal(\Qbar/K)}$ is everywhere unramified, which then factors through the class group $\textup{Cl}(\calO_{K})=\ZZ/2\ZZ$. But this possibility can be excluded by numerical calculation of $m^{5}_{2}(p)$ for the first few $p$.

Therefore $k=1$ or $2$, and the Frobenius-compatible family $\rho_{\ell}\chi_{\cyc}^{-k}:\GQ\to\GL_{2}(\Ql)$ then comes from a CM form of weight $2k+1$, level $\Gamma_{0}(15)$ and nebentypus $\left(\frac{\cdot}{15}\right)$ (using the converse theorem since the $L$-function of $\rho_{\ell}$ is the same as the $L$-function of the Galois character $\rho|_{\Gal(\Qbar/K)}$). The value of $k$ can again be checked by numerics.

\subsection{$\Sym^{6}$ and $\Sym^{8}$ of $\Kl_{2}$}\label{ss:Sym68} When $d=6$ or $d=8$, Theorem \ref{th:intro} gives a Frobenius-compatible system of continuous Galois representations
\begin{equation*}
\rho_{\ell}=\rho^{d}_{2,\ell}:\GQ\to \GL_{2}(\Ql).
\end{equation*}
with determinant $\det(\rho_{\ell})=\chi_{\cyc}^{-d-1}$, which are at most ramified at $2,3$ and $\ell$. Moreover at $p=3$, the action of $\GQp$ is via $J_{2}(-d/2)$. By Theorem \ref{th:main}\eqref{motivic}, the Hodge-Tate weights of $\rho_{\ell}|_{\GQl}$ are $(-\frac{d}{2}-1+k,-\frac{d}{2}-k)$ for $1\leq k\leq \frac{d}{2}$. 


\subsubsection{Proof of theorem \eqref{th:Sym8}--Reduction to finite calculation}\label{pf Sym8} We will prove part (1) of Theorem \ref{th:Sym8} which reduces the proof to a finite calculation. Set $d=8$, then $1\leq k\leq 4$ for all $\ell$. Let $\rho'_{\ell}=\rho_{\ell}\chi_{\cyc,\ell}^{5-k}$. Then the Hodge-Tate weights of $\rho'_{\ell}$ are $(0,1-2k)$.

We shall use the Serre conjecture, proved by Khare and Wintenberger \cite{KW}, to deduce the modularity of $\rho'_{\ell}$, following an argument originally due to Serre in \cite[\S4.8]{Serre}. One can find a similar argument from Kisin's survey article \cite[Theorem 1.4.3]{Kisin}. Let $\rhobar'_{\ell}$ be the reduction mod $\ell$ of $\rho'_{\ell}$, well-defined up to semisimplification. Let $L$ be the set of primes $\ell$ such that
\begin{itemize}
\item $\ell>7$.
\item The variety $X$ in Theorem \ref{th:main}\eqref{motivic} has good reduction at $\ell$, so that $\rho_{\ell}$ is crystalline at $\ell$. By Fontaine-Laffaille theory (note that $\ell>\dim X=7$, see \cite{FL}, see also \cite[Corollary 4.3]{BLZ}), the semisimplification of  $\rhobar'_{\ell}|_{\calI_{\ell}}$ is either of the form $1\oplus\omega^{1-2k}$ or $\omega'^{1-2k}\oplus\omega''^{1-2k}$. Here $\omega$ is the mod $\ell$ cyclotomic character and $\omega',\omega'':\calI_{\ell}\to\FF_{\ell^{2}}^{\times}$ are the level two fundamental characters. By the definition of Serre weights \cite[\S2]{Serre}, the Serre weight of $\rhobar'_{\ell}$ is equal to $2k$.
\item $\ell\not\equiv\pm1\mod8$. (See \cite[(4.8.7)]{Serre}.)
\end{itemize}
This set  $L$ is infinite. Strong Serre conjecture then implies that for each $\ell\in L$, $\rhobar'_{\ell}$ is modular of weight $2k$, level of the form $2^{e}3$ for some $0\leq e\leq 8$ (see the estimate of Artin conductors in \cite[Corollary to Proposition 9]{Serre} and \cite[Last paragraph of p.216]{Serre}; this is why wee need the third condition when defining $L$). We have $3$ in the level because $\GQp$ acts under $\rho_{\ell}$ by $J_{2}(-d/2)$. The finiteness of such modular forms then imply the modularity of the compatible system $\{\rho'_{\ell}\}$. For details, we refer to \cite[\S4.8]{Serre}.

In conclusion, there is a Hecke eigenform $f$ of weight $2k$, level $N_{f}=2^{e}3$ for some $0\leq e\leq8$ (and trivial nebentypus) such that the attached Galois representation $\rho_{f,\ell}$ is isomorphic to $\rho_{\ell}\chi_{\cyc,\ell}^{5-k}$ for every prime $\ell\in L$. Since $\rho_{\ell}\chi_{\cyc,\ell}^{5-k}$ form a Frobenius-compatible family, $\rho_{f,\ell}\cong \rho_{\ell}\chi_{\cyc,\ell}^{5-k}$ for all $\ell$.

\begin{remark} The same proof shows that the analog of Theorem \ref{th:Sym8} holds for $m^{6}_{2}(p)$, with the weight of the modular form now $2k$ with $1\leq k\leq 3$. We actually get more precise information at $p=2$ thanks to the fact $(\Sym^{6})^{\calI_{\infty}}\cong\sgn(-3)$ proved in Lemma \ref{l:local inv p=2}. Using this fact, the same arugment in \S\ref{sss:ram Kl2} allows us to show that $\rho_{\ell}|_{\Gal(\overline{\QQ}_{2}/\QQ_{2})}\cong J_{2}\otimes\sgn(-3)$, and in particular the modular form $f$ in question should have $\Gamma_{0}(6)$. Finally \eqref{m82} should change to
\begin{equation*}
-m^{6}_{2}(p)-1=p^{4-k}a_{f}(p) 
\end{equation*}
which holds for {\em all} primes $p$. 
\end{remark}

\subsection{$\Sym^{7}$ of $\Kl_{2}$}\label{ss:Sym7} Theorem \ref{th:intro} gives a Frobenius-compatible system of Galois representations
\begin{equation*}
\rho_{\ell}=\rho^{7}_{2,\ell}\chi_{\cyc,\ell}^{4}:\GQ\to\Og_{3}(\Ql)
\end{equation*}
with determinant $\left(\frac{\cdot}{105}\right)$ and ramified only at $3,5,7$ and $\ell$. By Theorem \ref{th:main}\eqref{motivic}, the Hodge-Tate weights of $\rho^{7}_{2,\ell}|_{\GQl}$ are in $\{-1,\cdots, -7\}$, hence the Hodge-Tate weights for $\rho_{\ell}|_{\GQl}$ are $(-k,0,k)$ for $0\leq k\leq 3$ independent of $\ell$ (see Proposition \ref{p:mot dn}(2)). 

We then define
\begin{equation*}
\rho'_{\ell}=\rho_{\ell}\left(\frac{\cdot}{105}\right):\GQ\to\SO_{3}(\Ql)\cong\PGL_{2}(\Ql).
\end{equation*}

For $p=3,5$ and $7$, Theorem \ref{th:main}\eqref{ram} implies that $\dim\Im(M_{\ell}^{\calI_{p}}\to M_{\ell,\calI_{p}})\geq\dim M^{7}_{2,!*,\FF_{p}}=2$. Since $\rho_{\ell}$ does ramify at $p$, this inequality must be an equality, the action of $\calI_{p}$ factors through its tame quotient, and a generator of $\calI^{t}_{p}$ maps under $\rho_{\ell}$ to a matrix conjugate to $\diag(1,1,-1)\in\Og_{3}(\Qlbar)$. Equivalently, a generator of $\calI^{t}_{p}$ maps under $\rho'_{\ell}$ to a matrix conjugate to $\zeta=\diag(-1,1)\in\PGL_{2}(\Qlbar)$. We fix a lifting  of $\Frob_{p}$ to $\GQp$ and let $\phi_{p}\in\PGL_{2}(\Qlbar)$ be its image under $\rho'_{\ell}$. Then the relation $\phi^{-1}_{p}\zeta\phi_{p}=\zeta^{p}=\zeta$ forces $\phi_{p}$ to lie in $N_{\PGL_{2}}(\zeta)=T\cup Tw$, where $T$ is the diagonal torus in $\PGL_{2}(\Qlbar)$ and $w=\left(\begin{array}{cc} 0 & 1 \\ 1 & 0\end{array}\right)$. 

We say $p$ is of {\em abelian type} if $\phi_{p}\in T$; we say $p$ is of {\em dihedral type} if $\phi_{p}\in Tw$.

\begin{lemma} $p=3$ and $7$ are of abelian type, and $p=5$ is dihedral type.
\end{lemma}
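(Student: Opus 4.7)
The plan is to reformulate the abelian/dihedral dichotomy as the sign of a determinant computable via the Fu-Wan formula (Theorem \ref{th:det}).

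First, by Corollary \ref{c:M dim p}, $\dim M^{7}_{2,!*,\FF_{p}}=2$ for $p\in\{3,5,7\}$. Since inertia $\calI_{p}$ acts on $M^{7}_{2,\ell}$ via an order-two reflection with $2$-dimensional $+1$-eigenspace $W_{2}$ and $1$-dimensional $-1$-eigenspace $W_{1}$, the natural map $W_{2}=(M^{7}_{2,\ell})^{\calI_{p}}\to (M^{7}_{2,\ell})_{\calI_{p}}$ is an isomorphism (since $W_{1}\cap W_{2}=0$). Combined with the inclusion of Theorem \ref{th:main}\eqref{ram} and matching dimensions, this gives an isomorphism of $\Frob_{p}$-modules $M^{7}_{2,!*,\FF_{p}}\cong \Ql(\mu_{p})\otimes W_{2}$. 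In particular, $\det(\Frob_{p},M^{7}_{2,!*,\FF_{p}})=\det A$, where $A=\Frob_{p}|_{W_{2}}$. Since $W_{2}$ is pure of weight $8$ with the nondegenerate pairing induced from $M^{7}_{2,\ell}$, we have $\det A=\epsilon_{p}\cdot p^{8}$ for some sign $\epsilon_{p}\in\{\pm1\}$.

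Next, working inside $\Og_{3}$ on the weight-zero version $M_{\ell}=M^{7}_{2,\ell}(4)$, write $\rho_{\ell}(\Frob_{p})=\diag(A',c')$ in the basis $W_{2}\oplus W_{1}$, with $A'=p^{-4}A\in\Og_{2}$ and $c'\in\{\pm1\}$. After twisting by $\left(\frac{\cdot}{105}\right)$ (well-defined up to the inertia ambiguity) to land in $\SO_{3}$, a direct case check shows that $\phi_{p}=\rho'_{\ell}(\Frob_{p})$ is a rotation around the $W_{1}$-axis (hence lies in the torus $T$ of $\PGL_{2}\cong\SO_{3}$ containing $\zeta$) iff $\det A'=+1$; otherwise $\phi_{p}$ is a rotation by $\pi$ around an axis in the $W_{2}$-plane, and thus $\phi_{p}\in Tw$. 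The coset type is unchanged by the Frobenius-lift ambiguity because different lifts differ by $\zeta\in T$. Since $\det A'=p^{-8}\det A=\epsilon_{p}$, we conclude that $\phi_{p}\in T$ iff $\epsilon_{p}=+1$, and $\phi_{p}\in Tw$ iff $\epsilon_{p}=-1$.

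Finally, apply Theorem \ref{th:det} to $d=7$ with $\dim M^{7}_{2,!*,\FF_{p}}=2$:
\begin{equation*}
\epsilon_{p}=\left(\frac{-2}{p}\right)^{\left[\frac{7}{2p}+\frac{1}{2}\right]}\prod_{\substack{0\leq j\leq 3\\ p\nmid 2j+1}}\left(\frac{(-1)^{j}(2j+1)}{p}\right).
\end{equation*}
A direct calculation of the resulting Legendre symbols gives $\epsilon_{3}=+1$, $\epsilon_{5}=-1$, $\epsilon_{7}=+1$, establishing the claim.

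The main obstacle is the case-by-case bookkeeping in the second step, tracking how the ramified twist $\left(\frac{\cdot}{105}\right)$ interacts with the $\Og_{3}$-decomposition $W_{2}\oplus W_{1}$ to produce an element of $\SO_{3}\cong\PGL_{2}$; once that is done, Fu-Wan's determinant formula renders the conclusion mechanical. A potential alternative—using traces via a direct computation of $m^{7}_{2}(p)$—is less clean since $\Tr A=0$ occurs for both reflections and rotations by $\pi/2$, so the trace alone cannot always distinguish the two types.
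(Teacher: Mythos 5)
Your proof is correct and follows essentially the same route as the paper: the abelian/dihedral dichotomy is detected by whether $\det(\Frob_{p},M^{7}_{2,!*,\FF_{p}})$ equals $+p^{8}$ or $-p^{8}$, and this sign is evaluated via the Fu--Wan formula (Theorem \ref{th:det}), yielding $+1$ at $p=3,7$ and $-1$ at $p=5$. You merely spell out in more detail the identification $M^{7}_{2,!*,\FF_{p}}\cong\Ql(\mu_{p})\otimes(M^{7}_{2,\ell})^{\calI_{p}}$ and the $\SO_{3}\cong\PGL_{2}$ bookkeeping that the paper leaves implicit.
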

\begin{proof}
Whether $\phi_{p}\in T$ or $\phi_{p}\in Tw$ can be detected by computing the determinant of $\Frob_{p}$ on $M^{7}_{2,!*,\FF_{p}}$. In fact, under $\rho^{7}_{2,\ell}$, $\Frob_{p}$ acts on the two-dimensional orthogonal space $M^{\calI_{p}}_{\ell}$ with similitude $p^{4}$. If $\det(\Frob_{p}, M^{7}_{2,!*,\FF_{p}})=p^{8}$ then $\phi\in T$; if $\det(\Frob_{p}, M^{7}_{2,!*,\FF_{p}})=-p^{8}$ then $\phi\in Tw$. We then use Theorem \ref{th:det} to see that $\det(\Frob_{3}, M^{7}_{2,!*,\FF_{3}})=\det(\Frob_{7}, M^{7}_{2,!*,\FF_{7}})=p^{8}$ while $\det(\Frob_{5}, M^{7}_{2,!*,\FF_{5}})=-p^{8}$.
\end{proof}

\begin{lemma}\label{l:irred} For any prime $\ell>7$,   $\rho'_{\ell}$  is absolutely irreducible as a 3-dimensional orthogonal Galois representation.
\end{lemma}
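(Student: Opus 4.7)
Suppose for contradiction that $\rho'_\ell$ is absolutely reducible. Since its image lies in $\SO_3(\Ql) \cong \PGL_2(\Ql)$ and the standard three-dimensional orthogonal representation admits a Galois-invariant line, the image is conjugate into the normalizer $N(T) \subset \PGL_2$ of a maximal torus $T$. Composing $\rho'_\ell$ with $N(T) \twoheadrightarrow N(T)/T \cong \{\pm 1\}$ yields a quadratic character $\ep \colon \GQ \to \{\pm 1\}$, giving a decomposition $\rho'_\ell \cong \ep \oplus \rho_{2,\ell}$ into orthogonal summands, with $\rho_{2,\ell}$ two-dimensional of determinant $\ep$.

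The next step is to pin down $\ep$ globally using the local analysis preceding the lemma. For $p \in \{3,5,7\}$, the image $\rho'_\ell(\calI_p) = \jiao{\zeta}$ lies in $T$, so $\ep$ is unramified at $p$. The Frobenius conditions give $\ep(\Frob_3) = \ep(\Frob_7) = 1$ (abelian type, $\phi_p \in T$) and $\ep(\Frob_5) = -1$ (dihedral type, $\phi_5 \in Tw$). Since $\rho'_\ell$, and therefore $\ep$, is also unramified at $2$ and outside $\{3,5,7,\ell\}$, the character $\ep$ is a nontrivial quadratic character of $\GQ$ unramified outside $\{\ell\}$. By class field theory, the only such nontrivial character is $\chi_{\ell^*}$ with $\ell^* = (-1)^{(\ell-1)/2}\ell$, and this character is ramified at $\ell$.

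The contradiction will come from analyzing $\rho'_\ell$ at $\ell$. By Theorem \ref{th:main}\eqref{motivic}, $\rho'_\ell$ is a subquotient of $\cohog{6}{X_{\Qbar},\Ql}(-1)$ for a fixed smooth projective variety $X/\QQ$ of dimension $6$, and by Proposition \ref{p:mot dn} the Hodge-Tate weights are $\{-k,0,k\}$ for some $k \in \{0,1,2,3\}$ independent of $\ell$. For $\ell > 7$, one expects (this is the main obstacle) that $X$ has good reduction at $\ell$, so that $\rho'_\ell|_{\GQl}$ is crystalline, with all Hodge-Tate weights of absolute value at most $3 < (\ell-1)/2$. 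Being a direct summand of a crystalline representation, $\ep = \chi_{\ell^*}$ would then be a crystalline character of $\GQl$ of Hodge-Tate weight $0$; such a character is necessarily unramified, contradicting the ramification of $\chi_{\ell^*}$ at $\ell$.

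The principal difficulty in the argument above is verifying the good-reduction hypothesis on $X$ at every $\ell > 7$, which requires a careful examination of the Bott-Samelson resolution and the subvariety $Z_1$ constructed in the proof of Theorem \ref{th:main}; one may need to refine the choice of smooth projective model. An alternative route, avoiding the crystalline input entirely, is to combine the congruence conditions $\left(\frac{\ell^*}{3}\right) = \left(\frac{\ell^*}{7}\right) = 1$ and $\left(\frac{\ell^*}{5}\right) = -1$ imposed on $\ell$ with Frobenius-compatibility of the system $\{\rho'_\ell\}$ and explicit computations of $m^7_2(p)$ at several small primes $p$ to rule out the dihedral structure forced by $\ep = \chi_{\ell^*}$.
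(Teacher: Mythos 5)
Your proposal has genuine gaps, and it also diverges from the paper's route. First, absolute reducibility does not place the image inside $N(T)$: the invariant line supplied by self-duality may be isotropic, in which case the image lies only in a Borel subgroup of $\PGL_{2}(\Qlbar)$, there is no orthogonal splitting $\ep\oplus\rho_{2,\ell}$ and no quadratic character $\ep$ at all; up to semisimplification one then has $\chi\oplus 1\oplus\chi^{-1}$ with $\chi$ of possibly infinite order. You never address this case, and it is exactly the case the paper's proof treats: there one writes $\chi=\chi_{\cyc}^{k}\chi_{0}$ with $\chi_{0}$ of finite order (using the Hodge--Tate property), so that $\left(\frac{p}{105}\right)(-m^{7}_{2}(p)-1)p^{-4}=p^{k}\chi_{0}(p)+p^{-k}\chi_{0}(p)^{-1}+1$, and one contradicts the size of this quantity against Evans's numerical table at $p=11$ and $13$ (at least one of which differs from $\ell$). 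Second, even in the dihedral configuration you do treat, your determination of $\ep$ is unjustified: the torus appearing in the local analysis at $p=3,5,7$ is the connected centralizer of the image of a generator of $\calI_{p}$, not the global torus whose normalizer contains the image, and the two cannot be identified from the data you cite. The inertia generator is known only up to conjugacy (its image under $\rho'_{\ell}$ has eigenvalues $\{-1,-1,1\}$ on the three-dimensional space), and both possibilities --- the involution lying in the global $T$ (acting by $+1$ on the $\ep$-line and by $-1$ on the plane) or in the coset $Tw$ (acting by $-1$ on the $\ep$-line and by $\diag(1,-1)$ on the plane, note $\det\rho_{2,\ell}=\ep$) --- realize this same conjugacy class. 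So the local data does not force $\ep$ to be unramified at $3,5,7$, nor does it yield the values $\ep(\Frob_{p})$ you quote; if $\ep$ is allowed conductor dividing $105$ it need not be the character attached to $\QQ(\sqrt{\ell^{*}})$, and your contradiction at $\ell$ disappears.

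Third, even where your argument applies, it requires $\rho'_{\ell}|_{\GQl}$ to be crystalline, i.e.\ good reduction of $X$ at $\ell$, which is only available for $\ell$ outside an unspecified finite set, whereas the lemma is asserted for every $\ell>7$; you acknowledge this, but the sketched ``alternative route'' is not carried out. By contrast, the paper's proof uses no integral $p$-adic Hodge theory at $\ell$: it needs only that the constituent characters are Hodge--Tate (hence of the form $\chi_{\cyc}^{k}\chi_{0}$) together with the archimedean size of $\Tr\rho'_{\ell}(\Frob_{p})$ compared with the computed values of $m^{7}_{2}(p)$ at two small primes, and this works uniformly in $\ell$. Your instinct to confront the genuinely dihedral possibility (which the paper's own write-up passes over quickly when it asserts the semisimplification is $\chi\oplus\chi^{-1}\oplus 1$) is reasonable, but to salvage your approach you would need both a separate treatment of the Borel case and a global, not merely local-conjugacy, argument pinning down in which coset of $T$ the inertia involutions lie.
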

\begin{proof}
If not, there should be a continuous character $\chi:\GQ\to\Qlbar^{\times}$ such that $\rho'_{\ell}$ is isomorphic to $\chi\oplus\chi^{-1}\oplus1$ up to semisimplification. By the Hodge-Tate property, we have $\chi=\chi_{\cyc}^{k}\chi_{0}$ where $\chi_{0}$ has finite order. Therefore for each prime $p\neq 3,5,7$ or $\ell$, we have
\begin{equation}\label{reducible}
\left(\frac{p}{105}\right)\frac{-m^{7}_{2}(p)-1}{p^{4}}=\Tr(\rho'_{\ell}(\Frob_{p}))=p^{k}\chi_{0}(p)+p^{-k}\chi_{0}(p)^{-1}+1.
\end{equation}
In particular, we have
\begin{equation*}
|m^{7}_{2}(p)+1|\geq p^{4}(p^{k}-p^{-k}-1)\geq p^{5}-p^{4}-p^{3}.
\end{equation*}
However, from \cite[Table 2.1]{Evans}, we see that neither $p=11$ nor $p=13$  (at least one of them is not equal to $\ell$) satisfies the above inequality. 

\end{proof}

\begin{lemma}\label{l:not finite} For $\ell>7$, the Galois representation $\rho'_{\ell}$ does not have finite image.
\end{lemma}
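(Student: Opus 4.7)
\emph{Proof plan.} I would argue by contradiction: suppose $\rho'_\ell$ has finite image in $\SO_3(\Qlbar)$. Then every Frobenius element $\rho'_\ell(\Frob_p)$ (for primes $p \notin \{3,5,7,\ell\}$ at which $\rho'_\ell$ is unramified) has finite order, so its three eigenvalues are roots of unity. In particular,
\begin{equation*}
|\Tr(\rho'_\ell(\Frob_p))| \leq 3 \quad \text{for all } p \notin \{3,5,7,\ell\}.
\end{equation*}

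Next I would translate this trace into the $\Sym^7$-moment. By Theorem \ref{th:intro}(1) applied with $d=7$ and the relation $\rho'_\ell = \rho_\ell \left(\frac{\cdot}{105}\right) = \rho^7_{2,\ell}\chi_{\cyc,\ell}^{4}\left(\frac{\cdot}{105}\right)$, one obtains, for every $p > 7$ with $p\neq \ell$,
\begin{equation*}
\Tr(\rho'_\ell(\Frob_p)) = \left(\frac{p}{105}\right)\frac{-m^7_2(p)-1}{p^4}.
\end{equation*}
By the compatibility of the system $\{\rho'_\ell\}$ established in Corollary \ref{c:unram}, the right-hand side is a rational number depending only on $p$, not on $\ell$. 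Hence the finite-image assumption forces $\left|\left(\frac{p}{105}\right)\frac{-m^7_2(p)-1}{p^4}\right| \leq 3$ for every $p \notin \{3,5,7,\ell\}$.

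To reach a contradiction it suffices to exhibit a single small prime $p$ for which this inequality fails. I would follow the same strategy used in Lemma \ref{l:irred}: inspect the values of $m^7_2(p)$ in \cite[Table 2.1]{Evans} for $p = 11$ and $p = 13$, and verify directly that at least one of them gives $|m^7_2(p)+1| > 3p^4$. Since $\ell > 7$ can equal at most one of $11$ or $13$, the remaining prime produces the contradiction, completing the proof.

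The main obstacle is purely numerical: one must be certain that the tabulated values of $m^7_2(11)$ and $m^7_2(13)$ violate the bound $|m^7_2(p)+1| \leq 3p^4$. These are honest finite exponential sums, evaluable exactly, and the bound $3p^4$ is small compared with the expected size $|m^7_2(p)| \lesssim 128\, p^{9/2}$ coming from Deligne's estimate $|\Kl_2(p;a)| \leq 2\sqrt p$; but this heuristic does not replace the concrete check, which is the point where the argument depends on Evans's table.
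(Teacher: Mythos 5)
There is a genuine gap: the numerical test you propose can never be violated, so no contradiction can come out of it. Your trace identity is correct, but the bound $|\Tr(\rho'_{\ell}(\Frob_p))|\leq 3$ holds \emph{unconditionally}, not only under the finite-image hypothesis. Indeed $\rho'_{\ell}$ is a quadratic twist of $\rho^{7}_{2,\ell}\chi_{\cyc,\ell}^{4}$, and by Corollary \ref{c:unram} the eigenvalues of $\rho^{7}_{2,\ell}(\Frob_p)$ are $p$-Weil numbers of weight $8$; after the twist by $\chi_{\cyc,\ell}^{4}$ they become Weil numbers of weight $0$, i.e.\ all their archimedean absolute values equal $1$. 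Equivalently, $M^{7}_{2,!*,\FF_p}$ is pure of weight $8$ and $3$-dimensional, so $|m^{7}_{2}(p)+1|\leq 3p^{4}$ for every good $p$. Hence for $p=11,13$ (and every unramified $p$) the quantity $\left(\frac{p}{105}\right)\frac{-m^{7}_{2}(p)-1}{p^{4}}$ automatically lies in $[-3,3]$ (in fact in $[-1,3]$, since $\rho'_{\ell}(\Frob_p)\in\SO_{3}$ has eigenvalues $1,\alpha,\alpha^{-1}$), and Evans's table cannot exhibit the violation you are hoping for. The heuristic comparison with $128\,p^{9/2}$ is beside the point: it is purity, not the trivial termwise bound, that controls the size of $m^{7}_{2}(p)+1$, and it makes your criterion vacuous.

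Since boundedness of traces cannot distinguish finite from infinite image here, a finer argument is required, and this is what the paper does: a finite subgroup of $\SO_{3}(\Qlbar)\cong\PGL_{2}(\Qlbar)$ must be cyclic, dihedral, or one of $A_{4}$, $S_{4}$, $A_{5}$, and each case is excluded separately. The cyclic case contradicts absolute irreducibility (Lemma \ref{l:irred}); in the dihedral case the image surjects onto $\{\pm1\}$ via a quadratic character unramified outside $3,5,7$, hence equal to $\left(\frac{\cdot}{N}\right)$ for some $N\mid 105$, which forces $\left(\frac{p}{105}\right)\frac{-m^{7}_{2}(p)-1}{p^{4}}=-1$ whenever $\left(\frac{p}{N}\right)=-1$, contradicted by the tabulated values; and if the image were $A_{4}$, $S_{4}$ or $A_{5}$, the Frobenius traces could take at most $5$ distinct values, whereas the table shows more than $5$ distinct values of the normalized moment. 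If you want to salvage a numerics-based proof, it must be of this kind (matching against the finitely many possible trace spectra or characters of the candidate finite groups), not a size estimate.
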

\begin{proof} We need to eliminate the possibility that $\Im(\rho'_{\ell})$ is cyclic, dihedral, $A_{4}, S_{4}$ or $A_{5}$. Since $\rho'_{\ell}$ fits into a Frobenius-compatible system, it is unramified at $\ell$. 

By Lemma \ref{l:irred},  the image of $\rho'_{\ell}$ cannot be cyclic.

Suppose $\Im(\rho'_{\ell})=D_{2n}$ then we have a surjection $\ep:\GQ\surj D_{2n}\surj \{\pm1\}$ which is unramified away from $3,5,7$. Therefore for some $N|105$ we have $\ep=\left(\frac{\cdot}{N}\right)$. For those prime $p$ such that $\left(\frac{p}{N}\right)=-1$, $\Frob_{p}$ lies in the non-neutral component of $\Og_{2}\subset \SO_{3}\cong\PGL_{2}$, hence has trace $-1$. Therefore, whenever $\left(\frac{p}{N}\right)=-1$, we must have
\begin{equation*}
\left(\frac{p}{105}\right)\frac{-m^{7}_{2}(p)-1}{p^{4}}=-1.
\end{equation*}
Checking all possible $N$ and numerics of $m^{7}_{2}(p)$ from \cite[Table 2.1]{Evans} we can eliminate this possibility.

Suppose $\Im(\rho'_{\ell})=A_{4}, S_{4}$ or $A_{5}$, then $\Tr(\rho'_{\ell}(\Frob_{p}))$ can only take at most 5 different values. However, numerics from \cite[Table 2.1]{Evans} shows that $\left(\frac{p}{105}\right)\frac{-m^{7}_{2}(p)-1}{p^{4}}$ takes more than 5 values for varying $p$.
\end{proof}

\begin{cor}\label{c:distinct HT} The Hodge-Tate weights of $\rho'_{\ell}$ are not all zero.
\end{cor}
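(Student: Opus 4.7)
The plan is to rule out $k=0$ by combining local $\ell$-adic Hodge theory at $\ell$ with the two preceding lemmas. Suppose for contradiction that $k=0$, so that all Hodge-Tate weights of $\rho'_\ell|_{\GQl}$ are zero. The first step is to invoke Sen's theorem: a Hodge-Tate representation of $\GQl$ whose Hodge-Tate cocharacter vanishes is potentially unramified, so $\rho'_\ell(\calI_\ell)$ is finite. Combined with the observation just before the corollary that $\rho'_\ell(\calI_p)$ has order $2$ for each $p\in\{3,5,7\}$, and with $\rho'_\ell$ being unramified outside $\{3,5,7,\ell\}$, the image of every inertia subgroup of $\GQ$ under $\rho'_\ell$ is finite.

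The second step is to analyze the Zariski closure $H_\ell\subset\SO_3$ of $\rho'_\ell(\GQ)$. By Lemma \ref{l:irred}, $H_\ell$ acts absolutely irreducibly on $\Ql^3$. The proper positive-dimensional algebraic subgroups of $\SO_3\cong\PGL_2$ (tori, unipotent radicals, Borels) all stabilize a point or a pair of points on $\PP^1$, hence act reducibly on the $3$-dimensional adjoint representation. So the algebraic subgroups of $\SO_3$ acting irreducibly are only the finite groups $A_4,S_4,A_5$ and the full group $\SO_3$ itself. Lemma \ref{l:not finite} rules out the finite case, so $H_\ell=\SO_3$.

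The final step, which I expect to be the main obstacle, is to derive a contradiction from $H_\ell=\SO_3$ coupled with finiteness of all inertia images. The cleanest way is to invoke a global form of Bogomolov's theorem: for a continuous semisimple $\ell$-adic Galois representation of $\GQ$ ramified only at a finite set $S$ of primes, the Lie algebra of the Zariski closure of the image is algebraically generated by the Lie algebras of the images of the inertia groups for $v\in S\cup\{\ell\}$; since each of these is trivial, the Lie algebra of $H_\ell$ vanishes, contradicting $H_\ell=\SO_3$. An alternative route that avoids citing this global principle is to combine purity of weight $0$, Frobenius compatibility of the integer characteristic polynomials (Corollary \ref{c:unram}), and $\ell$-adic Hodge theory with vanishing HT weights, in order to force the eigenvalues of $\rho'_\ell(\Frob_p)$ for good $p$ to be roots of unity of bounded order (the degree being bounded by the orthogonal structure); Chebotarev density then gives $\rho'_\ell(\GQ)$ bounded exponent, which for a compact $\ell$-adic Lie subgroup of $\SO_3(\Ql)$ implies finiteness, again contradicting Lemma \ref{l:not finite}.
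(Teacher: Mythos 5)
Your steps 1 and 2 are fine: Sen's theorem does give finiteness of $\rho'_{\ell}(\calI_{\ell})$ when all Hodge--Tate weights vanish, and the only Zariski-closed subgroups of $\SO_{3}\cong\PGL_{2}$ acting irreducibly in dimension $3$ are $A_{4},S_{4},A_{5}$ and $\SO_{3}$ itself. But step 3 is where the proof actually lives, and neither of your routes closes it. The ``global Bogomolov'' principle you invoke --- that the Lie algebra of the Zariski closure of the image is generated by the Lie algebras of the inertia images at the primes of $S\cup\{\ell\}$ --- is not a theorem. Applied to a representation whose inertia images are all finite it would say the global image is finite, which is a form of the unramified Fontaine--Mazur conjecture and is open; Bogomolov's actual theorem concerns homotheties in the images of Tate modules of abelian varieties and gives nothing of this sort. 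The obstruction is genuine: after passing to a finite level $K_{m_{0}}$ killing all the (finite, torsion) inertia images, the remaining tower is everywhere unramified, and infinite everywhere-unramified towers of number fields do exist (Golod--Shafarevich); what rules them out here must be an arithmetic discriminant bound, not a group-theoretic generation principle.

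Your alternative route is closer in spirit to a correct argument but has two gaps as written. First, ``$\ell$-adic Hodge theory with vanishing HT weights'' applied to $\rho'_{\ell}$ constrains only the prime $\ell$; it says nothing about the $p$-adic valuations of the eigenvalues of $\rho'_{\ell}(\Frob_{p})$ for $p\neq\ell$, and without knowing these normalized eigenvalues are $p$-adic units you cannot invoke Kronecker (a priori the weight-$8$ eigenvalues on $M^{7}_{2,\ell}$ could have slopes $(0,4,8)$, and dividing by $p^{4}$ then destroys integrality). To repair this you must work at $p$: use the $\ell$-independence of the Hodge--Tate weights (Proposition \ref{p:mot dn}(2)) and crystalline $p$-adic Hodge theory for the member of the compatible family at $p$ (Newton above Hodge), together with Corollary \ref{c:unram}. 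Second, Chebotarev controls only characteristic polynomials, so you get that every element of the image has eigenvalues which are roots of unity of bounded order, not that the group has bounded exponent (non-semisimple elements have infinite order); you then need your step 2 again --- Zariski density forces an infinite compact image to be open in $\PGL_{2}(\Ql)$, since its Lie algebra is an ideal of $\sl_{2}$ --- to reach a contradiction with Lemma \ref{l:not finite}. The paper's own proof is much shorter and avoids all of this: it chooses $\ell$ so that $\rho'_{\ell}$ is crystalline, where vanishing HT weights give genuine (not merely potential) unramifiedness at $\ell$; then the fields cut out by the mod $\ell^{m}$ reductions are ramified only at $3,5,7$ with inertia of order two, hence have bounded root discriminant, so by a Hermite--Minkowski finiteness argument the tower stabilizes, the image is finite, and Lemma \ref{l:not finite} gives the contradiction.
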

\begin{proof}
Since $\rho'_{\ell}$ is a subquotient of the cohomology of some fixed smooth projective variety $X$ over $\QQ$ (independent of $\ell$) by Theorem \ref{th:main}\eqref{motivic}, we may take $\ell$ such that $\rho'_{\ell}$ is crystalline. If the Hodge-Tate weights of $\rho'_{\ell}$ are all zero, it should be unramified at $\ell$. Then we know that $\rho'_{\ell}$ is only ramified at $3,5$ and $7$ with inertial image of order two. Hence for each $m$ the image of the reduction $\GQ\to\PGL_{2}(\ZZ/\ell^{m})$ is a finite Galois group $\Gal(K_{m}/\QQ)$ for $K_{m}$ of bounded discriminant independent of $m$. The inductive system $\varinjlim_{m}K_{m}$ should then stabilize and $\rho'_{\ell}$ must have finite image. But this is impossible by Lemma \ref{l:not finite}.
\end{proof}

\begin{cor}\label{c:odd} Let $c$ be a complex conjugation in $\GQ$, then $\rho'_{\ell}(c)\in\PGL_{2}(\Qlbar)$ is conjugate to $\diag(1,-1)$.
\end{cor}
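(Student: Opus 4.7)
An element of $\SO_3(\Qlbar)$ of order dividing $2$ has determinant $+1$ and eigenvalues in $\{\pm 1\}$, so it is either the identity (eigenvalues $(1,1,1)$, trace $3$) or conjugate to $\diag(1,-1,-1)$ (eigenvalues $(1,-1,-1)$, trace $-1$); the latter corresponds in $\PGL_2$ to the conjugacy class of $\diag(1,-1)$ via the isomorphism $\SO_3 \cong \PGL_2$ (compute the adjoint action of $\diag(1,-1)$ on $\sl_2$). The assertion is therefore equivalent to showing $\Tr(\rho'_\ell(c), M_\ell) \neq 3$.

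I would compute $\Tr(\rho'_\ell(c))$ via the Betti/Hodge realization. By Theorem \ref{th:main}\eqref{motivic} and the variety $Z_1$ constructed in Proposition \ref{p:mot dn}, the Galois representation $\rho'_\ell$ is cut out by a specific subquotient of cohomology of $\QQ$-varieties. After the Tate twist normalizing the weight to $0$ (already absorbed in the definition of $\rho'_\ell$), this subquotient also furnishes a $3$-dimensional $\QQ$-Hodge structure $V_B$ underlying the representation (Proposition \ref{p:mot dn}(2)). By the orthogonal self-duality of $\rho'_\ell$ the Hodge-Tate multiset is symmetric about $0$, so, combining Corollary \ref{c:distinct HT} with the constraint that the HT weights lie in $\{-3,\ldots,3\}$ (cf.\ the discussion preceding \S\ref{ss:Sym7}), the HT weights are $(-k,0,k)$ for a single $k \in \{1,2,3\}$. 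Correspondingly $V_B \otimes \CC$ has Hodge decomposition $H^{k,-k} \oplus H^{0,0} \oplus H^{-k,k}$ with each piece one-dimensional.

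Since complex conjugation on $X(\CC)$ is anti-holomorphic, the induced involution $F_\infty$ on $V_B$ exchanges $H^{p,q}$ with $H^{q,p}$. Hence $F_\infty$ interchanges the two lines $H^{k,-k}$ and $H^{-k,k}$ (contributing $0$ to the trace) and acts on the line $H^{0,0}$ by $\pm 1$, so $\Tr(F_\infty, V_B) = \pm 1$. Fixing an embedding $\Qbar \incl \CC$, the Artin comparison isomorphism $V_B \otimes_\QQ \Qlbar \cong M_\ell$ is equivariant for the respective actions of $c$, so
\begin{equation*}
\Tr(\rho'_\ell(c), M_\ell) = \Tr(F_\infty, V_B) = \pm 1 \ne 3,
\end{equation*}
and by the dichotomy of the first paragraph we conclude that $\rho'_\ell(c)$ is conjugate to $\diag(1,-1)$ in $\PGL_2(\Qlbar)$, as desired.

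The main technical point is the compatibility of the $F_\infty$-action on $V_B$ with the Galois action on $M_\ell$ via Artin comparison, applied to the specific subquotient singled out in Proposition \ref{p:mot dn}. This follows because the subquotient is defined as the image (resp.\ kernel) of $\QQ$-linear maps between compactly-supported cohomologies of $\QQ$-varieties (namely the specialization map $\Sp_{\Hod}$ and its $\ell$-adic analogue), operations which commute with Artin comparison and preserve the action of complex conjugation.
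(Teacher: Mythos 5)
Your proof is correct and follows essentially the same route as the paper: both reduce the claim to showing $\rho'_{\ell}(c)\neq 1$, then use the $\QQ$-Hodge structure from Proposition \ref{p:mot dn}(2) together with Corollary \ref{c:distinct HT} (so $k>0$) and the compatibility of complex conjugation with the Betti--\'etale comparison, concluding since $F_\infty$ swaps the two one-dimensional extreme Hodge pieces. The only cosmetic difference is that you normalize the weight to $0$ and phrase the conclusion via the trace, whereas the paper keeps the untwisted weights $(4-k,4,4+k)$ and argues directly that $\rho_\ell(c)\neq 1$.
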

\begin{proof} We only need to show that $\rho'_{\ell}(c)=\rho_{\ell}(c)\neq1$.

Consider the $\QQ$-Hodge structure $M^{7}_{2,\Hod}$ defined in Proposition \ref{p:mot dn}(2). By the comparison of singular and \'etale cohomology, there is an isomorphism $M^{7}_{2,\Hod}\otimes\Ql\isom M^{7}_{2,\ell}$ which intertwines the action of the complex conjugation on $M^{7}_{2,\Hod}$ (coming from the complex conjugation acting on the $\CC$-points of varieties defined over $\RR$), and the action of $c$ on $M^{7}_{2,\ell}$. By Proposition \ref{p:mot dn}(2), the Hodge-Tate weights of $M^{7}_{2,\Hod}$ are also $(4-k,4,4+k)$, i.e.,  the Hodge decomposition for $M^{7}_{2,\Hod}\otimes_{\QQ}\CC$ reads $H^{4+k,4-k}\oplus H^{4,4}\oplus H^{4-k,4+k}$, with each summand one-dimensional. We know from Corollary \ref{c:distinct HT} that $k>0$.

Since the Galois group $\Gal(\CC/\RR)$ acts on $M^{7}_{2,\Hod}\otimes_{\QQ}\CC$ (obtained by extending scalars from its action on $M^{7}_{2,\Hod}$) by permuting $H^{4+k,4-k}$ and $H^{4-k,4+k}$, we conclude that $\rho_{\ell}(c)\neq1$.
\end{proof}

\begin{lemma}\label{l:lifting} For sufficiently large prime $\ell$, there exists a continuous Galois representation
\begin{equation*}
\wt{\rho}_{\ell}:\GQ\to\GL_{2}(\Qlbar).
\end{equation*}
lifting the Galois representation $\rho'_{\ell}:\GQ\to\PGL_{2}(\Ql)\incl\PGL_{2}(\Qlbar)$.
Moreover, the lifting $\wt{\rho}_{\ell}$ may be chosen to satisfy the following properties:
\begin{enumerate}
\item It is unramified away from $3,5,7$ and $\ell$;
\item When $p=3$ or $7$, $\wt{\rho}_{\ell}|_{\calI_{p}}$ is tame, and maps a generator of $\calI^{t}_{p}$ to a matrix conjugate to $\diag(1,-1)$ in $\GL_{2}(\Qlbar)$;
\item When $p=5$, $\wt{\rho}_{\ell}|_{\calI_{p}}$ is tame, and maps a generator of $\calI^{t}_{p}$ to a matrix conjugate to $\diag(x,-x)$ in $\GL_{2}(\Qlbar)$,  where $x$ is a primitive $8\nth$ root of unity;
\item It is crystalline at $\ell$ with Hodge-Tate weights $(0,-k)$.
\end{enumerate}
\end{lemma}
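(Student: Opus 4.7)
The plan is to produce some continuous lift of $\rho'_\ell$ via a vanishing theorem, then twist by a carefully chosen Hecke character to achieve the prescribed local conditions at $3, 5, 7$ and $\ell$.

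First, I would invoke Tate's theorem $H^2(\GQ, \Qlbar^\times) = 0$ to obtain a continuous homomorphism lift $\wt\rho_0 : \GQ \to \GL_2(\Qlbar)$ of $\rho'_\ell$. By compactness, its image lies in $\GL_2(E)$ for some finite extension $E/\Ql$, and $\wt\rho_0$ is unramified outside a finite set $T \supset \{3, 5, 7, \ell\}$. Since the set of lifts forms a torsor over the group of continuous characters $\GQ \to \Qlbar^\times$, it suffices to find a single character $\chi$ such that $\wt\rho_0 \otimes \chi$ satisfies properties (1)--(4).

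Next, I would translate the four properties into local conditions on $\chi$. At a prime $p \in T \setminus \{3, 5, 7, \ell\}$, the restriction $\wt\rho_0|_{\calI_p}$ is necessarily a scalar-valued character $\eta_p$ (because $\rho'_\ell$ is unramified there), so the condition is $\chi|_{\calI_p} = \eta_p^{-1}$. For $p \in \{3, 7\}$, the projective inertia image is cyclic of order $2$, generated by an element whose preimage in $\GL_2$ consists of scalar multiples of $\diag(1, -1)$; the prescription that $\wt\rho_\ell$ send a topological generator of $\calI^t_p$ to an element conjugate to $\diag(1, -1)$ determines $\chi_p|_{\calI^t_p}$ up to a sign. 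For $p = 5$, the analogous analysis instead determines $\chi_5|_{\calI^t_5}$ up to $8$th roots of unity. For $p = \ell$, using Theorem~\ref{th:main}\eqref{motivic} together with Fontaine-Laffaille theory (applicable once $\ell > \dim X$), the representation $\rho'_\ell|_{G_{\QQ_\ell}}$ is crystalline with Hodge-Tate weights $\{-k, 0, k\}$; consequently $\wt\rho_0|_{G_{\QQ_\ell}}$ is de Rham with Hodge-Tate weights $(a, a-k)$ for some $a \in \ZZ$, and we require $\chi|_{G_{\QQ_\ell}}$ to be crystalline of Hodge-Tate weight $-a$, which is an integer power of $\chi_{\cyc,\ell}$ times a finite-order character.

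Finally, I would assemble the local $\chi_p$ into a global Hecke character via class field theory. Since $\Qlbar^\times$ is divisible, the classical Grunwald-Wang exception does not intervene, so any family of local characters at a finite set of places extends to a global continuous Hecke character provided the obvious product-formula compatibility is met. By an auxiliary unramified twist we may ensure the resulting $\chi$ has ramification contained in $\{3, 5, 7, \ell\}$. The hard part will be verifying this simultaneous compatibility of all local conditions, especially reconciling the crystalline constraint at $\ell$ with the determinant data already encoded in $\wt\rho_0$ at $3, 5, 7$: concretely, one must ensure that the integer $a$ and the finite-order twists at $\{3,5,7\}$ combine consistently with $\det(\wt\rho_0 \otimes \chi) = (\det \wt\rho_0) \cdot \chi^2$. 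This bookkeeping is handled by tracking determinants against the expected value $\chi_{\cyc,\ell}^{-k}$ times a finite-order character; the underlying reason the construction succeeds is that the target group $\Qlbar^\times$ is abelian and divisible, so the global obstruction vanishes.
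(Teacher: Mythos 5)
Your overall strategy (produce one lift, then twist by a global character assembled from local characters) is the same as the paper's, and several of your local steps are fine: the torsor observation, the treatment of the auxiliary primes in $T\setminus\{3,5,7,\ell\}$, and the tame analysis at $3,7$ (and, modulo the unproved dihedral computation that forces the primitive $8$th root of unity, at $5$). Your worry about Grunwald--Wang is also a non-issue: since $\GQ^{\mathrm{ab}}\cong\prod_{p}\ZZ_{p}^{\times}$, any finite collection of characters of the $\ZZ_{p}^{\times}$'s glues to a continuous character of $\GQ$ with no compatibility condition, which is exactly how the paper assembles the twist.

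The genuine gap is at $\ell$. From Tate's vanishing theorem you only get \emph{some} continuous lift $\wt{\rho}_{0}$, and your inference ``$\rho'_{\ell}|_{\GQl}$ is crystalline, consequently $\wt{\rho}_{0}|_{\GQl}$ is de Rham with Hodge--Tate weights $(a,a-k)$, $a\in\ZZ$'' is false: two lifts of the same projective representation differ by an arbitrary continuous character of $\GQ$, and such a character can fail to be Hodge--Tate at $\ell$ (e.g.\ the character cut out from $\Gal(\QQ(\mu_{\ell^{\infty}})/\QQ)\cong\ZZ_{\ell}^{\times}$ raised to a non-integral $\ell$-adic power), so a Tate-theorem lift carries no $p$-adic Hodge-theoretic control whatsoever. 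Moreover, to repair this by twisting you must already know that $\rho'_{\ell}|_{\GQl}$ admits a crystalline (or at least de Rham) lift $\phi$, so that $\wt{\rho}_{0}|_{\GQl}=\phi\cdot d_{\ell}$ and the inertial part of $d_{\ell}$ can be globalized and removed; the existence of such a $\phi$ is precisely the nontrivial input your argument omits. The paper supplies it by starting not from Tate's theorem but from Patrikis's geometric lifting theorem (\cite[Corollary 13.0.15]{Pat}), which gives a lift that is almost everywhere unramified and de Rham at $\ell$, and then upgrading the local de Rham lift to a crystalline one via Conrad \cite[Corollary 6.7]{Conrad} before normalizing the Hodge--Tate weights to $(0,-k)$ by a Tate twist. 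Without one of these inputs, property (4) cannot be achieved by a character twist, so as written your construction does not prove the lemma.
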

\begin{proof}  
By a result of Patrikis \cite[Corollary 13.0.15]{Pat}, there exists a lifting $\wt{\rho}^{\dagger}$ which is ``geometric'' in the sense of Fontaine-Mazur, namely it is almost everywhere unramified and de Rham at $\ell$. Below we will define continuous characters $c_{p}:\ZZ_{p}^{\times}\to\Qlbar^{\times}$, one for each prime $p$, and trivial for almost all $p$. Ultimately the product of $\{c_{p}\}$ will give a character $c$ of $\GQ$ such that $\wt{\rho}^{\dagger}\otimes c^{-1}$ satisfies all the requirement in the lemma.

For $p\neq 3,5, 7$ or $\ell$, the restriction $\wt{\rho}^{\dagger}|_{\GQp}$ is abelian because $\wt{\rho}^{\dagger}|_{\calI_{p}}$ maps to scalar matrices in $\GL_{2}(E_{\l})$. Therefore $\wt{\rho}^{\dagger}|_{\calI_{p}}$ corresponds to a character $c_{p}:\ZZ_{p}^{\times}\to\Qlbar^{\times}$ by local class field theory. Since $\wt{\rho}^{\dagger}$ is ramified only at finitely many places, $c_{p}$ is trivial for almost all $p$.

For $p=3$ or $7$, $\rho'_{\ell}|_{\GQp}$ is of abelian type (i.e.,  its image lies in a torus), hence $\wt{\rho}^{\dagger}(\GQp)$ is also abelian and can be conjugated to $\diag(c_{p}, c'_{p})$. Here $c_{p}, c'_{p}:\GQp\to\Qlbar^{\times}$ are characters such that $c_{p}/c'_{p}$ restricted to $\calI_{p}$ has order two. 

For $p=5$,  $\rho'_{\ell}|_{\GQp}$ is of dihedral type, the restriction $\wt{\rho}^{\dagger}|_{\Gal(\Qpbar/\QQ_{p^{2}})}$
 is still abelian, and can be conjugated to $\diag(d_{1},d_{2})$ where $d_{1},d_{2}:\Gal(\Qpbar/\QQ_{p^{2}})\to\Qlbar^{\times}$ are characters. One easily sees that $d_{2}=d_{1}^{\sigma}$ where $\sigma\in\Gal(\QQ_{p^{2}}/\QQ_{p})$ is the nontrivial element. Also $d_{1}/d_{2}=d_{1}/d_{1}^{\sigma}$ when restricted to $\calI_{p}$ has order exactly two. Using class field theory we may write $d_{1}|_{\calI_{p}}:\ZZ^{\times}_{p^{2}}\to\Qlbar^{\times}$. Now $d_{1}/d_{1}^{\sigma}$ is trivial on $(\ZZ^{\times}_{p^{2}})^{2}$, i.e., $d_{1}|_{(\ZZ^{\times}_{p^{2}})^{2}}$ is invariant under $\Gal(\QQ_{p^{2}}/\QQ_{p})$. Identify $\ZZ^{\times}_{p^{2}}$ with $(1+p\ZZ_{p^{2}})\times\FF^{\times}_{p^{2}}$, we conclude that $d_{1}$ factors through
\begin{equation*}
\ZZ^{\times}_{p^{2}}\cong(1+p\ZZ_{p^{2}})\times\FF^{\times}_{p^{2}}\xrightarrow{\Nm, (-)^{(p+1)/2}}(1+p\ZZ_{p})\times \mu_{2(p-1)}(\Fpbar).
\end{equation*}
We denote the resulting character $(1+p\ZZ_{p})\times \mu_{2(p-1)}(\Fpbar)\to\Qlbar^{\times}$ by $\overline{d}$.
Since $d_{1}/d_{1}^{\sigma}$ is nontrivial, $d_{1}$ does not factor further through $(1+p\ZZ_{p})\times \FF_{p}^{\times}=\ZZ_{p}^{\times}$. Then there exists a character $c_{p}:\ZZ^{\times}_{p}=(1+p\ZZ_{p})\times \FF_{p}^{\times}\to\Qlbar^{\times}$ which is equal to $\overline{d}$ on  $(1+p\ZZ_{p})$. Then $d_{1}c_{p}^{-1}$ (which is now tame) maps a generator of $\calI^{t}_{p}$ to a primitive $2(p-1)=8\nth$ root of unity.

We suppose $\ell$ is sufficiently large so that $\rho'_{\ell}$ is crystalline. At $\ell$, the Hodge-Tate weights are $(a,a-k)$ for some $a\in\ZZ$. Since $\rho'_{\ell}|_{\GQl}$ admits a de Rham lifting, it also admits a crystalline lifting $\phi:\GQl\to\GL_{2}(\Qlbar)$, by a result of Conrad \cite[Corollary 6.7]{Conrad}. By making a Tate twist we may assume $\phi$ has Hodge-Tate weights $(0,-k)$. We then have $\wt{\rho}^{\dagger}_{\ell}|_{\GQl}=\phi\cdot d_{\ell}$ for some continuous character $d_{\ell}:\GQl^{ab}\to\Qlbar^{\times}$. The restriction of $d_{\ell}$ to the inertia group then gives a character $c_{\ell}:\ZZ_{\ell}^{\times}\to\Qlbar^{\times}$. 

Let $c:\GQ^{ab}\cong\prod_{p}\ZZ_{p}^{\times}\to\Qlbar^{\times}$ be the product of the $c_{p}$ defined above for all primes $p$. This makes sense because $c_{p}$ is trivial for almost all $p$. Also $c$ is continuous since each $c_{p}$ is. Then $\wt{\rho}_{\ell}:=\wt{\rho}^{\dagger}\otimes c^{-1}$ satisfies all the requirements.
\end{proof}

\subsubsection{Proof of Theorem \ref{th:Sym7}}\label{pf Sym7}
Fix a lifting $\wt{\rho}_{\ell}$ as in Lemma \ref{l:lifting} ($\ell$ is sufficiently large). 

The Artin conductor of $\wt{\rho}_{\ell}$ at $3,5$ and $7$ are $1,2$ and $1$ respectively according to the description of local behavior of $\wt{\rho}_{\ell}$ in Lemma \ref{l:lifting}.

From the description on the behavior of $\wt{\rho}_{\ell}$ at $p=3,5$ and $7$ again, we see that under $\wt{\rho}_{\ell}$ a generator of $\calI_{3}$ or $\calI_{7}$ gets mapped to $\diag(1,-1)$, with determinant $-1$. A generator of $\calI_{5}$ gets mapped to $\diag(x,-x)$, where $x$ is a primitive $8\nth$ root of unity, with determinant a primitive $4\nth$ root of unity. Therefore
\begin{equation*}
\det(\wt{\rho}_{\ell})\cong\left(\frac{\cdot}{21}\right)\ep_{5}\ep_{\ell}\chi_{\cyc,\ell}^{-k}.
\end{equation*}
Here $\ep_{5}:\GQ\surj(\ZZ/5\ZZ)^{\times}\isom\mu_{4}(\Qlbar)$ is a quartic Dirichlet character of conductor $5$; $\ep_{\ell}$ is a Dirichlet character of conductor a power of $\ell$.

Let $\rhowtbar_{\ell}$ be the reduction mod $\ell$ of $\wt{\rho}_{\ell}$, i.e., a continuous representation $\GQ\to\GL_{2}(\overline{\FF}_{\ell})$. Since $\wt{\rho}_{\ell}$ is crystalline at $\ell$ with Hodge-Tate weights $(0,-k)$, and $\ell$ is assumed to be sufficiently large, Fontaine-Laffaille theory implies that the Serre weight of $\rhowtbar_{\ell}$ is $k+1$ (as in the proof of Theorem \ref{th:Sym8}). Therefore the $\ell$-part of $\det(\rhowtbar_{\ell})$ should be $\omega_{\ell}^{-k}$ here $\omega_{\ell}$ is $\chi_{\cyc,\ell}$ mod $\ell$. Hence $\ep_{\ell}$ is trivial modulo $\ell$, and therefore admitting a square root $\ep^{1/2}_{\ell}$. Changing $\wt{\rho}_{\ell}$ to $\wt{\rho}_{\ell}\ep^{-1/2}_{\ell}$, our new lifting, which we still denote by $\wt{\rho}_{\ell}$ (and still satisfying the properties of Lemma \ref{l:lifting}), satisfies
\begin{equation}\label{det lifting}
\det(\wt{\rho}_{\ell})\cong\left(\frac{\cdot}{21}\right)\ep_{5}\chi_{\cyc,\ell}^{-k}.
\end{equation}

By Corollary \ref{c:odd}, a complex conjugation $c$ gets mapped to $\diag(1,-1)\in\PGL_{2}(\Qlbar)$ under $\rho'_{\ell}$, therefore $\wt{\rho}_{\ell}$ is odd. Since $\left(\frac{\cdot}{21}\right)\ep_{5}$ takes value $-1$ on $c$ as well, we conclude from \eqref{det lifting} that $k$ is even. Since $1\leq k\leq 3$, we get $k=2$. 

Summarizing our knowledge about $\rhowtbar_{\ell}$ for large $\ell$ at this point. It is absolutely irreducible (by Lemma \ref{l:irred}), odd, with Artin conductor $525=3\cdot 5^{2}\cdot 7$ (away from $\ell$ part), Serre weight equal to $k+1=3$ and determinant given by \eqref{det lifting} modulo $\ell$. 

We now appeal to Serre's argument for modularity \cite[\S4.8]{Serre} again. Although we do not know whether we can choose the liftings $\{\wt{\rho}_{\ell}\}$ to form a Frobenius-compatible family, we have the following weaker property.  The lifting $\wt{\rho}_{\ell}$ satisfying the condition \ref{det lifting} is unique up to twisting by a quadratic Dirichlet character. Hence for $p\neq 3,5$ or $7$, $\Tr(\wt{\rho}_{\ell}(\Frob_{p}))^{2}$ is independent of $\ell$. This property will play the role of Frobenius-compatibility in the argument below. Therefore, by Serre's conjecture, for sufficiently large $\ell$, $\rhowtbar_{\ell}$ comes from the mod $\ell$ Galois representation of a Hecke eigenform of weight 3, level 525 and nebentypus $\ep_{f}=\left(\frac{\cdot}{21}\right)\ep_{5}$. Since there are only finitely many possibilities for such eigenforms, we conclude that there is such a newform $f$ such that $\rhobar_{f,\ell}\cong\rhowtbar_{\ell}$ for infinitely many $\ell$. Therefore for each prime $p\neq 3,5$ or $7$, $\Tr(\wt{\rho}_{\ell}(\Frob_{p}))^{2}$ is congruent mod $\ell$ to the Fourier coefficient $a_{f}(p)^{2}$ for infinitely many $\ell$. Hence for each  prime $p\neq 3,5$ or $7$, we have
\begin{equation*}
\Tr(\wt{\rho}_{\ell}(\Frob_{p}))^{2}=a_{f}(p)^{2}.
\end{equation*}
This means that
\begin{equation}\label{trace rho'}
\Tr(\rho'_{\ell}(\Frob_{p}))=\frac{\Tr(\wt{\rho}_{\ell}(\Frob_{p}))^{2}}{\det(\wt{\rho}_{\ell}(\Frob_{p}))}-1=a_{f}(p)^{2}p^{-2}\ep_{f}(p)^{-1}-1.
\end{equation}
By Theorem \ref{th:intro}, we see that
\begin{equation*}
-m^{7}_{2}(p)=1+p^{4}\Tr(\rho_{\ell}(\Frob_{p}))=1+p^{4}\left(\frac{p}{105}\right)\Tr(\rho'_{\ell}(\Frob_{p})).
\end{equation*}
Plugging in \eqref{trace rho'}, we get \eqref{m72}.

\appendix

\section{Fourier transform}\label{app}
In the appendix, we collect some facts about homogeneous Fourier transform that we need in the main body of the paper. We follow Laumon's original paper \cite{Laumon}.

\subsection{Fourier-Deligne transform}\label{Fourier-Del} Let $S$ be a scheme of finite type over $k$. Let $\AA$ and $\AA^{\vee}$ both be the affine line $\AA^{1}_{S}$ over $S$. We recall the Fourier-Deligne transform 
\begin{equation*}
\Four_{\psi}: D^{b}_{c}(\AA,\Ql(\mu_{p}))\to D^{b}_{c}(\AA^{\vee},\Ql(\mu_{p})).
\end{equation*}
It is defined as follows. Consider the following diagram
\begin{equation*}
\xymatrix{ & \AA\times_{S}\AA^{\vee}\ar[rr]^{m}\ar[dl]^{\pr}\ar[dr]^{\pr^{\vee}}& & \AA^{1}\\
\AA & & \AA^{\vee}}
\end{equation*}
where $\pr,\pr^{\vee}$ are projections and $m$ is the multiplication map. For $\calF\in D^{b}_{c}(\AA,\Ql(\mu_{p}))$, its Fourier transform is defined as
\begin{equation*}
\Four_{\psi}(\calF)=\pr^{\vee}_{!}(\pr^{*}\calF\otimes m^{*}\AS_{\psi})[1].
\end{equation*}

\subsection{Laumon's homogeneous Fourier transform}\label{ss:HF}
We still let $\AA$ and $\AA^{\vee}$ be affine lines over $S$, except now $S$ is allowed to be any scheme over finite type over $\Zl$. The notation $\Gm$ now means the torus $\GmS$ over $S$, which acts on $\AA$ and  $\AA^{\vee}$. We shall consider the equivariant derived categories $D^b_{\Gm}(\AA,\Ql)$ and $D^b_{\Gm}(\AA^{\vee},\Ql)$. Laumon defines a homogeneous Fourier transform
\begin{equation*}
\HF: D^b_{\Gm}(\AA,\Ql)\to D^b_{\Gm}(\AA^{\vee},\Ql).
\end{equation*}
Its definition uses a similar diagram of stacks over $S$
\begin{equation*}
\xymatrix{ & [\AA/\Gm]\times_{S}[\AA^{\vee}/\Gm]\ar[rr]^{\overline{m}}\ar[dl]^{\bpr}\ar[dr]^{\bpr^{\vee}}& & [\AA^{1}_{S}/\Gm]\\
[\AA/\Gm] & & [\AA^{\vee}/\Gm]}
\end{equation*}
Let $\beta:S=[\Gm/\Gm]\incl[\AA^{1}_{S}/\Gm]$ be the open inclusion, and let $\Psi=\beta_{*}\Ql\in D^{b}_{\Gm}(\AA^{1}_{S},\Ql)$.
For $\calF\in D^{b}_{\Gm}(\AA,\Ql)$, its homogeneous Fourier transform is defined as
\begin{equation*}
\HF(\calF)=\bpr^{\vee}_{!}(\bpr^{*}\calF\otimes \overline{m}^{*}\Psi).
\end{equation*}

We summarize the major properties of the homogeneous Fourier transform.
\begin{theorem}\label{th:Fourier} Let $S$ be a scheme of finite type over $\Zl$. Let $\calF\in D^b_{\Gm}(\AA,\Ql)$.
\begin{enumerate}
\item\label{F:sp} Let $\calF_0$ and $\calF_1$ be restriction of $\calF$ to the sections $0_S$ and $1_S$ of the projection $\AA\to S$ (recall $\AA=\AA^{1}_{S}$). Then there is a specialization morphism
\begin{equation*}
\Sp(\calF):\calF_0\to\calF_1
\end{equation*}
functorial in $\calF$, such that the generic stalk $\HF(\calF)_{\eta}$ of the homogeneous Fourier transform $\HF(\calF)$ (i.e., the restriction of $\HF(\calF)$ to $[\Gm/\Gm]=S$) fits into a distinguished triangle
\begin{equation}\label{sp seq}
\calF_{0}\xrightarrow{\Sp(\calF)}\calF_{1}\to\HF(\calF)_{\eta}\to\calF_{0}[1].
\end{equation}
\item\label{FD=HF} Suppose $S$ is over $\FF_p$, then we have a canonical isomorphism in $D^{b}_{c}(\AA^{\vee},\Ql(\mu_{p}))$
\begin{equation*}
\Four_{\psi}(\calF\otimes\Ql(\mu_{p}))\cong\HF(\calF)\otimes\Ql(\mu_{p}).
\end{equation*}
\item\label{duality} There is an isomorphism of functors:
\begin{equation*}
\HF\circ\DD\cong(\DD\circ\HF)(-1).
\end{equation*}
\item\label{perv} The functor $\HF$ is $t$-exact with respect to the perverse $t$-structure on $D^b_{\Gm}(\AA,\Ql)$.
\item\label{conv} Let $*_{+}$ denote the convolution on $D^{b}_{\Gm}(\AA,\Ql)$ using the additive group structure. Then for $\calF_{1},\calF_{2}\in D^{b}_{\Gm}(\AA,\Ql)$, we have a canonical isomorphism
\begin{equation*}
\HF(\calF_{1}*_{+}\calF_{2})[1]\cong\HF(\calF_{1})\otimes\HF(\calF_{2}).
\end{equation*}
\end{enumerate}
\end{theorem}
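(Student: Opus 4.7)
The plan is to follow Laumon's original treatment \cite{Laumon}, reducing each assertion to a base-change computation on $[\AA^1_S/\Gm]$ and exploiting the structure of the ``universal kernel'' $\Psi = \beta_* \Ql$. The key input is the distinguished triangle
\begin{equation*}
\beta_! \Ql \to \Psi \to i_* i^* \Psi \to \beta_! \Ql[1]
\end{equation*}
on $[\AA^1_S/\Gm]$, where $i$ is the inclusion of the complementary closed stratum (the zero section).

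For \eqref{F:sp}, I would compute $\HF(\calF)_\eta$ directly. Base-changing along the section $1_S: S \to [\GmS/\GmS] \subset [\AA^\vee/\Gm]$ identifies the correspondence with the natural morphism $u: [\AA/\Gm] \to [\AA^1/\Gm]$, so $\HF(\calF)_\eta \cong \bR\Gamma_c([\AA/\Gm], \calF \otimes u^* \Psi)$. Pulling back the triangle above, tensoring with $\calF$, and applying $\bR\Gamma_c$ produces a distinguished triangle whose outer terms compute $\calF_1 \cong \bR\Gamma_c([\GmS/\GmS], \calF|_{\Gm})$ and $\calF_0$ (restriction to the zero section); rotating yields \eqref{sp seq}, with $\Sp(\calF)$ the connecting map, functorial in $\calF$ by construction.

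For \eqref{FD=HF}, over $\FF_p$ both transforms integrate $\calF$ against a rank-one kernel on $\AA \times_S \AA^\vee$: $m^* \AS_\psi$ for $\Four_\psi$ and $\overline{m}^* \Psi$ for $\HF$. On the open stratum $\{xy \neq 0\}$ both pullbacks agree after tensoring with $\Ql(\mu_p)$, and the comparison on the coordinate axes matches the vanishing of $\AS_\psi$-integrals along lines through the origin with the $\Gm$-equivariance of $\overline{m}^* \Psi$; the precise argument is the content of \cite{Laumon}. Parts \eqref{duality}, \eqref{perv}, and \eqref{conv} then follow by formal manipulations from the properties of $\Psi$: Verdier self-duality $\DD \Psi \cong \Psi[2](1)$ together with properness of $\overline{m}$ yield \eqref{duality}; $t$-exactness in \eqref{perv} follows from affineness of $\overline{m}$ (Artin vanishing for upper bounds) combined with \eqref{duality} (for lower bounds by dualizing); and \eqref{conv} reduces, after unraveling the additive convolution $\calF_1 *_+ \calF_2 = a_!(\calF_1 \boxtimes \calF_2)$ via base change through the addition map $a: \AA \times_S \AA \to \AA$, to a multiplicativity identity for $\Psi$ pulled back along the two projections and the product map.

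I expect the main technical obstacle to lie in \eqref{FD=HF}: reconciling the $\Gm$-equivariance intrinsic to $\HF$ with the translation equivariance driving $\Four_\psi$, while correctly tracking the shift $[1]$ and the coefficient extension to $\Ql(\mu_p)$ built into each construction. Once this comparison is pinned down on the stacky level, the remaining pieces of the theorem fall out from standard Fourier-transform yoga, and can indeed be read off from the relevant sections of \cite{Laumon} after translating to the equivariant setup used here.
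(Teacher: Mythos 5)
Your route is the same as the paper's: part \eqref{F:sp} is proved directly from the triangle $\beta_!\Ql\to\Psi\to i_*i^*\Psi$ on $[\AA/\Gm]$, tensored with $\calF$ and pushed down to $S$, while parts \eqref{FD=HF}--\eqref{conv} are quoted from Laumon (Th\'eor\`eme 2.2, Th\'eor\`eme 4.1, Th\'eor\`eme 4.2 and Lemme 1.7 of \cite{Laumon}, respectively), which is exactly what the paper does, so deferring there is legitimate even if some of your sketched mechanisms are shaky as stated (for instance $\overline{m}$ is not proper, and the self-duality of $\Psi$ you invoke for \eqref{duality} is essentially the content of Laumon's theorem rather than an input to it).

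The one substantive point you gloss over is the identification of the closed-stratum term in \eqref{F:sp}. Writing $h:[\AA/\Gm]\to S$ for the structure map and $i:\BB\Gm\incl[\AA/\Gm]$ for the origin, the term contributed by $i_*i^*\Psi$ is $(h\circ i)_!\bigl(i^*\Psi\otimes i^*\calF\bigr)$, a pushforward along $\BB\Gm\to S$; if $i^*\Psi$ were merely the constant sheaf, this would be $\calF_0$ tensored with the compactly supported cohomology of $\BB\Gm$, not $\calF_0$. What makes your assertion ``(restriction to the zero section)'' true is the computation of the stalk of the kernel at the origin, $i^*\beta_*\Ql\cong g_!\Ql[1]$ where $g:S\to\BB\Gm$ is the quotient map (Lemme 1.4(ii) of \cite{Laumon}); by the projection formula this converts the term into $(h\circ i)_!g_!\bigl(g^*i^*\calF\bigr)[1]\cong\calF_0[1]$, and the shift $[1]$ is precisely what makes the rotated triangle come out as $\calF_0\to\calF_1\to\HF(\calF)_\eta\to\calF_0[1]$ with $\Sp(\calF)$ the connecting map of the original triangle. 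This stalk computation is the only non-formal input in \eqref{F:sp}, so you must either cite it or reprove it; the rest of your argument (the base change identifying $\HF(\calF)_\eta$ with $h_!(\calF\otimes u^*\Psi)$ and the identification $h_!(\beta_!\Ql\otimes\calF)\cong\calF_1$ via the projection formula) agrees with the paper.
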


\begin{proof}
(1) Let $h:\AA\to S$ be the structure morphism. By definition and proper base change, we have
\begin{equation}\label{b*}
\HF(\calF)_{\eta}=h_{!}(\beta_{*}\otimes\calF).
\end{equation}
Let $\alpha:\BB\Gm\incl[\AA/\Gm]$ be the closed embedding of the origin. Consider the distinguished triangle in $D^{b}_{\Gm}(\AA)$:
\begin{equation}\label{beta}
\beta_{!}\Ql\to\beta_{*}\Ql\to\alpha_{*}\alpha^{*}\beta_{*}\Ql\to\beta_{!}\Ql[1]
\end{equation}
Also let $g:S\to \BB\Gm$ be the quotient morphism, then by \cite[Lemme 1.4(ii)]{Laumon} we have an isomorphism in $D^{b}_{\Gm}(S,\Ql)$
\begin{equation*}
\alpha^{*}\beta_{*}\Ql\cong  g_{!}\Ql[1].
\end{equation*}
Therefore \eqref{beta}, after rotating the triangle, becomes
\begin{equation*}
\alpha_{*}g_{!}\Ql\to \beta_{!}\Ql\to\beta_{*}\Ql\to\alpha_{*}g_{!}\Ql[1].
\end{equation*}
Tensoring with $\calF$ and applying the functor $h_{!}$, we get
\begin{equation}\label{pre seq}
h_{!}(\alpha_{*}g_{!}\Ql\otimes\calF)\to h_{!}(\beta_{!}\Ql\otimes\calF)\to h_{!}(\beta_{*}\otimes\calF)\to.
\end{equation}
By projection formula
\begin{eqnarray}
\label{a!} h_{!}(\alpha_{*}g_{!}\Ql\otimes\calF)=h_{!}\alpha_{!}g_{!}(g^{*}\alpha^{*}\calF)=\calF_{0};\\
\label{b!} h_{!}(\beta_{!}\Ql\otimes\calF)=h_{!}\beta_{!}(\beta^{*}\calF)=\calF_{1}.
\end{eqnarray}
Plugging \eqref{a!},\eqref{b!} and \eqref{b*} into \eqref{pre seq}, we get the desired triangle \eqref{sp seq}, which also includes the definition of the functorial map $\Sp(\calF):\calF_{0}\to\calF_{1}$ as part of the triangle.

For \eqref{FD=HF}, see \cite[Th\'eor\`eme 2.2]{Laumon}.

For \eqref{duality}, see \cite[Th\'eor\`eme 4.1]{Laumon}.

For \eqref{perv}, see \cite[Th\'eor\`eme 4.2]{Laumon}.

\eqref{conv} is the special case of \cite[Lemme 1.7]{Laumon} in the case of the linear map $\AA^{2}\to\AA^{1}$ given by the addition of two coordinates.
\end{proof}

\section{Proof of Theorem \ref{th:Sym8}--Computational part \\ by Christelle Vincent }\label{app:Vincent}

In this Appendix we give a brief account of the computational part of the proof of Theorem \ref{th:Sym8}. This computation was carried out using the open source software Sage \cite{sage}, and part of the computation was carried out on the Sage Cloud \cite{sagecloud}. All Sage code written for this project is available online \cite{vincent}.

Fix a prime $p$. We first compute the numbers $m_2^8(p)$. In the notation of Section \ref{ss:intro Kl}, for each $a \in \mathbb{F}_p^{\times}$, we have
\begin{equation*}
\Kl_2(p;a) = - (\alpha_a + \beta_a),
\end{equation*}
for $\alpha_a$ and $\beta_a$ the eigenvalues of $\Frob_a$ acting on $(\Kl_2)_a$, and
\begin{equation*}
m_2^8(p) = \sum_{a \in \mathbb{F}_p^{\times}} \sum_{i=0}^8 \alpha_a^i \beta_a^{8-i}.
\end{equation*}

Sage already has a function to compute twisted Kloosterman sums. Since in our case the Dirichlet character is trivial, we modified the code to remove the dependence on the Dirichlet character. Even though it is possible to return the exact value of the Kloosterman sum, we compute it as an approximate complex number to speed up computations.

Using these values, we may compute, for $n \geq 1$, the sums
\begin{equation*}
S_n(p) = \sum_{a \in \mathbb{F}_p^{\times}} (\Kl_2(p;a))^n.
\end{equation*}
Since we only have an approximation for $\Kl_2(p;a)$, to recover the exact value of $S_n(p)$ we use the congruence 
\begin{equation*}
S_n(p)  \equiv (-1)^{n-1}(n-1)p \pmod{p^2}
\end{equation*}
proved in \cite{choievans}.

Using the $S_n(p)$'s and the fact that $\alpha_a \beta_a =p$, we have
\begin{equation*}
m_2^8(p) = S_8(p) - 7pS_6(p)+15p^2S_4(p)-10p^3S_2(p)+p^5-p^4+10p^3-15p^2+7p-1.
\end{equation*}

From there we can obtain the Fourier coefficients of the form $f$ which we are looking for:
\begin{equation*}
a_f(p) = -(m_2^8(p)+1+p^4)/p^2.
\end{equation*}

For each
\begin{equation*}
N \in \{ 3, 6, 12, 24, 48, 96, 192, 384, 768 \},
\end{equation*}
and $k = 2,4$, and each
\begin{equation*}
N \in \{ 3, 6, 12, 24, 48, 96, 192, 384 \},
\end{equation*}
and $k=6,8$ we compute the space of cuspidal new modular symbols of weight $k$ and level $\Gamma_0(N)$. We then decompose it into invariant subspaces under the action of the Hecke operators of index coprime to the level. Each of these spaces is simple, and we compute the eigenvalue of the Hecke operator $T_5$ on each space. If it is not equal to $a_f(5)=-66$, we can immediately discard the space. In only two cases is the eigenvalue equal to $-66$: When $N=6$ and $k=6$, and in one of the subspaces obtained when $N=48$ and $k=6$. In the second case, we check the eigenvalue of the Hecke operator $T_7$, and it is $-176$, which is not equal to $a_f(7)=176$. We can therefore discard this space too. 

When $N=768$ and $k=6$ or $k=8$, the spaces become too large and the computation becomes prohibitively long. So we instead compute the space of cuspidal new modular symbols over the field $\mathbb{F}_{13}$. In each case we then compute the characteristic polynomial of the Hecke operator $T_5$ and check whether the number $-66$ is a root of this polynomial. It is not, and therefore neither of these subspaces contains an eigenform with eigenvalue for $T_5$ congruent to $-66$ modulo $13$. We can therefore discard those spaces as well.

This leaves us with the unique new cusp form of weight $6$ and level $\Gamma_0(6)$. For good measure, we check that its Hecke eigenvalues agree with our list of $a_f(p)$, and they do for all primes $p$ such that $3 \leq p <1000$.


\end{document}